\definecolor{dullmagenta}{rgb}{0.4,0,0.4}   
\definecolor{darkblue}{rgb}{0,0,0.4}
\selectfont\symbol{62}\fontencoding{\encodingdefault}}
\newcommand{\assign}{:=}
\newcommand{\backassign}{=:}
\newcommand{\comma}{{,}}
\newcommand{\mathd}{\mathrm{d}}
\newcommand{\nobracket}{}
\newcommand{\tmaffiliation}[1]{\\ #1}
\newcommand{\tmcolor}[2]{{\color{#1}{#2}}}
\newcommand{\tmem}[1]{{\em #1\/}}
\newcommand{\tmemail}[1]{\\ \textit{Email:} \texttt{#1}}
\newcommand{\tmop}[1]{\ensuremath{\operatorname{#1}}}
\newcommand{\tmscript}[1]{\text{\scriptsize{$#1$}}}
\newcommand{\tmstrong}[1]{\textbf{#1}}
\newcommand{\tmtextbf}[1]{{\bfseries{#1}}}
\newcommand{\tmtextit}[1]{{\itshape{#1}}}
\newenvironment{enumeratealpha}{\begin{enumerate}[a{\textup{)}}] }{\end{enumerate}}
\newenvironment{enumeratenumeric}{\begin{enumerate}[1.] }{\end{enumerate}}
\newenvironment{enumerateroman}{\begin{enumerate}[i.] }{\end{enumerate}}
\newenvironment{proof}{\noindent\textbf{Proof\ }}{\hspace*{\fill}$\Box$\medskip}
\newenvironment{proof*}[1]{\noindent\textbf{#1\ }}{\hspace*{\fill}$\Box$\medskip}
\newenvironment{quoteenv}{\begin{quote} }{\end{quote}}
\newtheorem{corollary}{Corollary}
\newtheorem{definition}{Definition}
\newtheorem{lemma}{Lemma}
\newtheorem{proposition}{Proposition}
{\theorembodyfont{\rmfamily}\newtheorem{remark}{Remark}}
\newtheorem{theorem}{Theorem}
\newcommand{\tmkeywords}{\textbf{Keywords:} }
\begin{document}

\title{Noiseless regularisation by noise}

\author{
  L.~Galeati \& M.~Gubinelli
  \tmaffiliation{Institute of Applied Mathematics \&\\
  Hausdorff Center for Mathematics\\
  University of Bonn\\
  Germany}
  \tmemail{\{lucio.galeati,gubinelli\}@iam.uni-bonn.de}
}

\date{}

\maketitle

\begin{abstract}
  We analyse the effect of a \tmtextit{generic} continuous additive
  perturbation to the well-posedness of ordinary differential equations.
  Genericity here is understood in the sense of prevalence. This allows us to
  discuss these problems in a setting where we do not have to commit ourselves
  to any restrictive assumption on the statistical properties of the
  perturbation. The main result is that a generic continuous perturbation
  renders the Cauchy problem well-posed for arbitrarily irregular vector
  fields. Therefore we establish regularisation by noise ``without
  probability''.
  
  \
  
  {\noindent}\tmtextbf{MSC(2020):} Primary: 60H50. Secondary: 37C20.
\end{abstract}

\tmkeywords{Regularisation by noise, Prevalence.}

{\tableofcontents}

\section{Introduction}\label{sec1}

From the modelling point of view, the presence of external perturbations to
otherwise autonomous evolutions is a very natural assumption. Let $d \in
\mathbb{N}$ and consider the ODE in $\mathbb{R}^d$
\begin{equation}
  \left\{\begin{array}{l}
    \dot{x} (t) = b (t, x (t)) + \dot{w} (t)\\
    x (0) = x_0 \in \mathbb{R}^d
  \end{array}\right., \qquad t \geqslant 0, \label{eq:pert-ode}
\end{equation}
where $w \in C (\mathbb{R}_+, \mathbb{R}^d)$ is a fixed perturbation, the dot
denotes differentiation with respect to time and $b : \mathbb{R}_+ \times
\mathbb{R}^d \rightarrow \mathbb{R}^d$ is a time-dependent vector field.
Provided eq.~{\eqref{eq:pert-ode}} is understood as an integral equation and
thanks to the additive nature of the perturbation, there are no particular
regularity requirements, apart from continuity, which have to be imposed on
the function $w$. A natural question is then for which classes of vector
fields \ $b$ eq.~{\eqref{eq:pert-ode}} is well-posed and if, for certain sets
of perturbations $w$, one can obtain well-posedness results in classes which
are known to lead to an ill-posed problem when $w = 0$.

\

One possible approach to this problem is to consider $w$ a sample path of a
stochastic process $W$ defined on a probability space $(\Omega, \mathcal{F},
\mathbb{P})$. Indeed, in recent years there has been a lot of activity in
understanding the possible role of random perturbations to improve the
well-posedness of ordinary (or partial) differential equations (ODE/PDE)
(see~{\cite{flandoli}} for a recent review). This approach has, however,
certain limitations:
\begin{enumeratealpha}
  \item It requires to make very specific assumptions on the kind of
  randomness which is allowed in any specific problem.
  
  \item It introduces into the picture considerations which are not quite
  germane to the initial formulation. For example measurability (or
  adaptedness) wrt. $\Omega$ of solutions as soon as we need to look at them
  in the sense of stochastic processes (i.e. seen as random variables) and
  weaker notions of uniqueness which are not easy to compare to the
  deterministic setting.
\end{enumeratealpha}
With respect to point a) one can use other assumptions to justify specific
choices. Within the class of time-dependent continuous random processes, for
example, \ Brownian motion has suitable features of universality and
Markovianity, making it a natural choice. Furthermore, a large set of
theoretical tools is available to analyse the effect of Brownian perturbations
to deterministic evolutions and this topic has a long and extensive
literature~{\cite{BFGM,dapratoflandoli,DFRV,FGP,krylov,veretennikov,zvonkin}}.
Other classes of random perturbations, like fractional Brownian motion (fBm)
have been more recently analysed, or even more exotic variants (e.g.
$\alpha$-stable and log regular
processes)~{\cite{banosproske,banos2019,le,nualartouknine,priola2012,priola2020}}.
Let us finally mention the remarkable results from~{\cite{butkovsky2019}}
concerning rates of convergence of numerical schemes
for~{\eqref{eq:pert-ode}}.

\

As for the technical limitations in point b), a possible solution is to
modify the probabilistic setting in order to derive path-wise statements:
\begin{enumerateroman}
  \item Davie and Flandoli introduced a stronger concept of uniqueness called
  \tmtextit{path-by-path uniqueness}~{\cite{davie,flandoli,flandoli2011}} in
  the Brownian setting; see also~{\cite{shaposhnikov2020}} regarding its
  difference from pathwise uniqueness.
  
  \item Catellier and one of the authors~{\cite{catelliergubinelli}} studied
  almost sure regularisation properties of fractional Brownian motion (fBm)
  and applied them to show strong well-posedness results
  for~{\eqref{eq:pert-ode}} when $w$ is a sample path of an fBm.
\end{enumerateroman}

In this work we take a conceptually different approach and consider the
regularisation by noise problem from the point of view of \tmtextit{generic}
perturbations, in particular without reference to any (specific) probabilistic
setting.

\

We will say that a property holds for \tmtextit{almost every} path $w$ if it
holds for a \tmtextit{prevalent} set of paths. Prevalence~{\cite{ott}} is a
notion of ``Lebesgue measure zero sets'' in infinite dimensional complete
metric vector spaces. Such sets cannot be naively defined due to the fact that
there cannot exist $\sigma$-additive, translation invariant measures in
infinite dimensional spaces. It was first introduced by Christensen
in~{\cite{christensen}} in the context of abelian Polish groups and later
rediscovered independently by Hunt, Sauer and Yorke in~{\cite{huntsauer}} for
complete metric vector spaces.

\

Prevalence has been used in different contexts in order to study the
properties of generic functions belonging to spaces of suitable regularity.
For instance, it was proved in~{\cite{hunt}} that almost every continuous
function is nowhere differentiable, while in~{\cite{fraysse2010,fraysse2006}}
the multi-fractal nature of generic Sobolev functions was shown. Recently,
prevalence has also attracted a lot of attention in the study of dimension of
graphs and images of continuous functions, see among
others~{\cite{fraserhyde,bayart}}.

\

A key advantage of prevalence, with respect to other notions of genericity,
is that it allows the use of probabilistic methods in the proof. However the
statements are fully non-probabilistic and the kind of problems one encounters
in formulating prevalence results are quite distinct from those of a purely
probabilistic setting, extensively investigated in the probabilistic
literature.

\

Armed with this ``noiseless'' notion of ``almost every path'', we can already
state informally one of the results of the paper as follows:

\begin{quoteenv}
  Let $b \in C ([0, T] ; H^{- s} (\mathbb{R}^d))$ be fixed, $s > 0$
  arbitrarily large. Then \tmtextit{almost every} perturbation $w \in C ([0,
  T] ; \mathbb{R}^d)$ has \tmtextit{infinite} regularisation effect on the ODE
  associated to $b$, namely it renders the ODE~{\eqref{eq:pert-ode}}
  well-posed and with a smooth flow.
\end{quoteenv}

\

In order to proceed and precise the above claims we will need a suitable
notion of solution to~{\eqref{eq:pert-ode}} which makes sense for
distributional fields $b$. The key observation in this direction comes from
the work~{\cite{catelliergubinelli}}, which started the study of analytic
properties of paths which affects the regularisation of ODEs.

\

In particular, the work~{\cite{catelliergubinelli}} introduces the
\tmtextit{averaging operator} $T$ as a tool to study the regularisation
properties of a path $w$. It is the operator acting on time-dependent vector
fields $b$ and paths $w$ as as
\[ (w, b) \mapsto (T^w b) (t, x) = \int_0^t b (s, x + w (s)) \mathd s, \qquad
   x \in \mathbb{R}^d, t \geqslant 0. \]
It is a linear operator in $b$, so that one can fix $w$ and consider the
operator $T^w : b \mapsto T^w b$ as above; in this case we say that $T^w$ is
the averaging operator associated to $w$. Alternatively, one can fix $b$ and
vary $w$, $w \mapsto T^w b$; to stress the latter case, we say that $T^w b$ is
an \tmtextit{averaged field}.

\

Averaging is connected to an alternative formulation of the ODE via the
theory of non-linear Young integration. Assume for the moment $b : [0, T]
\times \mathbb{R}^d \rightarrow \mathbb{R}^d$ smooth and consider the
ODE~{\eqref{eq:pert-ode}} in integral form
\begin{equation}
  x_t = x_0 + \int_0^t b (s, x_s) \mathd s + w_t \label{eq:integral-cauchy},
  \qquad t \in [0, T]
\end{equation}
with $w \in C ([0, T] ; \mathbb{R}^d)$. Then, this equation admits a unique
solution of the form $x = w + C^1$, in the sense that the difference $x - w$
is a $C^1$ path, regardless the regularity of $w$. Applying the change of
variables $\theta \assign x - w$ we get the new integral equation
\begin{equation}
  \theta_t = \theta_0 + \int_0^t b (s, \theta_s + w_s) \mathd s, \qquad t \in
  [0, T] . \label{eq:integral-cauchy-theta}
\end{equation}
Since both $b$ and $\theta$ are continuous, the last integral can be
approximated via Riemann--Stieltjes type sums as follows
\begin{equation}
  \int_0^t b (s, \theta_s + w_s) \mathd s = \lim_{| \Pi | \rightarrow 0}
  \sum_i \int_{t_i}^{t_{i + 1}} b (s, \theta_{t_i} + w_s) \mathd s = \lim_{|
  \Pi | \rightarrow 0} \sum_i T^w b_{t_i, t_{i + 1}} (\theta_{t_i})
  \label{sec4.1 eq1}
\end{equation}
where the limit is taken over all possible partitions $\Pi = \{ t_0, \ldots,
t_n \}$ with $0 = t_0 < t_1 < \cdots < t_n = t$ with mesh $| \Pi | = \sup_i |
t_{i + 1} - t_i |$ converging to $0$ and where for a function $A = A (t, x)$
we adopt the compact notation $A_{s, t} (x) \assign A (t, x) - A (s, x)$. The
r.h.s. of equation~{\eqref{sec4.1 eq1}} depends now on the averaged field $T^w
b$. The key observation of~{\cite{catelliergubinelli}} is that, under suitable
space-time regularity conditions on $T^w b$, it is possible to show
convergence of the above Riemann--Stieltjes type sums to a unique limit even
when $b$ is not continuous anymore, thus allowing to \tmtextit{define} the
integral on the l.h.s. of~{\eqref{sec4.1 eq1}} as their limit. This limit is
called in~{\cite{catelliergubinelli}} a \tmtextit{non-linear Young integral}
and denoted as
\[ \int_0^t T^w b (\mathd s, \theta_s) . \]
Eq.~{\eqref{eq:integral-cauchy-theta}} takes then the form of an integral
equation involving non-linear Young integrals:
\[ \theta_t = \theta_0 + \int_0^t T^w b (\mathd s, \theta_s) . \]
The analysis of such equations (existence, uniqueness, regularity of the flow)
for irregular $b$ depends essentially on the regularity properties of the
averaged field $T^w b$ and a substantial part of the present paper will be
dedicated to analyse them in detail. For example we will prove that:

\begin{quoteenv}
  Let $b \in C ([0, T] ; H^{- s} (\mathbb{R}^d))$ be fixed, $s > 0$
  arbitrarily large. Then \tmtextit{almost every} perturbation $w \in C ([0,
  T] ; \mathbb{R}^d)$ has \tmtextit{infinite} regularisation effect on $b$,
  namely $T^w b \in C ([0, T] ; C^{\infty})$.
\end{quoteenv}

A quantitative version of the statements above, which collects some of the
main results of this paper, is the following one.

\begin{theorem}
  \label{intro_main_thm}Let $b \in B^{\alpha}_{\infty, \infty}$ be a compactly
  supported distribution, $\alpha \in (- \infty, 1)$, $\delta \in (0, 1)$.
  \begin{enumerateroman}
    \item If $\delta < (2 - 2 \alpha)^{- 1}$, then for a.e. $w \in
    C_t^{\delta}$ it holds $T^w b \in C^{\gamma}_t C^1_x$ and
    ODE~{\eqref{eq:integral-cauchy}} has a meaningful interpretation; moreover
    for any initial $x_0 \in \mathbb{R}^d$ there exists a solution to the ODE.
    
    \item If $\delta < (2 - 2 \alpha)^{- 1}$ and we fix $x_0 \in
    \mathbb{R}^d$, then for a.e. $w \in C_t^{\delta}$ there exists a unique
    solution to the ODE with initial condition $x_0$.
    
    \item If $\delta < (4 - 2 \alpha)^{- 1}$ then for a.e. $w \in
    C_t^{\delta}$ the ODE is well posed and it admits a locally $C^1$ flow.
    
    \item If $\delta < (2 n - 2 \alpha)^{- 1}$, then for a.e. $w \in
    C_t^{\delta}$ the flow is locally $C^{n - 1}$.
    
    \item Finally, for a.e. $w \in C^0$ the ODE admits a smooth flow.
  \end{enumerateroman}
\end{theorem}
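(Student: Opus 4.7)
The plan is to reduce the entire theorem to one prevalence statement about the averaged field $T^w b$, and prove that statement by using a fractional Brownian motion as a probe measure. The non-linear Young ODE theory developed earlier in the paper tells us that once $T^w b$ lies in $C^\gamma_t C^k_x$ for some $\gamma > 1/2$, the equation $\theta_t = \theta_0 + \int_0^t T^w b(\mathd s, \theta_s)$ admits unique solutions, with a locally $C^{k-1}$ flow; existence requires only $k \geq 1$, uniqueness for a fixed initial datum likewise, and the flow's smoothness is governed by $k$. So it suffices to show: whenever $\delta < (2k - 2\alpha)^{-1}$ with $k \in \mathbb{N}$, the set
\[ \mathcal{P}_k := \{ w \in C^\delta_t : T^w b \in C^\gamma_t C^k_x \text{ for some } \gamma > 1/2 \} \]
is prevalent in $C^\delta_t$.

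To prove that $\mathcal{P}_k$ is prevalent, I take as probe the law $\mu_H$ of a $d$-dimensional fractional Brownian motion $W^H$ with Hurst parameter $H$ chosen in the (non-empty) window $(\delta, (2k-2\alpha)^{-1})$. Since $W^H$ almost surely belongs to $C^{H-\varepsilon}_t \hookrightarrow C^\delta_t$, the measure $\mu_H$ is supported on $C^\delta_t$. The key input is then the a.s.\ regularisation estimate: for $b \in B^\alpha_{\infty,\infty}$ compactly supported,
\[ \mu_H\bigl(\{v : T^v b \in C^\gamma_t C^k_x\}\bigr) = 1 \]
whenever $H < (2k - 2\alpha)^{-1}$, which is the standard Catellier--Gubinelli-type occupation measure estimate. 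To pass from this a.s.\ statement to prevalence, fix an arbitrary $w \in C^\delta_t$ and observe that the shift $w + v$ with $v \sim \mu_H$ satisfies $T^{w+v} b = T^v b^w$, where $b^w(s,y) := b(y + w(s))$ is time-dependent but has the same $B^\alpha_{\infty,\infty}$-norm as $b$ and uniformly bounded support for $s \in [0,T]$. The regularisation estimate applies unchanged to $b^w$, yielding $\mu_H(\{v : w+v \in \mathcal{P}_k\}) = 1$ for every $w$. This gives items (i)--(iv).

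For part (v), the same strategy works in the ambient space $C^0$: for each $k \in \mathbb{N}$ pick any $H_k < (2k - 2\alpha)^{-1}$ (no lower bound on $H_k$ is needed now since every fBm lives in $C^0$), obtain via the preceding argument a prevalent set $\mathcal{P}_k \subset C^0$ on which $T^w b \in C^\gamma_t C^k_x$, and take the countable intersection $\bigcap_k \mathcal{P}_k$, still prevalent, on which $T^w b$ is smooth in space. The non-linear Young ODE theory then supplies a smooth flow.

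The technical heart of the argument, and the step I expect to be most delicate, is establishing the a.s.\ regularisation $T^{W^H} b \in C^\gamma_t C^k_x$ with the sharp threshold $H < (2k-2\alpha)^{-1}$ and simultaneous time regularity $\gamma > 1/2$. This requires careful Fourier/Besov analysis of the increments of the (expected) local time of fBm, combined with a Garsia--Rodemich--Rumsey-type upgrade from moment bounds to pathwise Hölder norms; the threshold must be attained sharply since it drives the numerical bounds in (i)--(iv). Everything else---the translation trick that transfers an a.s.\ result to a prevalence result, the countable intersection argument for (v), and the deterministic non-linear Young ODE inputs---is comparatively soft once that sharp estimate is in hand.
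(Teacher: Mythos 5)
Your reduction to a single prevalence statement about the spatial regularity of $T^w b$ works for items (i), (iii), (iv), and (v), and your translation trick (shifting the drift via $b^w(s,y) = b(y+w_s)$) and the countable-intersection argument for (v) are exactly what the paper does. But there is a real gap in your treatment of item (ii). You write that for the nonlinear Young equation, ``uniqueness for a fixed initial datum likewise'' requires only $k \geq 1$, i.e.\ $T^w b \in C^\gamma_t C^1_x$. This is not true in the Young regime $\gamma$ close to $1/2$. The deterministic comparison principle (the paper's Theorem~\ref{sec4.1 comparison thm v1}) needs $T^w b \in C^\gamma_t C^2_x$ to run, essentially because one must linearise the nonlinear Young integral in the difference $\theta^1 - \theta^2$, and that linearisation (Lemma~\ref{sec4.1 technical lemma}) costs a full extra spatial derivative of $T^w b$. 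The telltale sign is precisely that items (i) and (ii) have the \emph{same} threshold $\delta < (2-2\alpha)^{-1}$, while item (iii) (existence of a Lipschitz flow, which is what deterministic uniqueness would give you for free) has the stricter $\delta < (4-2\alpha)^{-1}$. If uniqueness at fixed $x_0$ followed from $C^\gamma_t C^1_x$ alone, the thresholds in (ii) and (iii) would have to coincide.

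The paper closes this gap by a genuinely different and non-deterministic argument: a \emph{conditional} comparison principle (Theorem~\ref{sec4.1 comparison thm v2}), which yields uniqueness among $C^\gamma$-perturbations of $w$ provided there \emph{exists} one solution $x$ with $T^{x} b \in C^\gamma_t \mathrm{Lip}_x$, combined with Girsanov's theorem for fBm (Section~\ref{sec4.2.2}) to manufacture such a solution under the fBm probe measure. Concretely, Lemma~\ref{sec4.2.3 criterion path-by-path uniqueness} shows that the trivial process $X = x_0 + W^H$ becomes a solution after a measure change, and the absolute continuity transfers the $C^\gamma_t \mathrm{Lip}_x$ regularity of $T^{W^H} b$ to $T^X b$. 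This is the step your proposal is missing, and it cannot be replaced by a purely deterministic argument at the $(2-2\alpha)^{-1}$ threshold. As a secondary remark, the paper proves the averaging estimate not by the occupation-measure/GRR route you sketch but via a functional It\^o--Tanaka formula together with Burkholder's inequality in martingale-type-2 Banach spaces (Theorems~\ref{sec3.2 functional ito tanaka} and~\ref{sec3.3 thm averaging besov}); this yields exponential integrability of the $C^\gamma_t C^\beta_x$-norm of $T^{W^H} b$, which is then exactly what Girsanov via Novikov's criterion requires --- another reason the two halves of the argument are intertwined rather than independent as your proposal assumes.
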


\begin{remark}
  In this theorem we could allow time dependent $b \in L^{\infty}_t
  B^{\alpha}_{\infty, \infty}$ provided $\delta < 1 / 2$. This is due to some
  technical limitations in the proof technique.
\end{remark}

Let us point out that this results is the first general statement which
supports the heuristics ``the rougher the noise, the better the
regularisation'' observed in the probabilistic literature since
e.g.~{\cite{catelliergubinelli}} but so far never discussed abstracting from a
particular probabilistic model of the perturbation.

\

We conclude this introduction by discussing possible extensions are relations
with related work. The averaging operator $T^w$ is, in many respect, a key
tool introduced in~{\cite{catelliergubinelli}} to study analytically the
regularisation properties of perturbations in dynamical problems. In this
paper we refrain to investigate more thoroughly this operator from the point
of view of prevalence since this will be the main objective of the companion
paper~{\cite{galeatigubinelli}}. There we continue the study of the prevalent
properties of path which are associated to the regularisation by noise
phenomenon by concentrating on the notion of $\rho$\mbox{-}irregularity of a
path, as introduced in~{\cite{catelliergubinelli}}, and the related notion of
occupation measure, obtaining as a by-product information on the prevalent
properties of $T^w$.

The setting we propose in this paper opens up a completely new research
subject with many natural problems, one prominent among them is to investigate
the \tmtextit{zero noise limit}, that is the limit as $\varepsilon \rightarrow
0$ for solutions to the equation $\dot{x}_{\varepsilon} = b (x_{\varepsilon})
+ \varepsilon w$. Already in the probabilistic setting this limit is not well
understood, especially from the path-wise perspective and the dependence of
the limit on the law assumed for $w$ is not clear.

On a more technical level several improvement of our results could be
possible. For example it would be interesting to obtain estimates for the
averaging in $L^p$\mbox{-}based spaces with $p \in [1, 2)$, see
Remark~\ref{sec3.3 remark limitation} below and the related discussion in
Appendix~\ref{appendixA3}. In particular let us note that the natural
Conjecture~1.2 from~{\cite{catelliergubinelli}} is still partially open; after
the first draft of this work appeared, Nicolas Perkowski presented us a proof
that answers negatively the conjecture in the case $H < 1 / d$ for general $d
\geqslant 3$ and $H \leqslant 1 / 2$ for $d = 2$.

\

While we were finalizing the present paper, two related preprints appeared.
Harang and Perkowski~{\cite{perkowski}} \ study the flow of the
ODE~{\eqref{eq:pert-ode}} perturbed with a Gaussian process very similar to
that considered in~{\cite{banosproske}} but from the pathwise point of view
of~{\cite{catelliergubinelli}}. Along the way they give proofs of some results
on the flow of Young differential equations alternative to those we give
below. In~{\cite{amine2020}} Amine, Mansouri and Proske study with techniques
very different from ours, the path-by-path uniqueness for transport equations
driven by fBm with Hurst index $H < 1 / 2$ and with bounded vector-fields. It
is to be noted that while both works obtain interesting results, they still
consider very specific probabilistic models. Therefore they are both far from
the novel point of view we propose here and in the companion
paper~{\cite{galeatigubinelli}} and from the specific results it generates.

Let us finally mention the very recent work~{\cite{gerencser2020}} in which
Gerencser provides instances of regularisation by noise for $w \in C^{\delta}$
with $\delta > 1$.

\

\tmtextbf{Structure of the paper.} We start by introducing the concept
prevalence and its basic properties. Section~\ref{sec3} is devoted to the
study of prevalence statements for averaged fields. Fractional Brownian motion
(fBm) enters into the picture as a suitable transverse measure for prevalence.
Thanks to a functional Ito--Tanaka type formula, we deduce regularity
estimates for distributions averaged by fBm, which are strong enough to lead
to prevalence statements. Section~\ref{sec4} is devoted to the application of
the results from the previous section to perturbed ODEs via the theory of
nonlinear Young integrals. After recalling and expanding the results
from~{\cite{catelliergubinelli}}, we provide conditions (in terms of the
regularity of $T^w b$) under the ODE admits a flow with prescribed regularity.
Combined with Section~\ref{sec3}, this allows to prove
Theorem~\ref{intro_main_thm}. Finally, we consider the case of perturbed
transport type PDEs, for which it is again possible to establish
well-posedness under suitable regularity conditions on $T^w b$. We choose to
put in the Appendix reminders of standard facts and certain technical results.

\

\tmtextbf{Acknowledgments.} We thank Mark Veraar and Simone Floreani for a
very useful discussion on integration in UMD Banach spaces.

\

{\tmstrong{Notation.}} We will use the notation $a \lesssim b$ to mean that
there exists a positive constant $c$ such that $a \leqslant c b$; we use the
index $a \lesssim_x b$ to highlight the dependence $c = c (x)$. $a \sim b$ if
and only if $a \lesssim b$ and $b \lesssim a$, similarly for $a \sim_x b$.

We will always work on a finite time interval $[0, T]$ unless stated
otherwise. Whenever useful we adopt the convention that $f_t$ stands for $f
(t)$ for a function $f$ indexed on $t \in [0, T]$, but depending on the
context we will use both notations; similarly for the increments of $f_{s, t}
= f_t - f_s$.

For $x \in \mathbb{R}^d$, $| x |$ denotes the Euclidean norm, $x \cdot y$ the
scalar product. For any $R > 0$, $B_R$ stands for $B (0, R) = \{ x \in
\mathbb{R}^d : | x | \leqslant R \}$.

We denote by $\mathcal{S} (\mathbb{R}^d ; \mathbb{R}^m)$ and $\mathcal{S}'
(\mathbb{R}^d ; \mathbb{R}^m)$ respectively the spaces of vector\mbox{-}valued
Schwarz functions and tempered distributions on $\mathbb{R}^d$; similarly
$C^{\infty}_c (\mathbb{R}^d ; \mathbb{R}^m)$ is the set of
vector\mbox{-}valued smooth compactly supported functions.

Given a separable Banach space $E$, we denote by $L^q (0, T ; E) = L^q_t E$
the Bochner--Lebesgue space of $E$--valued measurable functions $f : [0, T]
\rightarrow E$ such that
\[ \| f \|_{L^q (0, T ; E)} = \left( \int_0^T \| f_t \|_E^q \mathd t
   \right)^{1 / q} < \infty, \]
with the essential supremum in the limit case $q = \infty$. $C^{\alpha} ([0,
T] ; E) = C^{\alpha}_t E$ is the space of $E$--valued $\alpha$-H\"{o}lder
continuous functions, for $\alpha \in (0, 1)$, i.e. $f : [0, T] \rightarrow E$
such that
\[ \| f \|_{C^{\alpha} E} \assign \| f \|_{C^0 E} + \llbracket f
   \rrbracket_{C^{\alpha} E} = \sup_{t \in [0, T]} \| f_t \|_E +
   \sup_{\tmscript{\begin{array}{c}
     s \neq t \in [0, T]
   \end{array}}} \frac{\| f_{s, t} \|_E}{| t - s |^{\alpha}} < \infty . \]
A similar definition holds for $\tmop{Lip} ([0, T] ; E) = \tmop{Lip}_t E$.
More generally, for a given modulus of continuity $\omega$ (possibly defined
only in a neighbourhood of $0$), we denote by $C^{\omega} ([0, T] ; E) =
C^{\omega} E$ the set of all $E$-valued continuous functions with modulus of
continuity $\omega$, $\| f \|_{C^{\omega} E}$ and $\llbracket f
\rrbracket_{C^{\omega} E}$ defined as above.

Whenever $E =\mathbb{R}^d$, we will refer to $w \in C^{\alpha}_t = C^{\alpha}
([0, T] ; \mathbb{R}^d)$ as a {\tmem{path}} and in this case we allow $\alpha
\in [0, \infty)$ with the convention that $w \in C_t^{\alpha}$ it is has
continuous derivatives up to order~$\lfloor \alpha \rfloor$ and $D^{\lfloor
\alpha \rfloor} \varphi$ is $\{ \alpha \}$--H\"{o}lder continuous, where
$\lfloor \alpha \rfloor$ and $\{ \alpha \}$ denote respectively integer and
fractional part.

$B^s_{p, q} (\mathbb{R}^d ; \mathbb{R}^m)$, $L^{s, p} (\mathbb{R}^d ;
\mathbb{R}^m)$ and $F^s_{p, q} (\mathbb{R}^d ; \mathbb{R}^m)$ will denote
respectively vector\mbox{-}valued Besov, Bessel potential and
Triebel--Lizorkin spaces (see Appendix~\ref{appendixA2}), $L^p (\mathbb{R}^d ;
\mathbb{R}^m)$ standard Lebesgue spaces. Whenever it doesn't create confusion,
we will just write $B^s_{p, q}$, $L^{s, p}$, $F^s_{p, q}$ and $L^p$ for short.
For $\alpha \in \mathbb{R} \setminus \mathbb{N}_0$, $C^{\alpha} (\mathbb{R}^d
; \mathbb{R}^m) = C^{\alpha}_x = B^{\alpha}_{\infty, \infty}$; instead for
$\alpha \in \mathbb{N}_0$, $C^n (\mathbb{R}^d ; \mathbb{R}^m) = C^n_x$ denotes
the Banach space of all continuous functions with continuous derivatives up to
order $n$, endowed with the norm
\[ \| f \|_{C^{\alpha}} = \sup_{x \in \mathbb{R}^d} | f (x) | + \sum_{\beta
   \in \mathbb{N}_0^n : | \beta | = n} \sup_{x \in \mathbb{R}^d} | D^{\beta} f
   (x) | . \]
Let us stress in particular that by saying that $f \in C^n_x$, we are implying
that we have a uniform bound on the whole $\mathbb{R}^d$ for its derivatives.
If instead we want to say that $f$ has continuous derivatives up to order $n$,
we will write $f \in C^n_{\tmop{loc}}$. We will adopt short-hand notations of
the form $L^q_t L^p_x = L^q (0, T ; L^p (\mathbb{R}^d ; \mathbb{R}^m))$,
$C^{\alpha}_t C^{\beta}_x = C^{\alpha} ([0, T] ; C^{\beta} (\mathbb{R}^d ;
\mathbb{R}^m))$.

Whenever a stochastic process $X = (X_t)_{t \geqslant 0}$ is considered, even
when it is not specified we imply the existence of an abstract underlying
filtered probability space $(\Omega, \mathcal{F}, \{ \mathcal{F}_t \}_{t
\geqslant 0}, \mathbb{P})$ such that $\mathcal{F}$ and $\mathcal{F}_t$ satisfy
the usual assumptions and $X_t$ is adapted to $\mathcal{F}_t$. If
$\mathcal{F}_t$ is said to be the natural filtration generated by $X$, then it
is tacitly implied that it is actually its right continuous, normal
augmentation. We denote by $\mathbb{E}$ integration (equiv. expectation)
w.r.t. the probability $\mathbb{P}$.

\section{Prevalence}\label{sec2}

Here we follow the setting and the terminology given in~{\cite{huntsauer}}
even if, for our purposes, we will be interested only in the case of a Banach
space $E$.

\begin{definition}
  \label{definition prevalence}Let $E$ be a complete metric vector space. A
  Borel set $A \subset E$ is said to be \tmtextit{shy} if there exists a
  measure $\mu$ such that:
  \begin{enumerateroman}
    \item There exists a compact set $K \subset E$ such that $0 < \mu (K) <
    \infty$.
    
    \item For every $v \in E$, $\mu (v + A) = 0$.
  \end{enumerateroman}
  In this case, the measure $\mu$ is said to be transverse to $A$. More
  generally, a subset of $E$ is shy if it is contained in a shy Borel set. The
  complement of a shy set is called a prevalent set.
\end{definition}

Sometimes it is said more informally that the measure $\mu$ ``witnesses'' the
prevalence of $A^c$.

It follows immediately from part \tmtextit{i.} of the definition that, if
needed, one can assume $\mu$ to be a compactly supported probability measure
on $E$. If $E$ is separable, then any probability measure on $E$ is tight and
therefore \tmtextit{i.} is automatically satisfied.

\

The following properties hold for prevalence (all proofs can be found
in~{\cite{huntsauer}}):
\begin{enumeratenumeric}
  \item If $E$ is finite dimensional, then a set $A$ is shy if and only if it
  has zero Lebesgue measure.
  
  \item If $A$ is shy, then so is $v + A$ for any $v \in E$.
  
  \item Prevalent sets are dense.
  
  \item If $\dim (E) = + \infty$, then compact subsets of $E$ are shy.
  
  \item Countable union of shy sets is shy; conversely, countable intersection
  of prevalent sets is prevalent. 
\end{enumeratenumeric}
From now, whenever we say that a statement holds for a.e. $v \in E$, we mean
that the set of elements of $E$ for which the statement holds is a prevalent
set. Property~1. states that this convention is consistent with the finite
dimensional case.

\

In the context of a function space $E$, it is natural to consider as
probability measure the law induced by an $E$\mbox{-}valued stochastic
process. Namely, given a stochastic process $W$ defined on a probability space
$(\Omega, \mathcal{F}, \mathbb{P})$, taking values in a separable Banach space
$E$, in order to show that a property $\mathcal{P}$ holds for a.e. $f \in E$,
it suffices to show that
\[ \mathbb{P} \left( \text{$f + W$ satisfies property $\mathcal{P}$} \right) =
   1, \qquad \forall \, f \in E. \]
Clearly, we are assuming that the set $A = \left\{ w \in E : \text{$w$
satisfies property $\mathcal{P}$} \right\}$ is Borel measurable and if $E$ is
not separable, then we need to require in addition that the law of $W$ is
tight, so as to satisfy point~\tmtextit{i.} of Definition~\ref{definition
prevalence}.

\

As a consequence of properties~4. and~5., the set of all possible
realisations of a probability measure on a separable Banach space is a shy
set, as it is contained in a countable union of compact sets (this is true
more in general for any tight measure on a Banach space). This highlights the
difference between a statement of the form
\[ \text{``Property $\mathcal{P}$ holds for a.e. $f$''} \]
and, for instance,
\[ \text{ ``Property $\mathcal{P}$ holds for all Brownian trajectories''}, \]
where this last statement corresponds to $\mu \left( \text{Property
$\mathcal{P}$ holds} \right) = 1$, $\mu$ being the Wiener measure on $C ([0,
1])$. Indeed, the second statement doesn't provide any information regarding
whether the property might be prevalent or not. Intuitively, the elements
satisfying a prevalence statement are ``many more'' than just the realisations
of the Wiener measure.

\section{Averaging operators}\label{sec3}

We introduce in detail the averaging operator $(w, b) \mapsto T^w b$ and
analyse its prevalent properties in various functional spaces. Fractional
Brownian motion is used as a convenient tranverse measure to detect prevalent
regularisation properties of paths.

\subsection{Definition of averaging operator and basic
properties}\label{sec3.1}

In this section we provide the definition of the averaging operator $T^w$ for
measurable $w : [0, T] \rightarrow \mathbb{R}^d$, together with some basic
properties which will be fundamental for later sections and our first main
prevalence result. Our definition is rather abstract and works for a general
class of Banach spaces $E$, but keep in mind that for our purposes $E$ will
always be either a Bessel space $L^{s, p}$ or a Besov space $B^s_{p, q}$ with
$p \in [2, \infty)$. Also, we consider for simplicity the scalar-valued case,
i.e. $E \subseteq \mathcal{S}' (\mathbb{R}^d)$. Everything generalises
immediately to the vector-valued case $\mathcal{S}' (\mathbb{R}^d ;
\mathbb{R}^m)$ reasoning component by component.

\

Let us assume that $E$ is a separable Banach space that continuously embeds
into $\mathcal{S}' (\mathbb{R}^d)$ (so that there is also a dual embedding
$\mathcal{S} (\mathbb{R}^d) \hookrightarrow E^{\ast}$) such that translation
$\tau^v : f \mapsto \tau^v f = f \left( \cdot \, + v \right)$ act continuously
on it and leave the norm invariant: $\| \tau^v f \|_E = \| f \|_E$ for all $v
\in \mathbb{R}^d$ and $f \in E$. Assume moreover that the map $v \mapsto
\tau^v$ is continuous in the sense that if $v_n \rightarrow v$, then
$\tau^{v_n} f \rightarrow \tau^v f$ for all $f \in E$.

\begin{definition}
  \label{sec3.1 definition averaging}Let $w : [0, T] \rightarrow \mathbb{R}^d$
  be a measurable function, $E$ as above. Then we define the averaging
  operator $T^w$ as the continuous linear map from $L^1 (0, T ; E)$ to $C^0
  ([0, T] ; E)$ given by
  \[ T^w_t b = T^w b (t) \assign \int_0^t \tau^{w_s} b_s \mathd s \quad
     \forall \, t \in [0, T] . \]
  We will refer to $T^w b$ as an averaged function to stress that $b$ is
  fixed, while $w$ might be varying.
\end{definition}

The definition is meaningful, since by the continuity properties of $v \mapsto
\tau^v$, the map $s \mapsto \tau^{w_s} b (s)$ is still measurable and by the
invariance of $\| \cdot \|_E$ under translations $\| b \|_{L^1 E} = \|
\tau^{w_{\cdot}} b \|_{L^1 E}$. Continuity of $T^w b$ and the bound $\| T^w b
\|_{C^0 E} \leqslant \| b \|_{L^1 E}$ follow from standard properties of
Bochner integral, as well as the linearity of the map $b \mapsto T^w b$.
Similarly, it is easy to see that, in the case $b$ enjoys higher
integrability, $T^w$ can also be defines as a linear bounded operator from
$L^q_t E$ to $C_t^{1 - 1 / q} E$. Furthermore, if $w$ and $\tilde{w}$ are such
that $w_t = \tilde{w}_t$ for Lebesgue-a.e. $t \in [0, T]$, then $T^w b$ and
$T^{\tilde{w}} b$ coincide for all $b$, so that $T^w$ can be defined for $w$
in an equivalence class.

\begin{lemma}
  \label{sec3.1 lemma 1}Let $w^n \rightarrow w$ in $L^1_t \mathbb{R}^d$ and $b
  \in L^q_t E$, then $T^{w^n} b \rightarrow T^w b$ in $C_t^{1 - 1 / q} E.$
\end{lemma}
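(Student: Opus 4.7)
The plan is to establish a universal Hölder bound on $T^w b$ independent of $w$, then deduce convergence in $C^0_t E$ from $L^1_t$ convergence of $w^n$ by a standard subsequence argument, and finally upgrade this to convergence in $C^{1-1/q}_t E$ by exploiting the absolute continuity of the integral to control the Hölder modulus at small scales. I treat the case $q < \infty$; the endpoint case $q = \infty$ is delicate and deserves a separate remark.

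First step: a universal Hölder bound. For any measurable $w$ and $0 \leqslant s \leqslant t \leqslant T$, the translation invariance $\|\tau^{w_r} b_r\|_E = \|b_r\|_E$ and Hölder's inequality yield
\[
\| T^w_t b - T^w_s b\|_E = \Bigl\|\int_s^t \tau^{w_r} b_r \, \mathd r\Bigr\|_E \leqslant \int_s^t \|b_r\|_E \, \mathd r \leqslant (t-s)^{1-1/q} \|b\|_{L^q_t E}.
\]
Thus $\|T^w b\|_{C^{1-1/q}_t E} \lesssim \|b\|_{L^q_t E}$ uniformly in $w$, and in particular the family $\{T^{w^n} b - T^w b\}_n$ is equibounded in $C^{1-1/q}_t E$ by $2 \|b\|_{L^q_t E}$.

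Second step: $C^0_t E$ convergence. From $w^n \to w$ in $L^1_t$ I extract a subsequence $w^{n_k}$ with $w^{n_k}_s \to w_s$ for a.e.\ $s \in [0,T]$. By the assumed continuity of $v \mapsto \tau^v f$ in $E$, it follows that $\tau^{w^{n_k}_s} b_s \to \tau^{w_s} b_s$ in $E$ for a.e.\ $s$. Since this integrand is dominated by $2\|b_s\|_E \in L^1_t$, Lebesgue's dominated convergence theorem gives $\int_0^T \|\tau^{w^{n_k}_s} b_s - \tau^{w_s} b_s\|_E \, \mathd s \to 0$, hence $\|T^{w^{n_k}} b - T^w b\|_{C^0_t E} \to 0$. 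Since every subsequence of $w^n$ admits a further a.e.-convergent sub-subsequence, the full sequence converges in $C^0_t E$.

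Third step: upgrading to Hölder convergence. Given $\eta > 0$, absolute continuity of the integral applied to $\|b\|_E^q \in L^1_t$ produces $\delta > 0$ with $\int_s^t \|b_r\|_E^q \, \mathd r < \eta^q$ whenever $|t-s| < \delta$. Repeating the Hölder estimate of Step 1 gives $\|T^w_t b - T^w_s b\|_E \leqslant \eta \, (t-s)^{1-1/q}$ for every $w$ on scales $|t-s| < \delta$, so the Hölder quotient of the difference $T^{w^n} b - T^w b$ is at most $2\eta$ on those scales, uniformly in $n$. On scales $|t-s| \geqslant \delta$, the same quotient is dominated by $2\delta^{1/q - 1} \|T^{w^n} b - T^w b\|_{C^0_t E}$, which tends to $0$ by the previous step. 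Letting $n \to \infty$ and then $\eta \to 0$ yields $\llbracket T^{w^n} b - T^w b \rrbracket_{C^{1-1/q}_t E} \to 0$, which together with Step 2 gives the claim.

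The main obstacle is precisely the endpoint exponent $1-1/q$ in Step 3: plain interpolation between $C^0$-convergence and the uniform $C^{1-1/q}$ bound only gives convergence in $C^\alpha_t E$ for $\alpha < 1 - 1/q$, so it is essential to harness the absolute continuity of $\|b\|_E^q$ to get an equi-small Hölder modulus at small scales. For $q = \infty$ this mechanism breaks down, which reflects the fact that Lipschitz-in-time convergence of the averaged field genuinely requires better than $L^1$ convergence of the paths.
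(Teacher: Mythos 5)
Your proof is correct, but it takes a longer route than the paper and bypasses its key simplification. The paper bounds the H\"older seminorm directly: by H\"older's inequality,
\[
\| T^{w^n}_{s,t} b - T^w_{s,t} b \|_E \leqslant \int_s^t \| \tau^{w^n_r} b_r - \tau^{w_r} b_r \|_E \, \mathd r \leqslant (t-s)^{1 - 1/q}\Bigl( \int_0^T \| \tau^{w^n_r} b_r - \tau^{w_r} b_r \|_E^q \, \mathd r \Bigr)^{1/q},
\]
so the $C^{1-1/q}_t E$ norm of the difference is controlled by the $L^q_t E$ norm of $\tau^{w^n_\cdot} b_\cdot - \tau^{w_\cdot} b_\cdot$; dominated convergence is then applied once, in $L^q$, with dominant $(2\|b_t\|_E)^q \in L^1$, after passing to an a.e.-convergent subsequence. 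Your Step 2 instead applies dominated convergence only in $L^1$, obtaining merely $C^0$ convergence, which then forces the scale-splitting argument of Step 3 to recover the endpoint exponent $1-1/q$. You correctly observe that plain interpolation between the $C^0$ convergence and the uniform $C^{1-1/q}$ bound would lose the endpoint, and your absolute-continuity fix is sound; but doing the dominated convergence in $L^q$ rather than $L^1$ makes the endpoint come for free and eliminates Step 3 entirely. Your closing remark on $q=\infty$ is accurate, but this limitation is shared by the paper's proof (the dominant $\|b_t\|_E^q$ ceases to make sense), and the lemma is only ever invoked there with $q<\infty$.
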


\begin{proof}
  We can assume in addition that $w^n_t \rightarrow w_t$ for Lebesgue-a.e.
  $t$, the general case following from applying the reasoning to any possible
  subsequence that can be extracted from $\{ T^{w_n} b \}_n$. Since
  $\tau^{w^n_t} b_t \rightarrow \tau^{w_t} b_t$ for Lebesgue-a.e. $t$ and $\|
  \tau^{w^n_t} b_t - \tau^{w_t} b_t \|^q \lesssim \| b_t \|^q \in L^1$, it
  follows from dominated convergence that
  \[ \| T^{w^n} b - T^w b \|_{C^{1 - 1 / q} E} \lesssim \int_0^T \|
     \tau^{w^n_t} b_t - \tau^{w_t} b_t \|^q \rightarrow 0, \quad \text{as } n
     \rightarrow \infty, \]
  which gives the conclusion. 
\end{proof}

The advantage of the above definition of $T^w$ is that it is intrinsic and
does not depend on any approximation procedure by mollifiers. However, a
possibly more intuitive description of $T^w b$ can be given by duality. Recall
that in the sense of distributions $(\tau^v)^{\ast} = \tau^{- v}$, so that for
any $\varphi \in \mathcal{S} (\mathbb{R}^d) \hookrightarrow E^{\ast}$ it holds
\[ \langle T^w_t b, \varphi \rangle = \int_0^t \langle b_s, \varphi \left(
   \cdot \, - w_s \right) \rangle \mathd s \]
where the pairing is integrable since $\left| \langle b_s, \varphi \left(
\cdot \, - w_s \right) \rangle \right| \lesssim_{\varphi} \| b_s \|_E$. The
above relation holds for all $\varphi \in \mathcal{S} (\mathbb{R}^d)$ and
therefore uniquely identifies $T^w b (t)$ as an element of $\mathcal{S}'
(\mathbb{R}^d)$, for all $t \in [0, T]$. The advantage now is that the map
$(t, x) \mapsto \varphi (x - w_t)$ can be regarded as an element of
$L^{\infty} (0, T ; \mathcal{S} (\mathbb{R}^d))$, to which standard operations
on $\mathcal{S} (\mathbb{R}^d)$ such as differentiation and convolution can be
applied.

\begin{lemma}
  \label{sec3.1 lemma 2}Let $w$ and $b$ be as above. Then:
  \begin{enumerateroman}
    \item Averaging and spatial differentiation commute, i.e. for all $i = 1,
    \ldots, d$, $\partial_i T^w b = T^w \partial_i b$.
    
    \item Averaging and spatial convolution commute, i.e. for any $K \in
    C^{\infty}_c (\mathbb{R}^d)$ it holds
    \[ K \ast (T^w b) = T^w (K \ast b) = (T^w K) \ast b. \]
  \end{enumerateroman}
\end{lemma}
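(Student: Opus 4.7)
The whole plan rests on the distributional characterisation of $T^w b$ given just above the lemma,
\[
\langle T^w_t b, \varphi \rangle \;=\; \int_0^t \langle b_s, \tau^{-w_s}\varphi\rangle\,\mathrm{d}s, \qquad \varphi \in \mathcal{S}(\mathbb{R}^d),
\]
together with two elementary facts on Schwartz functions: $\tau^v$ commutes with $\partial_i$ and with convolution against any fixed kernel, and $(\tau^v)^{\ast} = \tau^{-v}$ when acting on tempered distributions. All the assertions of the lemma are equalities that can be verified slice-wise in $t$ and, via duality, reduced to identities in $\mathcal{S}'(\mathbb{R}^d)$.

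For part (i), the plan is to test $\partial_i T^w_t b$ against $\varphi \in \mathcal{S}$, move the derivative onto $\varphi$ via $(\partial_i)^{\ast} = -\partial_i$, use the commutation $\partial_i \tau^{-w_s}\varphi = \tau^{-w_s}\partial_i\varphi$, and then recognise the resulting expression as $\langle T^w_t \partial_i b, \varphi\rangle$ via the same duality formula applied to $\partial_i b$ in place of $b$. Since the equality holds for every $\varphi$, it identifies $\partial_i T^w_t b$ with $T^w_t\partial_i b$ in $\mathcal{S}'$, and the identity lifts to the ambient space $E$ under the standing assumption $\partial_i b \in L^1_t E$ (so that the right-hand side is a genuine element of $C^0_t E$ in the sense of Definition~\ref{sec3.1 definition averaging}).

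For part (ii), I would work directly from the integral definition of $T^w$. The identity $\tau^{v}(K \ast f) = K \ast (\tau^v f) = (\tau^v K) \ast f$, valid for all $v\in\mathbb{R}^d$ and $f\in\mathcal{S}'(\mathbb{R}^d)$, gives
\[
T^w_t(K \ast b) \;=\; \int_0^t K \ast (\tau^{w_s} b_s)\,\mathrm{d}s \;=\; K \ast \int_0^t \tau^{w_s} b_s\,\mathrm{d}s \;=\; K \ast T^w_t b,
\]
and the remaining identity $T^w_t(K \ast b) = (T^w_t K) \ast b$, read pointwise in $t$, follows from the same manipulation using the alternative form $(\tau^{w_s} K) \ast f = \tau^{w_s}(K \ast f)$ to pull the $s$-integral past the spatial convolution.

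The main technical point is the middle equality in the display above, namely the exchange of the Bochner integral in $s$ with the spatial convolution by $K$. I would handle it by pairing with a test function $\varphi \in \mathcal{S}$, so that it reduces to a scalar Fubini applied to $\langle b_s, (\tilde K \ast \varphi)(\cdot - w_s)\rangle$ with $\tilde K(x) = K(-x)$; integrability is supplied by the continuous embedding $E \hookrightarrow \mathcal{S}'$ and the translation invariance of $\|\cdot\|_E$, which together yield the pointwise bound $|\langle b_s, \psi(\cdot - w_s)\rangle| \lesssim_\psi \|b_s\|_E \in L^1_t$ for any $\psi \in \mathcal{S}$. Beyond this Fubini step, which presents no genuine difficulty, everything is purely formal.
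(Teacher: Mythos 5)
Your proposal is correct and follows essentially the same duality-based route as the paper: part (i) is identical, and part (ii) differs only in that you manipulate the Bochner integral directly and then justify the exchange with convolution by testing against $\varphi$ and applying scalar Fubini, whereas the paper stays entirely at the level of distributional pairings throughout. Your extra care on the Fubini step (and the sanity-check that $\partial_i b$ should itself be in $L^1_t$ of a suitable space for the right-hand side of (i) to be well-defined) is a harmless elaboration of points the paper treats implicitly; the only caveat worth noting is that, as in the paper, the third identity $(T^w K)\ast b$ tacitly requires $b$ time-independent for the expression to parse, which neither your write-up nor the paper's flags explicitly.
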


\begin{proof}
  Both statements follow easily from the duality formulation. For any $\varphi
  \in \mathcal{S} (\mathbb{R}^d)$ and $t \in [0, T]$ it holds
  \begin{align*}
    \langle \partial_i T^w b (t), \varphi \rangle & = - \langle T^w b (t),
    \partial_i \varphi \rangle = - \int_0^t \langle b_r, \partial_i \varphi
    \left( \cdot \, - w_r \right) \rangle \, \mathd r\\
    & = \int_0^t \langle \partial_i b_r, \varphi \left( \cdot \, - w_r
    \right) \rangle \, \mathd r = \langle (T^w \partial_i b) (t), \varphi
    \rangle .
  \end{align*}
  If $K \in C^{\infty}_c (\mathbb{R}^d)$, then denoting by $\tilde{K}$ its
  reflection, by duality it holds
  \begin{align*}
    \langle K \ast T^w b (t), \varphi \rangle & = \langle T^w b (t), \tilde{K}
    \ast \varphi \rangle = \int_0^t \langle b_r, \tau^{- w_r} (\tilde{K} \ast
    \varphi) \rangle \, \mathd r = \int_0^t \langle b_r, \tilde{K} \ast
    (\tau^{- w_r} \varphi) \rangle \, \mathd r\\
    & = \int_0^t \langle K \ast b (r), \tau^{- w_r} \varphi \rangle \, \mathd
    r = \langle T^w (K \ast b) (t), \varphi \rangle .
  \end{align*}
  A similar computation shows the other part of the identity.
\end{proof}

\begin{remark}
  \label{sec3.1 remark localization}Let us point out that if $w \in
  L^{\infty}$, then the averaging operator has finite speed of propagation and
  so behaves well under localisation. Indeed, if $b \in L^1_t E$ is such that
  $\tmop{supp} b_t \subset B_R$ for all $t \in [0, T]$, then $\tmop{supp} T^w
  b (t) \subset B_{R + \| w \|_{\infty}}$ for all $t \in [0, T]$ and similarly
  if $b$ and $\tilde{b}$ are such that their restrictions to $B_R$ coincide
  for all $t$, then $T^w b$ and $T^w \tilde{b}$ will still coincide on $B_{R -
  \| w \|_{\infty}}$.
\end{remark}

In view of the applications in Section~\ref{sec4}, our main goal is to
establish conditions under which $T^w b \in C^{\gamma}_t F$, where $\gamma > 1
/ 2$ and $F$ is another Banach space which enjoys better regularity properties
than the original space $E$: typically $F = C^{\beta}_x$ for suitable values
of $\beta$. For this reason, we are going to assume from now on that $b \in
L^q_t E$ for some $q > 2$. The idea behind this restriction is that sometimes
averaging allows to trade off time regularity for space regularity (think of
the analogy with parabolic regularity theory) and therefore in order to have
$T^w b \in C^{\gamma}_t F$, knowing a priori only that $T^w b \in C^{1 - 1 /
q}_t E$, we need to require at least
\[ 1 - \frac{1}{q} > \gamma > \frac{1}{2}  \enspace \Rightarrow \enspace q >
   2. \]
\begin{remark}
  \label{sec3.1 remark limitation}Despite our use of the terminology
  ``regularisation by averaging'', what we mean is really that we
  \tmtextit{fix} a drift $b$ and we want to establish that for a.e. path $w$
  the averaged function $T^w b$ has nice regularity properties. This is
  \tmtextit{different} from trying to establish that the averaging operator
  $T^w$ as a linear operator from $L^q_t E$ to $C^{\gamma}_t F$ is bounded,
  which is clearly false due to the time dependence of the drifts we consider.
  Indeed, given any $b \in E$, defining $\tilde{b}_t = \tau^{- w_t} b$, by
  definition of averaging we obtain $T_{s, t}^w \tilde{b} = (t - s) b$, which
  shows that for such choice of $\tilde{b}$, $T^w \tilde{b}$ cannot have
  better spatial regularity than $\tilde{b}$. The situation is more
  interesting if one defines $T^w$ for time independent drifts only.
  Prevalence statements for that case will be analysed in the companion
  paper~{\cite{galeatigubinelli}}.
\end{remark}

In order to show prevalence of regularisation by averaging, we first need to
show that such a property indeed defines Borel sets in suitable spaces of
paths. To this end, we require $F$ to be another Banach spaces which embeds
into $\mathcal{S}' (\mathbb{R}^d)$ which enjoys the following {\tmem{Fatou
type property}}: if $\{ x_n \}_n$ is a bounded sequence in $F$ such that $x_n$
converge to $x$ in the sense of distributions, then $x \in F$ and $\| x \|_F
\leqslant \liminf \| x_n \|_F$.

In the next lemma we allow any $\bar{\gamma} \in (0, 1)$, but our primary
focus will be $\bar{\gamma} = 1 / 2$.

\begin{lemma}
  \label{sec3.1 lemma 4}Let $F$ be as above, $b \in L^q_t E$ for some $q > 2$.
  Then for any $\bar{\gamma} \in (0, 1)$ the set
  \[ \mathcal{A}^{\bar{\gamma}} = \left\{ \, w : [0, T] \rightarrow
     \mathbb{R}^d \text{ such that } T^w b \in C^{\gamma}_t F \text{ for some
     } \gamma > \bar{\gamma} \right\} \]
  is Borel measurable w.r.t. the following topologies: $L^p$ with $p \in [1,
  \infty]$, $C^{\alpha}$ with $\alpha \geqslant 0$.
\end{lemma}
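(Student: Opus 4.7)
The plan is to rewrite
\[
\mathcal{A}^{\bar\gamma} = \bigcup_{\gamma \in \mathbb{Q}\cap(\bar\gamma,1)}\ \bigcup_{M \in \mathbb{N}} \Bigl\{ w : \|T^w b\|_{C^\gamma_t F} \le M \Bigr\}
\]
and to show that each sublevel set on the right is Borel in all the required topologies. Since the inclusions $L^p \hookrightarrow L^1$ for $p \in [1,\infty]$ and $C^\alpha \hookrightarrow L^1$ for $\alpha \ge 0$ are continuous on the finite interval $[0,T]$, it suffices to prove each sublevel set is closed in $L^1(0,T;\mathbb{R}^d)$, or equivalently that the functional $w \mapsto \|T^w b\|_{C^\gamma_t F} \in [0,\infty]$ is lower semi-continuous with respect to $L^1$-convergence.

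The main ingredient is to combine Lemma~\ref{sec3.1 lemma 1} with the Fatou property of $F$. If $w^n \to w$ in $L^1$, the lemma yields $T^{w^n} b \to T^w b$ in $C^{1-1/q}_t E$, and in particular $T^{w^n}_{s,t}b \to T^w_{s,t}b$ in $E$, hence also in $\mathcal{S}'(\mathbb{R}^d)$, for every fixed $s,t \in [0,T]$. Applying the Fatou property along a subsequence realising the liminf then gives
\[
\|T^w_{s,t} b\|_F \le \liminf_{n \to \infty} \|T^{w^n}_{s,t} b\|_F,
\]
so that $w \mapsto \|T^w_{s,t} b\|_F$ is l.s.c.\ on $L^1$ for each fixed pair $(s,t)$. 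A countable supremum of l.s.c.\ functions is l.s.c., so the argument is complete once the suprema defining $\|\cdot\|_{C^\gamma_t F}$ are shown to reduce to countable ones.

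The last reduction is the technical step I would handle most carefully, and it is where I expect the main obstacle to lie. I would show that $(s,t) \mapsto \|T^w_{s,t}b\|_F$ is itself l.s.c.\ on $[0,T]^2$: if $(s_k,t_k) \to (s,t)$ then $T^w_{s_k,t_k} b \to T^w_{s,t} b$ in $E$ by the continuity of $T^w b$ as an $E$-valued map (Definition~\ref{sec3.1 definition averaging}), hence in $\mathcal{S}'$, and the Fatou property of $F$ yields $\|T^w_{s,t}b\|_F \le \liminf_k \|T^w_{s_k,t_k}b\|_F$. Any l.s.c.\ function on the compact set $[0,T]^2$ attains its supremum as a limit along a rational sequence, whence
\[
\|T^w b\|_{C^\gamma_t F} = \sup_{t \in \mathbb{Q}\cap[0,T]} \|T^w b(t)\|_F + \sup_{\substack{s,t \in \mathbb{Q}\cap[0,T]\\ s\ne t}} \frac{\|T^w_{s,t} b\|_F}{|t-s|^\gamma},
\]
a countable supremum of $L^1$-l.s.c.\ functions of $w$, hence l.s.c.\ itself. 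This closes the sublevel sets in $L^1$ and, via the continuous embeddings above, makes them Borel in each of the $L^p$ and $C^\alpha$ topologies, yielding the conclusion.
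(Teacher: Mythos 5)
Your proof is correct and follows essentially the same route as the paper: decompose $\mathcal{A}^{\bar\gamma}$ into a countable union of sublevel sets, reduce to the $L^1$-topology, and combine Lemma~\ref{sec3.1 lemma 1} with the Fatou property of $F$ to pass the $C^\gamma_t F$ bound to the limit. The extra step you flag as the main obstacle is in fact superfluous: an \emph{arbitrary} (not merely countable) supremum of lower semi-continuous functions is lower semi-continuous, since $\{\sup_\alpha f_\alpha > c\} = \bigcup_\alpha \{f_\alpha > c\}$ is open; equivalently, as in the paper, one can simply observe that the Fatou bound holds for each fixed pair $(s,t)$, so the sublevel set is closed without any reduction to rational indices.
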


\tmcolor{blue}{\tmcolor{black}{\begin{proof}
  We can write $\mathcal{A}^{\bar{\gamma}}$ as a countable union of sets as
  follows:
  \[ \mathcal{A}^{\bar{\gamma}} = \bigcup_{m, n \in \mathbb{N}}
     \mathcal{A}_{m, n}^{\bar{\gamma}} \assign \bigcup_{m, n \in \mathbb{N}}
     \left\{ w : [0, T] \rightarrow \mathbb{R}^d \text{ such that } \| T^w b
     \|_{C^{\bar{\gamma} + 1 / m} F} \leqslant n \right\} ; \]
  in order to show the statement, it suffices to show that for every $m, n$
  the set $\mathcal{A}_{m, n}^{\bar{\gamma}}$ is closed in the above
  topologies. It suffices to show that it is closed in the $L^1$-topology,
  which is weaker than any of the others considered. Let $w^k$ be a sequence
  in $\mathcal{A}_{m, n}^{\bar{\gamma}}$ such that $w^k \rightarrow w$ in
  $L^1$, then by Lemma~\ref{sec3.1 lemma 1} we know that $T^{w^k} b
  \rightarrow T^w b$ in $C ([0, T] ; E)$ and so that for any $s < t$,
  $T^{w^k}_{s, t} b \rightarrow T^w_{s, t} b$ in $E$ and in $\mathcal{S}'
  (\mathbb{R}^d)$. On the other hand, by definition of $\mathcal{A}_{m,
  n}^{\bar{\gamma}}$ it holds
  \[ \sup_k  \frac{\| T^{w^k}_{s, t} b \|_F}{| t - s |^{\bar{\gamma} + 1 / m}}
     \leqslant n \]
  which implies by the Fatou property of $F$ that $T^w_{s, t} b \in F$ and
  \[ \frac{\| T^w_{s, t} b \|_F}{| t - s |^{\bar{\gamma} + 1 / m}} \leqslant
     n. \]
  As the reasoning holds for any $s < t$, it follows that $T^w b \in
  \mathcal{A}_{m, n}^{\bar{\gamma}}$ as well.
\end{proof}}}

\begin{remark}
  Any weakly\mbox{-}$\ast$ compact Banach space $F$ which embeds in
  $\mathcal{S}' (\mathbb{R}^d)$ satisfies the Fatou property. In the following
  we will always work with $L^p_x$\mbox{-}based function spaces with $p \in
  [2, \infty]$, so the property holds automatically. Let us also point out
  that the proof actually works more generally for conditions of the form $T^w
  b \in C^{\omega}_t F$, where $\omega$ is a prescribed modulus of continuity.
\end{remark}

We are now ready to provide a first prevalence statement.

\begin{theorem}
  \label{sec3.3 thm prevalence averaging}Let $b \in L^{\alpha}_t L^{s, p}_x$
  (resp. $b \in L^{\alpha}_t B^s_{p, q}$) for some $\alpha > 2$, $s \in
  \mathbb{R}$, $p, q \in [2, \infty)$. Let $\delta \in [0, 1)$ and $\beta \in
  \mathbb{R}$ satisfy
  \begin{equation}
    \beta < s + \frac{1}{\delta} \left( \frac{1}{2} - \frac{1}{\alpha} \right)
    - \frac{d}{p}, \label{eq:beta}
  \end{equation}
  where $d$ is the space dimension, i.e. $L^{s, p}_x = L^{s, p} (\mathbb{R}^d
  ; \mathbb{R}^m)$, and we adopt the convention that~{\eqref{eq:beta}} is
  satisfied for any $\beta$ if $\delta = 0$. Then for almost every $\varphi
  \in C^{\delta}_t$, $T^{\varphi} b \in C^{\gamma}_t C^{\beta}_x$ for some
  $\gamma > 1 / 2$.
\end{theorem}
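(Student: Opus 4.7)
By Lemma~\ref{sec3.1 lemma 4} the set
$A=\{\varphi\in C_t^{\delta}:T^{\varphi}b\in C_t^{\gamma}C_x^{\beta}\text{ for some }\gamma>1/2\}$
is Borel. My plan is to witness prevalence of $A$ by taking as transverse measure the law $\mu$ of a fractional Brownian motion $W$ on $\mathbb{R}^d$ with Hurst parameter $H\in(\delta,1)$. Since $W\in C_t^{\delta}$ almost surely and $C_t^{\delta}$ is separable, $\mu$ is automatically tight. To prove $A$ prevalent it then suffices to show that, for every fixed $\varphi\in C_t^{\delta}$,
\[
  \mathbb{P}\bigl(T^{\varphi+W}b\in C_t^{\gamma}C_x^{\beta}\text{ for some }\gamma>1/2\bigr)=1.
\]
Sending $H\searrow\delta$ at the end and invoking property~5 of prevalent sets (countable intersections) will recover the sharp exponent in~\eqref{eq:beta}: for any $\beta$ strictly below the right-hand side of~\eqref{eq:beta} one can choose $H>\delta$ close enough to $\delta$ that $\beta<s+(2H)^{-1}(1-2/\alpha)-d/p$ still holds.

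\textbf{Fourier moment estimate.} The core computation is a second-moment estimate on $T^{\varphi+W}_{s,t}b$ in a Banach space $F$ that embeds into $C_x^{\beta}$ (e.g.\ $F=L_x^{\beta',2}$ with $\beta'>\beta+d/2$ via Sobolev embedding). Writing
\[
  \widehat{T^{\varphi+W}_{s,t}b}(\xi)=\int_s^t\hat b_u(\xi)\,e^{i\xi\cdot(\varphi_u+W_u)}\,\mathrm{d}u
\]
and exploiting the Gaussian formula $|\mathbb{E}[e^{i\xi\cdot(W_u-W_v)}]|=\exp(-\tfrac12|\xi|^2|u-v|^{2H})$, one bounds the modulus of $\mathbb{E}[|\widehat{T^{\varphi+W}_{s,t}b}(\xi)|^2]$ by a double integral of $|\hat b_u(\xi)|\,|\hat b_v(\xi)|\,e^{-c|\xi|^2|u-v|^{2H}}$. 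An application of H\"older in time (using $\alpha>2$) together with the scaling $\int e^{-c|\xi|^2 r^{2H}}\,\mathrm{d}r\lesssim|\xi|^{-1/H}$ yields
\[
  \mathbb{E}\bigl[|\widehat{T^{\varphi+W}_{s,t}b}(\xi)|^2\bigr]\lesssim\|\hat b(\xi)\|_{L^{\alpha}([s,t])}^{2}\,(t-s)^{2\tau}\,(1+|\xi|)^{-2\kappa},
\]
with $\tau,\kappa>0$ tuned by an interpolation of the $\min(t-s,|\xi|^{-1/H})$ factor. Integrating against $(1+|\xi|^2)^{\beta'}$ and using Minkowski's inequality ($\alpha\geq2$) to bound $\|\hat b(\cdot)\|_{L^2_{\xi}L^{\alpha}_t}\leq\|b\|_{L^{\alpha}_tH^{s}_x}$ gives $\mathbb{E}[\|T^{\varphi+W}_{s,t}b\|_F^2]\lesssim(t-s)^{2\tau}$, with the balance between $\tau$ and the admissible $\beta'$ matching~\eqref{eq:beta}.

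\textbf{Higher moments and $L^p$-based spaces.} To apply Kolmogorov's continuity criterion one needs moments of all orders with the same time scaling. I would obtain $\mathbb{E}[\|T^{\varphi+W}_{s,t}b\|_F^m]\lesssim_m(t-s)^{m\tau}$ via Gaussian hypercontractivity applied to the Wiener-chaos decomposition of $e^{i\xi\cdot W_u}$, whose components are then all $L^p$-equivalent. Kolmogorov then delivers $T^{\varphi+W}b\in C_t^{\gamma}F\hookrightarrow C_t^{\gamma}C_x^{\beta}$ almost surely with $\gamma>1/2$. For the genuinely $L^{s,p}_x$ or $B^s_{p,q}$ targets with $p>2$, the $L^2$-based Plancherel step is replaced by a dyadic argument: averaging commutes with the Littlewood--Paley projectors by Lemma~\ref{sec3.1 lemma 2}(ii), so the moment estimate is carried out block by block on $\Delta_jb$ and reassembled in the $L^p$-valued setting using the UMD structure of $L^p(\mathbb{R}^d)$ (whence the acknowledgement to Veraar and Floreani).

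\textbf{Main obstacle.} The delicate point is the sharpness of~\eqref{eq:beta}. The naive H\"older-in-time/Fourier estimate above is wasteful: optimising the interpolation exponent produces $\tau>1/2$ only when $\alpha>4$, whereas the theorem requires merely $\alpha>2$. To cover the full range one must refine the time estimate via a functional It\^o--Tanaka type formula for fBm, as announced in the introduction, which represents $T^{\varphi+W}_{s,t}b$ as a boundary term smoothed by the fBm heat semigroup plus a genuine stochastic integral whose quadratic variation already scales as $(t-s)^{2\tau}$ with the optimal $\tau$. Coupling that representation with Gaussian hypercontractivity, Kolmogorov's criterion and the embedding $F\hookrightarrow C_x^{\beta}$ then closes the argument.
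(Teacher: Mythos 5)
Your high-level strategy is the right one and matches the paper's: Borel measurability of the target set via Lemma~\ref{sec3.1 lemma 4}, the law $\mu^H$ of a fractional Brownian motion with $H>\delta$ as transverse measure, tightness from separability, and a reduction to showing that $T^{\varphi+W^H}b$ has the desired regularity with full probability. However, several links in the chain are missing or incorrectly diagnosed, so as written this is a sketch with genuine gaps rather than a complete alternative proof.

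First, the paper's reduction to a pure fBm estimate goes through the translation identity $T^{\varphi+w}b=T^w\tilde b$ with $\tilde b_t:=\tau^{\varphi_t}b_t$, which preserves the $L^{\alpha}_t E$-norm precisely because $E$ is translation invariant. Your Fourier computation silently sidesteps this (the factor $e^{i\xi\cdot\varphi_u}$ disappears after taking moduli), which is fine for a second-moment bound, but you never make the corresponding reduction when you switch to the It\^o--Tanaka route — yet that route estimates $T^{W^H}\tilde b$ and it is exactly the identity above that makes the shifted drift admissible. This gluing step needs to be stated.

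Second, the ``main obstacle'' is misdiagnosed. A Young/H\"older-in-time treatment of the kernel $e^{-c|\xi|^2|u-v|^{2H}}$ already yields a gain $\kappa<\frac{1}{H}\bigl(\frac12-\frac1\alpha\bigr)$ under $\alpha>2$, not $\alpha>4$. The genuine obstruction to closing the argument with a plain Fourier/Kolmogorov scheme is different: the second-moment estimate alone feeds Kolmogorov a loss that kills the $\gamma>1/2$ requirement, so you need moments of every order $p$ with uniform scaling $(t-s)^{p\tau}$. Your proposal to get these by Gaussian hypercontractivity is not justified here, because $T^{W^H}_{s,t}b=\int_s^t \hat b_u(\xi)e^{i\xi\cdot W^H_u}\,\mathrm du$ is not an element of a fixed finite Wiener chaos, and the hypercontractive constant grows with the chaos degree. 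The paper avoids this entirely: the It\^o--Tanaka formula (Theorem~\ref{sec3.2 functional ito tanaka}) decomposes $T^{W^H}_{s,t}b$ into a deterministically bounded heat-smoothed term $I^{(1)}$ plus a genuine stochastic integral $I^{(2)}$, and Burkholder's inequality in martingale type~2 spaces (Theorem~\ref{appendixA3 thm burkholder}, with constant $\lesssim\sqrt p$) gives $\mathbb E[\|I^{(2)}_{s,t}\|^{2k}]\le (Ck)^k\|b\|^{2k}|t-s|^{k(1+\varepsilon)}$, hence exponential integrability and the chaining Lemma~\ref{appendixA1 chaining lemma} rather than a bare Kolmogorov criterion. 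That is the content of Theorems~\ref{sec3.3 thm averaging bessel}, \ref{sec3.3 thm averaging besov} and Remark~\ref{sec3.3 remark embedding}, which the paper's proof simply cites. Similarly, your suggestion to ``reassemble block by block'' with the UMD structure is unnecessary: the Burkholder inequality is applied directly at the level of $L^{s,p}$ (which is martingale type~2 for $p\ge2$), not after a Littlewood--Paley decomposition.

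In short: right transverse measure, right measurability lemma, and you correctly anticipate that an It\^o--Tanaka representation is needed, but the passage from the second-moment Fourier bound to uniformly scaled moments of all orders in the $L^p$-valued setting — which is the technical heart — is missing, and the hypercontractivity substitute you propose in its place does not work.
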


\begin{proof*}{Proof of Theorem~\ref{sec3.3 thm prevalence averaging}}
  By Lemma~\ref{sec3.1 lemma 4}, the set
  \[ \mathcal{A} = \left\{ \, w \in C^{\delta}_t \, : T^w b \in C^{\gamma}_t
     C^{\beta}_x \text{ for some } \gamma > 1 / 2 \right\} \]
  is Borel in $C^{\delta}_t$. For simplicity we will adopt the notation $b \in
  L^{\alpha}_t E$, as the reasoning is the same for $E = L^{s, p}_x$ or $E =
  B^s_{p, q}$. In order to prove the statement, it remains to find a suitable
  tight probability distribution $\mu$ on $C^{\delta}_t$ such that for any
  $\varphi \in C^{\delta}_t$ it holds
  \begin{equation}
    \mu (\varphi + \mathcal{A}) = \mu \left( \text{$w \in C^{\delta}_t : \,
    T^{\varphi + w} b \in C^{\gamma}_t C^{\beta}_x$ for some $\gamma > 1 / 2$}
    \right) = 1. \label{sec3.1 eq1}
  \end{equation}
  Thanks to the translation invariance of $\| \cdot \|_E$, we can reduce the
  above problem to an easier one. Indeed, setting $\tilde{b}_t \assign
  \tau^{\varphi_t} b_t$ for all $t \in [0, T]$, $\tilde{b} \in L^{\alpha}_t E$
  and it holds $T^{\varphi + w} b = T^w \tilde{b}$. In particular in order to
  show that~{\eqref{sec3.1 eq1}} holds for fixed $b \in L^{\alpha}_t E$ and
  for all $\varphi \in C^{\delta}_t$, it actually suffices to find $\mu$ such
  that
  \begin{equation}
    \mu \left( \text{$w \in C^{\delta}_t : \, T^w \tilde{b} \in C^{\gamma}_t
    C^{\beta}_x$ for some $\gamma > 1 / 2$} \right) = 1 \text{ for all } \,
    \tilde{b} \in L^{\alpha}_t E. \label{sec3.1 eq2}
  \end{equation}
  Considering equation~{\eqref{sec3.1 eq2}} for the choice $E = L^{s, p}$
  (resp. $E = B^s_{p, q}$), it suffices to show that for all $\beta$
  satisfying~{\eqref{eq:beta}} there exists a tight measure $\mu_{\beta,
  \delta}$ on $C^{\delta}_t$ such that
  \begin{equation}
    \mu_{\beta, \delta} \left( w \in C^{\delta}_t : T^w b \in C^{\gamma}_t
    C^{\beta}_x \text{ for some } \gamma > 1 / 2 \right) = 1 \quad \text{for
    all } b \in L^{\alpha}_t E. \label{eq:prev-stat-averaging}
  \end{equation}
  The rest of the section will be devoted to the identification of such a
  measure. In particular, using Theorem~\ref{sec3.3 thm averaging bessel}
  (resp. Theorem~\ref{sec3.3 thm averaging besov}) combined with \
  Remark~\ref{sec3.3 remark embedding} below, we can choose $\mu_{\beta,
  \delta} = \mu^H$ to be the law of a fractional Brownian motion of parameter
  $H \in (0, 1)$ such that $H > \delta$ and
  \[ \beta < s + \frac{1}{H} \left( \frac{1}{2} - \frac{1}{\alpha} \right) -
     \frac{d}{p} . \]
\end{proof*}

We conclude this section with a lemma on approximation by mollifications which will
be very useful in Section~\ref{sec4}.

\begin{lemma}
  \label{sec3.1 lemma 3}Let $b \in L^q_t E$ such that $T^w b \in C^{\gamma}_t
  C^{\beta}_x$ for some $\gamma \in (0, 1]$, $\beta \in (0, \infty)$ and let
  $\{ \rho^{\varepsilon} \}_{\varepsilon > 0}$ be a family of standard spatial
  mollifiers; let $b^{\varepsilon} \assign \rho^{\varepsilon} \ast b$.Then for
  any $\delta > 0$ it holds $T^w b^{\varepsilon} \rightarrow T^w b$ locally in
  $C^{\gamma - \delta}_t C^{\beta - \delta}_x$, namely for any $R > 0$ $T^w
  b^{\varepsilon} |_{[0, T] \times B_R} \nobracket \rightarrow T^w b |_{[0, T]
  \times B_R} \nobracket$ in $C^{\gamma - \delta} ([0, T] ; C^{\beta - \delta}
  (B_R))$.
\end{lemma}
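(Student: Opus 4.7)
The plan is to convert the mollification of $b$ into a mollification of the averaged field itself. By Lemma~\ref{sec3.1 lemma 2}(ii), convolution with $\rho^\varepsilon$ commutes with the averaging operator, so
\[
T^w b^\varepsilon = T^w(\rho^\varepsilon \ast b) = \rho^\varepsilon \ast T^w b.
\]
Setting $f \assign T^w b \in C^\gamma_t C^\beta_x$, the statement reduces to showing that $\rho^\varepsilon \ast f \to f$ in $C^{\gamma-\delta}_t C^{\beta-\delta}_x$; the local version claimed in the lemma then follows since $C^{\beta-\delta}(B_R)$ is dominated by the global $C^{\beta-\delta}_x$ norm.

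The key ingredient is the standard mollifier estimate: for any $g \in C^\beta_x$ and $\delta > 0$ small,
\[
\| \rho^\varepsilon \ast g - g \|_{C^{\beta - \delta}_x} \lesssim \varepsilon^\delta \| g \|_{C^\beta_x},
\]
obtained by interpolation between the uniform bound $\| \rho^\varepsilon \ast g \|_{C^\beta_x} \lesssim \| g \|_{C^\beta_x}$ and an approximation rate in a weaker norm, or equivalently via a direct Littlewood--Paley argument in the Besov formulation $C^\beta_x = B^\beta_{\infty,\infty}$. Applied at each fixed time, this yields the sup-in-time part of the target norm:
\[
\sup_{t \in [0,T]} \| \rho^\varepsilon \ast f_t - f_t \|_{C^{\beta - \delta}_x} \lesssim \varepsilon^\delta \| f \|_{C^0_t C^\beta_x} \xrightarrow{\varepsilon \to 0} 0.
\]

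For the H\"older seminorm in time, I would use that mollification in space commutes with taking increments in time, $(\rho^\varepsilon \ast f - f)_{s,t} = \rho^\varepsilon \ast f_{s,t} - f_{s,t}$, and apply the same mollifier estimate to $f_{s,t} \in C^\beta_x$, giving
\[
\| (\rho^\varepsilon \ast f - f)_{s,t} \|_{C^{\beta - \delta}_x} \lesssim \varepsilon^\delta \| f_{s,t} \|_{C^\beta_x} \lesssim \varepsilon^\delta | t - s |^\gamma \llbracket f \rrbracket_{C^\gamma_t C^\beta_x}.
\]
Dividing by $|t - s|^{\gamma - \delta}$ and using $|t - s| \leqslant T$, the $C^{\gamma - \delta}_t$-seminorm of $\rho^\varepsilon \ast f - f$ in $C^{\beta - \delta}_x$ is bounded by $\varepsilon^\delta T^\delta \llbracket f \rrbracket_{C^\gamma_t C^\beta_x}$, which vanishes as $\varepsilon \to 0$. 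No genuine obstacle is expected; the only point of care is the mollifier estimate, which is standard in both the non-integer case (by Besov/Littlewood--Paley) and the integer case (by the mean-value form $\rho^\varepsilon \ast g(x) - g(x) = \int \rho^\varepsilon(y)[g(x-y)-g(x)]\,\mathd y$ together with the $C^\beta$ regularity of $g$).
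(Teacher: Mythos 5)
Your proof is correct but takes a genuinely different, more quantitative route than the paper's. Both start from the commutation identity $T^w b^\varepsilon = \rho^\varepsilon \ast T^w b$ (Lemma~\ref{sec3.1 lemma 2}). From there the paper argues softly: the uniform bound $\| T^w b^\varepsilon \|_{C^\gamma_t C^\beta_x} \leqslant \| T^w b \|_{C^\gamma_t C^\beta_x}$ together with convergence of $T^w b^\varepsilon(t)$ to $T^w b(t)$ in $\mathcal{S}'$ gives, by Ascoli--Arzel\`a and uniqueness of subsequential limits, local convergence in $C^{\gamma-\delta}([0,T];C^{\beta-\delta}(B_R))$; the restriction to $B_R$ is essential there because $C^\beta(\mathbb{R}^d)$ does not embed compactly into $C^{\beta-\delta}(\mathbb{R}^d)$. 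You instead invoke the quantitative mollifier estimate $\| \rho^\varepsilon \ast g - g \|_{C^{\beta-\delta}_x} \lesssim \varepsilon^\delta \| g \|_{C^\beta_x}$, applied once to $f_t$ and once to the increments $f_{s,t}$ of $f = T^w b$, which yields an explicit rate $O(\varepsilon^\delta)$ and in fact convergence in the \emph{global} norm $\| \cdot \|_{C^{\gamma-\delta}_t C^{\beta-\delta}_x}$; the local statement of the lemma follows trivially. This is a stronger conclusion and a more elementary argument, at the cost of verifying the mollifier bound, which you do correctly in both the Besov and the mean-value form; one should just note that this bound, and hence your argument, requires $\delta$ small (say $\delta < \min(1, \beta, \gamma)$), which is harmless since that is the only regime in which the lemma has content.
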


\begin{proof}
  It follows immediately from the property $(T^w b)^{\varepsilon} = T^w
  b^{\varepsilon}$ that
  \[ \| T^w b^{\varepsilon} \|_{C^{\gamma}_t C^{\beta}_x} \leqslant \| T^w
     b \|_{C^{\gamma}_t C^{\beta}_x} \quad \forall \,
     \varepsilon > 0 \]
  and moreover that $T^w b^{\varepsilon} (t) \rightarrow T^w b (t)$ in
  $\mathcal{S}' (\mathbb{R}^d)$ as $\varepsilon \rightarrow 0$. For any $R >
  0$ and $\delta > 0$, thanks to the above uniform bound, we can extract by
  Ascoli-Arzel{\`a} a (not relabelled) subsequence such that $T^w
  b^{\varepsilon} |_{[0, T] \times B_{_R}}$ converges in
  $C^{\gamma - \delta} ([0, T] ; C^{\beta - \delta} (B_R))$ to a suitable
  limit; by the above convergence in probability, the limit must necessarily
  coincide with $T^w b |_{[0, T] \times B_{_R}}$ and since the
  reasoning holds for any subsequence we can extract, the whole sequence must
  converge to $T^w b |_{[0, T] \times B_{_R}}$.
\end{proof}

\subsection{Fractional Brownian motion and It\^{o}--Tanaka
formula}\label{sec2.2}

In view of concluding the proof of Theorem~\ref{sec3.3 thm prevalence
averaging} we give here the essential details on the fractional Brownian
motion (fBm), whose law will be used as a transverse measure for prevalence.

In the literature, it is more common the use of \tmtextit{probes}, that is
finite dimensional transverse measures in order to establish prevalence
properties. The only other work we are aware of using general stochastic
processes in this context is~{\cite{bayart}}. However see also~{\cite{peres}}
and the references therein for the study of properties of fractional Brownian
motion with deterministic drift.

\

The material on fractional Brownian motion presented here is classical and
taken from~{\cite{nualart2006}} and~{\cite{picard}}. A one dimensional fBm
$(W^H_t)_{t \geqslant 0}$ of Hurst parameter $H \in (0, 1)$ is a mean zero
continuous Gaussian process with covariance
\[ \mathbb{E} [W^H_t W^H_s] = \frac{1}{2} (| t |^{2 H} + | s |^{2 H} - | t - s
   |^{2 H}) . \]
When $H = 1 / 2$, it coincides with standard Brownian motion and for $H \neq 1
/ 2$ it is not a semi-martingale nor a Markov process. However it shares many
properties of Brownian motion, such as stationarity, reflexivity and
self-similarity. The trajectories of fBm are $\mathbb{P}$-a.s.
$\delta$-H{\"o}lder continuous for any $\delta < H$ and nowhere
$\delta$\mbox{-}H{\"o}lder continuous for any $\delta \geqslant H$; it follows
from Ascoli--Arzel{\`a} that its law $\mu^H$ is tight on $C^{\delta}_t$ for
any $\delta < H$.

A $d$\mbox{-}dimensional fBm $W^H$ of Hurst parameter $H \in (0, 1)$ is an
$\mathbb{R}^d$\mbox{-}valued Gaussian process with components given by
independent one dimensional fBms; we state for simplicity in the rest of the
section all the results for $d = 1$ but they generalise immediately to higher
dimension reasoning component by component.

A very useful property of fBm is that it admits representations in terms of
stochastic integrals. Given a two-sided Brownian motion $\{ B_t \}_{t \in
\mathbb{R}}$, a fBm of parameter $H \neq 1 / 2$ can be constructed by
\begin{equation}
  W^H_t = c_H \int_{- \infty}^t [(t - r)^{H - 1 / 2}_+ - (- r)_+^{H - 1 / 2}]
  \, \mathd B_r \label{sec2.2 non canonical representation}
\end{equation}
where $c_H = \Gamma (H + 1 / 2)^{- 1}$ is a suitable renormalising constant.
Such a representation is usually called {\tmem{non canonical}} as the
filtration $\mathcal{F}_t = \sigma (B_s : s \leqslant t)$ is strictly larger
than the one generated by $W^H$; it is useful as it immediately shows that,
for any pair $0 \leqslant s < t$, the variable $W^H_t$ decomposes into the sum
of two mean zero Gaussian variables, $W^H_t = W^{1, H}_{s, t} + W^{2, H}_{s,
t}$, where
\[ W^{1, H}_{s, t} = c_H \int_s^t (t - r)^{H - 1 / 2} \, \mathd B_r, \quad
   W^{2, H}_{s, t} = c_H \int_{- \infty}^s [(t - r)^{H - 1 / 2}_+ - (- r)_+^{H
   - 1 / 2}] \, \mathd B_r \]
with $W^{2, H}_{s, t}$ being $\mathcal{F}_s$-measurable and $W^{1, H}_{s, t}$
being independent of $\mathcal{F}_s$ and with variance
\[ \tmop{Var} (W^{1, H}_{s, t}) = \tilde{c}_H | t - s |^{2 H} \]
where $\tilde{c}_H = c_H^2 / (2 H)$. In particular this implies that
\begin{equation}
  \tmop{Var} (W^H_t | \sigma (W^H_r, r \leqslant s \nobracket)) \geqslant
  \tmop{Var} (W^H_t | \mathcal{F}_s \nobracket) = \tmop{Var} (W^{1, H}_{s, t})
  = \tilde{c}_H | t - s |^{2 H} \label{sec2.2 lnd}
\end{equation}
which is a local nondeterminism property. Loosely speaking, it means that for
any $s < t$, the increment $W_t^H - W^H_s$ contains a part which is
independent of the the history of the path $W^H_{\cdot}$ up to time $s$ and
therefore makes the path $W^H_{\cdot}$ ``intrinsically chaotic''. The local
nondeterminism property was first formulated by Berman in~{\cite{berman73}} in
a different context; it plays a major role in the proofs of this section and
indeed the prevalence statement can be alternatively proved by using the laws
of other locally nondeterministic Gaussian processes, see Remark~\ref{sec3.3
remark lnd}.

\

We are going to prove an It\^o--Tanaka type formula for averaged functionals,
in the same spirit of the one considered in~{\cite{coutin}}. We first need to
recall the Clark--Ocone formula, see~{\cite{nualart2006}}. Given a
two\mbox{-}sided standard Brownian motion $B$ on a space $(\Omega,
\mathcal{F}, \mathbb{P})$, $\mathcal{F}_t = \sigma (B_s, s \leqslant t)$, and
given a Malliavin differentiable random variable $A$ with Malliavin derivative
$D_{\cdot} A$, the Clark--Ocone formula states that
\begin{equation}
  A =\mathbb{E} [A] + \int_{- \infty}^{+ \infty} \mathbb{E} [D_r A |
  \mathcal{F}_r \nobracket] \, \mathd B_r . \label{sec2.2 clark ocone}
\end{equation}
From~{\eqref{sec2.2 clark ocone}} it follows immediately that, for any $s \in
\mathbb{R}$, we have the more general identity
\[ A =\mathbb{E} [A | \mathcal{F}_s \nobracket] + \int_s^{+ \infty} \mathbb{E}
   [D_r A | \mathcal{F}_r \nobracket] \, \mathd B_r . \]
We do not provide here the general definition of Malliavin derivative of a
Brownian variable, which can be found in~{\cite{nualart2006}}; we only provide
it in the following specific case, which is the one of our interest: given a
smooth function $f$ and a variable $X = \int_{- \infty}^{+ \infty} K_s \,
\mathd B_s$, the Malliavin derivative of $A : = f (X)$ is given by
\begin{equation}
  D_t A = \nabla f (X) \cdot K_t . \label{sec2.2 malliavin derivative}
\end{equation}
In the next statement, $P_t$ denotes the heat kernel, i.e. $P_t f = p_t \ast
f$ where
\[
p_t (x) = (2 \pi t)^{- d / 2} e^{- \frac{| x |^2}{2 t}} .
\]
\begin{lemma}
  \label{sec3.2 lemma ito tanaka}Let $b : [0, T] \times \mathbb{R}^d
  \rightarrow \mathbb{R}$ be a smooth, compactly supported function, then for
  any fixed $0 \leqslant s \leqslant t \leqslant T$, $H \in (0, 1)$ and $x \in
  \mathbb{R}^d$, the following identity holds with probability $1$:
  \begin{eqnarray}
    \int_s^t b (r, x + W^H_r) \, \mathd r & = & \tmcolor{black}{\int_s^t
    P_{\tilde{c}_H | r - s |^{2 H}} \, b (r, x + W^{2, H}_{s, r}) \, \mathd r}
    \label{sec3.2 preliminary ito tanaka} \\
    &  & + \int_s^t \int_u^t P_{\tilde{c}_H | r - u |^{2 H}} \nabla b (r, x +
    W^{2, H}_{u, r}) c_H | r - u |^{H - 1 / 2} \, \mathd r \, \cdot \mathd B_u
    . \nonumber
  \end{eqnarray}
\end{lemma}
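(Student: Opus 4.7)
The plan is to derive the identity pointwise in $r$ by applying the Clark--Ocone formula to $A_r \assign b(r, x + W^H_r)$ with respect to the Brownian filtration $\mathcal{F}_\cdot$ generated by $B$, and then integrate in $r$ and invoke stochastic Fubini to swap the $\mathd r$ and $\mathd B_u$ integrals.

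First, I fix $r \in [s,t]$. From the non-canonical representation \eqref{sec2.2 non canonical representation} and formula \eqref{sec2.2 malliavin derivative}, one computes
\[
D_u A_r = c_H\bigl[(r-u)_+^{H-1/2} - (-u)_+^{H-1/2}\bigr]\,\nabla b(r, x+W^H_r),
\]
which for $u \in (s,r)$ (assuming $s \geq 0$) reduces to $c_H(r-u)^{H-1/2}\nabla b(r, x+W^H_r)$ and vanishes for $u > r$. The Clark--Ocone formula \eqref{sec2.2 clark ocone}, conditioned on $\mathcal{F}_s$, then gives
\[
A_r = \mathbb{E}[A_r \mid \mathcal{F}_s] + \int_s^r \mathbb{E}[D_u A_r \mid \mathcal{F}_u]\,\mathd B_u.
\]

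Next, for any $u \in [s,r]$ the decomposition $W^H_r = W^{1,H}_{u,r} + W^{2,H}_{u,r}$ expresses $W^H_r$ as the sum of an $\mathcal{F}_u$-measurable term and an independent centred Gaussian with variance $\tilde{c}_H|r-u|^{2H}$. Consequently, for any bounded measurable $f$,
\[
\mathbb{E}[\,f(x+W^H_r) \mid \mathcal{F}_u\,] = \bigl(P_{\tilde{c}_H|r-u|^{2H}} f\bigr)(x+W^{2,H}_{u,r}).
\]
Applying this identity both to $b(r,\cdot)$ (with $u=s$) and to each component of $\nabla b(r,\cdot)$ (with generic $u \in [s,r]$), the pointwise formula becomes
\[
A_r = P_{\tilde{c}_H|r-s|^{2H}} b(r, x+W^{2,H}_{s,r}) + \int_s^r c_H(r-u)^{H-1/2}\, P_{\tilde{c}_H|r-u|^{2H}}\nabla b(r, x+W^{2,H}_{u,r}) \cdot \mathd B_u.
\]

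Finally, I integrate the identity over $r \in [s,t]$ and exchange the $\mathd r$ and $\mathd B_u$ integrations by a stochastic Fubini theorem to obtain exactly \eqref{sec3.2 preliminary ito tanaka}. The main technical obstacle is justifying the stochastic Fubini step: one must verify an $L^2$-type integrability condition for the kernel
\[
(u,r) \mapsto c_H(r-u)^{H-1/2}\,P_{\tilde{c}_H|r-u|^{2H}}\nabla b(r, x+W^{2,H}_{u,r}) \qquad \text{on the triangle } \{s \leq u \leq r \leq t\}.
\]
Since $b$ is smooth and compactly supported, $\nabla b$ (and any heat-semigroup action on it) is uniformly bounded, while the singular factor $(r-u)^{H-1/2}$ is square-integrable on the triangle for every $H \in (0,1)$ (as $2H-1 > -1$), so the required bound is immediate. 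Smoothness and compact support of $b$ also ensure that $A_r$ lies in the Malliavin domain $\mathbb{D}^{1,2}$ and that the chain rule yielding $D_u A_r$ is valid, closing all technical gaps.
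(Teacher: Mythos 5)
Your proof is correct and follows essentially the same route as the paper's: apply the Clark--Ocone formula pointwise in $r$ conditioned on $\mathcal{F}_s$, use the decomposition $W^H_r = W^{1,H}_{u,r} + W^{2,H}_{u,r}$ (independence of $W^{1,H}_{u,r}$ from $\mathcal{F}_u$) to express the conditional expectations through the heat semigroup, then integrate in $r$ and apply stochastic Fubini. The only cosmetic difference is that you spell out the Malliavin-derivative computation and the $L^2$ integrability of the singular kernel $(r-u)^{H-1/2}$ justifying Fubini, whereas the paper invokes these more tersely (and also separates off the $H = 1/2$ case, which your argument actually covers uniformly).
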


\begin{proof}
  For $H = 1 / 2$ the above formula is well known and coincides with a
  standard application of the It\^o--Tanaka trick together with a representation
  formula for solution of the heat equation, see for instance the discussion
  in~{\cite{coutin}}; so we can assume $H \neq 1 / 2$. Let us fix $x \in
  \mathbb{R}^d$. Since $b$ is smooth, for fixed $r$ we can apply Clark--Ocone
  formula to $b (r, x + W^H_r)$ to obtain
  \begin{align*}
    b (r, x + W^H_r) &= \mathbb{E} [b (r, x + W^H_r) | \mathcal{F}_s
    \nobracket] + \int_s^r \mathbb{E} [\nabla b (r, x + W^H_r) | \mathcal{F}_u
    \nobracket] \, c_H (r - u)^{H - 1 / 2} \cdot \, \mathd B_u\\
    & = P_{\tilde{c}_H | r - s |^{2 H}} b (r, x + W^{2, H}_{s, r}) +
    \int_s^r P_{\tilde{c}_H | r - u |^{2 H}} \nabla b (r, x + W^{2, H}_{u, r})
    c_H | r - u |^{H - 1 / 2} \cdot \mathd B_u
  \end{align*}
  where we used both the representation of $W^H$ in terms of a stochastic
  integral and the decomposition $W^H_r = W^{1, H}_{u, r} + W^{2, H}_{u, r}$
  with $W^{1, H}_{u, r}$ independent of $\mathcal{F}_u$. Integrating over $[s,
  t]$ and applying stochastic Fubini's theorem (which is allowed since we are
  assuming $b$ smooth and compactly supported) we obtain
  \begin{eqnarray*}
    \int_s^t b (r, x + W^H_r) \, \mathd t & = & \int_s^t P_{\tilde{c}_H | r -
    s |^{2 H}} b (r, x + W^{2, H}_{s, r}) \, \mathd r\\
    &  & + c_H \int_s^t \int_s^r P_{\tilde{c}_H | t - s |^{2 H}} \nabla b (t,
    x + W^{2, H}_{u, r}) | r - u |^{H - 1 / 2} \cdot \mathd B_u \mathd r\\
    & = & \int_s^t P_{\tilde{c}_H | r - s |^{2 H}} b (r, x + W^{2, H}_{s, r})
    \, \mathd r\\
    &  & + c_H \int_s^t \int_u^t P_{\tilde{c}_H | r - u |^{2 H}} \nabla b (r,
    x + W^{2, H}_{u, r}) | r - u |^{H - 1 / 2} \, \mathd r \, \cdot \mathd B_u
  \end{eqnarray*}
  which gives the conclusion.
\end{proof}

The previous result can be strengthened by considering for instance $b \in
C_b^1$ instead of smooth, or showing that we can find a set of probability $1$
on which the identity holds for all $0 \leqslant s \leqslant t \leqslant T$;
we don't do it here since it is not needed for our purposes. Instead, we need
to strengthen the result to the following functional equality.

\begin{theorem}
  \label{sec3.2 functional ito tanaka}Let $b : [0, T] \times \mathbb{R}^d
  \rightarrow \mathbb{R}$ be a smooth, compactly supported function, then for
  any fixed $0 \leqslant s \leqslant t \leqslant T$, $H \in (0, 1)$, with
  probability $1$ it holds
  \begin{eqnarray}
    T^{W^H} b_{s, t} & = & \int_s^t P_{\tilde{c}_H | r - s |^{2 H}} b \left(
    r, \cdot \, + W^{2, H}_{s, r} \right) \, \mathd r \label{sec3.2 ito tanaka
    formula} \\
    &  & + c_H \int_s^t \int_u^t P_{\tilde{c}_H | r - u |^{2 H}} \nabla b
    \left( r, \cdot \, + W^{2, H}_{u, r} \right) | r - u |^{H - 1 / 2} \,
    \mathd r \, \cdot \mathd B_u \nonumber
  \end{eqnarray}
  where the first integral must be interpreted as a Bochner integral, while
  the second one as a functional stochastic integral.
\end{theorem}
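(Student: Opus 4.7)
The plan is to upgrade the pointwise It\^o--Tanaka identity of Lemma~\ref{sec3.2 lemma ito tanaka} to a functional identity with values in a suitable separable Banach space $E \hookrightarrow \mathcal{S}'(\mathbb{R}^d)$ on which translations act isometrically (for instance a Bessel space $L^{s,p}$ or $E = L^2(\mathbb{R}^d)$, which is enough since $b$ is smooth and compactly supported). The first task is to make sense of every term of~\eqref{sec3.2 ito tanaka formula} as an $E$-valued random variable. The left-hand side is already a Bochner integral by Definition~\ref{sec3.1 definition averaging}. The first term on the right is also a Bochner integral: $r \mapsto P_{\tilde{c}_H|r-s|^{2H}} b(r, \cdot + W^{2,H}_{s,r})$ is progressively measurable in $E$ and uniformly bounded on $[s,t]$, since $b$ is smooth and compactly supported and the heat semigroup is contractive.

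For the second term I would introduce the integrand
\[
\Phi_u \assign c_H \int_u^t P_{\tilde{c}_H|r-u|^{2H}} \nabla b\left(r,\cdot + W^{2,H}_{u,r}\right) |r-u|^{H-1/2} \mathd r,
\]
which is an $E$-valued Bochner integral for each $u$, progressively measurable in $u$, and square-integrable in $(u,\omega)$ because the singularity $|r-u|^{H-1/2}$ is integrable for $H>0$ and $\nabla b$ is bounded. One then defines $\int_s^t \Phi_u \cdot \mathd B_u$ as a vector-valued It\^o integral with respect to the scalar Brownian motion $B$: classical if $E$ is Hilbert, and via the UMD theory (cf.\ the acknowledgments) in the general Bessel/Besov case. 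This gives both sides of~\eqref{sec3.2 ito tanaka formula} a precise meaning in $E$.

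The next step is to identify the two sides. The cleanest route is a duality/stochastic Fubini argument: for every $\varphi \in \mathcal{S}(\mathbb{R}^d) \hookrightarrow E^\ast$, pair both sides with $\varphi$. The action of $\langle \cdot, \varphi\rangle$ commutes with the Bochner integral trivially and with the $E$-valued It\^o integral by the vector-valued stochastic Fubini theorem (applicable thanks to the smoothness and compact support of $b$, which make $\Phi_u$ and $\langle \Phi_u, \varphi\rangle$ as well-behaved as one wishes). The resulting scalar identity then reduces, after a standard Fubini step in $x$, to Lemma~\ref{sec3.2 lemma ito tanaka} integrated against $\varphi$. Since $\mathcal{S}(\mathbb{R}^d)$ is separating in $E^\ast$, this forces the two $E$-valued random variables to coincide almost surely.

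An alternative, more hands-on route is to show, via a Kolmogorov continuity argument applied to the right-hand side (using the Burkholder--Davis--Gundy inequality for vector-valued martingales and the Lipschitz dependence of $\nabla b$ on $x$), that the right-hand side of~\eqref{sec3.2 ito tanaka formula} admits a modification jointly continuous in $x$; the left-hand side is obviously continuous in $x$. Invoking Lemma~\ref{sec3.2 lemma ito tanaka} on a countable dense set $D \subset \mathbb{R}^d$ yields a.s.\ agreement on $D$, which by continuity extends to all $x$ and hence to equality in $E$. The main obstacle in both routes is the same: justifying the exchange between the $E$-valued It\^o integral and evaluation (or pairing with $\varphi$), which is where the UMD framework and the regularity assumptions on $b$ do the essential work; everything else is bookkeeping.
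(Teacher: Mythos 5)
Your Route~1 is essentially the paper's proof: embed both sides into a separable Banach space $E$, test the pointwise identity of Lemma~\ref{sec3.2 lemma ito tanaka} against a dense family of functionals, and conclude by a separating-family argument. There are two points, however, where the paper's execution is sharper and where your sketch has a real gap.

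On the choice of test functionals: the paper takes $E = H^{\alpha}(\mathbb{R}^d)$ with $\alpha$ large enough that $H^{\alpha}$ embeds into continuous functions vanishing at infinity, so $E$ is Hilbert (no UMD theory is needed for this theorem; the martingale-type-$2$ / UMD machinery only enters in the estimates of Section~\ref{sec3.3}) and, crucially, $\delta_x \in E^{\ast} = H^{-\alpha}$. Pairing with $\delta_x$ is evaluation at $x$, and the commutation of a fixed bounded linear functional with the $E$-valued stochastic integral---which is identity~\eqref{appendixA3 identity dual}, not a Fubini theorem---then reduces each term of the functional formula \emph{verbatim} to the corresponding term of Lemma~\ref{sec3.2 lemma ito tanaka}. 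With Schwartz test functions $\varphi$, as in your sketch, you still need a scalar stochastic Fubini to swap $\int_{\mathbb{R}^d}\varphi(x)\,\mathd x$ with $\int_s^t\cdots\,\mathd B_u$ before you can compare with the pointwise lemma, so the ``standard Fubini step in $x$'' you wave at is genuine extra work. The Dirac-delta choice eliminates it entirely, which is precisely why the paper works in $H^{\alpha}$ rather than $L^2$.

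The genuine gap: ``$\mathcal{S}(\mathbb{R}^d)$ is separating in $E^{\ast}$'' does not by itself force the two $E$-valued random variables to coincide a.s. For each fixed $\varphi$ you obtain a null set $N_{\varphi}$ outside of which $\langle X - Y, \varphi\rangle = 0$, but these null sets vary with $\varphi$, and you cannot take an uncountable union. The paper addresses this with a preliminary general lemma: by separability of $E$ and Hahn--Banach it extracts a countable norming family $\{\varphi_n^{\ast}\} \subset E^{\ast}$ with $\|z\|_E = \sup_n |\langle \varphi_n^{\ast}, z\rangle|$, establishes the a.s.\ equality for each $n$, and then intersects the countably many full-measure events. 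An equivalent fix is to take a countable dense subset of your testing family in $E^{\ast}$, pass to the a.s.\ intersection, and extend by continuity in $\varphi^{\ast}$. Your sketch omits this reduction, and without it the argument as stated does not close.
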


We postpone the proof of this result to Appendix~\ref{appendixA3}, as it is
quite technical and requires some knowledge of stochastic integration in UMD
spaces. Up to technical details, it is mostly a rewriting of the statement
already contained in Lemma~\ref{sec3.2 lemma ito tanaka} without further
insights.

\subsection{Regularity estimates in Bessel and Besov spaces}\label{sec3.3}

We provide here the regularity estimates for $T^w b$ when $w$ is sampled as a
fBm of parameter $H$, in view of
establishing~{\eqref{eq:prev-stat-averaging}}.

The main ingredients of the proof are the use of the functional
It\^{o}--Tanaka formula~{\eqref{sec3.2 ito tanaka formula}} together with
Burkholder's inequality (Theorem~\ref{appendixA3 thm burkholder} below), heat
kernel and interpolation estimates from Lemmata~\ref{appendixA2 heat kernel
estimates} and~\ref{appendixA2 interpolation inequality}. We refer the reader
to Appendices~\ref{appendixA2} and~\ref{appendixA3} for more information on
these tools. Let us point out that the strategy of proof is fairly general and
in principle could work also in other classes of spaces, up to the requirement
that the above tools are still available. However, in order to apply
Burkholder's inequality, we need to restrict to scales of $L^p$\mbox{-}based
spaces with $p \geqslant 2$. See Appendix~\ref{appendixA3} for a deeper
discussion of this point.

Although our main aim is to establish prevalence results, our results are also
new in the probabilistic setting and therefore we will try to give their
sharpest versions. In particular we will always achieve exponential
integrability whenever it is possible.

\begin{theorem}
  \label{sec3.3 thm averaging bessel}Let $W^H$ be a fBm of parameter $H$ and
  let $b \in L^q_t L^{s, p}_x$ for some $p, q \in [2, \infty)$. Then for any
  $\rho > 0$ satisfying
  \begin{equation}
    H \rho + \frac{1}{q} < \frac{1}{2}, \label{sec3.3 KR condition}
  \end{equation}
  $T^{W^H} b \in C_t^{\gamma} L_x^{s + \rho, p}$ for some $\gamma > 1 / 2$
  with probability $1$; moreover, there exist positive constants $\lambda, K$
  independent of $b$ such that
  \begin{equation}
    \mathbb{E} \left[ \exp \left( \lambda \frac{\| T^{W^H} b \|^2_{C^{\gamma}
    L^{s + \rho, p}}}{\| b \|_{L^q L^{s, p}}^2} \right) \right] \leqslant K.
    \label{sec3.3 thm exp integrability bessel}
  \end{equation}
\end{theorem}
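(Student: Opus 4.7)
The plan is to combine the functional It\^o--Tanaka decomposition of Theorem~\ref{sec3.2 functional ito tanaka} with heat kernel smoothing estimates and the UMD-valued Burkholder inequality, exploiting translation invariance of the Bessel norm in a decisive way. First I would reduce to smooth compactly supported $b$ by density in $L^q_t L^{s,p}_x$ (permissible since $p,q<\infty$): once~\eqref{sec3.3 thm exp integrability bessel} is established there, Lemma~\ref{sec3.1 lemma 1} together with the Fatou property of $L^{s+\rho,p}$ extend it to general $b$. For such smooth $b$ and fixed $0\leq s<t\leq T$, Theorem~\ref{sec3.2 functional ito tanaka} writes $T^{W^H}_{s,t}b = I_{s,t} + M_{s,t}$ with $I_{s,t}$ a Bochner integral and $M_{s,t}$ a vector-valued stochastic integral.

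The drift term is handled deterministically. The heat-kernel estimate $\|P_\tau g\|_{L^{s+\rho,p}} \lesssim \tau^{-\rho/2}\|g\|_{L^{s,p}}$ from Lemma~\ref{appendixA2 heat kernel estimates}, combined with translation invariance of $L^{s,p}$ (which eliminates the random shift $W^{2,H}_{s,r}$) and H\"older in time, gives
\[ \|I_{s,t}\|_{L^{s+\rho,p}} \lesssim \int_s^t |r-s|^{-H\rho}\,\|b_r\|_{L^{s,p}}\,\mathd r \lesssim |t-s|^{1-H\rho-1/q}\,\|b\|_{L^q_t L^{s,p}}. \]
For the stochastic term I would invoke Theorem~\ref{appendixA3 thm burkholder} in the UMD space $L^{s+\rho,p}$ (UMD since $p \in [2,\infty)$): for $m\geq 2$,
\[ \mathbb{E}\|M_{s,t}\|^m_{L^{s+\rho,p}} \lesssim C^m m^{m/2}\, \mathbb{E}\!\left(\int_s^t \|\Phi_u\|_{L^{s+\rho,p}}^2\,\mathd u\right)^{m/2}, \quad \Phi_u \assign \int_u^t P_{\tilde c_H|r-u|^{2H}}\nabla b\bigl(r,\cdot+W^{2,H}_{u,r}\bigr)|r-u|^{H-1/2}\,\mathd r. \]
The crucial observation is that, by translation invariance of the Bessel norm, $\|\Phi_u\|_{L^{s+\rho,p}}$ is bounded by a \emph{deterministic} quantity. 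Indeed, $\|P_\tau \nabla g\|_{L^{s+\rho,p}} \lesssim \tau^{-(1+\rho)/2}\|g\|_{L^{s,p}}$ combined with H\"older yields
\[ \|\Phi_u\|_{L^{s+\rho,p}} \lesssim \int_u^t |r-u|^{-H\rho-1/2}\,\|b_r\|_{L^{s,p}}\,\mathd r \lesssim |t-u|^{1/2-H\rho-1/q}\,\|b\|_{L^q_t L^{s,p}}, \]
where the finiteness of the $r$-integral and the positivity of the $|t-u|$ exponent are exactly guaranteed by~\eqref{sec3.3 KR condition}. Substituting gives $\mathbb{E}\|M_{s,t}\|_{L^{s+\rho,p}}^m \lesssim C^m m^{m/2}\,\|b\|^m_{L^q L^{s,p}}\,|t-s|^{m(1-H\rho-1/q)}$.

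A Kolmogorov continuity argument with $m$ arbitrarily large then produces, for any $\gamma < 1-H\rho-1/q$, the moment bound $\mathbb{E}\|T^{W^H}b\|_{C^\gamma_t L^{s+\rho,p}}^m \lesssim C^m m^{m/2}\,\|b\|^m_{L^q L^{s,p}}$; assumption~\eqref{sec3.3 KR condition} is precisely what allows some admissible $\gamma>1/2$. Finally, summing the Taylor series of $\exp(\lambda x^2)$ against these Gaussian-type moments upgrades the polynomial bounds to the sub-Gaussian estimate~\eqref{sec3.3 thm exp integrability bessel} for $\lambda$ sufficiently small. I expect the main technical obstacle to be the careful handling of the UMD / $\gamma$-radonifying step for the vector-valued stochastic integral, in particular verifying that the radonifying norm of the integrand is controlled $\omega$-pointwise by the $L^{s+\rho,p}$-norm of $\Phi_u$ (routine for $p\geq 2$, but requiring care); once that is established, the translation-invariance miracle makes $\|\Phi_u\|_{L^{s+\rho,p}}$ deterministic, and the Kolmogorov and Fernique steps are classical.
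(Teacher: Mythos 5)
Your proposal is correct and follows essentially the same route as the paper's proof: the functional It\^o--Tanaka decomposition, deterministic heat-kernel estimates made uniform in $\omega$ via translation invariance of $L^{s,p}$, a Burkholder inequality for the stochastic term, and a Kolmogorov/chaining step to upgrade fixed-increment moment bounds to exponential integrability of the H\"older norm. The only cosmetic differences are that the paper first reduces to $s=0$ by commuting $T^{W^H}$ with $G^s$, invokes Burkholder in the martingale-type-2 form (Theorem~\ref{appendixA3 thm burkholder}), which sidesteps the $\gamma$-radonifying bookkeeping you flag, and packages the continuity-plus-exponential step in the chaining Lemma~\ref{appendixA1 chaining lemma} rather than a bare Kolmogorov argument.
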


\begin{proof}
  Without loss of generality, we can assume $s = 0$. Indeed, if $b \in L^q_t
  L^{s, p}_x$, then $b = G^s \, \tilde{b}$, where $\tilde{b} \in L^q_t L^p_x$
  and $\| b \|_{L^q L^{s, p}} = \| \tilde{b} \|_{L^q L^p}$; once the statement
  is shown for $\tilde{b}$, we can use the fact the commutating property of
  averaging operators $T^{W^H} b = T^{W^H} \left( G^s \, \tilde{b} \right) =
  G^s \, (T^{W^H} \tilde{b})$ to obtain the analogue statement for $b$ as
  well.
  
  Let us first assume $b$ to be a smooth function. By the Ito--Tanaka formula,
  \begin{eqnarray*}
    \int_s^t b \left( r, \cdot \, + W^H_r \right) \, \mathd r & = &
    \tmcolor{black}{\int_s^t P_{\tilde{c}_H | r - s |^{2 H}} \, b \left( r,
    \cdot \, + W^{2, H}_{s, r} \right) \, \mathd r}\\
    &  & + \, c_H \int_s^t \int_u^t P_{\tilde{c}_H | r - u |^{2 H}} \nabla b
    \left( r, \cdot \, + W^{2, H}_{u, r} \right) | r - u |^{H - 1 / 2} \,
    \mathd r \, \cdot \mathd B_u\\
    & = : & I^{(1)}_{s, t} + I_{s, t}^{(2)} .
  \end{eqnarray*}
  From now on for simplicity we will drop the constants $c_H$, $\tilde{c}_H$
  as they don't play any significant role in the following calculations. For
  the first term, we can apply the deterministic estimate:
  \begin{eqnarray*}
    \| I_{s, t}^{(1)} \|_{L^{\rho, p}} = \left\| \int_s^t P_{| r - s |^{2 H}}
    \, b \left( r, \cdot \, + W^{2, H}_{s, r} \right) \, \mathd r
    \right\|_{L^{\rho, p}} & \leqslant & \int_s^t \left\| P_{| r - s |^{2 H}}
    \, b_r \right\|_{L^{\rho, p}} \, \mathd r\\
    & \lesssim & \int_s^t | r - s |^{- \rho H} \left\|_{} \, b_r
    \right\|_{L^p} \, \, \mathd r\\
    & \leqslant & \| b \|_{L^q L^p} \, \left| \int_s^t | r - s |^{- \rho H \,
    q'} \mathd r \right|^{1 / q'}\\
    & \lesssim & \| b \|_{L^q L^p} \, | t - s |^{1 - 1 / q - \rho H}
  \end{eqnarray*}
  where we used the heat kernel estimates for Bessel spaces, see
  Lemma~\ref{appendixA2 heat kernel estimates}, and the fact that the
  $L^{\rho, p}$\mbox{-}norm of $b_r$ is not affected by a translation of
  $W^{2, H}_{r, s}$. Observe that $\rho H \, q' < 1$ is granted by
  condition~{\eqref{sec3.3 KR condition}}. Moreover,~{\eqref{sec3.3 KR
  condition}} implies that $1 - 1 / q - \rho H > 1 / 2$ and therefore we
  deduce that there exists $\gamma > 1 / 2$ such that, uniformly in $\omega
  \in \Omega$,
  \begin{equation}
    \| I^{(1)} \|_{C^{\gamma} L^{s, p}} \lesssim_{} \| b \|_{L^q L^p}
    \label{sec3.3 proof eq0} .
  \end{equation}
  For the second term, applying Burkholder's inequality~{\eqref{appendixA3
  burkholder}} (which is allowed since $L^{\rho, p}$ with $p \geqslant 2$ is a
  martingale type~2 space), we obtain
  \begin{equation}
    \mathbb{E} [\| I^{(2)}_{s, t} \|_{L^{\rho, p}}^{2 k}] \leqslant \left( C
    \, k \right)^k \, \mathbb{E} \left[ \left( \int_s^t \, \left\| \int_u^t
    P_{| r - u |^{2 H}} \nabla b \left( r, \cdot \, + W^{2, H}_{u, r} \right)
    | r - u |^{H - 1 / 2} \, \mathd r \, \right\|^2_{L^{\rho, p}} \mathd s
    \right)^k \right] \label{sec3.3 proof eq1} .
  \end{equation}
  We can then estimate the inner integral by deterministic estimates similar
  to the ones above:
  \begin{eqnarray*}
    \left\| \int_u^t P_{| r - u |^{2 H}} \nabla b \left( r, \cdot \, + W^{2,
    H}_{u, r} \right) | r - u |^{H - 1 / 2} \, \mathd r \, \right\|_{L^{\rho,
    p}} & \leqslant & \int_u^t \| P_{| r - u |^{2 H}} \nabla b_r \|_{L^{\rho,
    p}} | r - u |^{H - 1 / 2} \, \mathd r\\
    & \lesssim_{} & \int_u^t | r - u |^{- H (\rho + 1) + H - 1 / 2} \, \| b
    (r) \|_{L^p} \, \mathd r\\
    & \leqslant & \| b \|_{L^q L^p}  \left( \int_u^t | r - u |^{- (H \rho + 1
    / 2) q'} \mathd r \right)^{1 / q'}\\
    & \lesssim & \| b \|_{L^q L^p}  | t - u |^{1 / 2 - 1 / q - H \rho},
  \end{eqnarray*}
  where again we used the fact that $(H \rho + 1 / 2) q' < 1$, thanks to
  {\eqref{sec3.3 KR condition}}. Set $\varepsilon \assign 1 - 2 / q - 2 H
  \rho$; inserting the estimate inside~{\eqref{sec3.3 proof eq1}} we obtain
  that, for a suitable $C' > 0$, it holds
  \[ \mathbb{E} [\| I^{(2)}_{s, t} \|_{L^{\rho, p}}^{2 k}] \leqslant (C' k)^k
     \| b \|^{2 k}_{L^q L^p} \, | t - s |^{k (1 + \varepsilon)} . \]
  But then we have
  \begin{align*}
    \mathbb{E} \left[ \exp \left( \lambda \frac{\| I^{(2)}_{s, t} \|_{L^{\rho,
    p}}^2}{| t - s |^{1 + \varepsilon} \| b \|^2_{L^q L^p}} \right) \right] &
    = \, \sum_k \frac{\lambda^k}{k!} \, \mathbb{E} \left[ \frac{\| I^{(2)}_{s,
    t} \|_{L^{\rho, p}}^{2 k}}{| t - s |^{k (1 + \varepsilon)} \| b \|^{2
    k}_{L^q L^p}} \right]\\
    & \leqslant \, \sum_k \frac{(\lambda C')^k k^k}{k!} \lesssim \sum_k
    (\lambda C' e)^k < \infty
  \end{align*}
  as soon as $\lambda < (C' e)^{- 1}$. It follows from
  Lemma~\tmcolor{blue}{\tmcolor{black}{\ref{appendixA1 chaining lemma}}} that,
  for any $\varepsilon' < \varepsilon$, $I^{(2)} \in C_t^{1 / 2 +
  \varepsilon'} L_x^{\alpha, p}$ and that there exists another $\lambda > 0$
  (not relabelled for simplicity) such that
  \begin{equation}
    \mathbb{E} \left[ \exp \left( \lambda \frac{\| I^{(2)} \|^2_{C^{1 / 2 +
    \varepsilon'} L^{\rho, p}}}{\| b \|^2_{L^q L^p}} \right) \right] \leqslant
    K \label{sec3.3 proof eq2}
  \end{equation}
  for a constant $K$ independent of $b$, which together with~{\eqref{sec3.3
  proof eq0}} proves the claim for smooth $b$.
  
  Now let $b$ be a generic element of$L^q_t L^p_x$; let us consider the case
  $q < \infty$ first. We can then find a sequence $b_n$ of smooth functions
  such that $\| b - b_n \|_{L^q L^p} \rightarrow 0$ as $n \rightarrow \infty$;
  we know that in this case $\| T^{W^H} (b_n - b) \|_{C^0 L^p} \rightarrow 0$,
  uniformly on $\omega \in \Omega$. On the other hand, it follows
  from~{\eqref{sec3.3 thm exp integrability bessel}}, applied to $b_n - b_m$,
  that for any $k$ it holds
  \[ \mathbb{E} [\| T^{W^H} (b_n - b_m) \|^{2 k}_{C^{\gamma} L^{\rho, p}}]
     \lesssim_k \| b_n - b_m \|^{2 k}_{L^q L^p} \]
  which implies that the sequence $T^{W^H} b_n$ is Cauchy in $L^{2 k} (\Omega,
  \mathbb{P}; C_t^{\gamma} L_x^{\rho, p})$, hence it admits a limit. But then
  the limit must coincide with $T^{W^H} b \in L^{2 k} (\Omega, \mathbb{P};
  C_t^{\gamma} L_x^{\rho, p})$. Applying Fatou lemma we deduce
  \[ \mathbb{E} \left[ \exp \left( \lambda \frac{\| T^{W^H} b \|^2_{C^{\gamma}
     L^{\rho, p}}}{\| b \|_{L^q L^p}^2} \right) \right] \leqslant \liminf_{n
     \rightarrow \infty} \, \mathbb{E} \left[ \exp \left( \lambda \frac{\|
     T^{W^H} b_n \|^2_{C^{\gamma} L^{\rho, p}}}{\| b_n \|_{L^q L^p}^2} \right)
     \right] \leqslant K \]
  which gives the conclusion. In the case $q = \infty$, since $b \in L^{q'}_t
  L^p_x$ for every $q' < \infty$, for any fixed $\rho$ we can find $q'$ big
  enough such that~{\eqref{sec3.3 KR condition}} still holds and apply the
  result for such $q'$.
\end{proof}

\begin{remark}
  Theorem~\ref{sec3.3 thm averaging bessel} immediately implies that, under
  assumption~{\eqref{sec3.3 KR condition}}, the random averaging operator
  $T^{W^H} : b \mapsto T^w b$ is a linear bounded map from $L^q_t L^{s, p}_x$
  into $L^k (\Omega, \mathbb{P}; C^{\gamma}_t L^{s + \rho, p}_x)$, for any $k
  \in \mathbb{N}$. Observe the difference with Remark~\ref{sec3.1 remark
  limitation}.
\end{remark}

We can actually even improve the regularity result of Theorem~\ref{sec3.3 thm
averaging bessel}.

\begin{corollary}
  \label{sec3.3 corollary improvement}Let $b \in L^q_t L^{s, p}_x$ with $p \in
  [2, \infty)$, $\rho > 0$ and assume~{\eqref{sec3.3 KR condition}} holds.
  Then there exists $\gamma > 1 / 2$ and a function $K (\lambda)$ independent
  of $b$ such that
  \[ \mathbb{E} \left[ \exp \left( \lambda \frac{\| T^{W^H} b \|^2_{C^{\gamma}
     L^{s + \rho, p}}}{\| b \|_{L^q L^{s, p}}^2} \right) \right] \leqslant K
     (\lambda) < \infty \quad \forall \, \lambda \in \mathbb{R}. \]
\end{corollary}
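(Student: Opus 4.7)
The plan is to exploit the \emph{strict} inequality in condition~\eqref{sec3.3 KR condition}. Since~\eqref{sec3.3 KR condition} is an open condition on $\rho$, one can pick $\rho' \in (\rho, (1/2 - 1/q)/H)$ so that $\rho'$ also satisfies the hypothesis of Theorem~\ref{sec3.3 thm averaging bessel}. Applying that theorem with the larger index $\rho'$ produces constants $\gamma' > 1/2$, $\lambda_0 > 0$ and $K_0 < \infty$, all independent of $b$, such that
\[
\mathbb{E}\!\left[\exp\!\left(\lambda_0\,\frac{\|T^{W^H}b\|^2_{C^{\gamma'}_tL^{s+\rho',p}_x}}{\|b\|^2_{L^q_tL^{s,p}_x}}\right)\right]\leq K_0.
\]

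The second step is to transfer this integrability back to $L^{s+\rho,p}_x$ via interpolation. Setting $\theta := \rho/\rho' \in (0,1)$, the logarithmic convexity of Bessel potential norms (Lemma~\ref{appendixA2 interpolation inequality}) yields
\[
\|f\|_{L^{s+\rho,p}} \lesssim \|f\|_{L^{s,p}}^{1-\theta}\,\|f\|_{L^{s+\rho',p}}^{\theta}.
\]
Applied to each increment $T^{W^H}_{u,v}b$ and combined with the deterministic bound $\|T^{W^H}_{u,v}b\|_{L^{s,p}}\leq|v-u|^{1-1/q}\|b\|_{L^qL^{s,p}}$ (immediate from Definition~\ref{sec3.1 definition averaging}) together with the H\"older control in $L^{s+\rho',p}$, this produces, for both the $C^0$ part and the H\"older seminorm part,
\[
\|T^{W^H}b\|_{C^{\gamma}_tL^{s+\rho,p}_x}\lesssim\|b\|^{1-\theta}_{L^qL^{s,p}}\,\|T^{W^H}b\|^{\theta}_{C^{\gamma'}_tL^{s+\rho',p}_x},
\]
with $\gamma := (1-\theta)(1-1/q)+\theta\gamma'$. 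The crucial point is that $q>2$ gives $1-1/q > 1/2$ and Theorem~\ref{sec3.3 thm averaging bessel} gives $\gamma' > 1/2$, so the convex combination $\gamma$ is again strictly above $1/2$.

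The final step is a soft upgrade based on sublinearity. Write $Z := \|T^{W^H}b\|^2_{C^\gamma L^{s+\rho,p}}/\|b\|^2_{L^qL^{s,p}}$ and $Y := \|T^{W^H}b\|^2_{C^{\gamma'}L^{s+\rho',p}}/\|b\|^2_{L^qL^{s,p}}$; the previous display reads $Z \lesssim Y^{\theta}$. Since $\theta < 1$, for every $\mu > 0$ the elementary Young-type inequality $\mu y^\theta \leq \lambda_0\, y + C(\mu,\lambda_0,\theta)$ holds uniformly in $y\geq 0$ (with $C(\mu)$ explicit by minimisation). Therefore
\[
\mathbb{E}[e^{\lambda Z}]\leq e^{C(\lambda)}\,\mathbb{E}[e^{\lambda_0 Y}]\leq e^{C(\lambda)}K_0 =: K(\lambda) < \infty
\]
for every $\lambda \in \mathbb{R}$ (the case $\lambda \leq 0$ being trivial since $Z\geq 0$), which is the claim.

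The main obstacle lies in the interpolation step: one must carefully verify that the resulting H\"older exponent $\gamma$ stays strictly above $1/2$, which is precisely what the hypothesis $q > 2$ ensures via the convex combination. The conceptual heart of the argument is, however, the third step: the strict slack in~\eqref{sec3.3 KR condition} gives rise to a sublinear power $\theta < 1$, and it is the sublinearity of $y\mapsto y^\theta$ that converts the single-$\lambda$ Gaussian-type integrability of Theorem~\ref{sec3.3 thm averaging bessel} into exponential moments for every $\lambda$.
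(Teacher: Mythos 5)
Your argument is correct and follows essentially the same route as the paper: pick a slightly larger spatial index $\rho'$ still satisfying the strict condition~\eqref{sec3.3 KR condition}, interpolate via Lemma~\ref{appendixA2 interpolation inequality} between the deterministic $L^{s,p}$ level and the probabilistic $L^{s+\rho',p}$ level so that the target quantity is bounded by a sublinear ($\theta<1$) power of the controlled one, and then use a Young-type inequality to upgrade the single-$\lambda$ exponential moment to all $\lambda$. The only cosmetic difference is that you track the Hölder exponent as a convex combination $\gamma=(1-\theta)(1-1/q)+\theta\gamma'$ of the two pieces, whereas the paper keeps a single $\gamma>1/2$ throughout (valid since such a $\gamma$ can be chosen below $1-1/q$); both are fine and the key observations coincide.
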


\begin{proof}
  As before, we can assume without loss of generality $s = 0$. If $\rho$
  satisfies~{\eqref{sec3.3 KR condition}}, then there exists $\varepsilon > 0$
  such that also $\rho + \varepsilon$ satisfies~{\eqref{sec3.3 KR condition}};
  it then follows from Lemma~\ref{appendixA2 interpolation inequality} that
  \[ \| T^{W^H} b \|_{C^{\gamma} L^{\rho, p}} \lesssim \| T^{W^H} b
     \|_{C^{\gamma} L^p}^{1 - \theta} \| T^{W^H} b \|_{C^{\gamma} L^{\rho +
     \varepsilon, p}}^{\theta} \leqslant \| b \|_{L^q L^p}^{1 - \theta} \|
     T^{W^H} b \|_{C^{\gamma} L^{\rho + \varepsilon, p}}^{\theta} \]
  where $\theta = \varepsilon / (s + \varepsilon)$ and we used the fact that
  $q > 2$ due to condition~{\eqref{sec3.3 KR condition}}. It follows that
  \[ \frac{\| T^{W^H} b \|^{2 / \theta}_{C^{\gamma} L^{\rho, p}}}{\| b \|^{2 /
     \theta}_{L^q L^p}} \lesssim \frac{\| T^{W^H} b \|^2_{C^{\gamma} L^{\rho +
     \varepsilon, p}}}{\| b \|^2_{L^q L^p}} \]
  where $1 / \theta = (s + \varepsilon) / \varepsilon \backassign \beta$.
  Applying Theorem~\ref{sec3.3 thm averaging bessel} to $\rho + \varepsilon$,
  we obtain that there exist $\bar{\lambda}$, $\bar{K}$ independent of $b$
  such that
  \[ \mathbb{E} \left[ \exp \left( \bar{\lambda}  \frac{\| T^{W^H} b \|^{2
     \beta}_{C^{\gamma} L^{\rho, p}}}{\| b \|_{L^q L^p}^{2 \beta}} \right)
     \right] \leqslant \mathbb{E} \left[ \exp \left( C_{\varepsilon} 
     \bar{\lambda}  \frac{\| T^{W^H} b \|^2_{C^{\gamma} L^{\rho + \varepsilon,
     p}}}{\| b \|_{L^q L^p}^2} \right) \right] \leqslant \bar{K} . \]
  Since $\beta > 1$, the conclusion follows with the constant $K (\lambda)$
  given by the optimal deterministic constant such that $\exp (\lambda x^2)
  \leqslant K (\lambda) \exp (\bar{\lambda} x^{2 \beta}) / \bar{K}$ for all $x
  \geqslant 0$.
\end{proof}

In the limiting case in which~{\eqref{sec3.3 KR condition}} becomes an
equality, slightly more careful estimates still allow to obtain a regularity
result in space at the cost of lower time regularity.

\begin{theorem}
  \label{sec3.3 thm threshold bessel}Let $b \in L^q_t L^{s, p}_x$ with $p \in
  [2, \infty)$, $q \in (2, \infty)$ and let $\rho > 0$ satisfy
  \begin{equation}
    H \rho + \frac{1}{q} = \frac{1}{2} \label{sec3.3 threshold coefficient} .
  \end{equation}
  Then $T^{W^H} b \in C^0_t L_x^{s + \rho, p}$ with probability $1$ and there
  exist positive constant $\lambda$, $K$, independent of~$b$, such that
  \begin{equation*}
    \mathbb{E} \left[ \exp \left( \lambda \frac{\| T^{W^H} b \|^2_{C^0 L^{s +
    \rho, p}}}{\| b \|^2_{L^q L^{s, p}}} \right) \right] < K.
  \end{equation*}
\end{theorem}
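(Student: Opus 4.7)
The plan is to mimic the argument of Theorem~\ref{sec3.3 thm averaging bessel} as closely as possible, tracking carefully what happens at the borderline $H\rho + 1/q = 1/2$. As before, I would first reduce to the case $s = 0$ using that the Bessel potential $G^s$ commutes with $T^{W^H}$, and prove the estimate for smooth compactly supported $b$ first, then pass to general $b \in L^q_t L^p_x$ by the same density/Fatou argument used at the end of the proof of Theorem~\ref{sec3.3 thm averaging bessel}.

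For smooth $b$, apply the functional It\^o--Tanaka formula~\eqref{sec3.2 ito tanaka formula} to split $T^{W^H}_{s,t} b = I^{(1)}_{s,t} + I^{(2)}_{s,t}$. The deterministic term $I^{(1)}$ is easy: repeating the heat-kernel computation with $\rho H q' = (1 - 2/q)q'/2 < 1$ (still strict at the threshold, provided $q > 2$) yields
\[
\sup_{0 \le s \le t \le T} \frac{\|I^{(1)}_{s,t}\|_{L^{\rho,p}}}{|t-s|^{1/2}} \lesssim \|b\|_{L^q L^p},
\]
uniformly in $\omega$. The stochastic term $I^{(2)}$ is treated by Burkholder's inequality~\eqref{appendixA3 burkholder} exactly as before, except that now $\varepsilon \assign 1 - 2/q - 2H\rho = 0$, which gives the borderline bound
\[
\mathbb{E}\bigl[\|I^{(2)}_{s,t}\|_{L^{\rho,p}}^{2k}\bigr] \le (C'k)^k \|b\|_{L^q L^p}^{2k}\, |t-s|^k,
\]
equivalently the Gaussian tail $\mathbb{E}[\exp(\lambda \|I^{(2)}_{s,t}\|_{L^{\rho,p}}^2 / (|t-s|\|b\|_{L^q L^p}^2))] \le K$ for some small $\lambda > 0$.

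The main obstacle is the final chaining step: with $\varepsilon = 0$ the argument used in Theorem~\ref{sec3.3 thm averaging bessel} no longer produces a positive H\"older exponent, so we cannot conclude $I^{(2)} \in C^{\gamma}_t L^{\rho,p}_x$ for $\gamma > 1/2$. Nevertheless, applying Lemma~\ref{appendixA1 chaining lemma} (Kolmogorov/Garsia--Rodemich--Rumsey type) with the above Gaussian bound still yields a modulus of continuity for $I^{(2)}$ of the form $\omega(h) \lesssim \sqrt{h\,\log(1/h)}$, which is enough to conclude $I^{(2)} \in C^0_t L^{\rho,p}_x$, together with the exponential integrability estimate
\[
\mathbb{E}\Bigl[\exp\Bigl(\lambda\,\frac{\|I^{(2)}\|_{C^0 L^{\rho,p}}^2}{\|b\|_{L^q L^p}^2}\Bigr)\Bigr] \le K
\]
for $\lambda > 0$ small enough, with $K$ independent of $b$. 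Combining with the deterministic $C^{1/2}$ bound on $I^{(1)}$ gives the claimed $C^0_t L^{\rho,p}_x$ estimate for smooth $b$.

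Finally, to extend to arbitrary $b \in L^q_t L^p_x$, approximate by smooth $b_n$ with $b_n \to b$ in $L^q_t L^p_x$; the already-established linear bound shows that $T^{W^H} b_n$ is Cauchy in $L^{2k}(\Omega;\, C^0_t L^{\rho,p}_x)$ for every $k$, its limit must coincide with $T^{W^H} b$ (using $T^{W^H} b_n \to T^{W^H} b$ in $C^0_t L^p_x$ uniformly in $\omega$), and Fatou's lemma transfers the exponential integrability bound to $b$. The case $q = \infty$ is handled, as in the previous theorem, by replacing $q$ by any sufficiently large $q' < \infty$ and observing that the threshold condition can be made strict.
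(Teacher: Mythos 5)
Your overall architecture matches the paper's: reduce to $s=0$ via the commuting Bessel potential, work first with smooth compactly supported $b$, split via the functional It\^o--Tanaka formula into $I^{(1)}+I^{(2)}$, estimate $I^{(1)}$ uniformly in $\omega$, use Burkholder for $I^{(2)}$, and finish with the density/Fatou argument. The $I^{(1)}$ bound and the closing approximation step are handled exactly as in the paper.

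There is, however, a genuine gap at the key step. You write that repeating the Burkholder argument ``exactly as before, except that now $\varepsilon = 0$'' yields the borderline bound $\mathbb{E}\bigl[\|I^{(2)}_{s,t}\|_{L^{\rho,p}}^{2k}\bigr] \le (C'k)^k\|b\|^{2k}|t-s|^k$. But the deterministic step inside the first theorem's proof is the pointwise H\"older estimate
\[
\Bigl\|\int_u^t P_{|r-u|^{2H}}\nabla b(r,\cdot+W^{2,H}_{u,r})\,|r-u|^{H-1/2}\,\mathd r\Bigr\|_{L^{\rho,p}}
\lesssim \|b\|_{L^q L^p}\Bigl(\int_u^t |r-u|^{-(H\rho+1/2)q'}\,\mathd r\Bigr)^{1/q'},
\]
and this requires $(H\rho+1/2)q'<1$, i.e.\ $H\rho+1/q<1/2$ \emph{strictly}. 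At the threshold $H\rho+1/q=1/2$ one has $(H\rho+1/2)q'=1$, the time integral is $\int_u^t|r-u|^{-1}\mathd r=\infty$, and the inner bound is vacuous. So the computation does \emph{not} carry over with $\varepsilon=0$: the pointwise estimate simply fails, and you cannot conclude the claimed moment bound from it. This is precisely the obstruction the paper has to work around, and your proposal does not address it.

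What the paper does instead is forgo the pointwise bound on the inner integral and estimate the full $L^2$-norm in $u$ of that inner integral at once, via a Hardy--Littlewood--Sobolev / Young's-convolution estimate: writing $g(u)=\int_u^T|r-u|^{-(H\rho+1/2)}\|b_r\|_{L^p}\,\mathd r$, one has $\|g\|_{L^2(0,T)}\lesssim \|b\|_{L^q_t L^p_x}$ precisely when $H\rho+1/q=1/2$. This is the new ingredient that the threshold forces, and it is what makes $\mathbb{E}\bigl[\|I^{(2)}\|_{C^0 L^{\rho,p}}^{2k}\bigr]\le(C'k)^k\|b\|^{2k}_{L^qL^p}$ available. (Your borderline moment bound for fixed $s<t$ is in fact true, but its proof must go through this $L^2_u$-level inequality on $[s,t]$, not through the pointwise bound.)

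Once the borderline moment bound is correctly established, the remainder of your argument is sound and is in fact a genuine alternative to the paper's: the paper applies Burkholder's maximal inequality directly to get a bound on the $C^0_t$-norm and stops there, whereas you bound the individual increments and feed them into Lemma~\ref{appendixA1 chaining lemma}, obtaining a $h^{1/2}\sqrt{-\log h}$ modulus of continuity and the exponential integrability as a by-product. Your route costs a bit more (the chaining lemma and the two-parameter bookkeeping for $T^{W^H}_{s,t}b$) but yields a quantitatively sharper modulus of continuity than the bare $C^0$ statement. The missing piece in the write-up is entirely the Young/HLS estimate at the critical exponent; without it the proof does not close.
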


\begin{proof}
  As before, we can assume $s = 0$, $b$ smooth; again we decompose $T^{W^H} b
  = I^{(1)} + I^{(2)}$. Going through the same calculations for $I^{(1)}$, we
  obtain
  \[ \| I_{s, t}^{(2)} \|_{L^{\alpha, p}} \lesssim \| b \|_{L^q_t L^p_x} \, |
     t - s |^{1 - 1 / q - \alpha H} = \| b \|_{L^q_t L^p_x} \, | t - s |^{1 /
     2} \]
  where the estimate is uniform in $\omega \in \Omega$; it follows immediately
  that
  \[ \mathbb{E} [\exp (\lambda \| I^{(2)} \|_{C_0 L^{\rho, p}}^2)] < \infty,
  \]
  and therefore we only need to focus on $I^{(2)}$. By Burkholder's
  inequality, we have
  \[ \mathbb{E} [\| I^{(2)} \|_{C^0 L^{\alpha, p}}^{2 k}] \leqslant \left( C
     \, k \right)^k \, \mathbb{E} \left[ \left( \int_0^T \, \left\| \int_u^T
     P_{| r - u |^{2 H}} \nabla b (r, \cdot + W^{2, H}_{u, r}) | r - u |^{H -
     1 / 2} \, \mathd r \, \right\|^2_{L^{\alpha, p}} \mathd s \right)^k
     \right] \]
  and as before we want to estimate the integral inside in a deterministic
  manner. Going through similar calculations we obtain
  \[ \int_0^T \left\| \int_u^T P_{| r - u |^{2 H}} \nabla b (r, \cdot + W^{2,
     H}_{u, r}) | r - u |^{H - 1 / 2} \, \mathd r \, \right\|^2_{L^{\alpha,
     p}} \mathd u \]
  \[ \qquad \qquad \qquad \lesssim \int_0^T \left( \int_u^T | r - u |^{- H
     \alpha - 1 / 2} \, \| b_r \|_{L^p} \, \mathd r_{} \right)^2 \mathd u \]
  and now due to the assumption on the coefficients, we can apply the
  Hardy\mbox{-}Littlewood\mbox{-}Sobolev inequality to obtain
  \[ \left( \int_0^T \left( \int_u^T | r - u |^{- H \alpha - 1 / 2} \, \| b
     (r) \|_{L^p} \, \mathd r_{} \right)^2 \mathd u \right)^{1 / 2} \lesssim
     \| b \|_{L^q_t L^p_x}, \]
  which implies
  \[ \mathbb{E} [\| I^{(2)} \|_{C^0 L^{\alpha, p}}^{2 k}] \leqslant (C' k)^k
     \| b \|^{2 k}_{L^q_t L^p_x} . \]
  The conclusion then follows by expanding the exponential and choosing
  $\lambda$ sufficiently small as before.
\end{proof}

Going through the exact same calculations as above, an analogue result can be
obtained in the case of Besov spaces $B^s_{p, q}$ with $p, q \in [2, \infty)$.
In order to avoid unnecessary repetitions, we omit the proof.

\begin{theorem}
  \label{sec3.3 thm averaging besov}Let $W^H$ be a fBm of parameter $H$ and
  let $b \in L^{\alpha}_t B^s_{p, q}$ for some $p, q \in [2, \infty)$. Then
  for any $\rho > 0$ satisfying
  \begin{equation}
    H \rho + \frac{1}{\alpha} < \frac{1}{2}, \label{sec3.3 KR condition 2}
  \end{equation}
  $T^{W^H} b \in C^{\gamma}_t B^{s + \rho}_{p, q}$ for some $\gamma > 1 / 2$
  with probability $1$; moreover, there exist a positive function $K
  (\lambda)$ independent of $b$ such that
  \begin{equation}
    \mathbb{E} \left[ \exp \left( \lambda \frac{\| T^{W^H} b \|^2_{C^{\gamma}
    B_{p, q}^{s + \rho}}}{\| b \|_{L^{\alpha} B_{p, q}^s}^2} \right) \right]
    \leqslant K (\lambda) < \infty \quad \forall \, \lambda \in \mathbb{R}.
    \label{sec3.3 thm exp integrability besov}
  \end{equation}
  If equality holds in~{\eqref{sec3.3 KR condition 2}}, then there exist
  positive constant $\tilde{\lambda}$, $\tilde{K}$, independent of $b$, such
  that
  \begin{equation*}
    \mathbb{E} \left[ \exp \left( \widetilde{\lambda} \frac{\| T^{W^H} b
    \|^2_{C^0 B_{p, q}^{s + \rho}}}{\| b \|^2_{L^{\alpha} B_{p, q}^s}} \right)
    \right] < \tilde{K} .
  \end{equation*}
\end{theorem}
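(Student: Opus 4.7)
\begin{proof*}{Proof sketch of Theorem~\ref{sec3.3 thm averaging besov}}
The plan is to mimic the argument given for Theorem~\ref{sec3.3 thm averaging bessel} in the Besov scale, with the same splitting induced by the functional It\^o--Tanaka formula of Theorem~\ref{sec3.2 functional ito tanaka}. As before, by reducing via $b = \tilde b$ with $\tilde b \in L^\alpha_t B^0_{p,q}$ (using the translation invariance of the Besov norm and the commutation of averaging with convolution from Lemma~\ref{sec3.1 lemma 2}), I can restrict to the case $s = 0$; and by a density argument with Fatou afterwards, it suffices to carry out the estimates for smooth $b$ and transfer them to general $b$ as in the closing paragraphs of the proof of Theorem~\ref{sec3.3 thm averaging bessel}. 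Writing $T^{W^H}b_{s,t} = I^{(1)}_{s,t} + I^{(2)}_{s,t}$ as in~\eqref{sec3.2 ito tanaka formula}, I then estimate the two pieces separately.

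For the deterministic piece $I^{(1)}$ I would use the Besov-space heat kernel estimate $\|P_t f\|_{B^{s+\rho}_{p,q}} \lesssim t^{-\rho/2}\|f\|_{B^s_{p,q}}$ (Lemma~\ref{appendixA2 heat kernel estimates}) together with translation invariance of $\|\cdot\|_{B^s_{p,q}}$; H\"older's inequality in time, as in the Bessel proof, yields
\[
\|I^{(1)}_{s,t}\|_{B^\rho_{p,q}} \lesssim \|b\|_{L^\alpha B^0_{p,q}} \, |t-s|^{1 - 1/\alpha - H\rho},
\]
whose exponent is strictly greater than $1/2$ thanks to~\eqref{sec3.3 KR condition 2}; this bound is uniform in $\omega$ and hence gives trivially the exponential integrability of $\|I^{(1)}\|_{C^\gamma B^\rho_{p,q}}$ for some $\gamma > 1/2$.

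The stochastic piece $I^{(2)}$ is the main point where one has to be careful. The restriction $p,q \in [2,\infty)$ is precisely what ensures that $B^s_{p,q}$ has martingale type~$2$, so that Burkholder's inequality (Theorem~\ref{appendixA3 thm burkholder}) applies with constant $(Ck)^{k}$; this is exactly the step whose verification I expect to be the one real obstacle, and it is the reason one needs to appeal to the integration theory in UMD / type~$2$ spaces of Appendix~\ref{appendixA3}. Once Burkholder is granted, I would plug in the deterministic Besov estimate of the inner integrand, obtaining as in the Bessel case
\[
\mathbb{E}\!\left[\|I^{(2)}_{s,t}\|^{2k}_{B^\rho_{p,q}}\right] \leqslant (C'k)^k \, \|b\|^{2k}_{L^\alpha B^0_{p,q}} \, |t-s|^{k(1+\varepsilon)},
\]
with $\varepsilon = 1 - 2/\alpha - 2H\rho > 0$. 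Summing the Taylor series in $\lambda$ gives a Gaussian-type bound on the individual increment, and the chaining Lemma~\ref{appendixA1 chaining lemma} promotes it to the claimed exponential bound~\eqref{sec3.3 thm exp integrability besov} for the $C^\gamma_t B^{s+\rho}_{p,q}$ norm, with $\gamma > 1/2$. The improvement to arbitrary $\lambda \in \mathbb{R}$ comes from the interpolation argument of Corollary~\ref{sec3.3 corollary improvement} applied in the Besov scale (the relevant interpolation inequality between $B^s_{p,q}$, $B^{s+\rho}_{p,q}$, $B^{s+\rho+\varepsilon}_{p,q}$ is still in Lemma~\ref{appendixA2 interpolation inequality}).

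Finally, for the threshold case $H\rho + 1/\alpha = 1/2$ I would follow the same route as in Theorem~\ref{sec3.3 thm threshold bessel}: the bound on $I^{(1)}$ still gives a time-uniform estimate, while the Burkholder bound on $I^{(2)}$ leads, after applying the Besov heat kernel estimate inside, to a time integral of the form $\int_0^T\bigl(\int_u^T |r-u|^{-H\rho-1/2}\|b_r\|_{B^0_{p,q}}\,\mathrm{d}r\bigr)^2\,\mathrm{d}u$, controlled by $\|b\|^2_{L^\alpha B^0_{p,q}}$ via the Hardy--Littlewood--Sobolev inequality. Expanding the exponential and choosing $\widetilde\lambda$ small enough yields the claimed bound on $\|T^{W^H}b\|_{C^0 B^{s+\rho}_{p,q}}$. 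The only genuinely new input relative to the Bessel case is therefore the martingale type~$2$ property of $B^s_{p,q}$ for $p,q\in[2,\infty)$; the rest is a line-by-line transcription.
\end{proof*}
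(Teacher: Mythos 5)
Your proposal is correct and takes the same approach the paper indicates: the paper itself omits this proof, noting that it follows ``the exact same calculations'' as the Bessel case, and your reconstruction does exactly that (same Itô--Tanaka splitting $I^{(1)}+I^{(2)}$, Besov heat-kernel estimate for $I^{(1)}$, Burkholder for $I^{(2)}$, chaining, the interpolation upgrade of Corollary~\ref{sec3.3 corollary improvement}, and Hardy--Littlewood--Sobolev for the threshold case). You also correctly single out the one genuinely new ingredient, namely that $B^s_{p,q}$ with $p,q\in[2,\infty)$ has martingale type~2 so that Theorem~\ref{appendixA3 thm burkholder} applies with the $(Ck)^k$ constant needed to sum the exponential series.
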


We end this section with several remarks discussing various technical point
and extensions, and which can be skipped on a first reading.

\begin{remark}
  Heuristically, condition~{\eqref{sec3.3 KR condition 2}} can be seen as a
  time-space weighted regularity condition, where time counts as $1 / H$ times
  space (which is in agreement with parabolic regularity in the case $H = 1 /
  2$ of Brownian motion). Indeed, we know that the averaging operator $T^w$
  maps $L^{\alpha} B^s_{p, q}$ into $W^{1, \alpha} B^s_{p, q}$; if we assume
  that regularity can be distributed between time and space, it should also
  map $L^{\alpha} B^s_{p, q}$ into $W^{\theta, \alpha} B^{s + (1 - \theta) /
  H}_{p, q}$ for any $\theta \in (0, 1)$. In order to achieve $1 / 2 +
  \varepsilon$ regularity in time it is then required $\theta - 1 / \alpha > 1
  / 2$, which implies that the regularity gain in space is at most
  \[ \frac{1 - \theta}{H} < \frac{1}{H} \left( \frac{1}{2} - \frac{1}{\alpha}
     \right) \]
  which matches exactly condition~{\eqref{sec3.3 KR condition 2}} for $\rho$.
\end{remark}

\begin{remark}
  The restriction to work with $B^s_{p, q}$ with $q \in [2, \infty)$, is not
  particularly relevant since by Besov embedding if $b \in L^{\alpha}_t
  B^s_{p, q}$, then it also belongs to $L^{\alpha}_t B^s_{p, q'}$ for any $q'
  > q$ and to $L^{\alpha}_t B^{s - \varepsilon}_{p, q'}$ for any $q' < q$ and
  $\varepsilon > 0$, so that we can first embed it for a choice $q' \in [2,
  \infty)$ and then apply the estimate there. Also the restriction $p \neq
  \infty$ can be overcome, for instance by first localising it as \~{$b$} in a
  ball $B_R$ and then embedding it into some $p < \infty$; by the properties
  of averaging, we know that $T^{W^H} b = T^{W^H} \tilde{b}$ in $B_{R - \| W^H
  \|_{\infty}}$ and we can choose $R$ big enough such that $\mathbb{P} (R - \|
  W^H \|_{\infty} < R / 2)$ is very small, to deduce local estimates for
  $T^{W^H} b$ which hold with high probability. Alternatively, estimates for
  averaging in Besov-H\"{o}lder spaces have been given by a different
  technique in~{\cite{catelliergubinelli}}, Section~4.1. However for
  simplicity, when dealing with $b \in L^{\alpha}_t B^s_{\infty, \infty}$, we
  will always assume that $b$ has compact support in space, uniformly in time,
  so that we can embed it in $L^{\alpha}_t B^s_{p, p}$ for any $p < \infty$
  and then apply estimates there.
\end{remark}

\begin{remark}
  \label{sec3.3 remark limitation}The restriction to work with
  $L^p$\mbox{-}based spaces with $p \geqslant 2$ is more restrictive and it
  would be of fundamental importance to weaken it, especially reaching the
  case $p = 1$; this was already pointed out in Conjecture~1.2 from
  {\cite{catelliergubinelli}}. The reason is that, by the properties of
  averaging, we know that for any $K \in C^{\infty}_c$ and time independent
  $b$ it holds $K \ast T^w b = T^w (K \ast b) = (T^w K) \ast b$; \ if we were
  able to show that $T^w K \in C^{\gamma}_t W^{\rho, 1}_x$ with an estimate
  that only depends on the $L^1$\mbox{-}norm of $K$, then we could
  automatically deduce regularity estimates of the form $K \ast T^w b \in
  C^{\gamma}_t L^{\rho, p}$ with $b \in L^p$ for any $p \in [1, \infty]$. We
  could then consider a family of mollifiers obtained by rescaling $K$ (which
  all have the same $L^1$\mbox{-}norm, so the same estimate in $C^{\gamma}_t
  W^{\rho, 1}_x$) to get estimates for the map $b \mapsto T^w b$ in any
  $L^p$~based space with $p \in [1, \infty]$ (as above, only time independent
  $b$ considered).
\end{remark}

\begin{remark}
  \label{sec3.3 remark lnd}A closer look at the proofs shows that both the
  It\^{o}--Tanaka formula from Theorem~\ref{sec3.2 functional ito tanaka} and
  the regularity estimates from Theorems~\ref{sec3.3 thm averaging bessel}
  and~\ref{sec3.3 thm averaging besov} can be generalised to Gaussian
  processes $X$ different from fBm and of the form
  \[ X_t = \int_0^t K (t, s) \mathd B_s, \]
  for some deterministic matrix\mbox{-}valued function $K$, such that for some
  $H \in (0, 1)$ it holds
  \begin{equation}
    \tmop{Var} (X_t | \mathcal{F}_s \nobracket) \gtrsim | t - s |^{2 H} \quad
    \forall \, s < t \label{sec3.3 LND}
  \end{equation}
  where $\mathcal{F}_t = \sigma (B_s : s \leqslant t)$.
  Condition~{\eqref{sec3.3 LND}} is a type of strong local nondeterminism
  (SLND) and these type of processes satisfy many interesting properties,
  which are studied in detail in~{\cite{galeatigubinelli}}.
\end{remark}

\begin{remark}
  \label{sec3.3 remark embedding}It follows immediately from the above results
  and from Bessel (resp. Besov) embeddings (see Appendix~\ref{appendixA2})
  that if $b \in L^{\alpha}_t L^{s, p}_x$ (resp. $b \in L^{\alpha}_t B^s_{p,
  q}$) for some $\alpha > 2$, $p, q \in [2, \infty)$, then for any $\beta$
  such that
  \begin{equation}
    \beta < s + \frac{1}{H} \left( \frac{1}{2} - \frac{1}{\alpha} \right) -
    \frac{d}{p} \label{sec3.3 eq remark embedding}
  \end{equation}
  there exists $\gamma > 1 / 2$ such that $T^{W^H} b \in C^{\gamma}_t
  C^{\beta}_x$ with full probability. For instance in the case $s = 0$, i.e.
  $b \in L^{\alpha}_t L^p_x$, in order to require $T^{W^H} b \in C^{\gamma}_t
  C^0_x$ it is enough
  \[ \frac{1}{\alpha} + H \frac{d}{p} < \frac{1}{2}, \]
  while in order to require $T^{W^H} b \in C^{\gamma}_t C^1_x$ it suffices
  \[ \frac{1}{\alpha} + H \frac{d}{p} < \frac{1}{2} - H. \]
  If $b \in L^{\alpha}_t B^s_{\infty, \infty}$ with spatially compact support,
  uniform in time, then $T^{W^H} b \in C^{\gamma}_t C^n_x$ if
  \[ H < \frac{1}{n - s} \left( \frac{1}{2} - \frac{1}{\alpha} \right) . \]
\end{remark}

\begin{remark}
  \label{sec3.3 rem comparison}Finally, let us compare our results for
  $T^{W^H} b$ with existing literature; with the exception of the case $H = 1
  / 2$, in which classical stochastic calculus provides more refined
  information, the only references we are aware of are the
  aforementioned~{\cite{catelliergubinelli,le}}. The technique applied
  in~{\cite{catelliergubinelli}} allows to deal only with time independent
  $b$; however, introducing suitable weighted spaces, it does not require $b$
  to belong to $B^s_{p, p}$ for some $p < \infty$. The results from Section~7
  of~{\cite{le}}, where $b \in L^q_t B^s_{\infty, \infty}$ is considered, are
  in line with those from Remark~\ref{sec3.3 remark embedding}; still, the
  techniques used therein, based on moment estimates and
  Garsia-Rodemich-Rumsay lemma, do not provide global regularity estimates for
  $T^{W^H} b$ (only local ones) nor the exponential
  integrability~{\eqref{sec3.3 thm exp integrability bessel}}. Both such
  features will be fundamental in the solution theory presented the next
  section: global estimates avoid finite time blow-up of solutions,
  exponential integrability allows the use of Girsanov's theorem. Finally, let
  us point out that both references only provide estimates for $T_{s, t}^{W^H}
  b$ in $B^s_{\infty, \infty}$, not covering other scales $B^s_{p, q}$ with
  $p, q < \infty$.
\end{remark}

\section{Application to perturbed ODEs}\label{sec4}

Now we are going to transfer the prevalence results for the averaged
vector-field to prevalence of well-posedness to perturbed ODEs including
regularity of the flow. The key technical tool to achieve this connection is a
simple theory of nonlinear Young equations which we recall and adapt to our
specific setting.

\subsection{Perturbed ODEs as nonlinear Young differential
equations}\label{sec4.1}

In this section we provide a summary of the results contained
in~{\cite{catelliergubinelli}} on nonlinear Young differential equations
(YDEs). Sometimes we will provide slightly different statements which fit
better our context and in order to facilitate the understanding we will
provide self-contained proofs whenever possible.

\

Let us fix some notation first. Given $A \in C^{\gamma}_t C^{\nu}_x =
C^{\gamma} ([0, T] ; C^{\nu} (\mathbb{R}^d ; \mathbb{R}^d))$ for $\gamma, \nu
\in (0, 1)$, we denote by the norm $\| A \|_{C^{\gamma} C^{\nu}}$ and the
semi-norm $\llbracket A \rrbracket_{C^{\gamma} C^{\nu}}$ respectively the
quantities
\[ \| A \|_{C^{\gamma} C^{\nu}} = \sup_{t \in [0, T]} \| A_t (\cdot)
   \|_{C^{\nu}} + \sup_{s \neq t} \frac{\| A_{s, t} (\cdot) \|_{C^{\nu}}}{| t
   - s |^{\gamma}}, \]
and
\[ \llbracket A \rrbracket_{C^{\gamma} C^{\nu}} = \sup_{s \neq t}
   \frac{\llbracket A_{s, t} (\cdot) \rrbracket_{C^{\nu}}}{| t - s |^{\gamma}}
   = \sup_{s \neq t, x \neq y} \frac{| A (t, x) - A (t, y) - A (s, x) + A (s,
   y) |}{| t - s |^{\gamma}  | x - y |^{\nu}} . \]
One of the main results of~{\cite{catelliergubinelli}} is the rigorous
construction of the nonlinear Young integral.

\begin{theorem}
  \label{sec4.1 thm definition young integral}Let $\gamma, \rho, \nu \in (0,
  1)$ such that $\gamma + \nu \rho > 1$, $A \in C^{\gamma}_t C^{\nu}_x$ and
  $\theta \in C^{\rho}_t$. Then for any $[s, t] \subset [0, T]$ and for any
  sequence of partitions of $[s, t]$ with mesh converging to zero, the
  following limit exists and is independent of the chosen sequence of
  partitions:
  \[ \int_s^t A (\mathd u, \theta_u) \assign
     \tmcolor{blue}{\tmcolor{black}{\lim_{\tmscript{\begin{array}{c}
       | \Pi | \rightarrow 0
     \end{array}}}}} \sum_i A_{t_i, t_{t + 1}} (\theta_{t_i}) . \]
  The limit is usually referred as a nonlinear Young integral. Furthermore:
  \begin{enumeratenumeric}
    \item For all $s \leqslant r \leqslant t$ it holds $\int_s^r A (\mathd u,
    \theta_u) + \int_r^t A (\mathd u, \theta_u) = \int_s^t A (\mathd u,
    \theta_u)$.
    
    \item If $\partial_t A$ is continuous, then $\int_s^t A (\mathd u,
    \theta_u) = \int_s^t \partial_t A (u, \theta_u) \mathd u$.
    
    \item There exists a universal constant $C = C (\gamma, \rho, \nu)$ such
    that
    \[ \left| \int_s^t A (\mathd u, \theta_u) - A_{s, t} (\theta_s) \right|
       \leqslant C | t - s |^{\gamma + \nu \rho} \llbracket A
       \rrbracket_{C^{\gamma} C^{\nu}} \llbracket \theta \rrbracket_{C^{\rho}}
       . \]
    \item The map $(A, \theta) \mapsto \int_0^{\cdot} A (\mathd u, \theta_u)$
    is continuous as a function from $C^{\gamma}_t C^{\nu}_x \times C^{\rho}_t
    \rightarrow C^{\gamma}_t$, is linear in $A$ and there exists a constant
    $\tilde{C} = \tilde{C} (\gamma, \rho, \nu, T)$ such that
    \[ \left\| \int_0^{\cdot} A (\mathd u, \theta_u) \right\|_{C^{\gamma}}
       \leqslant \tilde{C}  \| A \|_{C^{\gamma} C^{\nu}}  (1 + \llbracket
       \theta \rrbracket_{C^{\rho}}) . \]
  \end{enumeratenumeric}
\end{theorem}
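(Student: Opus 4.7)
The plan is to view this as a direct application of the sewing lemma (Young--Gubinelli type) to the two\mbox{-}index germ
\[ \Xi_{s,t} \assign A_{s,t}(\theta_s), \qquad 0 \leqslant s \leqslant t \leqslant T. \]
First I would compute the coboundary $\delta\Xi_{s,u,t} \assign \Xi_{s,t} - \Xi_{s,u} - \Xi_{u,t}$ for $s \leqslant u \leqslant t$, using only that $A_{s,t}$ is additive in the time variables. Since $\Xi_{s,t} - \Xi_{s,u} = A_{u,t}(\theta_s)$, a direct calculation gives
\[ \delta\Xi_{s,u,t} = A_{u,t}(\theta_s) - A_{u,t}(\theta_u), \]
which by the space H\"older regularity of $A_{u,t}$ and the time H\"older regularity of $\theta$ is bounded by
\[ |\delta\Xi_{s,u,t}| \leqslant \llbracket A\rrbracket_{C^{\gamma}C^{\nu}} \llbracket\theta\rrbracket_{C^{\rho}}^{\nu} |t-u|^{\gamma}|u-s|^{\nu\rho} \leqslant \llbracket A\rrbracket_{C^{\gamma}C^{\nu}} \llbracket\theta\rrbracket_{C^{\rho}}^{\nu} |t-s|^{\gamma+\nu\rho}. \]
The crucial hypothesis $\gamma + \nu\rho > 1$ is exactly what the sewing lemma requires, so it yields a unique continuous additive map $(s,t) \mapsto I_{s,t}$ such that $|I_{s,t} - \Xi_{s,t}| \leqslant C(\gamma,\rho,\nu)|t-s|^{\gamma+\nu\rho}\llbracket A\rrbracket_{C^{\gamma}C^{\nu}}\llbracket\theta\rrbracket_{C^{\rho}}^{\nu}$, and $I_{s,t}$ is the limit of the Riemann--Stieltjes sums $\sum_i A_{t_i,t_{i+1}}(\theta_{t_i})$ along any sequence of partitions with mesh going to $0$. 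This immediately gives the definition of $\int_s^t A(\mathd u, \theta_u)$, the concatenation property~1, and the quantitative bound~3.

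For property~2, when $\partial_t A$ is continuous, I would observe that $A_{t_i,t_{i+1}}(\theta_{t_i}) = \int_{t_i}^{t_{i+1}}\partial_t A(u,\theta_{t_i})\,\mathd u$, so the Riemann sum equals $\sum_i \int_{t_i}^{t_{i+1}}\partial_t A(u,\theta_{t_i})\,\mathd u$; continuity of $\partial_t A$ in both variables together with continuity of $\theta$ allows passing to the limit to recover $\int_s^t \partial_t A(u,\theta_u)\,\mathd u$. For property~4, the $C^{\gamma}$\mbox{-}bound on $t \mapsto \int_0^t A(\mathd u,\theta_u)$ follows by writing $\int_s^t A(\mathd u,\theta_u) = A_{s,t}(\theta_s) + R_{s,t}$ with $|R_{s,t}| \lesssim |t-s|^{\gamma+\nu\rho}$ from the sewing estimate, and bounding the first term by $\| A\|_{C^{\gamma}C^{\nu}}|t-s|^{\gamma}(1+\|\theta\|_{C^0}^{\nu})$; standard arguments then absorb the $\theta$\mbox{-}dependence into $1+\llbracket\theta\rrbracket_{C^{\rho}}$ on a finite interval. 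Linearity in $A$ is clear from the construction. Joint continuity in $(A,\theta)$ follows because the sewing output depends continuously on the germ in sup\mbox{-}norm and both $\Xi$ and its coboundary bound depend continuously on $(A,\theta)$ in the stated topologies.

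The only real obstacle is setting up the sewing lemma carefully enough to deliver uniqueness, additivity, and continuity of the output simultaneously; but this is the standard Gubinelli sewing machinery, and the computation of $\delta\Xi$ above is essentially all that is needed to feed into it. No new ideas beyond the classical Young integral proof are required.
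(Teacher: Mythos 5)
Your proposal is correct and coincides with the approach the paper itself points to: the paper omits the proof but explicitly notes that ``an elementary proof based on the Sewing Lemma has been also given in~\cite{hu}'', and it records the Sewing Lemma as Lemma~\ref{appendixA1 sewing lemma} in the Appendix precisely because this machinery is used elsewhere. Your coboundary computation
\[ \delta\Xi_{s,u,t} = A_{u,t}(\theta_s)-A_{u,t}(\theta_u), \qquad
   |\delta\Xi_{s,u,t}| \leqslant \llbracket A\rrbracket_{C^{\gamma}C^{\nu}}\llbracket\theta\rrbracket_{C^{\rho}}^{\nu}|t-s|^{\gamma+\nu\rho}, \]
is exactly the right germ, the condition $\gamma+\nu\rho>1$ is what the sewing lemma needs, and properties 1--4 then follow as you indicate. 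One minor point worth flagging: your bound in property~3 carries the exponent $\nu$ on $\llbracket\theta\rrbracket_{C^{\rho}}$, while the theorem as stated in the paper drops it. Your version is the one that actually comes out of the sewing estimate (and agrees with Theorem~2.4 of~\cite{catelliergubinelli}); the version without the exponent is not sharp and is in fact too small when $\llbracket\theta\rrbracket_{C^{\rho}}<1$. This discrepancy is harmless in the paper because every downstream application passes through the term $1+\llbracket\theta\rrbracket_{C^{\rho}}$ as in property~4 (using $a^{\nu}\leqslant 1+a$), but it is worth recording that your bound is the precise one.
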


The statement is a (less general) version of Theorem~2.4
from~{\cite{catelliergubinelli}}; we omit the proof, but let us mention that
an elementary proof based on the Sewing Lemma has been also given
in~{\cite{hu}}. The statement above can be localised, i.e. it is enough to
require $A \in C^{\gamma}_t C^{\nu}_{\tmop{loc}}$ and in this case all the
estimates depend on the $C^{\gamma}_t C^{\nu}_x$\mbox{-}norm (resp. semi-norm)
of $A$ restricted to $[0, T] \times B_{\| \theta \|_{\infty}}$.

\

With this tool at hand, we can provide an alternative definition of
solutions to the perturbed ODE which is meaningful even when $b$ is
distributional in space. Since we want to apply the results from
Section~\ref{sec3}, from now on when we say that $b$ is distributional we are
always going to implicitly assume that there exists $q > 2$ such that $b \in
L^q_t E$, where $E$ is a suitable space of distributions as the ones described
in Section~\ref{sec3.1}.

\begin{definition}
  \label{sec4.1 defn solution}Let $b$ be a distributional drift such that $T^w
  b \in C^{\gamma}_t C_x^{\nu}$ for some $\gamma, \nu \in (0, 1]$ such that
  $\gamma (1 + \nu) > 1.$ Given $x_0 \in \mathbb{R}^d$, we say that $x$ is a
  solution to the ODE
  \begin{equation}
    x_t = x_0 + \int_0^t b (s, x_s) \mathd s + w_t \quad \forall \, t \in [0,
    T] \label{sec4.1 perturbed ode}
  \end{equation}
  if and only if $x \in w + C^{\gamma}$ and $\theta = x - w$ solves the
  non-linear Young differential equation
  \begin{equation}
    \theta_t = \theta_0 + \int_0^t T^w b (\mathd s, \theta_s) \quad \forall \,
    t \in [0, T] . \label{sec4.1 YDE}
  \end{equation}
\end{definition}

Observe that the condition $\gamma (1 + \nu) > 1$ immediately implies $\gamma
> 1 / 2$, in line with standard Young differential equations; in the case of
continuous $b$, it follows from the discussion in the introduction that the
condition $x \in w + C^{\gamma}$ is trivially satisfied and so the two
formulations~{\eqref{sec4.1 perturbed ode}} and~{\eqref{sec4.1 YDE}} are
equivalent, {\eqref{sec4.1 perturbed ode}}~being interpreted as the classical
integral equation.

\begin{remark}
  From now on we will mostly focus on solving~{\eqref{sec4.1 YDE}} with $T^w b
  = A$ being regarded as an abstract element in a class $C^{\gamma}_t
  C^{\nu}_x$; however, whenever $b$ is spatially bounded, the ODE formulation
  for $\theta$ is still useful, as it provides additional regularity estimates
  for $\theta$ compared to the ones given by the Young integral formulation:
  for instance if $b \in L^{\infty}_{t, x}$, then any solution $\theta$ of the
  integral equation is automatically Lipschitz with $\llbracket \theta
  \rrbracket_{\tmop{Lip}} \leqslant \| b \|_{L^{\infty}}$, while Point~3. of
  Theorem~\ref{sec4.1 thm definition young integral} only provides estimate
  for $\| \theta \|_{C^{\gamma}}$, where $\gamma < 1$ (usually we will take
  $\gamma$ as small as possible, namely $\gamma \sim 1 / 2$).
\end{remark}

\begin{theorem}
  \label{sec4.1 thm existence YDE}Let $\gamma > 1 / 2$, $\nu \in (0, 1)$ such
  that $\gamma (1 + \nu) > 1$ and assume that $T^w b \in C^{\gamma}_t
  C^{\nu}_x$. Then for any $\theta_0 \in \mathbb{R}^d$ there exists a solution
  $\theta \in C^{\gamma}$ to~{\eqref{sec4.1 YDE}}, defined on the whole
  interval $[0, T]$; furthermore, there exists a constant $C = C (\gamma
  \comma \nu, T)$ such that any solution to~{\eqref{sec4.1 YDE}} satisfies
  \begin{equation}
    \llbracket \theta \rrbracket_{C^{\gamma}} \leqslant C (1 + \| T^w b
    \|_{C^{\gamma}_t C_x^{\nu}}^2) \label{sec4.1 thm existence a priori
    estimate 1}
  \end{equation}
  as well as
  \begin{equation}
    \| \theta \|_{C^0} \leqslant C (1 + | \theta_0 | + \| T^w b
    \|_{C^{\gamma}_t C_x^{\nu}}^2) . \label{sec4.1 thm existence a priori
    estimate 2}
  \end{equation}
\end{theorem}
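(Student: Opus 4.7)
My plan is to first establish the a priori estimates assuming $\theta$ is a solution, and then construct a solution by spatial mollification, uniform bounds, and a compactness--continuity argument.

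\textbf{A priori estimates.} Let $A \assign T^w b \in C^\gamma_t C^\nu_x$ and suppose $\theta \in C^\gamma$ solves the YDE. Writing $\theta_{s,t} = \int_s^t A(\mathrm{d}u,\theta_u)$, I would apply Theorem~\ref{sec4.1 thm definition young integral}(3) with $\rho = \gamma$ (which is permitted because $\gamma(1+\nu)>1$) to obtain
\begin{equation*}
|\theta_{s,t}| \leq \|A\|_{C^\gamma C^\nu}|t-s|^\gamma + C\|A\|_{C^\gamma C^\nu}\llbracket\theta\rrbracket_{C^\gamma([s,t])}|t-s|^{\gamma(1+\nu)}.
\end{equation*}
Dividing by $|t-s|^\gamma$ on a subinterval of length $h$ gives $\llbracket\theta\rrbracket_{C^\gamma([s,t])} \leq \|A\|_{C^\gamma C^\nu}\bigl(1 + Ch^{\nu\gamma}\llbracket\theta\rrbracket_{C^\gamma([s,t])}\bigr)$. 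Choosing $h \sim \|A\|_{C^\gamma C^\nu}^{-1/(\nu\gamma)}$ so that $Ch^{\nu\gamma}\|A\|_{C^\gamma C^\nu} \leq 1/2$ lets me absorb the last term and deduce $\llbracket\theta\rrbracket_{C^\gamma([s,t])} \leq 2\|A\|_{C^\gamma C^\nu}$ on every such subinterval. Stitching across $\lceil T/h \rceil$ subintervals and splitting the ratio $|\theta_{s,t}|/|t-s|^\gamma$ into a piecewise estimate, I obtain a global Hölder bound of order $\|A\|_{C^\gamma C^\nu}^{1+(1-\gamma)/(\nu\gamma)}$; the assumption $\gamma(1+\nu)>1$ is exactly what forces this exponent to be strictly below $2$, yielding the claimed estimate on $\llbracket\theta\rrbracket_{C^\gamma}$. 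The $C^0$ bound then follows immediately from $\|\theta\|_\infty \leq |\theta_0| + T^\gamma\llbracket\theta\rrbracket_{C^\gamma}$.

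\textbf{Existence by approximation.} Next I would produce a solution by spatial mollification: set $A^\varepsilon \assign \rho^\varepsilon *_x A \in C^\gamma_t C^\infty_x$, which satisfies $\|A^\varepsilon\|_{C^\gamma C^\nu} \leq \|A\|_{C^\gamma C^\nu}$ uniformly and $A^\varepsilon \to A$ uniformly on compacts in $C^\gamma_t C^{\nu'}_x$ for any $\nu' < \nu$. Since $A^\varepsilon$ is spatially Lipschitz, a Picard contraction argument in $C^\gamma$ on short time intervals, combined with the a priori estimate (which rules out finite-time blowup), produces a global solution $\theta^\varepsilon \in C^\gamma$. The previous paragraph applied to $\theta^\varepsilon$ gives the uniform bound $\|\theta^\varepsilon\|_{C^\gamma} \leq C(1 + |\theta_0| + \|A\|_{C^\gamma C^\nu}^2)$.

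\textbf{Passage to the limit.} By Arzelà--Ascoli I extract a subsequence such that $\theta^\varepsilon \to \theta$ in $C^{\gamma'}$ for any $\gamma' < \gamma$, and I choose $\gamma' \in (1/2,\gamma)$ close enough to $\gamma$ that $\gamma + \nu\gamma' > 1$, which is feasible precisely because $\gamma(1+\nu)>1$. The joint continuity statement of Theorem~\ref{sec4.1 thm definition young integral}(4) then allows me to pass to the limit simultaneously in the integrator and the integrand of $\int_0^t A^\varepsilon(\mathrm{d}s,\theta^\varepsilon_s)$ and conclude that the limit $\theta$ solves the YDE for $A$. The main technical point I anticipate is localisation: the spatial convergence $A^\varepsilon \to A$ is only local, so I would exploit the uniform sup-norm bound on $\theta^\varepsilon$ to confine the entire analysis to a fixed ball $B_R$ and invoke the localised version of Theorem~\ref{sec4.1 thm definition young integral} mentioned immediately after its statement. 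The a priori bounds transfer to $\theta$ either by repeating the estimate of the first paragraph or by lower semi-continuity of the Hölder norm under uniform convergence.
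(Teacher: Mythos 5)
Your a priori estimate argument matches the paper's essentially line for line: both absorb the self-referential term by working with the local seminorm $\llbracket\theta\rrbracket_{\gamma,\Delta}$ on a window $\Delta \sim \|T^w b\|_{C^\gamma C^\nu}^{-1/(\nu\gamma)}$, then globalise via the subinterval-stitching inequality $\llbracket\theta\rrbracket_{C^\gamma} \lesssim \llbracket\theta\rrbracket_{\gamma,\Delta}(1+\Delta^{\gamma-1})$ (the paper cites Friz--Hairer, Exercise~4.24 for this), and both identify $\gamma(1+\nu)>1$ as the reason the resulting exponent $1 + (1-\gamma)/(\nu\gamma)$ stays below $2$. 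The $C^0$ bound is obtained the same way.

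Where you diverge is on existence: the paper dispatches it in one line by citing Theorem~2.9 of \cite{catelliergubinelli}, whereas you reconstruct it from scratch by mollifying $A = T^w b$ in space, solving the smooth YDEs by Picard iteration, invoking the a priori bound (uniform in $\varepsilon$ since $\|A^\varepsilon\|_{C^\gamma C^\nu} \leq \|A\|_{C^\gamma C^\nu}$) to rule out blow-up and to feed Arzel\`a--Ascoli, and passing to the limit with the joint continuity of the nonlinear Young integral in a loosened exponent $\gamma' \in (1/2,\gamma)$ chosen so that $\gamma + \nu\gamma' > 1$. This is a perfectly sound route and is in fact the argument underlying the cited theorem, so you gain self-containedness at the price of length; your attention to localisation (confining everything to a ball $B_R$ fixed by the uniform $C^0$ bound, using Lemma~\ref{sec3.1 lemma 3} for the local convergence of $A^\varepsilon$) is the right way to handle the fact that mollification only converges locally in the spatial variable. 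One small imprecision: for the Picard contraction on the mollified equation you need $A^\varepsilon$ to be $C^{1+\nu'}_x$ for some $\nu'$ with $\gamma(1+\nu')>1$ (as in Lemma~\ref{sec4.1 technical lemma}), not merely Lipschitz as you write; of course $A^\varepsilon \in C^\gamma_t C^\infty_x$ supplies this in abundance, so nothing breaks, but ``Lipschitz'' by itself is not the operative hypothesis in the Young setting.
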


\begin{proof}
  The existence of solutions is granted under milder conditions on $T^w b$ by
  Theorem~2.9 from~{\cite{catelliergubinelli}}, so here we only show the
  a-priori estimates. Let $\theta \in C^{\gamma}$ be a solution and for any
  $\Delta > 0$ define the semi-norm
  \[ \llbracket \theta \rrbracket_{\gamma, \Delta} \assign
     \sup_{\tmscript{\begin{array}{c}
       s \neq t\\
       0 < | s - t | \leqslant \Delta
     \end{array}}} \frac{| \theta_{s, t} |}{| t - s |^{\gamma}} . \]
  Let $\Delta$ be a parameter to be fixed later; for any $s < t$ such that $|
  s - t | \leqslant \Delta$ it holds
  \begin{align*}
    | \theta_{s, t} | & = \, \left| \int_s^t T^w b (\mathd r, \theta_r)
    \right|\\
    & \leqslant \, | T^w b_{s, t} (\theta_s) | + C | t - s |^{\gamma (1 +
    \nu)}  \| T^w b \|_{C^{\gamma} C^{\nu}}^{}  \llbracket \theta
    \rrbracket^{\nu}_{\gamma, \Delta}\\
    & \leqslant | t - s |^{\gamma}  (\| T^w b \|_{C^{\gamma} C^{\nu}} + C
    \Delta^{\gamma \nu} \| T^w b \|_{C^{\gamma} C^{\nu}}^{}  \llbracket \theta
    \rrbracket^{\nu}_{\gamma, \Delta})\\
    & \leqslant | t - s |^{\gamma} (\| T^w b \|_{C^{\gamma} C^{\nu}} + C
    \Delta^{\gamma \nu} \| T^w b \|_{C^{\gamma} C^{\nu}}^{} + C
    \Delta^{\gamma, \nu} \| T^w b \|_{C^{\gamma} C^{\nu}}^{} \llbracket \theta
    \rrbracket_{\gamma, \Delta}),
  \end{align*}
  where in the last passage to used the trivial inequality $a^{\nu} \leqslant
  1 + a$ for all $a \geqslant 0$ and $\nu \in (0, 1]$. Dividing both sides by
  $| t - s |^{\gamma}$ and taking the supremum $s$, $t$ such that $| s - t |
  \leqslant \Delta$ we get
  \[ \llbracket \theta \rrbracket_{\gamma, \Delta} \leqslant \| T^w b
     \|_{C^{\gamma} C^{\nu}}  (1 + C \Delta^{\gamma \nu}) + C \Delta^{\gamma
     \nu} \| T^w b \|_{C^{\gamma} C^{\nu}}^{} \llbracket \theta
     \rrbracket_{\gamma, \Delta} . \]
  Choosing $\Delta$ small enough such that $C \Delta^{\gamma \nu} \| T^w b
  \|_{C^{\gamma} C^{\nu}} \leqslant 1 / 2$, we obtain
  \[ \llbracket \theta \rrbracket_{\gamma, \Delta} \leqslant 2 \| T^w b
     \|_{C^{\gamma} C^{\nu}}  (1 + C \Delta^{\gamma \nu}) \lesssim 1 + \| T^w
     b \|_{C^{\gamma} C^{\nu}} . \]
  If we can take $\Delta = T$, this provides an estimate for $\llbracket
  \theta \rrbracket_{C^{\gamma}}$, which together with $\| \theta \|_{C^0}
  \leqslant | \theta_0 | + T^{\gamma} \llbracket \theta
  \rrbracket_{C^{\gamma}}$ gives the conclusion. If this is not the case, we
  can choose $\Delta$ as above such that in addition $C \Delta^{\gamma \nu} \|
  T^w b \| \geqslant 1 / 4$ and then by the simple inequality (see for
  instance Exercise~4.24 from~{\cite{frizhairer}})
  \[ \llbracket \theta \rrbracket_{C^{\gamma}} \lesssim \llbracket \theta
     \rrbracket_{\gamma, \Delta} (1 + \Delta^{\gamma - 1}) . \]
  It follows that
  \begin{align*}
    \llbracket \theta \rrbracket_{C^{\gamma}} & \lesssim \, (1 + \| T^w b
    \|_{C^{\gamma} C^{\nu}}) (1 + \Delta^{\gamma - 1})\\
    & \lesssim \, (1 + \| T^w b \|_{C^{\gamma} C^{\nu}}) (1 + \| T^w b
    \|_{C^{\gamma} C^{\nu}}^{(1 - \gamma) / (\gamma \nu)})\\
    & \lesssim \, 1 + \| T^w b \|_{C^{\gamma}}^2,
  \end{align*}
  where in the last line we used the fact that $\gamma (1 + \nu) > 1$ implies
  $(1 - \gamma) / (\gamma \nu) < 1$. The conclusion again follows by the
  standard inequality $\| \theta \|_{C^{\gamma}} \lesssim | \theta_0 | +
  T^{\gamma} \llbracket \theta \rrbracket_{C^{\gamma}}$.
\end{proof}

Given that in general we consider $\gamma$ to be very close to $1 / 2$, in
order to have existence in general we need $\nu$ to be arbitrarily close to
$1$, thus we will usually require directly $T^w b \in C^{\gamma}_t
\tmop{Lip}_x$ (with the quantities $\| T^w b \|_{C^{\gamma} \tmop{Lip}}$ and
$\llbracket T^w b \rrbracket_{C^{\gamma} \tmop{Lip}}$ defined as above).

\

To establish uniqueness of solutions, we need the following lemma of
independent interest.

\begin{lemma}
  \label{sec4.1 technical lemma}Let $\gamma, \nu, \rho \in (0, 1]$ be such
  that $\gamma + \nu \rho > 1$, $A \in C^{\gamma}_t C^{1 + \nu}_x$; then for
  any $\theta^1$ and $\theta^2 \in C^{\rho}$ it holds
  \[ \int_0^t A (\mathd s, \theta^1_s) - \int_0^t A (\mathd s, \theta^2_s) =
     \int_0^t (\theta^1_s - \theta^2_s) \cdot \mathd V_s, \]
  where $V \in C^{\gamma} ([0, T] ; \mathcal{L} (\mathbb{R}^d ;
  \mathbb{R}^d))$ is given by
  \[ V_{\cdot} = \int_0^1 \int_0^{\cdot} \nabla A (\mathd s, \theta^2_s + x
     (\theta^1_s - \theta^2_s)) \mathd x. \]
  The integral is meaningful as a Bochner integral and $\| V \|_{C^{\gamma}
  \mathcal{L}} \lesssim \| A \|_{C^{\gamma} C^{1 + \nu}} (1 + \llbracket
  \theta^1 \rrbracket_{C^{\rho}} + \llbracket \theta^2
  \rrbracket_{C^{\rho}})$.
\end{lemma}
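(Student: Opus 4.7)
The strategy is to prove the identity first for $A$ smooth in time, where everything reduces to a Fubini-plus-fundamental-theorem-of-calculus computation, and then extend to general $A$ by approximation using the continuity properties of the nonlinear Young integral collected in Theorem~\ref{sec4.1 thm definition young integral}.

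First I verify that $V$ is well defined. For each $x \in [0,1]$ set $\eta^x := \theta^2 + x(\theta^1 - \theta^2) \in C^\rho$, which satisfies $\llbracket \eta^x \rrbracket_{C^\rho} \leqslant \llbracket \theta^1 \rrbracket_{C^\rho} + \llbracket \theta^2 \rrbracket_{C^\rho}$ uniformly in $x$. Since $A \in C^\gamma_t C^{1+\nu}_x$ gives $\nabla A \in C^\gamma_t C^\nu_x$ and the condition $\gamma + \nu \rho > 1$ is met, part~4 of Theorem~\ref{sec4.1 thm definition young integral} yields that $t \mapsto \int_0^t \nabla A(\mathd s, \eta^x_s)$ is defined in $C^\gamma([0,T];\mathcal{L})$, with a bound $\lesssim \| A \|_{C^\gamma C^{1+\nu}}(1 + \llbracket \theta^1 \rrbracket_{C^\rho} + \llbracket \theta^2 \rrbracket_{C^\rho})$ uniform in $x$. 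Continuity of $x \mapsto \eta^x$ in $C^\rho$ together with the same Point~4 makes the integrand a continuous, bounded $C^\gamma_t \mathcal{L}$-valued function of $x$, so $V$ is Bochner integrable, and the stated norm bound follows by integrating in $x$.

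To prove the identity, I mollify $A$ in time to produce smooth $A^\varepsilon \to A$ in $C^{\gamma'}_t C^{1+\nu}_x$ with $\gamma' < \gamma$ chosen so that $\gamma' + \nu \rho > 1$ still holds. For smooth $A^\varepsilon$, part~2 of Theorem~\ref{sec4.1 thm definition young integral} turns the nonlinear Young integrals into classical Lebesgue integrals, $\int_0^t A^\varepsilon(\mathd s, \theta^i_s) = \int_0^t \partial_s A^\varepsilon(s, \theta^i_s)\,\mathd s$, and likewise $V^\varepsilon_t = \int_0^t \int_0^1 \nabla \partial_r A^\varepsilon(r, \eta^x_r)\,\mathd x\,\mathd r$. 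The fundamental theorem of calculus in $x$ combined with Fubini then gives the desired identity in the elementary sense
\[
\int_0^t \bigl[\partial_s A^\varepsilon(s, \theta^1_s) - \partial_s A^\varepsilon(s, \theta^2_s)\bigr]\,\mathd s = \int_0^t (\theta^1_s - \theta^2_s) \cdot \mathd V^\varepsilon_s,
\]
the right-hand side being in this smooth case nothing but the classical Lebesgue integral $\int_0^t (\theta^1_s - \theta^2_s) \cdot \dot V^\varepsilon_s\,\mathd s$.

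Passage to the limit $\varepsilon \to 0$ uses Point~4 of Theorem~\ref{sec4.1 thm definition young integral} twice. The left-hand side converges directly as $A^\varepsilon \to A$ in $C^{\gamma'}_t C^{1+\nu}_x$. For the right-hand side, the same continuity applied inside the $\mathd x$ integral, together with the uniform bound and dominated convergence on $[0,1]$, gives $V^\varepsilon \to V$ in $C^{\gamma'}([0,T];\mathcal{L})$; since $\theta^1 - \theta^2 \in C^\rho$ and $\gamma' + \rho > 1$ (here one uses $\nu \leqslant 1$, so that $\gamma' + \rho \geqslant \gamma' + \nu \rho > 1$), the outer integral is a classical Young integral depending continuously on $V$ in that topology, and the identity extends to $A$. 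The main technical obstacle is to align the three Hölder exponents during mollification: time-mollifying $A$ only converges in $C^{\gamma'}$ with $\gamma' < \gamma$, so one must pick $\gamma'$ close enough to $\gamma$ for both $\gamma' + \nu \rho > 1$ and $\gamma' + \rho > 1$ to remain in force throughout the limiting procedure.
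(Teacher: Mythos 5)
Your proof is correct and follows essentially the same strategy as the paper's: establish the identity for smooth $A$ by a Fubini plus mean--value/FTC computation in the spatial variable, then pass to general $A$ by approximation using the continuity of the nonlinear Young integral (Point~4 of Theorem~\ref{sec4.1 thm definition young integral}), together with the estimate $\| V \|_{C^{\gamma} \mathcal{L}} \lesssim \| A \|_{C^{\gamma} C^{1+\nu}} (1 + \llbracket\theta^1\rrbracket_{C^{\rho}} + \llbracket\theta^2\rrbracket_{C^{\rho}})$ derived uniformly from the Young integral bound. The one genuine difference is the approximation scheme: you mollify only in time, which keeps the full spatial regularity $C^{1+\nu}_x$ and lets you work globally (cost: drop from $\gamma$ to some $\gamma'<\gamma$, so you must preserve $\gamma'+\nu\rho>1$ and $\gamma'+\rho>1$, the latter automatic since $\nu\leqslant 1$); the paper mollifies in both time and space, giving $A^n\to A$ only locally in $C^{\gamma-\delta}_t C^{1+\nu-\delta}_x$, which forces the restriction to a ball $B_R$ with $R>\|\theta^i\|_\infty$ and the joint condition $\gamma-\delta+(\nu-\delta)\rho>1$. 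Your variant is slightly cleaner in that it avoids the localisation; otherwise the two arguments are the same, and both are correct.

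One small overclaim: to justify Bochner integrability of $V$ you do not actually need the map $x\mapsto\int_0^{\cdot}\nabla A(\mathd s,\eta^x_s)$ to be continuous into $C^{\gamma}_t\mathcal{L}$ with the strong topology (and it is not immediately obvious from Point~4 that the dependence on the path $\theta$ is continuous into $C^\gamma$ rather than into $C^{\gamma'}$ for $\gamma'<\gamma$); strong measurability plus the uniform bound already suffices, and measurability is clear from continuity into the coarser topology $C^0_t\mathcal{L}$.
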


\begin{proof}
  Suppose first that in addition $\partial_t A \in C^0_t C^2_x$, then by
  Taylor expansion
  \begin{align*}
    \int_0^t A (\mathd s, \theta^1_s) - \int_0^t A (\mathd s, \theta^2_s) & =
    \int_0^t [\partial_t A (s, \theta^1_s) - \partial_t A (s, \theta^2_s)]
    \mathd s\\
    & = \int_0^t (\theta^1_s - \theta^2_s) \cdot \int_0^1 \partial_t \nabla A
    (s, \theta^2_s + x (\theta^1_s - \theta^2_s)) \mathd x \mathd s\\
    & = \int_0^t (\theta^1_s - \theta^2_s) \cdot \frac{\mathd}{\mathd s}
    \left( \int_0^s \int_0^1 \partial_t \nabla A (u, \theta^2_u + x
    (\theta^1_u - \theta^2_u)) \mathd x \mathd u \right) \mathd s\\
    & = \int_0^t (\theta^1_s - \theta^2_s) \cdot \mathd \left( \int_0^1
    \int_0^s \partial_t \nabla A (u, \theta^2_u + x (\theta^1_u - \theta^2_u))
    \mathd u \mathd x \right)\\
    & = \int_0^t (\theta^1_s - \theta^2_s) \cdot \mathd \left( \int_0^1
    \int_0^s \nabla A (\mathd u, \theta^2_u + x (\theta^1_u - \theta^2_u))
    \mathd x \right)\\
    & \backassign \int_0^t (\theta^1_s - \theta^2_s) \cdot \mathd V_s
  \end{align*}
  where all manipulations in this case are allowed by the properties of Young
  integral and the fact that we are assuming $A$ regular; in particular by
  hypothesis $\nabla A \in C^{\gamma}_t C^{\nu}_x$ and $\theta^i \in
  C^{\rho}_t$ with $\gamma + \nu \rho > 1$, so the interpretation of the
  integrals as nonlinear Young integrals is legit. Observe that the map $A
  \mapsto V (A)$ is linear by construction and we have the estimate
  \begin{align*}
    \| V \|_{C^{\gamma} \mathcal{L}} & = \, \left\| \int_0^1 \int_0^{\cdot}
    \nabla A (\mathd u, \theta^2_u + x (\theta^1_u - \theta^2_u)) \mathd x
    \right\|_{C^{\gamma} \mathcal{L}}\\
    & \leqslant \int_0^1 \left\| \int_0^{\cdot} \nabla A (\mathd u,
    \theta^2_u + x (\theta^1_u - \theta^2_u)) \right\| \mathd x\\
    & \lesssim \int_0^1 \| \nabla A \|_{C^{\gamma} C^{\nu}} (1 + \llbracket
    \theta^1 \rrbracket_{C^{\rho}} + \llbracket \theta^2
    \rrbracket_{C^{\rho}}) \mathd x,
  \end{align*}
  which gives the conclusion in this case. The general case follows by
  approximation, considering a sequence of regular $A^n \rightarrow A$ locally
  in $C^{\gamma - \delta}_t C^{1 + \nu - \delta}_x$, on a ball of radius $R >
  \| \theta^i \|_{\infty}$, for $\delta$ small enough such that $\gamma -
  \delta + (\nu - \delta) \rho > 1$.
\end{proof}

With the above lemma at hand, we can provide a comparison principle, which
estimates the difference between solutions. It comes in two versions, which
apply to different scenarios.

\begin{theorem}[Comparison Principle, Version 1]
  \label{sec4.1 comparison thm v1}Let $\gamma > 1 / 2$ and assume that $b^1$,
  $b^2$ are distributional drifts such that $T^w b^i \in C^{\gamma}_t C^2_x$
  with $\| T^w b^i \|_{C^{\gamma} C^2} \leqslant R$. Let $\theta^i \in
  C^{\gamma}$, $i = 1, 2$ be solutions respectively of the YDEs
  \[ \theta_t^i = \theta_0^i + \int_0^t T^w b^i (\mathd s, \theta^i_s)  \quad
     \forall \, t \in [0, T] . \]
  Then there exists a constant $C = C (\gamma, T, R)$ such that
  \begin{equation}
    \| \theta^1_{\cdot} - \theta^2_{\cdot} \|_{C^{\gamma}} \leqslant C (|
    \theta^1_0 - \theta^2_0 | + \| T^w b^1 - T^w b^2 \|_{C^{\gamma}_t
    \tmop{Lip}}) . \label{sec4.1 comparison v1.1}
  \end{equation}
  Similarly, let $b^i$ be s.t. $b^i \in L^{\infty}_{t, x}$ and $T^w b^i
  \in C^{\gamma}_t C^{3 / 2}_x$ with $\max_i \{ \| b^i \|_{L^{\infty}}, \| T^w
  b^i \|_{C^{\gamma} C^{3 / 2}} \} \leqslant R$, let $\theta^i$ be Lipschitz
  solutions of the YDEs; then there exists $\tilde{C} = \tilde{C} (\gamma, T,
  R)$ such that
  \begin{equation}
    \| \theta^1_{\cdot} - \theta^2_{\cdot} \|_{C^{\gamma}} \leqslant \tilde{C}
    (| \theta^1_0 - \theta^2_0 | + \| T^w b^1 - T^w b^2 \|_{C^{\gamma}_t
    \tmop{Lip}}) . \label{sec4.1 comparison v1.2}
  \end{equation}
\end{theorem}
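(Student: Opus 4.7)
The strategy is to linearise the difference $\eta := \theta^1 - \theta^2$ via Lemma~\ref{sec4.1 technical lemma}, recast the resulting equation as a linear nonlinear-Young equation in $\eta$, and close the estimate by a Young--Gronwall argument modelled on the proof of Theorem~\ref{sec4.1 thm existence YDE}. First, Theorem~\ref{sec4.1 thm existence YDE} gives the uniform a priori bound $\llbracket \theta^i \rrbracket_{C^{\gamma}} \lesssim_{R,T} 1$; in the setting of the second version the ODE interpretation of~\eqref{sec4.1 perturbed ode} additionally yields $\llbracket \theta^i \rrbracket_{\mathrm{Lip}} \leqslant \| b^i \|_{L^{\infty}} \leqslant R$. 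Subtracting the two equations and adding and subtracting $\int_0^t T^w b^2(\mathd s, \theta^1_s)$ produces the decomposition
\[
\eta_t = \eta_0 + G_t + R_t, \qquad G_t := \int_0^t (T^w b^1 - T^w b^2)(\mathd s, \theta^1_s),
\]
with $R_t := \int_0^t T^w b^2(\mathd s, \theta^1_s) - \int_0^t T^w b^2(\mathd s, \theta^2_s)$.

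Next, I would apply Lemma~\ref{sec4.1 technical lemma} to $R_t$ with $A = T^w b^2$: in version~1 one takes $\nu = 1$ and $\rho = \gamma$, for which $\gamma + \nu \rho = 2\gamma > 1$; in version~2 one takes $\nu = 1/2$ and $\rho = 1$, for which $\gamma + \nu \rho = \gamma + 1/2 > 1$. The hypothesis $T^w b^2 \in C^{\gamma}_t C^2_x$ (respectively $C^{\gamma}_t C^{3/2}_x$) is exactly what is needed to realise $A \in C^{\gamma}_t C^{1+\nu}_x$. The lemma then gives $R_t = \int_0^t \eta_s \cdot \mathd V_s$ with $V \in C^{\gamma}_t \mathcal{L}$ and, after using the bounds on $\llbracket \theta^i \rrbracket$ from Step~1, $\| V \|_{C^{\gamma} \mathcal{L}} \lesssim_{R,T} 1$. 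Point~4 of Theorem~\ref{sec4.1 thm definition young integral} applied to the drift $T^w b^1 - T^w b^2 \in C^{\gamma}_t \mathrm{Lip}_x$ and $\theta = \theta^1$ yields
\[
\| G \|_{C^{\gamma}} \lesssim \| T^w b^1 - T^w b^2 \|_{C^{\gamma}_t \mathrm{Lip}} (1 + \llbracket \theta^1 \rrbracket_{C^{\rho}}) \lesssim_{R,T} \| T^w b^1 - T^w b^2 \|_{C^{\gamma}_t \mathrm{Lip}}.
\]

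Finally, one exploits the linearity of the equation $\eta_t = \eta_0 + G_t + \int_0^t \eta_s \cdot \mathd V_s$ in $\eta$. Applying point~3 of Theorem~\ref{sec4.1 thm definition young integral} to $A(t,x) = V_t \cdot x$ (so that $\llbracket A \rrbracket_{C^{\gamma} \mathrm{Lip}} = \llbracket V \rrbracket_{C^{\gamma} \mathcal{L}}$) gives, on any subinterval $[s,s+\Delta] \subset [0,T]$,
\[
\llbracket \eta \rrbracket_{C^{\gamma}[s, s+\Delta]} \leqslant \llbracket G \rrbracket_{C^{\gamma}} + \| V \|_{C^{\gamma}}\bigl( \| \eta \|_{C^{0}[s, s+\Delta]} + C \Delta^{\gamma} \llbracket \eta \rrbracket_{C^{\gamma}[s, s+\Delta]} \bigr).
\]
Choosing $\Delta = \Delta(R, T, \gamma)$ so small that $C \Delta^{\gamma} \| V \|_{C^{\gamma}} \leqslant 1/2$ absorbs the last term, after which $\| \eta \|_{C^{0}[s, s+\Delta]} \leqslant | \eta_s | + \Delta^{\gamma} \llbracket \eta \rrbracket_{C^{\gamma}[s, s+\Delta]}$ closes a short-time recursion; iterating over $\lceil T/\Delta \rceil$ intervals produces the announced estimate $\| \eta \|_{C^{\gamma}} \leqslant C(\gamma, T, R) (| \eta_0 | + \| T^w b^1 - T^w b^2 \|_{C^{\gamma}_t \mathrm{Lip}})$. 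The only real subtlety is the linearisation in the second step, for which exactly the extra half (respectively full) derivative built into the hypotheses is required; afterwards, the argument is a routine adaptation of the short-interval bootstrap used in Theorem~\ref{sec4.1 thm existence YDE}.
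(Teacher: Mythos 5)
Your proposal is correct and follows essentially the same argument as the paper: decompose the difference by adding and subtracting a cross term, linearise one half via Lemma~\ref{sec4.1 technical lemma}, bound the inhomogeneous half via point 4 of Theorem~\ref{sec4.1 thm definition young integral}, and close with a Young--Gronwall estimate. The only cosmetic differences are that the paper linearises around $A = T^w b^1$ with $\theta^2$ in the cross term (you linearise around $T^w b^2$ with $\theta^1$, which is the symmetric choice), and the paper invokes the appendix lemma on linear YDEs (Proposition~\ref{appendixA1 lemma bound linear YDE}) rather than re-deriving the short-interval recursion inline as you do.
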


\begin{proof}
  We show in detail the derivation of~{\eqref{sec4.1 comparison v1.1}} and
  briefly sketch the one of~{\eqref{sec4.1 comparison v1.2}} as the structure
  of the proof is the same. By the assumptions and Lemma~\ref{sec4.1 technical
  lemma} applied to $A = T^w b^1$, which is allowed for the choice $\nu = 1$,
  $\rho = \gamma$, the difference $v = \theta^1 - \theta^2$ satisfies
  \begin{align*}
    v_t & = \, v_0 + \left[ \int_0^t T^w b^1 (\mathd s, \theta^1_s) - \int_0^t
    T^w b^1 (\mathd s, \theta^2_s) \right] + \left[ \int_0^t T^w b^1 (\mathd
    s, \theta^2_s) - \int_0^t T^w b^2 (\mathd s, \theta^2_s) \right]\\
    & = v_0 + \int_0^t v_s \cdot \mathd V_s + \psi_t .
  \end{align*}
  This is a linear Young differential equation, for which standard estimates
  are available; Theorem~\ref{sec4.1 thm existence YDE}, Lemma~\ref{sec4.1
  technical lemma} and properties of nonlinear Young integral provide
  \[ \llbracket \theta^i \rrbracket_{C^{\gamma}} \lesssim \, 1, \quad \| V
     \|_{C^{\gamma} \mathcal{L}} \lesssim \| T^w b^1 \|_{C^{\gamma} C^2} (1 +
     \llbracket \theta^1 \rrbracket_{C^{\gamma}} + \llbracket \theta^2
     \rrbracket_{C^{\gamma}}) \lesssim 1, \]
  \[ \llbracket \psi \rrbracket_{C^{\gamma}} \lesssim \| T^w b^1 - T^w b^2
     \|_{C^{\gamma}_t \tmop{Lip}} \]
  where the constants appearing all depend on $\gamma, T, R$. Combining this
  estimates with Lemma~\ref{appendixA1 lemma bound linear YDE} from
  Appendix~\ref{appendixA1} yields the conclusion.
  
  The proof in the second case is analogue, but we have the additional
  estimate $\llbracket \theta^i \rrbracket_{\tmop{Lip}} \leqslant \| b^i
  \|_{L^{\infty}}$ coming from the ODE integral interpretation of the YDE and
  so we can apply as above Lemma~\ref{sec4.1 technical lemma} to $T^w b^1$
  this time for the choice $\nu = 1 / 2$, $\rho = 1$.
\end{proof}

\begin{remark}
  It follows immediately from the above result that if $T^w b \in C^{\gamma}_t
  C_x^2$ or $b \in L^{\infty}_{t, x}$ and $T^w b \in C^{\gamma}_t C^{3 /
  2}_x$, then for any $\theta_0 \in \mathbb{R}^d$ there exists a unique
  solution to the YDE~{\eqref{sec4.1 YDE}} and moreover the solution map
  $\theta_0 \mapsto \theta_{\cdot}$ is Lipschitz continuous w.r.t $\theta_0$;
  the solution constructed this way is also stable under approximation of $T^w
  b$ by other drifts $T^w \tilde{b}$, which can be combined with
  Lemma~\ref{sec3.1 lemma 3}, as we can take $\tilde{b} = b^{\varepsilon} =
  \rho^{\varepsilon} \ast b$ for some spatial mollifier $\rho^{\varepsilon}$.
\end{remark}

The above version of the Comparison Principle is of straightforward
application, as it only requires good regularity estimates on $T^w b$. The
next version is instead slightly more subtle and can be regarded as a
conditional Comparison Principle, as it allows to deduce estimates under less
regularity on $T^w b$ imposing the existence of a solution with suitable
properties; however, the existence of such solutions is not granted a priori
by the deterministic theory and in order to construct them probabilistic tools
will be needed, specifically Girsanov transform.

\begin{theorem}[Comparison Principle, Version 2]
  \label{sec4.1 comparison thm v2}Let $\gamma > 1 / 2$ and assume that $b^1$,
  $b^2$ are distributional drifts such that $T^w b^i \in C^{\gamma}_t
  \tmop{Lip}_x$ with $\| T^w b^i \|_{C^{\gamma} \tmop{Lip}} \leqslant R$. Let
  $\theta^i \in C^{\gamma}$, $i = 1, 2$ be solutions respectively of the YDEs
  \[ \theta_t^i = \theta_0^i + \int_0^t T^w b^i (\mathd s, \theta^i_s)  \quad
     \forall \, t \in [0, T] . \]
  and assume that $\theta^1$ is such that $T^{w + \theta^1} b \in C^{\gamma}_t
  \tmop{Lip}_x$ with $\| T^{w + \theta^1} b^1 \|_{C^{\gamma} \tmop{Lip}}
  \leqslant R$. Then there exists a constant $C = C (\gamma, T)$ such that
  \begin{equation}
    \| \theta^1_{\cdot} - \theta^2_{\cdot} \|_{C^{\gamma}} \leqslant C \exp (C
    R^{1 / \gamma})^{} (| \theta^1_0 - \theta^2_0 | + \| T^w b^1 - T^w b^2
    \|_{C^{\gamma}_t \tmop{Lip}}) . \label{sec4.1 comparison v2}
  \end{equation}
\end{theorem}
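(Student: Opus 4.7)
The plan is to use a shift trick to isolate the nonlinear part of the dynamics into a term depending on $T^{w+\theta^1} b^1$ (for which we have Lipschitz control by hypothesis), while the difference $b^1 - b^2$ enters only through a classical Young integral against the original shift $w$. Set $\tilde{\eta} \assign \theta^2 - \theta^1 \in C^\gamma$ and exploit the elementary identity
\begin{equation*}
\int_0^t T^w f(\mathd s, \theta^1_s + h_s) \,=\, \int_0^t T^{w+\theta^1} f(\mathd s, h_s),
\end{equation*}
valid for smooth $f$ (both sides reduce to $\int_0^t f(s, \theta^1_s + h_s + w_s)\,\mathd s$ by point~2 of Theorem~\ref{sec4.1 thm definition young integral}) and extended to our distributional setting by mollifying $b^1$ and passing to the limit via Lemma~\ref{sec3.1 lemma 3} together with the continuity assertion of Theorem~\ref{sec4.1 thm definition young integral}(4). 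Writing $b^2 = b^1 + (b^2 - b^1)$ and applying the identity, the equation for $\tilde{\eta}$ takes the form
\begin{equation*}
\tilde{\eta}_t \,=\, \tilde{\eta}_0 + I_t + J_t, \quad I_t \assign \int_0^t \bigl[T^{w+\theta^1} b^1(\mathd s, \tilde{\eta}_s) - T^{w+\theta^1} b^1(\mathd s, 0)\bigr], \quad J_t \assign \int_0^t T^w(b^2 - b^1)(\mathd s, \theta^2_s).
\end{equation*}

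Setting $D \assign \|T^w b^1 - T^w b^2\|_{C^\gamma_t \tmop{Lip}}$, the term $J$ is handled directly by point~4 of Theorem~\ref{sec4.1 thm definition young integral} combined with the a priori bound $\llbracket\theta^2\rrbracket_{C^\gamma} \lesssim 1 + R^2$ from Theorem~\ref{sec4.1 thm existence YDE}, giving $\llbracket J\rrbracket_{C^\gamma} \lesssim (1 + R^2) D$. For $I$, consider the germ $\mu_{s,t} \assign T^{w+\theta^1} b^1_{s,t}(\tilde{\eta}_s) - T^{w+\theta^1} b^1_{s,t}(0)$; the Lipschitz hypothesis gives $|\mu_{s,t}| \leq R|t-s|^\gamma |\tilde{\eta}_s|$, while additivity in time of $T^{w+\theta^1} b^1$ yields $\delta\mu_{s,u,t} = T^{w+\theta^1} b^1_{u,t}(\tilde{\eta}_s) - T^{w+\theta^1} b^1_{u,t}(\tilde{\eta}_u)$, bounded by $R|t-s|^{2\gamma}\llbracket\tilde{\eta}\rrbracket_{C^\gamma}$. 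Sewing (point~3 of Theorem~\ref{sec4.1 thm definition young integral}) then delivers
\begin{equation*}
|I_{s,t}| \,\leq\, R|t-s|^\gamma |\tilde{\eta}_s| + C R|t-s|^{2\gamma}\llbracket\tilde{\eta}\rrbracket_{C^\gamma, [s,t]}.
\end{equation*}

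To close the estimate, I would localize on intervals of length $\Delta$ with $4 C R \Delta^\gamma \leq 1$: combining the bounds on $I$ and $J$, dividing by $|t-s|^\gamma$, and absorbing the $\llbracket\tilde{\eta}\rrbracket$ term into the left-hand side produces the recursion
\begin{equation*}
|\tilde{\eta}_{s+\Delta}| \,\leq\, (1 + 4 R \Delta^\gamma)\, |\tilde{\eta}_s| + C\Delta^\gamma (1 + R^2) D.
\end{equation*}
Choosing $\Delta \sim R^{-1/\gamma}$ and iterating over $N = T/\Delta \sim R^{1/\gamma}$ subintervals yields $(1 + 4R\Delta^\gamma)^N \sim \exp(C R^{1/\gamma})$, and polynomial factors of $R$ in front of $D$ are absorbed into the exponential, giving $|\tilde{\eta}_T| \lesssim \exp(C R^{1/\gamma})(|\tilde{\eta}_0| + D)$. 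The corresponding bound on $\llbracket\tilde{\eta}\rrbracket_{C^\gamma, [0,T]}$ follows from the local estimate $\llbracket\tilde{\eta}\rrbracket_{C^\gamma, [s, s+\Delta]} \lesssim R \sup|\tilde{\eta}| + \llbracket J\rrbracket$ combined with the concatenation inequality $\llbracket f\rrbracket_{C^\gamma, [0,T]} \lesssim \llbracket f\rrbracket_{\gamma, \Delta}(1 + \Delta^{\gamma - 1})$ already used in the proof of Theorem~\ref{sec4.1 thm existence YDE}.

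The main technical obstacle is justifying the shift identity when $b^1$ is only a distribution: one has to verify that both nonlinear Young integrals are stable under mollification of the drift, which amounts to showing that $T^{w+\theta^1} b^{1,\varepsilon} \to T^{w+\theta^1} b^1$ locally in some $C^{\gamma'}_t C^{\nu'}_x$ with $\gamma' + \nu' \gamma > 1$. This is a consequence of Lemma~\ref{sec3.1 lemma 3} applied to the shifted perturbation $w + \theta^1$ (which lies in $C^\gamma$ and hence in $L^\infty$) together with the continuity statement of Theorem~\ref{sec4.1 thm definition young integral}(4); everything else in the argument is a deterministic Gr\"onwall-type computation.
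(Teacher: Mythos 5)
Your decomposition of $\tilde{\eta}=\theta^2-\theta^1$ into a term driven by $T^{w+\theta^1}b^1$ (after the shift identity) plus a remainder $J$ involving $T^w(b^2-b^1)$ is exactly the one used in the paper's proof, and your shift identity is precisely Lemma~\ref{sec4.1 technical lemma 2}, proved there by the same mollification argument you sketch. The only difference is cosmetic: the paper closes by invoking the a priori estimate of Lemma~\ref{appendixA1 lemma bound nonlinear YDE} for nonlinear YDEs with a vanishing drift at the origin, whereas you unpack that lemma and re-derive its Gr\"onwall-type iteration by hand via the sewing bound on the germ $\mu_{s,t}$; the content is the same.
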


The proof requires the following technical lemma.

\begin{lemma}
  \label{sec4.1 technical lemma 2}Let $w$, $\theta$ be such that $T^w b$,
  $T^{w + \theta} b \in C^{\gamma}_t \tmop{Lip}_x$ and $\theta \in C^{1 /
  2}_t$ for some $\gamma > 1 / 2$. Then for any $\tilde{\theta} \in C^{1 /
  2}_t$ it holds
  \begin{equation}
    \int_0^{\cdot} T^{w + \theta} b (\mathd s, \tilde{\theta}_s) =
    \int_0^{\cdot} T^w b (\mathd s, \tilde{\theta}_s + \theta_s) .
    \label{sec4.1 eq techlem}
  \end{equation}
\end{lemma}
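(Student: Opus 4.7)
My plan is to establish the identity first for smooth $b$, where both sides collapse to the same classical Riemann integral, and then extend to the general case by mollification, leveraging the continuity of the nonlinear Young integral.

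\textbf{Step 1 (smooth case).} Let $b^\varepsilon = \rho^\varepsilon \ast b$ be a standard spatial mollification, so $b^\varepsilon$ is smooth in the spatial variable. Then
\[
 T^{w+\theta} b^\varepsilon(t,x) = \int_0^t b^\varepsilon(r, x + w_r + \theta_r)\,\mathd r, \qquad T^w b^\varepsilon(t,x) = \int_0^t b^\varepsilon(r, x+w_r)\,\mathd r,
\]
both of which are $C^1$ in $t$ with continuous $t$-derivative (since $w,\theta$ are continuous). By point 2 of Theorem~\ref{sec4.1 thm definition young integral} the corresponding nonlinear Young integrals are actually Riemann integrals, and a direct computation gives
\[
 \int_0^t T^{w+\theta} b^\varepsilon(\mathd s, \tilde\theta_s) = \int_0^t b^\varepsilon(s, \tilde\theta_s + w_s + \theta_s)\,\mathd s = \int_0^t T^w b^\varepsilon(\mathd s, \tilde\theta_s + \theta_s),
\]
so \eqref{sec4.1 eq techlem} holds with $b$ replaced by $b^\varepsilon$.

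\textbf{Step 2 (passing to the limit).} Fix $R$ large enough that the ranges of $\tilde\theta$ and $\tilde\theta+\theta$ lie in $B_R$. By Lemma~\ref{sec3.1 lemma 3}, for any sufficiently small $\delta>0$ we have
\[
 T^w b^\varepsilon \longrightarrow T^w b, \qquad T^{w+\theta} b^\varepsilon \longrightarrow T^{w+\theta} b \qquad \text{in } C^{\gamma-\delta}([0,T]; C^{1-\delta}(B_R))
\]
as $\varepsilon\to 0$. Choose $\delta$ small enough that $\gamma':=\gamma-\delta>1/2$ and $\gamma' + (1-\delta)\cdot\tfrac12 > 1$; this is possible since $\gamma>1/2$. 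Both $\tilde\theta$ and $\tilde\theta+\theta$ belong to $C^{1/2}_t$, so the hypotheses of Theorem~\ref{sec4.1 thm definition young integral} are met for the Young integrals on both sides of \eqref{sec4.1 eq techlem}. By the continuity statement in point 4 of that theorem (applied locally, on $B_R$), we obtain
\[
 \int_0^\cdot T^{w+\theta} b^\varepsilon(\mathd s, \tilde\theta_s) \;\longrightarrow\; \int_0^\cdot T^{w+\theta} b(\mathd s, \tilde\theta_s),
\]
and analogously for the right-hand side with integrator $\tilde\theta+\theta$ and coefficient $T^w b^\varepsilon$. Since the $\varepsilon$-equality from Step~1 is preserved in the limit, \eqref{sec4.1 eq techlem} follows.

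\textbf{Expected obstacle.} The only genuinely delicate point is matching the regularity needed for the Young integral after mollification: Lemma~\ref{sec3.1 lemma 3} loses an arbitrarily small amount of space-time regularity, so one must verify that there remains enough margin above the threshold $\gamma'+\nu'/2>1$ to keep the nonlinear Young integral well-defined throughout the approximation. Since the hypothesis $\gamma>1/2$ combined with the $\mathrm{Lip}$-regularity $\nu=1$ of $T^w b$ and $T^{w+\theta}b$ leaves a strict slack, the choice of $\delta$ above works. The local (rather than global) nature of the convergence is harmless because $\tilde\theta$ takes values in a fixed compact set over $[0,T]$.
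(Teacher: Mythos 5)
Your overall strategy---establish the identity for mollified $b^\varepsilon$ and pass to the limit via Lemma~\ref{sec3.1 lemma 3} and continuity of the Young integral---matches the final step of the paper's proof, and Step~2 is sound. But Step~1 has a genuine gap.

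You claim that $T^{w+\theta}b^\varepsilon$ and $T^{w}b^\varepsilon$ are $C^1$ in $t$ with continuous $t$-derivative, justifying this by the continuity of $w$ and $\theta$, so that point~2 of Theorem~\ref{sec4.1 thm definition young integral} reduces the Young integral to a Riemann integral. This is false in general: $b^\varepsilon=\rho^\varepsilon\ast b$ is only a \emph{spatial} mollification, so $b^\varepsilon$ is smooth in $x$ but inherits whatever time regularity $b$ had, which in the standing convention of Section~\ref{sec4.1} is only $b\in L^q_t E$ with $q>2$. Consequently $\partial_t T^{w+\theta}b^\varepsilon(t,x)=b^\varepsilon(t,x+w_t+\theta_t)$ exists only a.e.\ in $t$ and need not be continuous, so the hypothesis of point~2 is not met, and you cannot simply declare the two sides to be the same Riemann integral.

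The paper resolves exactly this by inserting an intermediate case: for $b$ which is merely $L^1_t C^\alpha_x$ for some $\alpha>0$ (which covers $b^\varepsilon$), one shows directly that the Riemann--Stieltjes sums defining the two Young integrals differ by
\[
\sum_i \int_{t_i}^{t_{i+1}} \bigl| b(s, w_s+\theta_s+\tilde\theta_{t_i})- b(s, w_s+\theta_{t_i}+\tilde\theta_{t_i})\bigr|\,\mathd s
\;\leqslant\; \|\theta\|_{C^{1/2}}\,|\Pi_n|^{\alpha/2}\int_0^t\|b(s)\|_{C^\alpha_x}\mathd s \;\longrightarrow\; 0,
\]
using only spatial H\"older regularity of $b$, not time continuity. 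This estimate is the crux that your argument is missing; without it, Step~1 only applies to $b$ that is already jointly continuous in $(t,x)$, in which case the whole lemma is immediate and no approximation is needed. Once you supply this Riemann-sum estimate for $b^\varepsilon$, the rest of your mollification argument goes through as written.
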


\begin{proof}
  If $b$ is jointly continuous in $(t, x)$ then the result is straightforward
  by the equivalence between the Young integral formulation and the standard
  integral formulation. Next, if $b$ satisfies the hypothesis and in addition
  $b \in L^1_t C^{\alpha}_x$ for some $\alpha > 0$, then for any $t > 0$ and
  for any sequence of partitions $\Pi_n$ of $[0, t]$ such that $| \Pi_n |
  \rightarrow 0$ it holds
  \begin{eqnarray*}
    \left| \int_0^t T^{w + \theta} b (\mathd s, \tilde{\theta}_s) \right. & -
    & \left. \int_0^t T^w b (\mathd s, \theta_s + \tilde{\theta}_s) \right|\\
    & = & \lim_{n \rightarrow \infty} \left| \sum_i \int_{t_i}^{t_{i + 1}} b
    (s, w_s + \theta_s + \tilde{\theta}_{t_i}) - b (s, w_s + \theta_{t_i} +
    \tilde{\theta}_{t_i}) \mathd s \right|\\
    & \leqslant & \lim_{n \rightarrow \infty} \sum_i \int_{t_i}^{t_{i + 1}}
    \| b (s) \|_{C^{\alpha}_x}  | \theta_s - \theta_{t_i} |^{\alpha} \mathd
    s\\
    & \leqslant & \| \theta \|_{C^{1 / 2}} \lim_{n \rightarrow \infty} |
    \Pi_n |^{\alpha / 2} \sum_i \int_{t_i}^{t_{i + 1}} \| b (s)
    \|_{C^{\alpha}_x} \mathd s = 0
  \end{eqnarray*}
  which proves the statement in this case. For a general $b$, consider
  $b^{\varepsilon} = \rho^{\varepsilon} \ast b$, where $\rho^{\varepsilon}$ is
  a sequence of spatial mollifiers; for $b^{\varepsilon}$ by the previous step
  identity~{\eqref{sec4.1 eq techlem}} is true and by Lemma~\ref{sec3.1 lemma
  3} $T^w b^{\varepsilon} \rightarrow T^w b$ locally in $C^{\gamma - \delta}_t
  C^{1 - \delta}_x$, similarly for $T^{w + \theta} b^{\varepsilon} \rightarrow
  T^{w + \theta} b$. Choosing $\delta$ small such that $\gamma - \delta + (1 -
  \delta) \gamma > 1$ and using the continuity of Young integral we obtain the
  conclusion in the general case.
\end{proof}

\begin{proof}[of Theorem~\ref{sec4.1 comparison thm v2}]
  The idea of the proof is the same as that of Theorem~\ref{sec4.1 comparison
  thm v1}, and it is based on finding a Young differential equation for $v =
  \theta^2 - \theta^1$, only we now need to exploit the additional information
  on $T^{w + \theta^1} b^1$. By the assumptions combined with
  Lemma~\ref{sec4.1 technical lemma 2}, $v$ satisfies
  \begin{eqnarray*}
    v_t & = & v_0 + \left[ \int_0^t T^w b^1 (\mathd s, \theta^2_s) - \int_0^t
    T^w b^1 (\mathd s, \theta^1_s) \right] + \left[ \int_0^t T^w b^2 (\mathd
    s, \theta^2_s) - \int_0^t T^w b^1 (\mathd s, \theta^2_s) \right]\\
    & = & v_0 + \int_0^t T^{w + \theta^1} (\mathd s, v_s) - \int_0^t T^{w +
    \theta^1} (\mathd s, 0) + \psi_t\\
    & = & v_0 + \int_0^t A (\mathd s, v_s) + \psi_t,
  \end{eqnarray*}
  where $\psi_t$ is defined in the usual way and $A (t, x) = T^{w + \theta^1}
  (t, x) - T^{w + \theta^1} (t, 0)$, so that $A \in C^{\gamma}_t \tmop{Lip}_x$
  with $\llbracket A \rrbracket_{C^{\gamma} \tmop{Lip}} = \llbracket T^{w +
  \theta^1} b \rrbracket_{C^{\gamma} \tmop{Lip}}$ and $A (t, 0) = 0$ for all
  $t \in [0, T]$. We can then apply the estimates from Lemma~\ref{appendixA1
  lemma bound nonlinear YDE} from Appendix~\ref{appendixA1} to deduce
  \[ \| v \|_{C^{\gamma}} \lesssim \exp (C \llbracket A
     \rrbracket_{C^{\gamma}}^{1 / \gamma}) (| v_0 | + \llbracket \psi
     \rrbracket_{C^{\gamma}}) \]
  for some constant $C = C (\gamma, T)$ which together with the estimate
  \[ \llbracket \psi \rrbracket_{C^{\gamma}} \lesssim \| T^w b^2 - T^w b^1
     \|_{C^{\gamma} \tmop{Lip}} (1 + \llbracket \theta^1
     \rrbracket_{C^{\gamma}} + \llbracket \theta^2 \rrbracket_{C^{\gamma}})
     \lesssim \| T^w b^2 - T^w b^1 \|_{C^{\gamma} \tmop{Lip}} (1 + R^2) \]
  yields the conclusion.
\end{proof}

\begin{remark}
  \label{sec4.1 final remark}It follows immediately from Theorem~\ref{sec4.1
  comparison thm v2} that, if there exists a solution $\theta$ to the YDE
  associated to $T^w b$ with initial data $\theta_0$ such that $T^{w + \theta}
  \in C^{\gamma}_t \tmop{Lip}_x$, then this is necessarily the unique solution
  with initial data $\theta_0$ and it is stable under perturbation. This
  provides a nice ``duality principle'': existence of solutions is granted if
  $T^w b \in C^{\gamma}_t \tmop{Lip}_x$, uniqueness instead if there exists a
  solution with similar averaging properties. In the case $b$ is continuous,
  so that by Peano Theorem existence of a solution $x = w + \theta \in w +
  \tmop{Lip}$ is automatic, the statement can be rephrased as the fact that
  uniqueness for the Cauchy problem associated to $x_0$ holds under the
  condition $T^x b \in C^{\gamma}_t \tmop{Lip}_x$ for some $\gamma > 1 / 2$.
\end{remark}

\begin{remark}
  \label{sec4.1 remark localization}For the sake of simplicity we considered
  from the start $T^w b \in C^{\gamma}_t C^{\beta}_x$ in order to develop a
  global theory in space, but many results from Section~\ref{sec4} can be
  localised, thanks to Remark~\ref{sec3.1 remark localization}, in a similar
  fashion to what is done in Section~2.3 of~{\cite{catelliergubinelli}}. For
  instance local existence holds for $T^w b \in C^{\gamma}_t
  \tmop{Lip}_{\tmop{loc}}$, while local existence and uniqueness holds for
  $T^w b \in C^{\gamma}_t C^2_{\tmop{loc}}$; in the second version of the
  Comparison Principle, if there exists a solution $x$ defined on $[0,
  T^{\ast})$ such that $T^x b \in C^{\gamma}_t \tmop{Lip}_{\tmop{loc}}$, then
  it is the unique solution on $[0, T^{\ast})$. Analogue considerations hold
  for the results from Section~\ref{sec4.3} on the regularity of the flow.
\end{remark}

\subsection{Prevalence for the Cauchy problem}\label{sec4.2}

In this section we focus on establishing conditions under which, for a given
drift $b$ and a given initial datum $x_0 \in \mathbb{R}^d$, for almost every
$\varphi \in C^{\delta}_t$ the Cauchy problem (from now on referred to as
$(\tmop{CP}_{x_0})$)
\begin{equation}
  x_t = x_0 + \int_0^t b (s, x_s) + \varphi_t \label{sec4.2 cauchy problem}
\end{equation}
is well-posed, for suitable values of $\delta$. Here by well-posedness for
$(\tmop{CP}_{x_0})$ we mean the following: $\varphi$ is such that $T^{\varphi}
b \in C^{\gamma}_t \tmop{Lip}_x$ for some $\gamma > 1 / 2$, so that it makes
sense to talk about solutions to~{\eqref{sec4.2 cauchy problem}} in the sense
of Definition~\ref{sec4.1 defn solution}, and there exists a unique such
solution in the class $x \in \varphi + C^{\gamma}$. The main results we are
going to prove are the following.

\begin{theorem}
  \label{sec4.2 thm main 1}Let $b \in C^{\alpha}_x$ for some $\alpha \in (-
  \infty, 1)$, $b$ being compactly supported, and let $x_0 \in \mathbb{R}^d$
  be fixed. Let $\delta \in [0, 1)$ satisfy
  \[ \delta < \frac{1}{2 (1 - \alpha)} . \]
  Then for almost every $\varphi \in C^{\delta}_t$ the Cauchy problem
  $(\tmop{CP}_{x_0})$ is well-posed.
\end{theorem}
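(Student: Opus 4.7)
The plan is to use the law $\mu^H$ of a fractional Brownian motion $W^H$ as the transverse probability measure witnessing prevalence, with $H$ chosen in the interval $(\delta,\tfrac{1}{2(1-\alpha)})$; this interval is non-empty by hypothesis and $\mu^H$ is tight on $C^\delta_t$ because $H>\delta$. One first checks that
\[
\mathcal{A}=\{\varphi\in C^\delta_t:(\mathrm{CP}_{x_0})\text{ is well-posed}\}
\]
is Borel, by an argument in the spirit of Lemma~\ref{sec3.1 lemma 4}, decomposing well-posedness into a countable intersection of closed conditions on the norms of $T^\varphi b$ and on uniqueness of $\theta$. The goal then reduces to proving $\mathbb{P}(\varphi+W^H\in\mathcal{A})=1$ for every fixed $\varphi\in C^\delta_t$.

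For the existence half, Theorem~\ref{sec3.3 thm prevalence averaging} together with Remark~\ref{sec3.3 remark embedding}, applied to the translated drift $\tilde b_t=\tau^{\varphi_t}b$ viewed as an element of $L^\infty_t B^\alpha_{p,p}$ for large $p<\infty$ (permissible since $b$ has compact support in space), yields $T^{\varphi+W^H}b=T^{W^H}\tilde b\in C^\gamma_t C^1_x$ $\mu^H$-almost surely, for some $\gamma>1/2$; the condition $H<1/(2(1-\alpha))$ is exactly what makes Remark~\ref{sec3.3 remark embedding} produce $C^1_x$-regularity in this compactly supported case. Theorem~\ref{sec4.1 thm existence YDE} then produces at least one solution $\theta\in C^\gamma$ of the YDE with initial datum $x_0$, which is the existence half of well-posedness.

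For uniqueness I would invoke Comparison Principle Version~2 (Theorem~\ref{sec4.1 comparison thm v2}), whose hypothesis requires \emph{some} solution $\theta$ to further satisfy $T^{\varphi+W^H+\theta}b\in C^\gamma_t\tmop{Lip}_x$. The device to ensure this is a Girsanov transform: formally the process $x:=\varphi+W^H+\theta$ solves the SDE $dx_t=b(t,x_t)dt+d\varphi_t+dW^H_t$ with initial datum $x_0$, so shifting $W^H$ by the adapted drift $t\mapsto\int_0^t b(s,x_s)ds$ produces an equivalent measure $\mathbb{Q}\sim\mathbb{P}$ under which $x-x_0-\varphi$ is once again an fBm of index $H$. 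Consequently, under $\mathbb{Q}$, $T^x b$ has the same law as $T^{x_0+\varphi+W^H}b=\tau^{x_0}T^{\varphi+W^H}b$ under $\mathbb{P}$, and in particular belongs to $C^\gamma_t C^1_x$ $\mathbb{Q}$-a.s.\ by the previous step. Equivalence $\mathbb{Q}\sim\mathbb{P}$ transfers this back to a $\mathbb{P}$-a.s.\ assertion, and Theorem~\ref{sec4.1 comparison thm v2} then yields uniqueness of the solution starting at $x_0$.

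The hard part is the rigorous implementation of Girsanov at the distributional level, since $b$ is not a function and $\int_0^t b(s,x_s)\,ds$ only acquires meaning as the Young object $T^xb(t)$. The standard workaround is to regularise $b^\varepsilon=\rho^\varepsilon\ast b$, run the above existence/Girsanov scheme for each $\varepsilon$, and pass to the limit. Crucially, the exponential integrability bound~\eqref{sec3.3 thm exp integrability bessel} (in its sharpened form, Corollary~\ref{sec3.3 corollary improvement}) supplies Novikov-type conditions for the Dolean-Dade densities uniformly in $\varepsilon$, while the stability $T^w b^\varepsilon\to T^w b$ of Lemma~\ref{sec3.1 lemma 3} together with the continuity of the nonlinear Young integral from Theorem~\ref{sec4.1 thm definition young integral} allows one to identify the $\varepsilon\to 0$ limit with the desired measure change and solution $\theta$ of the limit YDE.
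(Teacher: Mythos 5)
Your overall route matches the paper's: translate the drift to reduce to $\varphi=0$, use the law $\mu^H$ of fBm as the transverse measure with $H\in(\delta,\tfrac1{2(1-\alpha)})$, obtain spatial $C^1_x$ regularity of $T^{W^H}\tilde b$ from the averaging estimates, and then combine Girsanov with Comparison Principle Version~2 for uniqueness. However, there is a genuine gap in the Borel measurability step. You propose to work directly with $\mathcal{A}=\{\varphi: (\mathrm{CP}_{x_0})\text{ is well-posed}\}$ and to decompose ``well-posedness'' into a countable intersection of closed conditions, \emph{including} ``uniqueness of $\theta$''. But uniqueness of solutions to a Young differential equation is \emph{not} a closed, and not even an obviously Borel, condition under $L^1$ or $C^0$ convergence of the driving path $w$: there is no a priori stability of uniqueness along a sequence $w_n\to w$. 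The paper circumvents this precisely by \emph{not} trying to show the well-posedness set itself is Borel. Instead it introduces the smaller set
\[
\mathcal{A}_\gamma=\bigl\{w: T^wb\in C^\gamma_t\tmop{Lip}_x,\ \exists\, x\in w+C^\gamma\text{ solution with }T^xb\in C^\gamma_t\tmop{Lip}_x\bigr\},
\]
proves that $\mathcal{A}_\gamma$ is Borel by writing it as a countable union of sets defined by \emph{norm bounds only} (which are closed by Ascoli--Arzel\`a plus the Fatou property, Lemma~\ref{sec4.2.1 lemma borel set uniqueness}), and then uses Comparison Principle Version~2 (Theorem~\ref{sec4.1 comparison thm v2}) to show $\mathcal{A}_\gamma$ is \emph{contained} in the well-posedness set. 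Prevalence of a subset yields prevalence of the superset. Without a replacement of this kind your Borel measurability argument does not go through.

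A secondary issue is the circularity in your Girsanov step: you shift $W^H$ by the drift $t\mapsto\int_0^t b(s,x_s)\,\mathrm{d}s$ where $x$ is ``a solution,'' but the existence of a \emph{suitable adapted} solution $x$ with $T^xb\in C^\gamma_t\tmop{Lip}_x$ is exactly what one needs to produce. The paper's version (Lemma~\ref{sec4.2.3 criterion path-by-path uniqueness}) shifts instead by the explicit process $h_\cdot=T^{W^H}b(\cdot,x_0)=\int_0^\cdot b(s,x_0+W^H_s)\,\mathrm{d}s$, which depends only on $W^H$ and $x_0$ and carries no circular dependence; the process $X=x_0+W^H$ then becomes a solution under the Girsanov-transformed measure. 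The mollification scheme you sketch would in effect reconstruct this argument, but as stated the step is not closed.
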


\begin{theorem}
  \label{sec4.2 thm main 2}Let $b \in C^{\alpha}_x$ for some $\alpha \in (-
  \infty, 1)$, $b$ being compactly supported, and let $x_0 \in \mathbb{R}^d$
  be fixed. Let $\mu^H$ denote the law of fBm of parameter $H$ and suppose
  that
  \[ \alpha > 1 - \frac{1}{2 H} . \]
  Then path\mbox{-}by\mbox{-}path uniqueness holds for $(\tmop{CP}_{x_0})$ and
  $w$ sampled according to $\mu^H$. Moreover there exists $\gamma > 1 / 2$
  which only depends on $\alpha$ such that
  \[ \mu^H \left( w \, : T^w b \in C^{\gamma}_t \tmop{Lip}_x, \,\exists \text{ a solution $x \in w + C^{\gamma}$ s.t. } T^x b \in C^{\gamma}_t
     \tmop{Lip}_x \right) = 1. \]
\end{theorem}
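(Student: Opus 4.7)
The plan is to combine the regularity estimates for averaged fields from Section~\ref{sec3.3} with a Girsanov-type change of measure for fBm, in order to exhibit a solution $x = w + \theta$ whose own trajectory averages $b$ to a Lipschitz field; once such a solution has been produced, Remark~\ref{sec4.1 final remark} (the second version of the Comparison Principle) yields both existence and path-by-path uniqueness at no extra cost.

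First I would verify the ``$T^w b \in C^\gamma_t \tmop{Lip}_x$'' half of the statement. Since $b \in C^\alpha_x = B^\alpha_{\infty,\infty}$ has compact support, it can be embedded into $L^q_t B^\alpha_{p,p}$ for arbitrarily large $p,q<\infty$; the hypothesis $\alpha > 1 - 1/(2H)$ then leaves room to pick $\rho > 1-\alpha$ and $p$ large enough so that $H\rho + 1/q < 1/2$ and $\alpha + \rho - d/p > 1$. Theorem~\ref{sec3.3 thm averaging besov} (equivalently, Remark~\ref{sec3.3 remark embedding}) combined with Besov embedding then delivers some $\gamma > 1/2$, depending only on $\alpha$, such that $T^{W^H} b \in C^\gamma_t \tmop{Lip}_x$ almost surely, together with the full-range exponential integrability of Corollary~\ref{sec3.3 corollary improvement}.

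The existence of a good solution is then produced by a Girsanov shift. On a probability space $(\Omega,\mathcal{F},\mathbb{P})$ carrying a two-sided Brownian motion $B$, represent $W^H_t = \int_0^t K_H(t,s)\,\mathd B_s$ via the Molchan--Golosov kernel, set $Y_t = x_0 + W^H_t$ and let $\theta_t := T^{W^H}b(t,x_0) \in C^\gamma_t$. Choose $g$ such that $\int_0^t K_H(t,s)\,g_s\,\mathd s = \theta_t$ (the inverse Molchan--Golosov transform of $\theta$), and define $\mathbb{Q}$ via $\mathd\mathbb{Q}/\mathd\mathbb{P} = \mathcal{E}(-\int_0^T g_s \cdot \mathd B_s)$. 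Under $\mathbb{Q}$, the process $\tilde W^H_t := W^H_t - \theta_t$ is a $\mathbb{Q}$-fBm of parameter $H$, and
\[
Y_t = x_0 + \theta_t + \tilde W^H_t = x_0 + \int_0^t b(s, Y_s)\,\mathd s + \tilde W^H_t,
\]
the $b$-integral being interpreted in the nonlinear Young sense of Definition~\ref{sec4.1 defn solution}. Hence, under $\mathbb{Q}$, $Y$ is a solution with driving path $w = \tilde W^H$ and shift $Y - \tilde W^H = x_0 + \theta \in C^\gamma$. Since $T^Y b(t,y) = T^{W^H} b(t, y+x_0)$ is merely a spatial translate of $T^{W^H}b$, it has identical regularity, and the equivalence $\mathbb{Q} \sim \mathbb{P}$ transports the $\mathbb{P}$-a.s.\ property $T^Y b \in C^\gamma_t \tmop{Lip}_x$ from Step~1 to a $\mathbb{Q}$-a.s.\ one. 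Because the event in the theorem depends only on $w$, and $\tilde W^H$ has $\mathbb{Q}$-law $\mu^H$, we conclude that it has $\mu^H$-probability~$1$; path-by-path uniqueness then follows from Remark~\ref{sec4.1 final remark}.

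The main obstacle is the Girsanov step itself when $H \neq 1/2$. The drift $g$ is essentially a fractional derivative of $\theta$ of order $1/2 - H$, so Novikov's condition requires exponential moments of $\int_0^T |g_s|^2\,\mathd s$: this is precisely where the full-range exponential integrability of Corollary~\ref{sec3.3 corollary improvement}, valid for every $\lambda > 0$, is indispensable. One also has to check that the $\gamma$-H\"older regularity of $\theta$ with $\gamma > 1/2$ is enough to make the inverse Molchan--Golosov transform well-defined and to deliver the required $L^2$ bounds on $g$, a delicate point specific to the range $H < 1/2$.
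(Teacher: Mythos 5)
Your outline reproduces the paper's strategy faithfully: establish $T^{W^H}b \in C^\gamma_t \tmop{Lip}_x$ a.s.\ via Theorem~\ref{sec3.3 thm averaging besov} and Besov embedding, then exhibit a solution $Y = x_0 + W^H$ whose own average $T^Y b$ is Lipschitz in space by Girsanov-shifting $\tilde W^H = W^H - h$ with $h = T^{W^H}b(\cdot, x_0)$, and finally invoke the conditional comparison principle (Remark~\ref{sec4.1 final remark}, Theorem~\ref{sec4.1 comparison thm v2}) to close. This is precisely the paper's Lemma~\ref{sec4.2.3 criterion path-by-path uniqueness} applied in the proof of Theorem~\ref{sec4.2.3 thm path-by-path uniqueness}.

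There is, however, a real gap in the Girsanov step which you flag but do not resolve, and it is the heart of the matter. To apply Theorem~\ref{sec4.2.2 theorem girsanov transform} you need $h \in C^\beta_t$ with $\beta > H + 1/2$, because $K_H^{-1}$ behaves like a fractional derivative of order $H + 1/2$ (not $1/2 - H$ as you wrote; compare Lemma~\ref{sec4.2.2 lemma fractional derivative} and the formula~\eqref{sec4.2.2 inverse operator}). The averaging estimate only gives $h \in C^\gamma_t$ with some $\gamma$ marginally above $1/2$, which for most $H < 1/2$ is strictly less than $H + 1/2$. The missing ingredient is an interpolation between the two scales in which $T^{W^H}b$ lives: on the one hand $T^{W^H}b \in \tmop{Lip}_t C^\alpha_x$ (trivially, since $b \in L^\infty_t B^\alpha_{\infty,\infty}$), on the other hand $T^{W^H}b \in C^{1/2+}_t C^{\alpha + 1/(2H) - \varepsilon}_x$. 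Interpolating with weight $\theta \in (0, 1-2H)$ in the second space produces time regularity $1 - \theta/2 > H + 1/2$ while keeping the spatial regularity $\alpha + \theta/(2H) - \varepsilon\theta$ positive — and the two constraints on $\theta$ can be met simultaneously precisely because $\alpha > 1 - 1/(2H)$ with strict inequality. The exponent $\theta < 1$ then also delivers the full-range exponential integrability of $\|h\|_{C^\beta}^2$ needed in Theorem~\ref{sec4.2.2 theorem girsanov transform}; you ascribe that to Corollary~\ref{sec3.3 corollary improvement}, but the improvement from small-$\lambda$ to all-$\lambda$ already comes for free from the subunit exponent in the interpolation bound, which is how the paper phrases it. Without this interpolation step your argument does not actually verify the hypotheses of the Girsanov theorem, so the ``$\exists$ a solution'' half of the event would not be established.
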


In the second statement we have used the terminology
``path\mbox{-}by\mbox{-}path uniqueness'' as it appears frequently in
regularisation by noise results, see~{\cite{flandoli}}, but in the framework
introduced above it just amounts to stating that there exists $\gamma > 1 / 2$
such that
\[ \mu^H \left( w \, : T^w b \in C^{\gamma}_t \tmop{Lip}_x \text{ and
   $(\tmop{CP}_{x_0})$ is well-posed} \right) = 1. \]
The section is organised as follows: we first prove Theorem~\ref{sec4.2 thm
main 1} in Section~\ref{sec4.2.1} relying on the validity of
Theorem~\ref{sec4.2 thm main 2}; then we pass to the proof of the latter,
which is based on an application of Theorem~\ref{sec4.1 comparison thm v2} in
combination with Girsanov transform for fBm, which is introduced in
Section~\ref{sec4.2.2}. The proof of Theorem~\ref{sec4.2 thm main 2} is
completed in Section~\ref{sec4.2.3}, along with several other results of the
same nature. We leave the details to the following subsections, but let us
point out already here that we will exploit crucially the general principle

\begin{center}
  \begin{tabular}{|l|}
    \hline
    $T^{W^H} b \in C^{\gamma}_t \tmop{Lip}_x$\\
    \hline
  \end{tabular}\enspace +\enspace\begin{tabular}{|l|}
    \hline
    Girsanov\\
    \hline
  \end{tabular}\enspace$\Longrightarrow$\enspace\begin{tabular}{|l|}
    \hline
    path-by-path uniqueness\\
    \hline
  \end{tabular}.
\end{center}

Such a principle is not new and was crucially exploited in~{\cite{davie}}
and~{\cite{catelliergubinelli}}. However, we believe it is the first time it
is properly formalised as in Lemma~\ref{sec4.2.3 criterion path-by-path
uniqueness} and its general structure allows to apply it in other situations.

\subsubsection{Proof of Theorem~\ref{sec4.2 thm main 1}}\label{sec4.2.1}

We need a few preparations first. Recall that in order to establish prevalence
of well-posedness for $(\tmop{CP}_{x_0})$ in $C_t^{\delta}$, we need to find a
set $\mathcal{A} \subset C_t^{\delta}$ and a tight probability $\mu$ on
$C_t^{\delta}$ such that: i) $\mathcal{A}$ is Borel w.r.t. the topology of
$C_t^{\delta}$; ii) for all $w \in \mathcal{A}$, $(\tmop{CP}_{x_0})$ is
well-posed; iii) for all $\varphi \in C_t^{\delta}$, $\mu (\varphi +
\mathcal{A}) = 1$.

A good candidate for the set $\mathcal{A}$ is given by Theorem~\ref{sec4.1
comparison thm v2} as follows: for $\gamma > 1 / 2$, define
\begin{equation}
  \mathcal{A}_{\gamma} = \left\{ w \in C^{\delta}_t : \, T^w b \in
  C^{\gamma}_t \tmop{Lip}_x,\,\exists \text{ a solution $x \in w +
  C^{\gamma}$ s.t. } T^x b \in C^{\gamma}_t \tmop{Lip}_x \right\} .
  \label{sec4.2.1 defn set}
\end{equation}
For such an $\mathcal{A}_{\gamma}$, it is now rather clear by the statement of
Theorem~\ref{sec4.2 thm main 2} that we plan to use as a measure $\mu^H$ for
suitable choice of $H$. But we first need to check that condition i) holds,
which is the aim of the following lemma.

\begin{lemma}
  \label{sec4.2.1 lemma borel set uniqueness}Let $\gamma > 1 / 2$, then the
  set $\mathcal{A}_{\gamma}$ is Borel measurable in the topology of
  $C^{\delta}$ for any $\delta \geqslant 0$.
\end{lemma}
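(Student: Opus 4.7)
My plan is to express $\mathcal{A}_{\gamma}$ as a countable union of sets defined by explicit quantitative bounds, each of which will be shown closed in the topology of $C^0_t$; since $C^{\delta}_t$ embeds continuously into $C^0_t$ for every $\delta \geqslant 0$, this will suffice. Specifically, for $N \in \mathbb{N}$, let $\mathcal{A}_{\gamma, N}$ consist of all $w \in C^{\delta}_t$ satisfying $\|T^w b\|_{C^{\gamma}_t \operatorname{Lip}_x} \leqslant N$ for which there exists $\theta \in C^{\gamma}_t$ with $\|\theta\|_{C^{\gamma}} \leqslant N$ solving
\[
\theta_t = (x_0 - w_0) + \int_0^t T^w b(\mathrm{d} s, \theta_s), \qquad t \in [0, T],
\]
and such that $\|T^{w + \theta} b\|_{C^{\gamma}_t \operatorname{Lip}_x} \leqslant N$. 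By the a priori estimates of Theorem~\ref{sec4.1 thm existence YDE}, any witnessing $\theta$ automatically satisfies $\|\theta\|_{C^{\gamma}} \lesssim 1 + \|T^w b\|_{C^{\gamma}_t \operatorname{Lip}_x}^2$, so $\mathcal{A}_{\gamma} = \bigcup_{N \in \mathbb{N}} \mathcal{A}_{\gamma, N}$.

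The main task is then to show that each $\mathcal{A}_{\gamma, N}$ is sequentially closed in $C^0_t$. Given $w_n \in \mathcal{A}_{\gamma, N}$ with $w_n \to w$ uniformly, I would pick witnesses $\theta_n$, apply Ascoli--Arzel\`a to the uniform bound $\|\theta_n\|_{C^{\gamma}} \leqslant N$ to extract a subsequence with $\theta_n \to \theta$ in $C^{\gamma - \varepsilon}_t$ for every $\varepsilon > 0$, and conclude $\|\theta\|_{C^{\gamma}} \leqslant N$ by lower semi-continuity of the H\"older seminorm. Combining Lemma~\ref{sec3.1 lemma 1} with the uniform bound on $T^{w_n} b$ and a further Ascoli--Arzel\`a extraction yields $T^{w_n} b \to T^w b$ locally in $C^{\gamma - \varepsilon}_t C^{1 - \varepsilon}_x$, while the Fatou-type property used in Lemma~\ref{sec3.1 lemma 4} transfers the bound to the limit: $\|T^w b\|_{C^{\gamma}_t \operatorname{Lip}_x} \leqslant N$. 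Since $w_n + \theta_n \to w + \theta$ uniformly, the same argument applied to $(w_n + \theta_n)$ in place of $w_n$ gives the analogous bound for $T^{w + \theta} b$.

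To close the argument I would pass to the limit in the identity
\[
\theta_{n, t} = (x_0 - w_n(0)) + \int_0^t T^{w_n} b(\mathrm{d} s, \theta_{n, s})
\]
using the continuity of the nonlinear Young integral (Point~4 of Theorem~\ref{sec4.1 thm definition young integral}) with parameters $\nu = 1 - \varepsilon$ and $\rho = \gamma - \varepsilon$, chosen small enough that $(\gamma - \varepsilon) + \nu \rho > 1$; this is possible since $\gamma > 1/2$. Combined with $w_n(0) \to w(0)$, this shows that $\theta$ is a valid witness for $w$, so $w \in \mathcal{A}_{\gamma, N}$.

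The step I expect to be the main obstacle is the simultaneous control of the two convergences $T^{w_n} b \to T^w b$ and $T^{w_n + \theta_n} b \to T^{w + \theta} b$: both must hold in a H\"older scale strong enough for Point~4 of Theorem~\ref{sec4.1 thm definition young integral} to apply, yet the $\operatorname{Lip}_x$-bounds on the limits are only accessible through Fatou-type lower semi-continuity. The resolution relies on interleaving Lemma~\ref{sec3.1 lemma 1} (which only gives convergence at the level of $C^0_t E$) with a local Ascoli--Arzel\`a argument on balls, whose correct behaviour under translation is ensured by the finite speed of propagation of averaging noted in Remark~\ref{sec3.1 remark localization}.
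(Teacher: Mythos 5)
Your proposal is correct and follows essentially the same route as the paper: decompose $\mathcal{A}_{\gamma}$ into a countable union of sublevel sets $\mathcal{A}_{\gamma,N}$ defined by quantitative bounds, show each is closed in the $C^0_t$ topology via Ascoli--Arzel\`a extraction of $\theta_n$, the Fatou-type lower semi-continuity used in Lemma~\ref{sec3.1 lemma 4} to transfer the bounds on $T^{w_n} b$ and $T^{w_n + \theta_n} b$ to the limits, and continuity of the nonlinear Young integral in a slightly weaker H\"older scale to pass to the limit in the YDE. Your additional inclusion of the explicit bound $\|\theta\|_{C^{\gamma}} \leqslant N$ in the definition of $\mathcal{A}_{\gamma,N}$, justified by the a priori estimate of Theorem~\ref{sec4.1 thm existence YDE}, is a harmless cosmetic refinement of the same decomposition.
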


\begin{proof}
  The idea of the proof is the usual one: we write the set
  $\mathcal{A}_{\gamma}$ as the countable union
  \begin{align*}
\mathcal{A}_{\gamma} = \bigcup_{N \geqslant 1} \mathcal{A}_N 
\assign \bigcup_{N \geqslant 1} \left\{ w \in E : \, \| T^w b
     \|_{C^{\gamma}_t \tmop{Lip}_x} \leqslant N,\,\exists \text{ a solution
     $x$ s.t. } \| T^x b \|_{C^{\gamma}_t \tmop{Lip}_x} \leqslant N \right\} .
  \end{align*}
  In order to conclude it is then sufficient to show that, for each $N$, the
  set $\mathcal{A}_N$ is closed under the topology of $C^{\delta}$. We can
  restrict ourselves to the case $C^0$, since any other convergence we
  consider is stronger than this one.
  
  Let $w^n$ be a sequence of elements of $\mathcal{A}_N$ such that $w^n
  \rightarrow w$, then by Lemma~\ref{sec3.1 lemma 4} we know that $T^w b \in
  C^{\gamma}_t \tmop{Lip}_x$ with the bound $\| T^w b \|_{C^{\gamma}_t
  \tmop{Lip}_x} \leqslant N$. For each $n$, denote by $x^n = \theta^n + w^n$
  the associated solution of $(\tmop{CP}_{x_0})$ such that $\| T^{x^n} b
  \|_{C^{\gamma}_t \tmop{Lip}_x} \leqslant N$; by the a priori estimates from
  Theorem~\ref{sec4.1 thm existence YDE}, together with $\| T^{w^n} b
  \|_{C^{\gamma}_t \tmop{Lip}_x} \leqslant N$, we deduce that $\| \theta^n
  \|_{C^{\gamma}}$ are uniformly bounded. We can therefore (up to subsequence)
  consider $\theta^n \rightarrow \theta$ in $C^{\gamma - \varepsilon}$ for
  suitable $\varepsilon > 0$. Since $w^n + \theta^n \rightarrow w + \theta$ in
  $C^0$, again it must hold $\| T^{w + \theta} b \|_{C^{\gamma}_t
  \tmop{Lip}_x} \leqslant N$.
  
  In order to conclude it remains to show that $x = \theta + w$ is a solution
  of the $(\tmop{CP}_{x_0})$ associated to $T^w b$. Since the sequence
  $T^{w^n} b \rightarrow T^w b$ in the sense of distributions and it is
  uniformly bounded in $C^{\gamma}_t \tmop{Lip}_x$, reasoning as in the proof
  of Lemma~\ref{sec3.1 lemma 3} we deduce that also local convergence in \
  $C^{\gamma - \varepsilon}_t C^{1 - \varepsilon}_x$ holds, for any
  $\varepsilon > 0$. Choosing $\varepsilon > 0$ small enough such that $\gamma
  - \varepsilon + (1 - \varepsilon) / (\gamma - \varepsilon) > 1$, by
  continuity of nonlinear Young integral it holds $\int_0^{\cdot} T^{w^n} b
  (\mathd s, \theta^n_s) \rightarrow \int_0^{\cdot} T^w b (\mathd s,
  \theta_s)$ in $C^{\gamma - \varepsilon}$. Taking the limit as $n \rightarrow
  \infty$ of
  \[ \theta^n_t = x_0 - w^n_0 + \int_0^t T^{w^n} b (\mathd s \comma
     \theta^n_s) \]
  we deduce that $x$ is a solution w.r.t. $T^w b$ of $(\tmop{CP}_{x_0})$,
  which concludes the proof.
\end{proof}

\begin{proof*}{Proof of Theorem~\ref{sec4.2 thm main 1}}
  In order to conclude it suffices to show that we can find $\gamma > 1 / 2$
  and $H > \delta$ such that $\mu^H (\varphi + \mathcal{A}_{\gamma}) = 1$ for
  all $\varphi \in C_t^{\delta}$. Let us choose $\varepsilon > 0$ small enough
  such that
  \begin{equation}
    H \assign \delta + \varepsilon < \frac{1}{2 (1 - \alpha)} \label{sec4.2.1
    eq1} .
  \end{equation}
  We need to find $\gamma > 1 / 2$ such that for any fixed $\varphi \in
  C_t^{\delta}$,
  \[ \mu^H \left( w \in C_t^{\delta} : \, T^{w + \varphi} b \in C^{\gamma}_t
     \tmop{Lip}_x, \text{ $\exists$ $x \in (w + \varphi) + C_t^{\gamma}$
     solution to $(\tmop{CP}_{x_0})$ s.t. } T^x b \in C^{\gamma}_t
     \tmop{Lip}_x \right) = 1 \]
  By definition of the averaging operator we have $T^{w + \varphi} b = T^w
  \tilde{b}$, where $\tilde{b} (t, \cdot) = b \left( t, \cdot \, + \varphi_t
  \right)$; moreover, $x \in (w + \varphi) + C_t^{\gamma}$ solves
  $(\tmop{CP}_{x_0})$ if and only if $\tilde{x} \assign x - \varphi \in w +
  C_t^{\gamma}$ is again a solution to another Cauchy problem of the same
  type. Indeed, by definition of solution, $\theta = x - (w + \varphi) =
  \tilde{x} - w$ must solve
  \[ \theta_t = x_0 - (w_0 + \varphi_0) + \int_0^t T^{w + \varphi} b (\mathd
     s, \theta_s) = \tilde{x}_0 - w_0 + \int_0^t T^w \tilde{b} (\mathd s,
     \theta_s) \]
  where $\tilde{x}_0 = x_0 - \varphi_0$, so that $\tilde{x}$ is a solution to
  the Cauchy problem associated to $\tilde{x}_0$, $\tilde{b}$ and $w$.
  Moreover by properties of averaging operators it holds $T^{\tilde{x}}
  \tilde{b} = T^x b$.
  
  By the translation invariance of the $C^{\alpha}_x$-norm, it holds
  $\tilde{b} \in C^{\alpha}_x$, $\| \tilde{b} \|_{C^{\alpha}} = \| b
  \|_{C^{\alpha}}$; moreover $\tilde{b}$ has still compact support in space,
  uniformly in time. Since condition~{\eqref{sec4.2.1 eq1}} implies $\alpha >
  1 - (2 H)^{- 1}$, we can apply Theorem~\ref{sec4.2 thm main 2} for the
  choice $\tilde{x}_0$, $T^w \tilde{b}$ to find $\gamma > 1 / 2$ (independent
  of $\varphi$) such that
  \begin{align*}
    1 & = \mu^H \left( w \in C_t^{\delta} : \, T^w \tilde{b} \in
    C^{\gamma}_t \tmop{Lip}_x \text{$\exists\, \tilde{x} \in w +
    C_t^{\gamma}$ solution to $(\tmop{CP}_{\tilde{x}_0})$ s.t. } T^{\tilde{x}}
    \tilde{b} \in C^{\gamma}_t \tmop{Lip}_x \right)\\
    & = \mu^H \left( w \in C_t^{\delta} : \, T^{w + \varphi} b \in
    C^{\gamma}_t \tmop{Lip}_x \text{ $\exists\, x \in (w + \varphi) +
    C_t^{\gamma}$ solution to $(\tmop{CP}_{x_0})$ s.t. } T^x b \in
    C^{\gamma}_t \tmop{Lip}_x \right)
  \end{align*}
  which gives the conclusion.
\end{proof*}

\begin{remark}
  For simplicity we have preferred to give the statement of
  Theorem~\ref{sec4.2 thm main 1} as above, but it will be clear from the
  contents of Section~\ref{sec4.2.3} that similar prevalence statements can be
  formulated under other hypothesis on $b$ and $\delta$ simply by going
  through the same proof and applying in the end either Theorem~\ref{sec4.2.3
  thm path-by-path uniqueness} or Corollary~\ref{sec4.2.3 corollary path
  uniqueness}.
\end{remark}

\subsubsection{Girsanov transform}\label{sec4.2.2}

Before introducing Girsanov Theorem, we need to recall another representation
formula for fBm, different from the one given in Section~\ref{sec2.2}, which
can be found in~{\cite{nualart2006}},~{\cite{picard}}. The representation is
based on fractional calculus, which we also quickly introduce and for which we
refer the interested reader to~{\cite{samko}}.

Given $f \in L^1 (0, T)$ and $\alpha > 0$, the fractional integral of order
$\alpha$ of $f$ is defined as
\begin{equation}
  (I^{\alpha} f)_{\cdot} = \frac{1}{\Gamma (\alpha)} \int_0^{\cdot} (t -
  s)^{\alpha - 1} f_s \mathd s \label{sec4.2.2 frac integral}
\end{equation}
where $\Gamma$ denotes the Gamma function. For $\alpha \in (0, 1)$ and $p >
1$, the map $I^{\alpha}$ is an injective bounded operator on $L^p$ and we
denote by $I^{\alpha} (L^p)$ the image of $L^p$ under the $I^{\alpha}$, which
is a Banach space endowed with the norm $\| f \|_{I^{\alpha} (L^p)} \assign \|
g \|_{L^p}$ if $f = I^{\alpha} g$. On this domain, $I^{\alpha}$ admits an
inverse, which is the fractional derivative of order $\alpha$, given by
\begin{equation}
  (D^{\alpha} f)_t = \frac{1}{\Gamma (1 - \alpha)}  \frac{\mathd}{\mathd x} 
  \int_0^t \frac{f_s}{(t - s)^{\alpha}} \mathd s = \frac{1}{\Gamma (1 -
  \alpha)}  \left( \frac{f_t}{t^{\alpha}} + \alpha \int_0^t \frac{f_t -
  f_s}{(t - s)^{\alpha + 1}} \mathd s \right) . \label{sec4.2.2 frac
  derivative}
\end{equation}
With this notation in mind, a fBm of Hurst parameter $H \in (0, 1)$ can be
constructed starting from a standard Brownian motion $B$ on the interval $[0,
T]$ by setting $W^H = K_H (\mathd B)$, where the operator $K_H$ is defined as
\[ K_H f = \left\{\begin{array}{ll}
     I^1 s^{H - 1 / 2} I^{H - 1 / 2} s^{1 / 2 - H} h & \tmop{if} H \geqslant 1
     / 2\\
     I^{2 H} s^{1 / 2 - H} I^{1 / 2 - H} s^{H - 1 / 2} h & \tmop{if} H
     \leqslant 1 / 2
   \end{array}\right. \]
where the notation $s^{\beta}$ denotes the multiplication operator with the
function $s \mapsto s^{\beta}$. It can be shown that this definition of $W^H$
is meaningful and that the operator $K_H$ corresponds to a Volterra kernel
$K_H (t, s)$, so that the above representation is equivalent to
\begin{equation}
  W^H_t = \int_0^t K_H (t, s) \mathd B_s . \label{sec4.2.2 canonical
  representation}
\end{equation}
The explicit expression for $K_H$ in the case $H > 1 / 2$ is given by
\begin{equation}
  K_H (t, s) = c_H s^{1 / 2 - H} \int_s^t (u - s)^{H - 3 / 2} u^{H - 1 / 2}
  \mathd u ; \label{sec4.2.2 volterra kernel}
\end{equation}
in the case $H < 1 / 2$ it is more complicated and we omit it as we will not
need it. It can be shown that the operator $K_H$ can be inverted, which
implies that the processes $B$ and $W^H$ generate the same filtration, which
makes it a {\tmem{canonical representation}}; moreover this implies that given
any fBm $W^H$ on a probability space, it is possible to construct the
associated $B$ by setting $B_{\cdot} = \int_0^{\cdot} (K^{- 1}_H W^H)_s \mathd
s$. The inverse operator $K_H^{- 1}$ is given by
\begin{equation}
  K^{- 1}_H f = \left\{\begin{array}{ll}
    s^{H - 1 / 2} D^{H - 1 / 2} s^{1 / 2 - H} f' & \tmop{if} H > 1 / 2\\
    s^{1 / 2 - H} D^{1 / 2 - H} s^{H - 1 / 2} D^{2 H} f \quad & \tmop{if} H <
    1 / 2
  \end{array}\right. . \label{sec4.2.2 inverse operator}
\end{equation}
We will use the following terminology: given a filtered space $(\Omega,
\mathcal{F}, \{ \mathcal{F}_t \}_{t \geqslant 0}, \mathbb{P})$, we say that a
process $W^H$ is an $\mathcal{F}_t$\mbox{-}fBm if it is a fBm under
$\mathbb{P}$ and the associated $B$ is an $\mathcal{F}_t$\mbox{-}Bm in the
usual sense.

\begin{theorem}[Girsanov]
  \label{sec4.2.2 girsanov thm}Let $(\Omega, \mathcal{F}, \{ \mathcal{F}_t
  \}_{t \geqslant 0}, \mathbb{P})$ be a filtered probability space, $W^H$ be
  an $\mathcal{F}_t$\mbox{-}fBm of parameter $H \in (0, 1)$ and $h$ be an
  $\mathcal{F}_t$\mbox{-}adapted process with continuous trajectories s.t.
  $h_0 = 0$. Let $B$ be the Bm associated to $W^H$, namely such that $W^H =
  K_H \mathd B$. Suppose that $K_H^{- 1} h \in L_t^2$ with probability $1$ and
  that
  \begin{equation}
    \mathbb{E} \left[ \frac{\mathd \mathbb{P}}{\mathd \mathbb{Q}} \right] = 1
    \label{sec4.2.2 condition girsanov}
  \end{equation}
  where the variable $\mathd \mathbb{P}/ \mathd \mathbb{Q}$ is given by
  \begin{equation}
    \frac{\mathd \mathbb{P}}{\mathd \mathbb{Q}} = \exp \left( - \int_0^T
    (K_H^{- 1} h)_s \mathd B_s - \frac{1}{2} \int_0^T | (K_H^{- 1} h)_s |^2
    \mathd s \right) . \label{sec4.2.2 density girsanov}
  \end{equation}
  Then the shifted process $\tilde{W}^H \assign W^H + h$ is an
  $\mathcal{F}_t$-fBm with parameter $H$ under the probability $\mathbb{Q}$. A
  sufficient condition in order for~{\eqref{sec4.2.2 condition girsanov}} to
  hold is given by Novikov's condition
  \begin{equation}
    \mathbb{E} \left[ \exp \left( \frac{1}{2} \int_0^T | (K_H^{- 1} h)_s |^2
    \mathd s \right) \right] < \infty . \label{sec4.2.2 novikov}
  \end{equation}
\end{theorem}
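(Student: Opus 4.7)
The plan is to reduce the statement to the classical Girsanov theorem for Brownian motion via the transfer operator $K_H$. Since $W^H = K_H \, \mathrm{d}B$ and $K_H$ is a bijection between suitable path spaces (with inverse $K_H^{-1}$ given by~\eqref{sec4.2.2 inverse operator}), knowing that a process $\tilde{W}^H$ is an $\mathcal{F}_t$--fBm amounts to showing that the process $\tilde{B}$ defined by $\tilde{W}^H = K_H \, \mathrm{d}\tilde{B}$ is an $\mathcal{F}_t$--Brownian motion under $\mathbb{Q}$.

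First, I would set $g := K_H^{-1} h$, which is $\mathcal{F}_t$--adapted and lies in $L^2_t$ almost surely by hypothesis. Writing formally $h_t = \int_0^t K_H(t,s) g_s \, \mathrm{d}s$ and using linearity of the stochastic Volterra transform $K_H$, one gets
\[
  \tilde{W}^H_t = W^H_t + h_t = \int_0^t K_H(t,s) \, \mathrm{d}B_s + \int_0^t K_H(t,s) g_s \, \mathrm{d}s = \int_0^t K_H(t,s) \, \mathrm{d}\tilde{B}_s,
\]
where $\tilde{B}_t := B_t + \int_0^t g_s \, \mathrm{d}s$. Thus $\tilde{W}^H = K_H \, \mathrm{d}\tilde{B}$, and the problem is reduced to verifying that $\tilde{B}$ is an $\mathcal{F}_t$--Bm under $\mathbb{Q}$.

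Second, I would apply the classical Girsanov theorem to the Brownian motion $B$ with the $\mathcal{F}_t$--adapted drift $g = K_H^{-1} h \in L^2_t$ a.s. The hypothesis~\eqref{sec4.2.2 condition girsanov} asserts exactly that the exponential martingale
\[
  Z_t = \exp\!\left( -\int_0^t g_s \, \mathrm{d}B_s - \tfrac{1}{2} \int_0^t |g_s|^2 \, \mathrm{d}s \right)
\]
has $\mathbb{E}[Z_T] = 1$, which ensures that $\mathbb{Q}$ defined by $\mathrm{d}\mathbb{P}/\mathrm{d}\mathbb{Q} = Z_T$ is a probability measure equivalent to $\mathbb{P}$, and the classical Girsanov theorem then yields that $\tilde{B}$ is an $\mathcal{F}_t$--Bm under $\mathbb{Q}$. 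Combining with the first step gives that $\tilde{W}^H$ is an $\mathcal{F}_t$--fBm of parameter $H$ under $\mathbb{Q}$. Finally, Novikov's condition~\eqref{sec4.2.2 novikov} implies~\eqref{sec4.2.2 condition girsanov} by the standard argument (uniform integrability of the exponential martingale).

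The main subtlety, and the only place where one must be careful beyond a mechanical reduction, is to justify the interchange of $K_H$ with the Lebesgue integration of $g$, i.e.\ the identity $K_H(g \, \mathrm{d}t) = h$. For $H > 1/2$ this is essentially Fubini combined with the definition~\eqref{sec4.2.2 volterra kernel}, while for $H < 1/2$ one needs to invoke the mapping properties of the fractional operators $I^\alpha$, $D^\alpha$ entering~\eqref{sec4.2.2 inverse operator} to see that $K_H \circ K_H^{-1} = \mathrm{id}$ on the correct function space (namely paths of the form $\int_0^\cdot (\cdot) \, \mathrm{d}s$ with square-integrable derivative after applying the fractional weights). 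This is a result from the fractional calculus literature recalled in~\cite{nualart2006,picard,samko} and can be invoked as a black box.
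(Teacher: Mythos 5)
Your proposal is mathematically correct, and it is worth noting that the paper itself does not supply a proof of this theorem: it simply defers to Theorem~2 of Nualart and Ouknine~\cite{nualartouknine}, remarking only that the requirement there that $h$ have bounded variation plays no role and that the Novikov criterion is classical. What you have written out is in fact the content of the cited argument: transfer the statement to Brownian motion through the Volterra operator, i.e.\ verify the algebraic identity $\tilde W^H = K_H\,\mathrm d\tilde B$ with $\tilde B = B + \int_0^\cdot (K_H^{-1}h)_s\,\mathrm d s$, apply the classical Girsanov theorem to change measure so that $\tilde B$ becomes a Brownian motion, and then read back that $\tilde W^H$ is a fractional Brownian motion under the new measure. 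You also correctly isolate the one genuinely non--formal step, namely $K_H(K_H^{-1}h) = h$ on the appropriate function space, and correctly point to the fractional calculus facts (mapping properties of $I^\alpha$ and $D^\alpha$) that justify it. In this sense you are reconstructing the proof the paper outsources, rather than doing something new or different.

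One small remark on notation you inherited from the statement: if one takes the exponential $Z_T = \exp\bigl(-\int_0^T (K_H^{-1}h)_s\,\mathrm d B_s - \tfrac12\int_0^T |(K_H^{-1}h)_s|^2\,\mathrm d s\bigr)$ at face value and sets $\mathrm d\mathbb P/\mathrm d\mathbb Q = Z_T$ as written in the statement, then $\mathrm d\mathbb Q/\mathrm d\mathbb P = Z_T^{-1}$ is not the exponential martingale, and $\mathbb E_{\mathbb P}[Z_T]=1$ does not by itself make $Z_T^{-1}$ a probability density. The intended reading (consistent with classical Girsanov and with the way the density is used later in the proof of Theorem~\ref{sec4.2.2 theorem girsanov transform}) is $\mathrm d\mathbb Q/\mathrm d\mathbb P = Z_T$, i.e.\ the paper's labels $\mathbb P$ and $\mathbb Q$ in~\eqref{sec4.2.2 density girsanov} are swapped. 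Your argument silently uses the correct orientation; it would be cleaner to flag this explicitly, since otherwise the penultimate step (``the classical Girsanov theorem then yields that $\tilde B$ is a $\mathbb Q$--Bm'') does not match the formula you copied.
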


The result is taken from~{\cite{nualartouknine}}, Theorem~2, with the
exception of the final part which is just classical Novikov condition; in the
original statement from~{\cite{nualartouknine}}, the process $h$ is taken of
the form $h_{\cdot} = \int_0^{\cdot} u_s \mathd s$, but this doesn't play any
role in the proof, which indeed holds also in the case $h$ is not of bounded
variation.

In order to apply Theorem~\ref{sec4.2.2 girsanov thm} in cases of interest, we
first need to establish conditions under which~{\eqref{sec4.2.2 novikov}}
holds, which requires a good control of $\| K^{- 1}_H h \|_{L^2}$ in terms of
$h$.

Since $K^{- 1}_H$ is defined in terms of fractional derivatives, the following
fact will be quite useful: if $f \in C^{\beta}$ and $f_0 = 0$, then
$D^{\alpha} f$ is well defined for any $\alpha < \beta$ and moreover
$D^{\alpha} f \in C^{\gamma}$ for any $\gamma < \beta - \alpha$ together with
the estimate
\begin{equation}
  \| D^{\alpha} f \|_{C^{\gamma}} \lesssim_{\gamma, \alpha} \| f
  \|_{C^{\beta}} . \label{sec4.2.2 holder fractional derivative}
\end{equation}
For a self-contained proof of this fact see Theorem~2.8 from {\cite{picard}}
(on a finite interval $[0, T]$, the space $\mathbb{H}^{\beta, 0}$ considered
therein corresponds to the functions $f \in C^{\beta}_t$ such that $f_0 = 0$).

\begin{lemma}
  \label{sec4.2.2 lemma fractional derivative}Let $\alpha \in (0, 1 / 2)$ and
  $h \in C_t^{\beta}$ for some $\beta > \alpha$, $h_0 = 0$. Then $s^{\alpha}
  D^{\alpha} s^{- \alpha} h \in L_t^2$ and there exists a constant $C = C
  (\alpha, \beta, T)$ such that
  \begin{equation}
    \| s^{\alpha} D^{\alpha} s^{- \alpha} h \|_{L^2} \leqslant C \| h
    \|_{C^{\beta}} . \label{sec4.2.2 estimate fractional derivative 1}
  \end{equation}
  In particular, for any $H \in (0, 1)$, if $h \in C_t^{\beta}$ for some
  $\beta > H + 1 / 2$, $h_0 = 0$, then $K^{- 1}_H \in L_t^2$ and there exists
  a constant $C = C (H, \beta, T)$ such that
  \begin{equation}
    \| K_H^{- 1} h \|_{L^2} \leqslant C \| h \|_{C^{\beta}} . \label{sec4.2.2
    estimate fractional derivative 2}
  \end{equation}
\end{lemma}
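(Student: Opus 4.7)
The plan is to establish the pointwise bound $|(s^{\alpha} D^{\alpha} s^{-\alpha} h)_t| \leq C\,\|h\|_{C^\beta}\,t^{\beta - \alpha}$, which, since $\beta > \alpha$, is square-integrable on $[0,T]$ and hence yields the first estimate by direct integration. Applying the Marchaud-type representation~\eqref{sec4.2.2 frac derivative} to the function $f_s := s^{-\alpha} h_s$ and multiplying by $t^{\alpha}$ gives
\begin{equation*}
\Gamma(1-\alpha)\,(s^{\alpha} D^{\alpha} s^{-\alpha} h)_t \;=\; t^{-\alpha} h_t \;+\; \alpha\, t^{\alpha} \int_0^t \frac{t^{-\alpha} h_t - s^{-\alpha} h_s}{(t-s)^{\alpha+1}}\,\mathd s.
\end{equation*}
The boundary-like term satisfies $|t^{-\alpha} h_t| \leq \|h\|_{C^\beta}\, t^{\beta - \alpha}$ directly from $h_0 = 0$ and H\"older continuity.

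For the integral I would split the numerator as $t^{-\alpha}(h_t - h_s) + (t^{-\alpha} - s^{-\alpha}) h_s$. The first piece gives $\int_0^t (h_t - h_s)/(t-s)^{\alpha+1}\, \mathd s \lesssim \|h\|_{C^\beta}\, t^{\beta-\alpha}$. For the second piece, using $|h_s| \leq \|h\|_{C^\beta}\, s^\beta$ together with the substitution $u = s/t$, the scaling collapses the expression to
\begin{equation*}
t^{\beta-\alpha}\, \|h\|_{C^\beta} \int_0^1 \frac{(u^{-\alpha} - 1)\, u^\beta}{(1-u)^{\alpha+1}}\,\mathd u,
\end{equation*}
with the final integral finite: near $u = 0$ the integrand behaves like $u^{\beta - \alpha}$ (integrable since $\beta > \alpha$), while near $u = 1$ the Taylor expansion $u^{-\alpha} - 1 \sim \alpha(1-u)$ cancels one power of $1-u$, leaving an $(1-u)^{-\alpha}$ singularity that is integrable as $\alpha < 1$. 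This completes the pointwise bound.

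For the second statement, substitute the explicit form~\eqref{sec4.2.2 inverse operator} of $K_H^{-1}$ and split into two cases. If $H > 1/2$ set $\alpha = H - 1/2 \in (0,1/2)$: then $K_H^{-1} h = s^{\alpha} D^{\alpha} s^{-\alpha} h'$, with $h' \in C^{\beta-1}$ and $\beta - 1 > \alpha$. Decomposing $h' = h'_0 + (h' - h'_0)$, the vanishing remainder falls into the scope of the first part, while the constant contribution equals $c_{\alpha}\, h'_0\, t^{-\alpha}$ by an explicit Beta-integral computation of $D^{\alpha} s^{-\alpha}$, which lies in $L^2$ since $2\alpha < 1$. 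If $H < 1/2$ set $\alpha = 1/2 - H$: by~\eqref{sec4.2.2 holder fractional derivative} the inner factor $\tilde h := D^{2H} h$ belongs to $C^\gamma$ with $\tilde h_0 = 0$ for any $\gamma < \beta - 2H$, and since $\beta > H + 1/2$ we may choose $\gamma > \alpha$; the first part applied to $\tilde h$ then yields the bound for $K_H^{-1} h = s^{\alpha} D^{\alpha} s^{-\alpha} \tilde h$.

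The main obstacle will be the bookkeeping of endpoint behaviour at $t = 0$ when composing fractional operators with power-weighted multiplications, in particular verifying that the intermediate functions vanish at $0$ whenever required by the first part, or else handling the constant boundary pieces by direct computation; all of this is routine given the H\"older-preserving estimate~\eqref{sec4.2.2 holder fractional derivative} but requires extra care in the nested composition arising in the $H < 1/2$ case.
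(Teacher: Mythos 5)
Your proof is correct and follows the paper's own route: the Marchaud representation with the split of the numerator into $t^{-\alpha}(h_t - h_s)$ and $(t^{-\alpha}-s^{-\alpha})h_s$ and the scaling $s=ut$ for the first part, then the explicit formula for $K_H^{-1}$ combined with \eqref{sec4.2.2 holder fractional derivative} for the second. Two remarks. You use the finer bound $|h_s|\le\|h\|_{C^\beta}s^\beta$ (available since $h_0=0$) to obtain the bounded pointwise estimate $C\|h\|_{C^\beta}t^{\beta-\alpha}$, whereas the paper contents itself with $|h_s|\le\|h\|_{C^\beta}$ and the singular but still square-integrable bound $t^{-\alpha}$; this is a harmless cosmetic difference, as both are in $L^2_t$ for $\alpha<1/2$. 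More substantively, your treatment of $H>1/2$ spots a real subtlety that the paper's sketch glosses over: the reduction $K_H^{-1}h = s^{\alpha}D^{\alpha}s^{-\alpha}h'$ with $\alpha = H-1/2$ cannot be fed directly into the first part of the lemma (nor into \eqref{sec4.2.2 holder fractional derivative}), because both require the argument to vanish at $0$, and $h'(0)$ need not be zero. Your decomposition $h' = h'(0) + (h'-h'(0))$, together with the explicit Beta-integral computation $s^{\alpha}D^{\alpha}s^{-\alpha}\,1 = c_{\alpha}t^{-\alpha}\in L^2_t$ (since $2\alpha<1$), closes this cleanly. Likewise, your spelled-out $H<1/2$ case, including the check that $D^{2H}h$ vanishes at $0$ so that the first part applies, makes precise what the paper only declares to be ``analogous''.
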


\begin{proof}
  We have
  \[ (s^{\alpha} D^{\alpha} s^{- \alpha} h) (t) = \Gamma (1 - \alpha)^{- 1}
     \left[ h_t + \alpha t^{\alpha} \int_0^t \frac{t^{- \alpha} h_t - s^{-
     \alpha} h_s}{(t - s)^{\alpha + 1}} \mathd s \right] . \]
  Since $h \in C^{\beta}$, it clearly also belongs to $L^2$, so we only need
  to control the term
  \begin{align*}
    t^{\alpha} \left| \int_0^t \, \frac{t^{- \alpha} h_t - s^{- \alpha}
    h_s}{(t - s)^{\alpha + 1}} \mathd s \right| & \leqslant \, t^{\alpha}
    \int_0^t \, \frac{t^{- \alpha} | h_t - h_s | + (s^{- \alpha} - t^{-
    \alpha}) | h_s |}{(t - s)^{\alpha + 1}} \mathd s\\
    & \leqslant \, \| h \|_{C^{\beta}} t^{\alpha} \int_0^t \, \frac{t^{-
    \alpha} (t - s)^{\beta} + (s^{- \alpha} - t^{- \alpha})}{(t - s)^{\alpha +
    1}} \mathd s\\
    & = \, \| h \|_{C^{\beta}} t^{- \alpha} \left[ t^{\beta} \int_0^1
    \frac{1}{(1 - u)^{1 + \alpha - \beta}} \mathd u + \int_0^1 u^{- \alpha} 
    \frac{(1 - u^{\alpha})}{(1 - u)^{1 + \alpha}} \mathd u \right]\\
    & \lesssim_T \, \| h \|_{C^{\beta}} t^{- \alpha} .^{}
  \end{align*}
  Since $\alpha \in (0, 1 / 2)$, $t^{- \alpha} \in L^2_t$ and so we deduce
  that the overall expression belongs to $L^2_t$, as well as
  estimate~{\eqref{sec4.2.2 estimate fractional derivative 1}}. Regarding the
  second statement, the case $H = 1 / 2$ is straightforward since $K^{- 1}_H h
  = h'$. In the case $H > 1 / 2$, by the formula for $K^{- 1}_H$ combined with
  estimates~{\eqref{sec4.2.2 holder fractional derivative}}
  and~{\eqref{sec4.2.2 estimate fractional derivative 1}} for the choice
  $\alpha = H - 1 / 2$, choosing $\varepsilon > 0$ sufficiently small we have
  \[ \| K^{- 1}_H h \|_{L^2} \lesssim \| h' \|_{C^{H - 1 / 2 + \varepsilon}}
     \lesssim \| h' \|_{C^{\beta}} ; \]
  the case $H < 1 / 2$ is analogous.
\end{proof}

\begin{remark}
  We have given an explicit proof of Lemma~\ref{sec4.2.2 lemma fractional
  derivative}, but a similar (stronger) type of result can be achieved by a
  more abstract argument. Indeed it follows from the proof of Theorem~5.4
  from~{\cite{picard}} that $\| s^{\alpha} D^{\alpha} (s^{- \alpha} h)
  \|_{L^2} \sim \| D^{\alpha} h \|_{L^2}$ and similarly $\| K^{- 1}_H h
  \|_{L^2} \sim \| D^{H + 1 / 2} h \|_{L^2}$; we have already seen that if $h
  \in C^{\beta}$ with $\beta > \alpha$ and $h_0 = 0$, then $D^{\alpha} h$ is a
  continuous function, so its $L^2$\mbox{-}norm is trivially finite. The
  inclusion $C^{\beta} \subset I^{\alpha} (L^2)$ is strict and therefore the
  hypothesis of Lemma~\ref{sec4.2.2 lemma fractional derivative} are non
  optimal, but they are rather useful when dealing with functions $h$ not of
  bounded variation.
\end{remark}

We can now state a general result on the applicability of Girsanov transform
together with a good control on the density defining $\mathbb{Q}$.

\begin{theorem}
  \label{sec4.2.2 theorem girsanov transform}Let $(\Omega, \mathcal{F}, \{
  \mathcal{F}_t \}_{t \geqslant 0}, \mathbb{P})$ be a filtered probability
  space, $W^H$ be an $\mathcal{F}_t$\mbox{-}fBm of parameter $H \in (0, 1)$
  and $h$ be an $\mathcal{F}_t$\mbox{-}adapted process with trajectories in
  $C^{\beta}_t$, $\beta > H + 1 / 2$, s.t. $h_0 = 0$ and
  \begin{equation}
    \mathbb{E} [\exp (\lambda \| h \|_{C^{\beta}}^2)] < \infty \quad \forall
    \, \lambda \in \mathbb{R}. \label{sec4.2.2 hypothesis application
    girsanov}
  \end{equation}
  Then Girsanov transform for \~{W}$^H = h + W^H$ is applicable, i.e.
  $\tilde{W}^H$ is an $\mathcal{F}_t$\mbox{-}fBm of parameter $H$ under the
  probability measure $\mathbb{Q}$ given by~{\eqref{sec4.2.2 density
  girsanov}}. Moreover the measures $\mathbb{Q}$ and $\mathbb{P}$ are
  equivalent and it holds
  \[ \mathbb{E}_{\mathbb{P}} \left[ \left( \frac{\mathd \mathbb{Q}}{\mathd
     \mathbb{P}} \right)^n + \left( \frac{\mathd \mathbb{P}}{\mathd
     \mathbb{Q}} \right)^n \right] < \infty \quad \forall \, n \in \mathbb{N}.
  \]
\end{theorem}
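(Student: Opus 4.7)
The strategy has three components: first transfer exponential integrability from $\|h\|_{C^\beta}$ to $\int_0^T |K_H^{-1}h|^2 \mathd s$, then apply Girsanov via Novikov, and finally estimate all moments of the density via stochastic exponential manipulations.

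By Lemma~\ref{sec4.2.2 lemma fractional derivative}, using $h_0 = 0$ and $\beta > H + 1/2$, there is a deterministic constant $C = C(H,\beta,T)$ such that $\|K_H^{-1} h\|_{L^2_t} \leq C\|h\|_{C^\beta_t}$ pathwise. Combined with hypothesis~\eqref{sec4.2.2 hypothesis application girsanov}, this yields
\[
\mathbb{E}_\mathbb{P}\!\left[\exp\!\left(\mu \int_0^T |(K_H^{-1} h)_s|^2\,\mathd s\right)\right] < \infty \qquad \forall\, \mu \in \mathbb{R}.
\]
In particular Novikov's condition~\eqref{sec4.2.2 novikov} is satisfied, so by Theorem~\ref{sec4.2.2 girsanov thm}, $\tilde W^H = W^H + h$ is an $\mathcal{F}_t$-fBm of parameter $H$ under $\mathbb{Q}$. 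Since $\mathrm d\mathbb{P}/\mathrm d\mathbb{Q}$ is $\mathbb{P}$-a.s.\ strictly positive, the measures $\mathbb{P}$ and $\mathbb{Q}$ are equivalent.

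For the moment bounds, set $\eta \assign K_H^{-1} h$ and write $\mathcal{E}(u)_T = \exp(\int_0^T u\,\mathd B - \tfrac12 \int_0^T |u|^2\mathd s)$, so that $\mathrm d\mathbb{P}/\mathrm d\mathbb{Q} = \mathcal{E}(-\eta)_T$ and $\mathrm d\mathbb{Q}/\mathrm d\mathbb{P} = \mathcal{E}(-\eta)_T^{-1}$. Then for every $n \in \mathbb{N}$,
\[
\left(\tfrac{\mathrm d\mathbb{P}}{\mathrm d\mathbb{Q}}\right)^n = \mathcal{E}(-n\eta)_T\, \exp\!\left(\tfrac{n^2-n}{2}\int_0^T |\eta|^2\,\mathd s\right), \qquad \left(\tfrac{\mathrm d\mathbb{Q}}{\mathrm d\mathbb{P}}\right)^n = \mathcal{E}(n\eta)_T\, \exp\!\left(\tfrac{n^2+n}{2}\int_0^T |\eta|^2\,\mathd s\right).
\]
The exponential integrability of $\int_0^T|\eta|^2\mathd s$ to all orders guarantees that $\mathcal{E}(\pm k\eta)$ is a true $\mathbb{P}$-martingale for every $k \in \mathbb{R}$, and a standard iterated Cauchy--Schwarz argument (splitting $\mathcal{E}(\pm n\eta)_T^2 = \mathcal{E}(\pm 2n\eta)_T \exp(n^2\int|\eta|^2)$ and iterating) upgrades this to $L^p(\mathbb{P})$-bounds on $\mathcal{E}(\pm n\eta)_T$ for every $p \geq 1$. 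Combining via Hölder's inequality with the finiteness of $\mathbb{E}_\mathbb{P}[\exp(\lambda \int|\eta|^2)]$ for arbitrarily large $\lambda$ yields the required finiteness of $\mathbb{E}_\mathbb{P}[(\mathrm d\mathbb{P}/\mathrm d\mathbb{Q})^n]$ and $\mathbb{E}_\mathbb{P}[(\mathrm d\mathbb{Q}/\mathrm d\mathbb{P})^n]$ for every $n$.

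The argument is essentially routine once the pathwise estimate for $K_H^{-1}$ from Lemma~\ref{sec4.2.2 lemma fractional derivative} is in hand; that estimate, in turn, is precisely what the condition $\beta > H + 1/2$ is tailored to supply, matching the regularity requirements of the fractional derivatives appearing in the definition~\eqref{sec4.2.2 inverse operator} of $K_H^{-1}$. The only mildly delicate point is the iteration used to deduce all $L^p$-moments of $\mathcal{E}(\pm n\eta)_T$ from the full exponential integrability of the bracket, but this is a well-known consequence of Novikov-type criteria and introduces no genuine obstacle.
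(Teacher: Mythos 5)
Your outline follows the paper's strategy closely: the pathwise estimate from Lemma~\ref{sec4.2.2 lemma fractional derivative} transfers exponential integrability from $\|h\|_{C^\beta}$ to $\|K_H^{-1}h\|_{L^2}^2$, Novikov then gives applicability of Girsanov, and the moments of the density are controlled by splitting off a martingale factor and applying H\"older. The observation that equivalence of $\mathbb{P}$ and $\mathbb{Q}$ follows directly from the strict positivity of the stochastic exponential is correct and actually slightly cleaner than the paper's route, which explicitly checks integrability of the inverse density under $\mathbb{Q}$.

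However, the mechanism you describe for getting $L^p$-bounds on $\mathcal{E}(\pm n\eta)_T$ --- ``splitting $\mathcal{E}(\pm n\eta)_T^2 = \mathcal{E}(\pm 2n\eta)_T\exp(n^2\int|\eta|^2)$ and iterating'' --- does not close. Each application of Cauchy--Schwarz replaces $\mathcal{E}(m\eta)_T^2$ by $\mathcal{E}(2m\eta)_T^2$, so after $N$ steps the martingale factor carries exponent $2^{-N}$ while the accumulated product of exponential factors is $\prod_{k=1}^N \mathbb{E}\bigl[\exp\bigl(2\cdot 4^{k-1}\int|\eta|^2\bigr)\bigr]^{2^{-k}}$, and the exponents $2\cdot 4^{k-1}\cdot 2^{-k} = 2^{k-1}$ diverge, so this product need not converge. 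The correct argument, used by the paper, is a \emph{single} application of Cauchy--Schwarz or H\"older: write, for instance,
\[
\left(\frac{\mathd\mathbb{Q}}{\mathd\mathbb{P}}\right)^{\alpha}
= \exp\!\left(\alpha\!\int \eta\,\mathd B - \alpha^2\|\eta\|_{L^2}^2\right)\exp\!\left(\bigl(\alpha^2-\tfrac{\alpha}{2}\bigr)\|\eta\|_{L^2}^2\right),
\]
note that the first factor is $\mathcal{E}(2\alpha\eta)_T^{1/2}$, and apply Cauchy--Schwarz so that it becomes $\mathbb{E}_{\mathbb{P}}[\mathcal{E}(2\alpha\eta)_T]^{1/2}=1$ by Novikov, while the second factor is handled by the exponential moment assumption. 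The exponent on the martingale factor must be chosen so that after one application of H\"older it becomes exactly~$1$; only then does the bound close. Your conclusion is correct, but because the intermediate device you invoke is genuinely circular, you should replace it with this one-step decomposition.
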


\begin{proof}
  By hypothesis~{\eqref{sec4.2.2 hypothesis application girsanov}} and
  Lemma~\ref{sec4.2.2 lemma fractional derivative} it follows immediately that
  \[ \mathbb{E} [\exp (\lambda \| K^{- 1} h \|_{L^2}^2)] < \infty \quad
     \forall \, \lambda \in \mathbb{R}. \]
  Therefore Novikov criterion is satisfied and Girsanov transform is
  applicable. The proof of second part of the statement follows from classical
  arguments, but we include it for the sake of completeness. Let us prove
  integrability of the moments: for any $\alpha \geqslant 1$ it holds
  \begin{align*}
    \mathbb{E}_{\mathbb{P}} \left[ \left( \frac{\mathd \mathbb{Q}}{\mathd
    \mathbb{P}} \right)^{\alpha} \right] & = \, \mathbb{E}_{\mathbb{P}} \left[
    \exp \left( \alpha \int_0^T (K^{- 1}_H h) \cdot \mathd B - \alpha^2 \|
    K^{- 1}_H h \|_{L^2}^2 + \left( \alpha^2 - \frac{\alpha}{2} \right)  \|
    K^{- 1}_H h \|_{L^2}^2 \right) \right]\\
    & \leqslant \, \mathbb{E}_{\mathbb{P}} \left[ \exp \left( 2 \alpha
    \int_0^T (K^{- 1}_H h) \cdot \mathd B - 2 \alpha^2  \| K^{- 1}_H h
    \|_{L^2}^2 \right) \right]^{1 / 2} \mathbb{E}_{\mathbb{P}} [\exp ((2
    \alpha^2 - \alpha) \| K^{- 1}_H h \|_{L^2}^2)]^{1 / 2}\\
    & = \, \mathbb{E}_{\mathbb{P}} [\exp ((2 \alpha^2 - \alpha) \| K^{- 1}_H
    h \|_{L^2}^2)]^{1 / 2} < \infty,
  \end{align*}
  where in the second line we used the fact that the integrand in the first
  term is again a probability density by Novikov's criterion, this time
  applied to the process $\tilde{h} = 2 \alpha h$. Now in order to show that
  the measures $\mathbb{Q}$ and $\mathbb{P}$ are equivalent, we need to show
  that the inverse density $\mathd \mathbb{P}/ \mathd \mathbb{Q}$ is
  integrable w.r.t. $\mathbb{Q}$. Again by Girsanov, since we have $W^H =
  \tilde{W}^H - h$, the inverse density is given by
  \[ \frac{\mathd \mathbb{P}}{\mathd \mathbb{Q}} = \exp \left( \int_0^T (K^{-
     1}_H h) (s) \cdot \mathd \tilde{B}_s - \frac{1}{2} \int_0^T | (K^{- 1}_H
     h) (s) |^2 \mathd s \right) \]
  where $\tilde{B}$ now denotes the standard Bm associated to $\tilde{W}^H$,
  i.e. such that $\tilde{W}^H_t = \int^t_0 K_H (t, s) \mathd \tilde{B}_s$.
  Since we have
  \begin{align*}
    \mathbb{E}_{\mathbb{Q}} \left[ \exp \left( \frac{1}{2} \int_0^T | (K^{-
    1}_H h) (s) |^2 \mathd s \right) \right] & =\mathbb{E}_{\mathbb{P}} \left[
    \exp \left( \frac{1}{2} \int_0^T | (K^{- 1}_H h) (s) |^2 \mathd s \right) 
    \frac{\mathd \mathbb{P}}{\mathd \mathbb{Q}} \right]\\
    & \leqslant \mathbb{E}_{\mathbb{P}} \left[ \exp \left( \int_0^T | (K^{-
    1}_H h) (s) |^2 \mathd s \right) \right]^{1 / 2} \mathbb{E}_{\mathbb{P}}
    \left[ \left( \frac{\mathd \mathbb{P}}{\mathd \mathbb{Q}} \right)^2
    \right]^{1 / 2} < \infty
  \end{align*}
  we can conclude, again by applying Novikov, that $\mathd \mathbb{P}/ \mathd
  \mathbb{Q}$ is integrable w.r.t. $\mathbb{Q}$. Reasoning as before it can be
  shown that $\mathd \mathbb{P}/ \mathd \mathbb{Q}$ admits moments of any
  order w.r.t. $\mathbb{Q}$, which gives the conclusion.
\end{proof}

\subsubsection{Path\mbox{-}by\mbox{-}path uniqueness for SDEs driven by
additive fBm}\label{sec4.2.3}

Girsanov's Theorem allows to construct a probabilistically weak solution of
$(\tmop{CP}_{x_0})$, which we define in the following way.

\begin{definition}
  \label{sec4.2.3 defn weak solution} We say that $(\Omega, \mathcal{F}, \{
  \mathcal{F}_t \}_{t \geqslant 0}, \mathbb{Q}, W^H_{\cdot}, X_{\cdot})$ is a
  weak solution of the Cauchy problem
  \begin{equation}
    X_t = x_0 + \int_0^t b (s, X_s) \mathd s + W^H_t \label{sec4.2.3 eq2}
  \end{equation}
  if $(\Omega, \mathcal{F}, \{ \mathcal{F}_t \}_{t \geqslant 0}, \mathbb{Q})$
  is filtered probability space, $W^H$ is an $\mathcal{F}_t$\mbox{-}fBm of
  parameter $H$ under the probability $\mathbb{Q}$ and
  $\mathbb{Q}$\mbox{-}a.s. the following holds: there exists $\gamma > 1 / 2$
  such that $T^{W^H} b \in C^{\gamma}_t \tmop{Lip}_x$, $X \in W^H +
  C^{\gamma}$ and $X$ is a solution of~{\eqref{sec4.2.3 eq2}} in the sense of
  Definition~\ref{sec4.1 defn solution}.
\end{definition}

We have given a non classical notion of weak solution, which is well suited
when dealing with a distributional $b$; depending on the context, this is not
the only possible definition, see for instance~{\cite{athreya}}
and~{\cite{flandolirusso}} for different choices.

We are now ready to provide a general principle to establish
path\mbox{-}by\mbox{-}path uniqueness.

\begin{lemma}
  \label{sec4.2.3 criterion path-by-path uniqueness}Let $W^H$ be an
  $\mathcal{F}_t$-fBm of parameter $H$ on $(\Omega, \mathcal{F}, \{
  \mathcal{F}_t \}_{t \geqslant 0}, \mathbb{P})$, $x_0 \in \mathbb{R}^d$;
  suppose that:
  \begin{enumeratenumeric}
    \item $b$ is a distributional drift such that, for some $\gamma > 1 / 2$,
    $T^{W^H} b \in C^{\gamma}_t \tmop{Lip}_x$ $\mathbb{P}$\mbox{-}a.s.;
    
    \item Girsanov theorem is applicable to the process $W^H - h$, $h_{\cdot}
    = \int_0^{\cdot} b (s, x_0 + W^H) = T^{W^H} (\cdot, x_0)$.
  \end{enumeratenumeric}
  Then path\mbox{-}by\mbox{-}path uniqueness for $(\tmop{CP}_{x_0})$ holds.
\end{lemma}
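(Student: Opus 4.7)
The idea is to use Girsanov (hypothesis~2) to construct a probabilistically weak solution of $(\tmop{CP}_{x_0})$ whose path $x$ has averaged field $T^x b$ of the same Lipschitz regularity as $T^{W^H} b$, and then to invoke the conditional Comparison Principle (Theorem~\ref{sec4.1 comparison thm v2}, cf.\ Remark~\ref{sec4.1 final remark}) to turn this regularity into path\mbox{-}by\mbox{-}path uniqueness of the Cauchy problem.

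First I apply Girsanov with shift $h_\cdot = T^{W^H}(\cdot, x_0)$ to obtain an equivalent measure $\mathbb{Q} \sim \mathbb{P}$ under which $\tilde{W}^H \assign W^H - h$ is an $\mathcal{F}_t$-fBm of parameter $H$. Setting $X \assign x_0 + W^H = x_0 + \tilde{W}^H + h$ and rewriting,
$$X_t = x_0 + \int_0^t b(s, X_s)\, \mathd s + \tilde{W}^H_t,$$
so that $(X, \tilde{W}^H)$ is a weak solution (under $\mathbb{Q}$) of the Cauchy problem in the sense of Definition~\ref{sec4.2.3 defn weak solution}. Moreover, by definition of the averaging operator, $T^X b(t, y) = T^{W^H} b(t, y + x_0)$, and translation invariance of the Lipschitz seminorm combined with hypothesis~1 yields $T^X b \in C^\gamma_t \tmop{Lip}_x$ $\mathbb{P}$\mbox{-}a.s., hence also $\mathbb{Q}$\mbox{-}a.s. by equivalence of measures; likewise $T^{\tilde{W}^H} b \in C^\gamma_t \tmop{Lip}_x$ $\mathbb{Q}$\mbox{-}a.s.

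I then transfer the statement back to $\mathbb{P}$ via equality of laws. Consider the Borel subset of $C^\delta_t$ (measurable by Lemma~\ref{sec4.2.1 lemma borel set uniqueness})
$$A_\gamma = \left\{ w : T^w b \in C^\gamma_t \tmop{Lip}_x,\ \exists\, x \in w + C^\gamma \text{ solution to } (\tmop{CP}_{x_0}) \text{ with } T^x b \in C^\gamma_t \tmop{Lip}_x \right\}.$$
The preceding step gives $\mathbb{Q}(\tilde{W}^H \in A_\gamma) = 1$. Since $A_\gamma$ is a property of the path alone and the law of $\tilde{W}^H$ under $\mathbb{Q}$ coincides with that of $W^H$ under $\mathbb{P}$ (both are fBm of parameter $H$), one concludes $\mathbb{P}(W^H \in A_\gamma) = 1$.

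Finally, for $\mathbb{P}$-almost every $\omega$ the path $w = W^H(\omega)$ lies in $A_\gamma$, so there exists at least one solution $x \in w + C^\gamma$ of $(\tmop{CP}_{x_0})$ whose averaged field is itself in $C^\gamma_t \tmop{Lip}_x$. Theorem~\ref{sec4.1 comparison thm v2} (read as in Remark~\ref{sec4.1 final remark}) then forces any other solution in $w + C^\gamma$ to coincide with it, which is precisely path\mbox{-}by\mbox{-}path uniqueness. The delicate bookkeeping sits in the middle step: hypothesis~1 is what ensures that the Girsanov\mbox{-}manufactured solution $X$ retains a Lipschitz averaged field, which is the extra input needed to activate the \emph{conditional} uniqueness statement of Theorem~\ref{sec4.1 comparison thm v2}; without it, Girsanov on its own would produce a weak solution but no sufficiently regular witness for the Comparison Principle.
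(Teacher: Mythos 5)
Your plan matches the paper's proof in structure and uses the same ingredients: Girsanov manufactures the witness solution $X = x_0 + W^H$, translation invariance of the Lipschitz seminorm gives the regularity of $T^X b$, and the transfer to the law $\mu^H$ closes the argument via Theorem~\ref{sec4.1 comparison thm v2}. That said, there is one genuine gap, and it sits exactly at the ``rewriting'' step you flag as routine.

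You write $X_t = x_0 + \int_0^t b(s, X_s)\,\mathd s + \tilde W^H_t$ and conclude that $(X,\tilde W^H)$ is a weak solution in the sense of Definition~\ref{sec4.2.3 defn weak solution}. For a genuinely distributional $b$ this is not an identity you can just ``read off'' from $X = x_0 + \tilde W^H + h$. The integral $\int_0^t b(s,X_s)\,\mathd s$ has no meaning on its own; solving $(\tmop{CP}_{x_0})$ relative to the perturbation $\tilde W^H$ means, by Definition~\ref{sec4.1 defn solution}, that $\theta \assign X - \tilde W^H = x_0 + h$ solves the nonlinear Young equation
$\theta_t = \theta_0 + \int_0^t T^{\tilde W^H} b(\mathd s, \theta_s)$.
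What you actually know a priori is that $\theta_t - \theta_0 = h_t = \int_0^t T^{W^H} b(\mathd s, x_0) = \int_0^t T^X b(\mathd s, 0)$, where the last equality is averaging translated by the constant $x_0$. Turning this into $\int_0^t T^{\tilde W^H} b(\mathd s, \theta_s)$ requires passing the shift $\theta$ from the path into the argument of the Young integral, i.e.\ the identity $\int_0^\cdot T^{w+\theta} b(\mathd s, 0) = \int_0^\cdot T^w b(\mathd s, \theta_s)$, which is precisely Lemma~\ref{sec4.1 technical lemma 2} (applied with $w=\tilde W^H$ and $\tilde\theta \equiv 0$). The paper invokes this lemma explicitly, and its hypotheses are what force you to first check that $T^{\tilde W^H} b$ and $T^X b$ both belong to $C^\gamma_t \tmop{Lip}_x$ and that $\theta \in C^{1/2}_t$ (the latter from $h = T^{W^H} b(\cdot,x_0) \in C^\gamma_t$). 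Without this step, the claim that $X$ satisfies the YDE is unjustified, and the set $A_\gamma$ is not seen to be hit. Inserting Lemma~\ref{sec4.1 technical lemma 2} at that point closes the gap; the remainder of your argument (regularity of $T^X b$ via translation invariance, equality of the laws of $\tilde W^H$ under $\mathbb{Q}$ and $W^H$ under $\mathbb{P}$, measurability of $A_\gamma$ via Lemma~\ref{sec4.2.1 lemma borel set uniqueness}, and the conditional comparison principle) is correct and identical to the paper's.
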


\begin{proof}
  Consider $\gamma > 1 / 2$ as in the assumption and the set
  $\mathcal{A}_{\gamma}$ defined as in~{\eqref{sec4.2.1 defn set}}; by
  Theorem~\ref{sec4.1 comparison thm v2}, in order to conclude it is enough to
  show that $\mu^H (\mathcal{A}_{\gamma}) = 1$. By hypothesis, the first half
  of the statement defining $\mathcal{A}_{\gamma}$ is already satisfied on a
  set of full probability, so we only need to concentrate on the second half.
  By the definition of $h$, the process $X = x_0 + W^H$ satisfies
  \begin{equation}
    X_t = x_0 + \int_0^t T^{W^H} (\mathd s, x_0) + [W^H_t - h_t] \backassign
    x_0 + \int_0^t T^X (\mathd s, 0) + \tilde{W}^H_t ; \label{sec4.2.3 eq6}
  \end{equation}
  by hypothesis Girsanov theorem is applicable, so we can construct a new
  probability measure $\mathbb{Q}$ which is absolutely continuous w.r.t. to
  $\mathbb{P}$ such that $\tilde{W}^H$ is an $\mathcal{F}_t$\mbox{-}fBm under
  $\mathbb{Q}$. Observe that $\mathbb{P}$\mbox{-}a.s. $T^X b = \tau^{x_0}
  T^{W^H} b \in C^{\gamma}_t \tmop{Lip}_x$ and so $\mathbb{P}$-a.s. the
  difference $X_{\cdot} - \tilde{W}^H_{\cdot} = x_0 + T^X \left( \cdot \,, 0
  \right) \in C^{\gamma}$ (if $T^X b \in C^{\gamma}_t \tmop{Lip}_x$, then it
  also belongs to $C^0_x C^{\gamma}_t$); then by Lemma~\ref{sec4.1 technical
  lemma 2}, on a set of full measure $\mathbb{P}$ equation~{\eqref{sec4.2.3
  eq6}} is equivalent to
  \[ X_t = x_0 + \int_0^t T^W (\mathd s, X_s - \tilde{W}_s^H) + \tilde{W}^H_t
  \]
  and so $X$ is $\mathbb{P}$\mbox{-}a.s. a solution to $(\tmop{CP}_{x_0})$ in
  the sense of Definition~\ref{sec4.1 defn solution}. Since $\mathbb{Q} \ll
  \mathbb{P}$, all the above statements also hold on a set of
  $\mathbb{Q}$\mbox{-}full measure. But then since $\tilde{W}^H$ has law
  $\mu^H$ under $\mathbb{Q}$, we obtain
  \begin{align*}
    \mu^H (\mathcal{A}_{\gamma}) & = \mathbb{Q} \left( T^{\tilde{W}^H} b \in
    C^{\gamma}_t \tmop{Lip}_x, \text{ $\exists$ a solution } x \in
    \tilde{W}^H + C^{\gamma} \text{ such that } T^x b \in C^{\gamma}_t
    \tmop{Lip}_x \right)\\
    & \geqslant \mathbb{Q} \left( T^{\tilde{W}^H} b \in C^{\gamma}_t
    \tmop{Lip}_x \text{ and $X$ is a solution to $(\tmop{CP}_{x_0})$ satisfying } T^X b \in C^{\gamma}_t \tmop{Lip}_x \right)\\
    & = 1
  \end{align*}
  which gives the conclusion.
\end{proof}

\begin{remark}
  We cannot apply directly the Yamada-Watanabe theorem to deduce existence of
  a strong solution under the assumptions of Lemma~\ref{sec4.2.3 criterion
  path-by-path uniqueness}, because our path\mbox{-}by\mbox{-}path uniqueness
  statement holds only in the class $w + C^{\gamma}$ and not in the class of
  all possible continuous paths (although in the case of continuous $b$ the
  two classes coincide). There is however a more direct way to show that the
  path\mbox{-}by\mbox{-}path unique solution $X$ is adapted to the filtration
  generated by $W^H$. Consider a sequence $\varepsilon_n \, \downarrow 0$ and
  $b^n \assign \rho^{\varepsilon_n} \ast b$, where as usual $\{
  \rho^{\varepsilon} \}_{\varepsilon > 0}$ is a sequence of spatial
  mollifiers, and consider $X^n$ solution to
  \[ \mathd X^n_t = b^n (t, X^n_t) \mathd t + \mathd W^H ; \]
  by classical theory $X^n$ is unique and adapted to the filtration generated
  by $W^H$. Then by Theorem~\ref{sec4.1 comparison thm v2} (possibly combined
  with Lemma~\ref{sec3.1 lemma 3}), $\mathbb{P}$\mbox{-}a.s. $X^n_{\cdot}
  \rightarrow X_{\cdot}$ in $C^{\gamma}$, which implies that $X$ is adapted as
  well and thus a strong solution.
\end{remark}

All the results obtained so far are of abstract nature. Now we are going to
show how to apply them to establish path\mbox{-}by\mbox{-}path uniqueness for
$(\tmop{CP}_{x_0})$ in our context. In particular, Theorem~\ref{sec4.2 thm
main 2} is a direct consequence of the following more general result.

\begin{theorem}
  \label{sec4.2.3 thm path-by-path uniqueness}Let $b$ be a given drift, $H \in
  (0, 1)$. Assume one of the following:
  \begin{itemize}
    \item if $H > 1 / 2$, then there exist $\alpha > 1 - 1 / (2 H) > 0$ and
    $\beta > H - 1 / 2 > 0$ such that $b \in C^0_t C^{\alpha}_x$ and
    \[ | b (t, x) - b (s, y) | \leqslant C (| x - y |^{\alpha} + | t - s
       |^{\beta}) \quad \text{for all } \, s, \, t \in [0, T], \, \, x, y \in
       \mathbb{R}^d ; \]
    \item if $H \leqslant 1 / 2$, then $b \in L^{\infty}_t C^{\alpha}_x$ for
    $\alpha > 1 - 1 / (2 H)$, such that $b$ has compact support, uniformly in
    time; here $\alpha < 0$ is allowed. 
  \end{itemize}
  Then for any $x_0 \in \mathbb{R}^d$ path\mbox{-}by\mbox{-}path uniqueness
  holds for $(\tmop{CP}_{x_0})$.
\end{theorem}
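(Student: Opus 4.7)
The strategy is to apply Lemma~\ref{sec4.2.3 criterion path-by-path uniqueness}, which reduces path-by-path uniqueness to two conditions: (i) $T^{W^H} b \in C^\gamma_t \tmop{Lip}_x$ almost surely, for some $\gamma > 1/2$; and (ii) Girsanov's theorem applies to the shift $W^H - h$, where $h_t = \int_0^t b(s, x_0 + W^H_s)\, \mathd s = T^{W^H}_t b(x_0)$. Condition (ii) will be handled via Theorem~\ref{sec4.2.2 theorem girsanov transform}, whose hypothesis is $h \in C^{\beta'}_t$ with $\beta' > H + 1/2$, $h_0 = 0$, plus an exponential moment bound on $\|h\|^2_{C^{\beta'}}$.

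Verifying (i): in both regimes I apply Theorem~\ref{sec3.3 thm averaging besov} combined with the Besov embedding $B^{\alpha+\rho}_{p,p} \hookrightarrow C^1$ when $\alpha + \rho - d/p > 1$, as in Remark~\ref{sec3.3 remark embedding}. For $H \le 1/2$, compact support permits embedding $b \in L^\infty_t B^\alpha_{\infty,\infty}$ into $L^q_t B^\alpha_{p,p}$ for arbitrarily large finite $q,p$; for $H > 1/2$ the bounded jointly Hölder drift is already in $L^q_t B^\alpha_{p,p}$. Letting $p,q \to \infty$, the averaging condition $H\rho + 1/q < 1/2$ collapses to $H(1-\alpha) < 1/2$, i.e.\ $\alpha > 1 - 1/(2H)$, and delivers $T^{W^H} b \in C^\gamma_t C^1_x$ for some $\gamma > 1/2$. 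Corollary~\ref{sec3.3 corollary improvement} additionally supplies $\mathbb{E}[\exp(\lambda \|T^{W^H} b\|^2_{C^\gamma C^1})] < \infty$ for every $\lambda$, which will be used below.

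Verifying (ii): the key observation is that $h$ is the \emph{pointwise} evaluation of $T^{W^H} b$ at the single point $x_0$, so one only needs $T^{W^H} b \in C^{\beta'}_t C^0_x$, not $\tmop{Lip}_x$. This looser requirement permits a smaller regularity gain $\rho$ in Theorem~\ref{sec3.3 thm averaging besov} and hence a better time Hölder exponent. For $H \le 1/2$, taking $\rho$ just above $(-\alpha)_+$ (enough to reach $C^0_x$ by Besov embedding after $p,q \to \infty$), the attainable time Hölder exponent is close to $1 - H\rho \approx 1 + H\alpha$; the Girsanov requirement $\beta' > H + 1/2$ then reads $1 + H\alpha > H + 1/2$, i.e.\ again $\alpha > 1 - 1/(2H)$, matching the hypothesis exactly. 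For $H > 1/2$ one has $H + 1/2 > 1$, so $h$ must be differentiable with $h' \in C^{H-1/2+\varepsilon'}_t$; since $b$ is jointly continuous, $h'(t) = b(t, x_0 + W^H_t)$, and its Hölder exponent is $\min(\beta, H\alpha) - \varepsilon$, both of which exceed $H-1/2$ under $\beta > H-1/2$ and $\alpha > 1 - 1/(2H)$. In either case the exponential moment bound on $\|h\|^2_{C^{\beta'}}$ follows from Corollary~\ref{sec3.3 corollary improvement} (for $H \le 1/2$) or from Fernique's theorem applied to $W^H$ in $C^{H-\varepsilon}$ (for $H > 1/2$), so Theorem~\ref{sec4.2.2 theorem girsanov transform} applies.

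With (i) and (ii) in hand, Lemma~\ref{sec4.2.3 criterion path-by-path uniqueness} yields the claim. The only real obstacle is the bookkeeping in (ii): one has to notice that pointwise evaluation at $x_0$ allows a much smaller regularity gain $\rho$ than is required for (i), and this slack is precisely what aligns the time Hölder regularity of $h$ with Girsanov's condition across the full range $\alpha > 1 - 1/(2H)$.
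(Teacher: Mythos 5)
Your proof is correct in substance and follows the same architecture as the paper's: reduce to Lemma~\ref{sec4.2.3 criterion path-by-path uniqueness}, establish $T^{W^H} b \in C^\gamma_t\tmop{Lip}_x$ via Theorem~\ref{sec3.3 thm averaging besov} and Remark~\ref{sec3.3 remark embedding}, and verify the Girsanov hypotheses of Theorem~\ref{sec4.2.2 theorem girsanov transform} by estimating the time regularity of $h_t = T^{W^H}_t b(x_0)$. For $H > 1/2$ your computation of the H\"older exponent of $h' = b(\cdot, x_0 + W^H_\cdot)$ and the appeal to Fernique are identical to the paper's.

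For $H \le 1/2$ you take a genuinely different route to the same exponent, and it is worth noting the difference. The paper \emph{interpolates} between the trivial estimate $T^{W^H} b \in \tmop{Lip}_t C^\alpha_x$ (from the definition of averaging) and the statement-level bound $T^{W^H} b \in C^{1/2}_t C^{\alpha + 1/(2H) - \varepsilon}_x$, producing the line $\gamma = 1 - \theta/2$, $\beta = \alpha + \theta/(2H) - \varepsilon\theta$ and solving for admissible $\theta$. You instead vary $\rho$ inside Theorem~\ref{sec3.3 thm averaging besov} and read off the time exponent $1 - 1/q - H\rho$ from the \emph{proof} (not the statement, which only asserts ``some $\gamma > 1/2$''). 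These two constructions trace out the exact same regularity line in the $(\gamma, \beta)$-plane with slope $-H$, so they are the same argument in two parameterisations; yours buys the ``pointwise evaluation needs less $\rho$, hence more $\gamma$'' observation more transparently, but at the cost of having to reopen the proof of Theorem~\ref{sec3.3 thm averaging besov} to extract the explicit exponent, which is not guaranteed by the theorem as stated. If you want this to be a self-contained citation you should either note that the theorem's proof yields $\gamma$ up to $1 - 1/q - H\rho$, or use the paper's interpolation trick which relies only on the stated $\gamma > 1/2$.

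Two small inaccuracies: for $H > 1/2$ a bounded jointly H\"older $b$ on $\mathbb{R}^d$ is \emph{not} in $L^q_t B^\alpha_{p,p}$ for finite $p$; one must first localise (the paper writes ``at least locally''). And the exponential moment bound in the Besov case is already built into Theorem~\ref{sec3.3 thm averaging besov} itself, so citing Corollary~\ref{sec3.3 corollary improvement}, which is stated for Bessel spaces, is slightly off. Neither affects the argument.
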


\begin{proof}
  In both cases, in order to conclude, we need to show that we can apply
  Lemma~\ref{sec4.2.3 criterion path-by-path uniqueness} to the process
  $h_{\cdot} = \int_0^{\cdot} b (s, x_0 + W^H_s) \mathd s$; in order to do so,
  we will check that the conditions of Theorem~\ref{sec4.2.2 theorem girsanov
  transform} are satisfied. Up to shifting $b$, we can assume without loss of
  generality $x_0 = 0$.
  
  Let $H > 1 / 2$, then by the hypothesis $b \in C^0_t C^{\alpha}_x$ and
  Theorem~\ref{sec3.3 thm averaging besov} we know that $T^{W^H} \in
  C^{\gamma}_t C^{1 + \varepsilon}_x$ (at least locally) for some $\gamma > 1
  / 2$ and $\varepsilon > 0$; the process $h$ belongs to $C_t^{H + 1 / 2 +
  \varepsilon}$ if and only if the map $t \mapsto b (t, W^H_t) \in C_t^{H - 1
  / 2 + \varepsilon}$. Recall that for any $\gamma < H$, $W^H \in
  C_t^{\gamma}$; then by the hypothesis it holds
  \[ | b (t, W^H_t) - b (s, W^H_s) | \leqslant C (| t - s |^{\beta} + | W^H_t
     - W^H_s |^{\alpha}) \leqslant C (| t - s |^{\beta} + \llbracket W^H
     \rrbracket^{\alpha}_{\gamma} | t - s |^{\alpha \gamma}) \]
  and so we can find $\varepsilon > 0$ small enough such that $\gamma = H -
  \varepsilon$ and
  \[ \left\llbracket b \left( \cdot \,, W^H_{\cdot} \right)
     \right\rrbracket_{C^{H + 1 / 2 + \varepsilon}} \lesssim 1 + \llbracket
     W^H \rrbracket^{\alpha}_{C^{H - \varepsilon}} . \]
  As the exponent $\alpha < 1$, by Fernique Theorem we deduce that
  \[ \mathbb{E} [\exp (\lambda \| h \|_{C^{H + 1 / 2 - \varepsilon}}^2)]
     \lesssim \mathbb{E} [\exp (\lambda C \llbracket W^H \rrbracket_{C^{H -
     \varepsilon}}^{2 \alpha})] < \infty \quad \forall \, \lambda \in
     \mathbb{R}. \]
  Consider now the case $H < 1 / 2$. By Theorem~\ref{sec3.3 thm averaging
  besov} (as the support of $b$ is compact uniformly in time, we have the
  embedding $C^{\alpha} \hookrightarrow B^{\alpha}_{p, p}$ for any $p <
  \infty$) we know that
  \[ T^{W^H} b \in C^{\gamma}_t C^{\alpha + 1 / 2 H - \varepsilon}_x
     \hookrightarrow C^{\gamma}_t C^{1 + \varepsilon}_x \]
  for some $\gamma > 0$ and $\varepsilon > 0$ sufficiently small, therefore
  the process $h_t = T^{W^H} b (t, 0)$ is a well defined element of
  $C^{\gamma}_t$. We now want to show that it actually belongs to $C_t^{H + 1
  / 2 + \varepsilon}$; \quad we can do so by interpolation, using the fact
  that $T^{W^H}$ has higher spatial regularity. Indeed by properties of the
  averaging operator $T^{W^H} b \in \tmop{Lip}_t C^{\alpha}_x$ and so for any
  $\theta \in (0, 1)$ it holds
  \[ \| h_. \|_{C^{1 - \theta / 2}} \leqslant \| T^{W^H} b \|_{C^{1 - \theta /
     2}_t C^{\beta}_x} \leqslant \| T^{W^H} b \|^{1 - \theta}_{\tmop{Lip}_t
     C^{\alpha}_x}  \| T^{W^H} b \|^{\theta}_{C^{1 / 2}_t C^{\alpha + 1 / (2
     H) - \varepsilon}_x} \]
  where $\beta = (1 - \theta) \alpha + \theta (\alpha + (2 H)^{- 1} -
  \varepsilon)$ and thanks to the hypothesis we can choose $\theta \in (0, 1)$
  s.t.
  \[ \left\{\begin{array}{l}
       \beta = \alpha + \frac{\theta}{2 H} - \varepsilon \theta > 0\\
       1 - \frac{\theta}{2} > H + \frac{1}{2}
     \end{array}\right. \Longleftrightarrow \, \alpha - \varepsilon \theta > -
     \frac{\theta}{2 H} > 1 - \frac{1}{2 H} . \]
  For this choice of $\theta$ therefore we obtain
  \[ \| h \|_{C^{H + 1 / 2 +}}^2 \lesssim \| T^{W^H} b \|^{2 \theta}_{C^{1 /
     2}_t C^{\alpha + 1 / (2 H) - \varepsilon}_x} \]
  and since the exponent $2 \theta < 2$, and we have exponential integrability
  for the term on the r.h.s. by Theorem~\ref{sec3.3 thm averaging besov}, we
  get the conclusion.
\end{proof}

In the regime $H > 1 / 2$, the hypothesis required on $b$ is the same as
in~{\cite{nualartouknine}}, although therein path-wise uniqueness is shown
only in the case $d = 1$, while here we obtain path\mbox{-}by\mbox{-}path
uniqueness in any dimension. In the case $H = 1 / 2$, we can allow $b \in
L^{\infty}_t C^{\alpha}_x$ for any $\alpha > 0$; this result is comparable to
the one from~{\cite{davie}}, in which sharper estimates allow to reach $b \in
L^{\infty}_{t, x}$, see also~{\cite{shaposhnikov1,shaposhnikov2}} for further
extensions. Observe that in the regime $H < 1 / 2$ we can allow $b$ to be only
distributional; in this case, we recover the results
from~{\cite{catelliergubinelli}}. Unfortunately, the original proof
from~{\cite{catelliergubinelli}} is wrong, due to an incorrect version of the
formula defining $K_H^{- 1}$ (see the formula for $H^n$ just before Lemma~4.8
therein), which is why we have decided to give an alternative proof rather
than directly invoking the results from~{\cite{catelliergubinelli}}.

\

The driving principle given by Lemma~\ref{sec4.2.3 criterion path-by-path
uniqueness} is fairly general and can be applied under different hypothesis on
$b$, especially when we combine it with Theorems~\ref{sec3.3 thm averaging
bessel} and~\ref{sec3.3 thm averaging besov}.

\begin{corollary}
  \label{sec4.2.3 corollary path uniqueness}Let $H < 1 / 2$ and $b \in L^q_t
  B^{\alpha}_{p, p}$ with $(q, p) \in [2, \infty)^2$, $\alpha < 0$ such that
  \begin{equation}
    \frac{1}{q} + H \left( \frac{d}{p} - \alpha \right) < \frac{1}{2} - H.
    \label{sec4.2.3 generalized condition}
  \end{equation}
  Then for any $x_0 \in \mathbb{R}^d$, path\mbox{-}by\mbox{-}path uniqueness
  for $(\tmop{CP}_{x_0})$ under $\mu^H$ holds. A similar statement holds for
  $b \in L^q_t L^p_x$ with
  \begin{equation}
    \frac{1}{q} + H \frac{d}{p} < \frac{1}{2} - H. \label{sec4.2.3 qp
    condition}
  \end{equation}
\end{corollary}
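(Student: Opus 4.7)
The plan is to verify the two hypotheses of Lemma~\ref{sec4.2.3 criterion path-by-path uniqueness}, then conclude. As in the proof of Theorem~\ref{sec4.2.3 thm path-by-path uniqueness}, translating $b$ by $\tau^{x_0}$ reduces everything to $x_0 = 0$, so we may work with $h_t = T^{W^H} b(t, 0)$.

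For the first hypothesis, namely $T^{W^H} b \in C^{\gamma}_t \tmop{Lip}_x$ almost surely for some $\gamma > 1/2$, I would apply Theorem~\ref{sec3.3 thm averaging besov} with a regularity gain $\rho_1$ satisfying $\rho_1 > 1 + d/p - \alpha$ and $H \rho_1 + 1/q < 1/2$. The joint solvability of these two inequalities is precisely condition~\eqref{sec4.2.3 generalized condition}. Once $T^{W^H} b \in C^{\gamma_1}_t B^{\alpha+\rho_1}_{p,p}$ is obtained, the Besov embedding $B^{\alpha+\rho_1}_{p,p} \hookrightarrow C^{\alpha+\rho_1 - d/p}_x \hookrightarrow \tmop{Lip}_x$ completes the step.

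For the second hypothesis, applicability of Girsanov to $W^H - h$, I would invoke Theorem~\ref{sec4.2.2 theorem girsanov transform}: it suffices to produce some $\beta > H + 1/2$ for which $\|h\|_{C^{\beta}_t}$ has exponential moments of every order. The crux is that the proof of Theorem~\ref{sec3.3 thm averaging besov} (more precisely, the chaining applied to the stochastic part $I^{(2)}$ in the proof of Theorem~\ref{sec3.3 thm averaging bessel}) actually delivers time H\"older regularity up to $1 - 1/q - H\rho$, not merely a $\gamma > 1/2$. Choosing $\rho_2 > d/p - \alpha$ so that $B^{\alpha+\rho_2}_{p,p} \hookrightarrow L^{\infty}_x$, and $\rho_2$ small enough that $H \rho_2 + 1/q < 1/2 - H$, one obtains $h \in C^{\beta}_t$ for some $\beta > H + 1/2$; then by (an analogue for Besov spaces of) Corollary~\ref{sec3.3 corollary improvement} the exponential moment bound
\[ \mathbb{E}\!\left[ \exp\!\left( \lambda \frac{\|h\|^2_{C^{\beta}}}{\|b\|^2_{L^q B^{\alpha}_{p,p}}} \right) \right] < \infty \qquad \forall\,\lambda \in \mathbb{R} \]
holds. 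The joint constraint on $\rho_2$ again reduces to~\eqref{sec4.2.3 generalized condition}.

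With both hypotheses verified, Lemma~\ref{sec4.2.3 criterion path-by-path uniqueness} yields path-by-path uniqueness for $(\tmop{CP}_{x_0})$ under $\mu^H$. The $L^q_t L^p_x$ case is completely analogous: one uses Theorem~\ref{sec3.3 thm averaging bessel} and the Bessel embedding $L^{\rho,p}_x \hookrightarrow C^{\rho - d/p}_x$, and the two constraints $\rho > 1 + d/p$ (for spatial Lipschitz) and $H \rho + 1/q < 1/2$ collapse exactly to~\eqref{sec4.2.3 qp condition}, with an analogous choice of a second parameter for the Girsanov step. The main technical obstacle is the sharpening of the time exponent produced by Theorem~\ref{sec3.3 thm averaging besov} from ``some $\gamma > 1/2$'' to essentially the full $1 - 1/q - H\rho$, which is what allows one to cross the Girsanov threshold $H + 1/2$; this sharpening is already implicit in the chaining argument inside the proof of Theorem~\ref{sec3.3 thm averaging bessel} and requires no new ideas, only a careful bookkeeping of exponents.
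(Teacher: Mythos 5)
Your proof is correct and reaches the same conclusion through the same key ingredients (Lemma~\ref{sec4.2.3 criterion path-by-path uniqueness}, Theorem~\ref{sec3.3 thm averaging besov}, Theorem~\ref{sec4.2.2 theorem girsanov transform}). The one place where you diverge from the paper's argument is in producing $h \in C^{\beta}_t$ for $\beta > H + 1/2$: you re-open the proof of Theorem~\ref{sec3.3 thm averaging besov} and extract the sharp time exponent $1 - 1/q - H\rho$ (via the chaining estimate applied to $I^{(2)}$), then choose a second, smaller regularity gain $\rho_2$ to push the time exponent above the Girsanov threshold. The paper instead never touches the interior of that proof: it takes the stated conclusion $T^{W^H} b \in C^{1/2}_t B^{\alpha+s}_{p,p}$ together with the trivial bound $T^{W^H} b \in C^{1-1/q}_t B^{\alpha}_{p,p}$ and interpolates in the $\theta$-parameter, solving a small linear system for $(\theta, s)$ to show it is equivalent to~\eqref{sec4.2.3 generalized condition}. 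Your observation is correct that setting $\rho_2 = \theta s$ with $Hs + 1/q \to 1/2$ recovers exactly the paper's interpolated exponent, so the two routes produce the same open constraints; the interpolation route is cleaner insofar as it only cites the stated version of Theorem~\ref{sec3.3 thm averaging besov}, while your route is more transparent about where the $H + 1/2$ threshold comes from. The exponential integrability step is also handled equivalently (both reduce to a power strictly below $2$ of a quantity with Gaussian tails). No genuine gap.
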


\begin{proof}
  It follows from hypothesis~{\eqref{sec4.2.3 generalized condition}},
  combined with Theorem~\ref{sec3.3 thm averaging besov} and the Besov
  embeddings $B^{\alpha + s}_{p, p} \hookrightarrow C^{\alpha + s - d / p}$,
  that we can choose $s$ satisfying~{\eqref{sec3.3 KR condition 2}} such that
  $T^{W^H} b \in C^{\gamma}_t C^1_x$ for some $\gamma > 1 / 2$. As before, we
  can now assume $x_0 = 0$ and it remains to show that the process $h_t =
  T^{W^H} b (t, 0) \in C^{\beta}_t$ for some $\beta > H + 1 / 2$ and satisfies
  integrability conditions like those of Theorem~\ref{sec4.2.2 theorem
  girsanov transform}. By the properties of the averaging operator, on a set
  of full probability it holds
  \[ T^{W^H} b \in C^{1 - 1 / q}_t B^{\alpha}_{p, p} \cap C^{1 / 2}_t
     B^{\alpha + s}_{p, p} \]
  for any $s$ such that $H s + 1 / q < 1 / q$. Therefore by interpolation, for
  any $\theta \in (0, 1)$, it holds
  \[ T^{W^H} b \in C^{(1 - 1 / q) (1 - \theta) + \theta / 2}_t B^{(1 - \theta
     \alpha) + \theta (\alpha + s)}_{p, p} \hookrightarrow C^{(1 - 1 / q) (1 -
     \theta) + \theta / 2}_t C^{(1 - \theta) \alpha + \theta (\alpha + s) - d
     / p} . \]
  In order to deduce that $T^{W^H} b \left( \cdot \,, 0 \right) \in
  C^{\beta}_t$ with $\beta > H + 1 / 2$, we need to find parameters $s > 0$
  and $\theta \in (0, 1)$ such that
  \[ \left\{\begin{array}{l}
       H s + \frac{1}{q} < \frac{1}{2},\\
       \left( 1 - \frac{1}{q} \right) (1 - \theta) + \frac{\theta}{2} > H +
       \frac{1}{2},\\
       (1 - \theta) \alpha + \theta (\alpha + s) - \frac{d}{p} > 0.
     \end{array}\right. \]
  A few algebraic manipulations show that the above system is equivalent to
  condition~{\eqref{sec4.2.3 generalized condition}}; from interpolation we
  then obtain, for $\beta = (1 - 1 / q) (1 - \theta) + \theta / 2$ as above,
  \[ \| h \|_{C^{\beta}} \lesssim \| T^{W^H} b \|_{C^{1 / 2}_t B^{\alpha +
     s}_{p, p}}^{\theta} \]
  and since the parameter $\theta \in (0, 1)$, we deduce that $\| h
  \|_{C^{\beta}}$ satisfies~{\eqref{sec4.2.2 hypothesis application
  girsanov}}.
  
  In the case $b \in L^q_t L^p_x$, using the embedding $L^p_x \hookrightarrow
  B^{- \varepsilon}_{p, p}$ for any $\varepsilon > 0$ (see
  Appendix~\ref{appendixA2}) and applying the previous result for
  $\varepsilon$ sufficiently small we get the conclusion.
\end{proof}

In the case $b \in L^q_t L^p_x$, it was already shown in~{\cite{le}} that
pathwise uniqueness holds. Here we have strengthened the result to
path\mbox{-}by\mbox{-}path uniqueness. The case $b \in L^q_t B^{\alpha}_{p,
p}$ with $\alpha < 0$ to the best of our knowledge has not been considered in
the literature so far. Condition~{\eqref{sec4.2.3 generalized condition}}
actually holds also in the regime $\alpha > 0$, but this is not particularly
interesting as one can use fractional Sobolev embeddings
(see~{\cite{dinezza}}) to deduce $L^q_t B^{\alpha}_{p, p} \hookrightarrow
L^q_t L^{p^{\ast}}_x$ with
\[ \frac{1}{p^{\ast}} = \frac{1}{p} - \frac{\alpha}{d} \]
and then reduce it to the case~{\eqref{sec4.2.3 qp condition}}.

\begin{remark}
  The guiding principle of Lemma~\ref{sec4.2.3 criterion path-by-path
  uniqueness} is fairly general, but there are situations in which it is
  possible to establish path\mbox{-}by\mbox{-}path uniqueness even if Girsanov
  theorem is not applicable (or at least we are currently not able to find
  suitable estimates in order to apply it). Consider for instance the case of
  $H > 1 / 2$ and $b \in L^{\infty}_t C^{\alpha}_x$ for $\alpha \in (0, 1)$
  such that
  \[ \alpha > \frac{3}{2} - \frac{1}{2 H} ; \]
  observe that the condition is non trivial for every $H \in (1 / 2)$. Then by
  Theorem~\ref{sec3.3 thm averaging besov} (possibly after a localisation
  procedure) $T^{W^H} b \in C^{\gamma}_t \tmop{Lip}_x$ (at least locally) and
  so by Theorem~\ref{sec4.1 comparison thm v1} path\mbox{-}by\mbox{-}path
  uniqueness holds for the whole ODE. However, lack of continuity in time of
  $b$ prevents us from applying Girsanov.
\end{remark}

\subsection{Regularity of the flow}\label{sec4.3}

\subsubsection{Variational formula for flow of
diffeomorphisms}\label{sec4.3.1}\label{sec4.3.1}

It follows from Theorem~\ref{sec4.1 comparison thm v1} that, if $b$ and $T^w
b$ satisfy the regularity assumptions, the solution map $(\theta_0, t) \mapsto
\theta_t$ is Lipschitz in space, uniformly in time (more precisely, it follows
from~{\eqref{sec4.1 comparison v1.1}} and~{\eqref{sec4.1 comparison v1.2}}
that it is $C^{\gamma}_t \tmop{Lip}_{\tmop{loc}}$). However we cannot yet talk
about a flow, as we haven't shown the invertibility of the solution map, nor
the flow property; this is accomplished by the following two lemmas.

\begin{lemma}
  \label{sec4.3 lemma reversed YDE}Let $T^w b \in C^{\gamma}_t C^{\beta}_x$
  and $\theta \in C^{\alpha}_t$ such that $\gamma + \beta \alpha > 1$. Then
  setting $\tilde{w}_t = w_{T - t}$, $\tilde{b}_t = b_{T - t}$, it holds
  \begin{equation}
    \int_0^t T^w b (\mathd s, \theta_s) = - \int_{T - t}^T T^{\tilde{w}}
    \tilde{b} (\mathd s, \theta_{T - s}) . \label{sec4.3 eq lemma reversed}
  \end{equation}
  In particular, if $\theta$ is a solution of the YDE
  \[ \theta_t = \theta_0 + \int_0^t T^w b (\mathd s, \theta_s), \]
  then $\tilde{\theta}_t = \theta_{T - t}$ satisfies the time-reversed YDE
  \[ \tilde{\theta}_t = \tilde{\theta}_0 + \int_0^t T^{\tilde{w}} \tilde{b}
     (\mathd s, \tilde{\theta}_s) . \]
\end{lemma}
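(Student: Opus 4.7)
The plan is to prove the integral identity directly from the Riemann-sum definition of the nonlinear Young integral, and then to deduce the reversed YDE by specialising $\theta$ to be a solution. The main preliminary is a purely algebraic relation between the two averaged fields: by the Bochner definition of the averaging operator and the change of variable $u = T - r$,
\[
T^{\tilde{w}}\tilde{b}(t,x) = \int_0^t b(T-r,\, x + w_{T-r})\,\mathd r = T^w b(T,x) - T^w b(T-t,x),
\]
so for every $0 \le s \le s' \le T$ one obtains the increment identity $T^{\tilde{w}}\tilde{b}_{s,s'}(x) = T^w b_{T-s',\,T-s}(x)$. This step does not use Young integration and is the only substantive input needed before turning to sums.

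Next I would match partitions: given $\Pi = \{0 = t_0 < \cdots < t_n = t\}$ of $[0,t]$, the partition $s_i \assign T - t_{n-i}$ of $[T-t,T]$ has the same mesh, and by the preliminary
\[
T^{\tilde{w}}\tilde{b}_{s_i,s_{i+1}}(\theta_{T-s_i}) = T^w b_{t_{n-i-1},\,t_{n-i}}(\theta_{t_{n-i}}).
\]
Re-indexing by $j = n - i - 1$, the Riemann sum for $\int_{T-t}^T T^{\tilde{w}}\tilde{b}(\mathd s, \theta_{T-s})$ becomes $\sum_j T^w b_{t_j,t_{j+1}}(\theta_{t_{j+1}})$. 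This is the right-endpoint analogue of the left-endpoint sum $\sum_j T^w b_{t_j,t_{j+1}}(\theta_{t_j})$ that defines $\int_0^t T^w b(\mathd s, \theta_s)$ via Theorem~\ref{sec4.1 thm definition young integral}.

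The core technical step is then to show that these two endpoint conventions share the same limit. The term-by-term discrepancy is bounded, using $T^w b \in C^{\gamma}_t C^{\beta}_x$ and $\theta \in C^{\alpha}_t$, by
\[
\bigl|T^w b_{t_j,t_{j+1}}(\theta_{t_{j+1}}) - T^w b_{t_j,t_{j+1}}(\theta_{t_j})\bigr| \le \|T^w b\|_{C^{\gamma}_t C^{\beta}_x}\,\llbracket \theta \rrbracket_{C^{\alpha}}^{\beta}\,|t_{j+1}-t_j|^{\gamma+\alpha\beta},
\]
and summing kills the total error as $|\Pi| \to 0$ precisely because of the standing hypothesis $\gamma + \alpha\beta > 1$. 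This is exactly where the hypothesis is used; everything else is algebra and integration by substitution. I expect this to be the main (if mild) obstacle, essentially a bookkeeping issue that also pins down the sign convention when orienting the integration domain from $[T-t,T]$ to $[0,t]$.

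Finally, for the reversed YDE, I would apply the integral identity to the solution $\theta$ at generic time $t'$, subtract its values at $t' = T$ and $t' = T - t$, and observe that the resulting left-hand side is $\theta_{T-t} - \theta_T = \tilde{\theta}_t - \tilde{\theta}_0$ while the right-hand side reduces, via additivity of the Young integral and the substitution $\tilde{\theta}_s = \theta_{T-s}$, to a single nonlinear Young integral of $T^{\tilde{w}}\tilde{b}$ against $\tilde{\theta}$ over $[0,t]$. Rearranging then yields the reversed Young equation for $\tilde{\theta}$ with initial datum $\tilde{\theta}_0 = \theta_T$.
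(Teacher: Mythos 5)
Your proposal is correct and follows essentially the same route as the paper's proof: the same increment identity $T^{\tilde{w}}\tilde{b}_{s,s'} = T^w b_{T-s',T-s}$, the same reversed-partition matching, and the same control of the left- vs.\ right-endpoint Riemann-sum discrepancy via $\gamma + \alpha\beta > 1$. One caveat worth pinning down (you rightly flag it as a sign-bookkeeping issue but leave it open): if you carry out your own derivation to the end you get $\int_0^t T^w b(\mathd s,\theta_s) = \int_{T-t}^T T^{\tilde w}\tilde b(\mathd s,\theta_{T-s})$ \emph{without} the minus sign, and consequently $\tilde\theta_t = \tilde\theta_0 - \int_0^t T^{\tilde w}\tilde b(\mathd s,\tilde\theta_s)$ \emph{with} a minus (check e.g.\ $b\equiv 1$, $w\equiv 0$), so the two signs as printed in the lemma are both typos that happen to compensate each other; this is harmless for the paper's applications, which only use the regularity of the reversed drift, not its sign.
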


\begin{proof}
  Let $\Pi$ be a partition of $[0, t]$ given by $0 = t_0 < t_1 < \cdots < t_n
  = t$ and define $\tilde{t}_i = T - t_i$, which defines a partition of $[T -
  t, T]$ (up to the fact that it is decreasing w.r.t. $i$); it holds
  \begin{align*}
    \sum_i T^w b_{t_i, t_{i + 1}}^{} (\theta_{t_i}) & = \, \sum_i T^w b_{T -
    \tilde{t}_i, T - \tilde{t}_{i + 1}} (\theta_{T - \tilde{t}_i})\\
    & = \, - \sum_i T^w b_{T - \tilde{t}_{i + 1}, T - \tilde{t}_i} (\theta_{T
    - \tilde{t}_{i + 1}}) + J
  \end{align*}
  where the remainder term $J$ satisfies
  \begin{align*}
    | J | & = \, \left| \sum_i [T^w b_{T - \tilde{t}_{i + 1}, T - \tilde{t}_i}
    (\theta_{T - \tilde{t}_i + 1}) - T^w b_{T - \tilde{t}_{i + 1}, T -
    \tilde{t}_i} (\theta_{T - \tilde{t}_i})] \right|\\
    & \leqslant \, \| T^w b \|_{C^{\gamma}_t C^{\beta}_x}  \| \theta
    \|_{C_t^{\alpha}} \sum_i | t_{i + 1} - t_i |^{\alpha + \beta \gamma}
    \lesssim \| \Pi \|^{\alpha + \beta \gamma - 1} = o (\| \Pi \|) .
  \end{align*}
  By basic properties of the averaging operator we have $T^w b_{T - t, T - s}
  (x) = T^{\tilde{w}} \tilde{b}_{s, t} (x)$ and so overall we obtain
  \[ \sum_i T^w b_{t_i, t_{i + 1}}^{} (\theta_{t_i}) = - \sum_i T^{\tilde{w}}
     \tilde{b}_{\tilde{t}_i, \tilde{t}_{i + 1}} (\theta_{T - \tilde{t}_{i +
     1}}) + o (\| \Pi \|) \]
  Taking a sequence of partitions $\Pi_N$ such that $\| \Pi_N \| \rightarrow
  0$ and taking the limits on both sides we obtain the first statement.
  Regarding the second statement, if $\theta$ is a solution of the YDE, then
  by~{\eqref{sec4.3 eq lemma reversed}} for any $t \in [0, T]$ it holds
  \begin{align*}
    \theta_{T - t} - \theta_T & = - \int_{T - t}^T T^w b (\mathd s, \theta_s)
    = \int_0^t T^{\tilde{w}} \tilde{b} (\mathd s, \theta_{t - s})
  \end{align*}
  which implies the conclusion.
\end{proof}

Similar arguments also provide the following lemma, whose proof is therefore
omitted.

\begin{lemma}
  \label{sec4.3 lemma translated YDE}Let $T^w b \in C^{\gamma}_t C^{\nu}_x$
  with $\gamma (1 + \nu) > 1$ and let $\theta$ be a solution of
  \[ \theta_t = \theta_s + \int_s^t T^w b (\mathd r, \theta_r) \quad \forall
     \, t \in [s, T] . \]
  Then setting $\tilde{\theta}_t = \theta_{s + t}$, $\widetilde{w_t} = w_{s +
  t}$ and $\tilde{b}_t = b_{s + t}$, it holds
  \[ \tilde{\theta}_t = \tilde{\theta}_0 + \int_0^t T^{\tilde{w}} \tilde{b}
     (\mathd r, \tilde{\theta}_r) \quad \forall \, t \in [0, T - s] . \]
\end{lemma}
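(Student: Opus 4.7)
The plan is to mimic the argument given for Lemma~\ref{sec4.3 lemma reversed YDE}, exploiting the fact that nonlinear Young integrals are defined as limits of Riemann--Stieltjes type sums built from the increments of the averaged field. The core observation is that the averaging operator interacts trivially with a time shift: by a simple change of variables in the Bochner integral defining $T^w b$, one has the pointwise identity
\[ (T^w b)_{s+u,\, s+v}(x) \;=\; \int_{s+u}^{s+v} \tau^{w_r} b_r(x)\, \mathd r \;=\; \int_u^v \tau^{\tilde{w}_{r'}} \tilde{b}_{r'}(x)\, \mathd r' \;=\; (T^{\tilde{w}} \tilde{b})_{u,v}(x), \]
valid for all $0 \leqslant u \leqslant v \leqslant T-s$ and $x \in \mathbb{R}^d$. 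This immediately gives $T^{\tilde{w}} \tilde{b} \in C^{\gamma}_t C^{\nu}_x$ on $[0,T-s]$ with the same norms as $T^w b$ restricted to $[s,T]$, so that the nonlinear Young integral on the right-hand side of the claim is meaningful by Theorem~\ref{sec4.1 thm definition young integral}.

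Given this identity, I would fix $t \in [0, T-s]$, take any sequence of partitions $\Pi_n = \{0 = t_0 < t_1 < \cdots < t_n = t\}$ of $[0,t]$ with $|\Pi_n| \to 0$, and form the shifted partition $\hat{\Pi}_n = \{s = s+t_0 < s+t_1 < \cdots < s+t_n = s+t\}$ of $[s, s+t]$, which also has mesh tending to zero. Then
\[ \sum_i (T^{\tilde{w}} \tilde{b})_{t_i, t_{i+1}}(\tilde{\theta}_{t_i}) \;=\; \sum_i (T^w b)_{s+t_i,\, s+t_{i+1}}(\theta_{s+t_i}), \]
and passing to the limit on both sides, using Theorem~\ref{sec4.1 thm definition young integral}, yields the integral identity
\[ \int_0^t T^{\tilde{w}} \tilde{b}(\mathd r, \tilde{\theta}_r) \;=\; \int_s^{s+t} T^w b(\mathd r, \theta_r). \]

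Finally, I would combine this with the integral equation satisfied by $\theta$ on $[s, s+t]$: by hypothesis $\theta_{s+t} = \theta_s + \int_s^{s+t} T^w b(\mathd r, \theta_r)$, which by the definitions $\tilde{\theta}_t = \theta_{s+t}$, $\tilde{\theta}_0 = \theta_s$ and the identity just proved rewrites as $\tilde{\theta}_t = \tilde{\theta}_0 + \int_0^t T^{\tilde{w}} \tilde{b}(\mathd r, \tilde{\theta}_r)$. Since $t$ was arbitrary, this is the claim. I do not expect any real obstacle here: the only mild point is the cosmetic check that $\tilde{\theta} \in C^{\rho}_t$ with $\gamma + \nu\rho > 1$ so that the Young integral on the right is well-defined --- but this is inherited from $\theta \in C^{\rho}_{[s,T]}$ through the shift, with the same H\"older exponent and seminorm.
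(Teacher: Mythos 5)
Your proposal is correct. The paper actually omits the proof of this lemma, remarking only that it follows by "similar arguments" to the preceding time-reversal Lemma~\ref{sec4.3 lemma reversed YDE}; your reconstruction via the identity $(T^w b)_{s+u,\,s+v} = (T^{\tilde w}\tilde b)_{u,v}$ and shifted Riemann--Stieltjes partitions is precisely that argument, and you correctly observe that the translation case is actually simpler than the reversal case, since the sums agree term-by-term and no remainder estimate $J$ is needed.
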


We are now ready to provide sufficient conditions for the existence of a
Lipschitz flow.

\begin{theorem}
  \label{sec4.3 theorem existence flow}Let $b$, $T^w b$ satisfy the
  assumptions of Theorem~\ref{sec4.1 comparison thm v1}. Then the YDE admits a
  locally Lipschitz flow; namely, setting $\Delta_T \assign \{ (s, t) \in [0,
  T]^2 : s \leqslant t \}$, there exists a map $\Phi : \Delta_T \times
  \mathbb{R}^d \rightarrow \mathbb{R}^d$ with the following properties:
  \begin{enumerateroman}
    \item $\Phi (t, t, x) = x$ for all $t \in [0, T]$ and $x \in
    \mathbb{R}^d$;
    
    \item $\Phi (s, \cdot, x) \in C^{\gamma} ([s, T] ; \mathbb{R}^d)$ for all
    $s \in [0, T]$ and $x \in \mathbb{R}^d$;
    
    \item for all $(s, t, x) \in \Delta_T \times \mathbb{R}^d$ it satisfies
    \[ \Phi (s, t, x) = x + \int_s^t T^w b (\mathd r, \Phi (s, r, x)) ; \]
    \item for all $0 \leqslant s \leqslant u \leqslant t \leqslant T$ and all
    $x \in \mathbb{R}^d$, it holds $\Phi (u, t, \Phi (s, u, x)) = \Phi (s, t,
    x)$;
    
    \item there exists $C = C (\gamma, T, \| T^w b \|_{C^{\gamma}_t
    C^2_x})$ (resp. $C = C (\gamma, T, \| T^w b \|_{C^{\gamma}_t C^{3 / 2}_x}
    \vee \| b \|_{L^{\infty}_{t, x}})$) such that
    \[ | \Phi (s, t, x) - \Phi (s, t, y) | \leqslant C | t - s |^{\gamma} | x
       - y | \quad \text{for all } (s, t) \in \Delta_T, \, x, y \in
       \mathbb{R}^d ; \]
    moreover $\Phi (s, t, \cdot)$ as a function from $\mathbb{R}^d$ to itself
    is invertible and the same inequality holds for its inverse, which we
    denote by $\psi (s, t, \cdot) = \Phi (s, t, \cdot)^{- 1}$.
  \end{enumerateroman}
\end{theorem}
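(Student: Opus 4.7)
The plan is to define $\Phi(s,t,x)$ directly as the value at time $t$ of the unique $C^{\gamma}$ solution of the YDE
\[ \theta_r = x + \int_s^r T^w b(\mathd u, \theta_u), \qquad r \in [s,T]. \]
Existence on the whole interval $[s,T]$ follows from Theorem~\ref{sec4.1 thm existence YDE} applied to the time-shifted drift provided by Lemma~\ref{sec4.3 lemma translated YDE}, while uniqueness of such a solution in the class $C^{\gamma}([s,T])$ is delivered by Theorem~\ref{sec4.1 comparison thm v1} under either of the two sets of regularity assumptions. Properties (i), (ii) and (iii) are immediate from this definition.

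For the flow property (iv), fix $0 \leqslant s \leqslant u \leqslant t \leqslant T$ and consider the two $C^{\gamma}([u,T])$ paths $r \mapsto \Phi(s,r,x)$ and $r \mapsto \Phi(u,r,\Phi(s,u,x))$. Both take the value $\Phi(s,u,x)$ at $r=u$, and both solve the YDE driven by $T^w b$ on $[u,T]$ (the first by the semigroup structure of the nonlinear Young integral in Theorem~\ref{sec4.1 thm definition young integral}.1 combined with Lemma~\ref{sec4.3 lemma translated YDE}, the second by construction); uniqueness in Theorem~\ref{sec4.1 comparison thm v1} forces them to coincide at $r=t$. For the local Lipschitz estimate in (v), apply~\eqref{sec4.1 comparison v1.1} (respectively~\eqref{sec4.1 comparison v1.2}) with $b^1=b^2=b$ and initial data $x,y$ at time $s$, after the time-shift of Lemma~\ref{sec4.3 lemma translated YDE}; this yields
\[ \| \Phi(s,\cdot,x) - \Phi(s,\cdot,y) \|_{C^{\gamma}([s,T])} \leqslant C | x - y |, \]
from which the pointwise bound with factor $|t-s|^{\gamma}$ follows using $\Phi(s,s,x)=x$, $\Phi(s,s,y)=y$, with $C$ depending only on $\gamma$, $T$ and the relevant norm of $T^w b$.

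Invertibility of $\Phi(s,t,\cdot)$ and the corresponding Lipschitz bound for $\psi$ are obtained via time-reversal. Set $\tilde{w}_t \assign w_{T-t}$, $\tilde{b}_t \assign b_{T-t}$; a direct change of variable gives the identity $T^{\tilde{w}} \tilde{b}_{s,t}(x) = -(T^w b)_{T-t,T-s}(x)$, which shows that $T^{\tilde w}\tilde b$ enjoys exactly the same $C^{\gamma}_t C^{2}_x$ (resp.\ $C^{\gamma}_t C^{3/2}_x$) regularity as $T^w b$. Hence the same construction produces a flow $\tilde{\Phi}$ for the reversed problem, and by Lemma~\ref{sec4.3 lemma reversed YDE} the identity $\tilde{\Phi}(T-t,T-s,\Phi(s,t,x)) = x$ holds for every $x$. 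This identifies $\psi(s,t,\cdot) = \tilde{\Phi}(T-t,T-s,\cdot)$, and the Lipschitz estimate for $\psi$ follows by applying the comparison estimate to the reversed YDE. The main conceptual point, and the only non-routine check, is the invariance of the regularity hypotheses under time reversal so that the whole machinery can be run symmetrically; everything else reduces to bookkeeping and to invoking Theorem~\ref{sec4.1 comparison thm v1}.
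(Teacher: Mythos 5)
Your proposal follows the same route as the paper: define $\Phi(s,t,x)$ via the unique $C^{\gamma}$ solution, invoke Theorem~\ref{sec4.1 thm existence YDE} and Theorem~\ref{sec4.1 comparison thm v1} for existence/uniqueness/Lipschitz estimates, use Lemma~\ref{sec4.3 lemma translated YDE} to justify the flow property, and Lemma~\ref{sec4.3 lemma reversed YDE} to obtain invertibility via time reversal; the paper's own proof is a terse one-paragraph application of exactly these three ingredients. One small slip: the correct identity is $T^{\tilde w}\tilde b_{s,t}(x) = T^w b_{T-t,T-s}(x)$ (no minus sign, cf.\ the proof of Lemma~\ref{sec4.3 lemma reversed YDE}); the sign you wrote is spurious, though immaterial for the regularity-invariance conclusion you draw from it.
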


\begin{proof}
  The proof is a straightforward application of Theorem~\ref{sec4.1 comparison
  thm v1} and Lemmata~\ref{sec4.3 lemma reversed YDE} and~\ref{sec4.3 lemma
  translated YDE}. In both cases of time reversal and translation we have $\|
  T^{\tilde{w}} \tilde{b} \|_{C^{\gamma} C^2} \leqslant \| T^w b
  \|_{C^{\gamma} C^2}$ (same for $\| \cdot \|_{C^{\gamma} C^{3 / 2}}$ and $\|
  \cdot \|_{L^{\infty}_{t, x}}$) so that uniqueness holds also for the
  reversed/translated YDE, with the same continuity estimates; this provides
  respectively invertibility of the solution map and flow property.
\end{proof}

Further estimates for $\Phi$ are available, since Theorem~\ref{sec4.1
comparison thm v1} actually implies that
\[ \llbracket \Phi (s, \cdot, x) \rrbracket_{C^{\gamma} ([s, T] ;
   \mathbb{R}^d)} \lesssim 1, \quad \llbracket \Phi (s, \cdot, x) - \Phi (s,
   \cdot, y) \rrbracket_{C^{\gamma} ([s, T] ; \mathbb{R}^d)} \lesssim | x - y
   | \]
uniformly in $s \in [0, T]$, $x, y \in \mathbb{R}^d$.

Let us denote by $\Phi_t$ the map $\Phi_t (x) = \Phi (0, t, x)$; from now on
we are only going to consider the map $\Phi_{\cdot} = \Phi_t (x)$, which by an
abuse of notation and language, will be just denoted by $\Phi$ and referred to
as the flow of the YDE. This is just to keep the notation simple and indeed
all the proofs below can be easily adapted to the whole flow $\Phi (s, t, x)$.

We will keep using the incremental notation $\Phi_{s, t} (x) = \Phi_t (x) -
\Phi_s (x)$; it follows from the above estimates that $\Phi \in C^{\gamma}_t
\tmop{Lip}_{\tmop{loc}}$, since
\begin{align*}
  | \Phi_{s, t} (x) - \Phi_{s, t} (y) | & \leqslant | t - s |^{\gamma}
  \llbracket \Phi (s, \cdot, x) - \Phi (s, \cdot, y) \rrbracket_{C^{\gamma}}
  \lesssim | t - s |^{\gamma} | x - y | .
\end{align*}
Similarly, we define $\psi_t (x) = \psi (0, t, x)$, so that $\psi_t =
\Phi_t^{- 1}$ as a map from $\mathbb{R}^d$ to itself.

We now state a specialised version of Theorem~\ref{sec4.1 comparison thm v1}
which is quite useful for practical purposes, as it clearly identifies a way
to approximate the flow associated to $T^w b$, which by the YDE formulation is
well defined when $b$ is only a distribution, by means of more regular flows,
associated to drifts $b^{\varepsilon}$ for which also the ODE interpretation
is meaningful.

\begin{lemma}
  \label{sec4.3 lemma convergence mollified flows}Let $b$, $T^w b$ satisfy the
  hypothesis of Theorem~\ref{sec4.1 comparison thm v1} and let $\{
  \rho^{\varepsilon} \}_{\varepsilon > 0}$ be a family of spatial mollifiers,
  $b^{\varepsilon} = \rho^{\varepsilon} \ast b$. Then $b^{\varepsilon}$
  satisfies the hypothesis of Theorem~\ref{sec4.1 comparison thm v1} for any
  $\varepsilon > 0$; denote by $\Phi^{\varepsilon}$ and $\Phi$ the flows
  associated respectively to $b^{\varepsilon}$ and $b$. Then
  $\Phi^{\varepsilon} \rightarrow \Phi$ uniformly on compact sets; more
  precisely, for any $\tilde{\gamma} < \gamma$ and any fixed $R > 0$ it holds
  \begin{equation}
    \lim_{\varepsilon \rightarrow 0} \sup_{x \in B_R} \| \Phi (\cdot, x) -
    \Phi^{\varepsilon} (\cdot, x) \|_{C^{\tilde{\gamma}}} = 0. \label{sec4.3
    lemma convergence mollified flow}
  \end{equation}
  In the case $b \in L^{\infty}_{t, x}$, the above convergence actually holds
  for any $\tilde{\gamma} < 1$.
\end{lemma}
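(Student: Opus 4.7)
The plan is to deduce the claim from the Comparison Principle (Theorem~\ref{sec4.1 comparison thm v1}) together with the approximation result for averaged fields provided by Lemma~\ref{sec3.1 lemma 3}, suitably localized.

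First I would verify that the mollified drifts $b^{\varepsilon}$ fall within the scope of Theorem~\ref{sec4.1 comparison thm v1} with constants \emph{uniform} in $\varepsilon$. This is where Lemma~\ref{sec3.1 lemma 2}(ii) is essential: the identity $T^w b^{\varepsilon} = \rho^{\varepsilon} \ast T^w b$ immediately gives $\|T^w b^{\varepsilon}\|_{C^{\gamma}_t C^{2}_x}\le \|T^w b\|_{C^{\gamma}_t C^{2}_x}$ (resp.\ with $C^{3/2}_x$), and in the second case Young's inequality also gives $\|b^{\varepsilon}\|_{L^{\infty}_{t,x}}\le \|b\|_{L^{\infty}_{t,x}}$. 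Hence Theorem~\ref{sec4.3 theorem existence flow} produces well-defined flows $\Phi^{\varepsilon}$ with estimates independent of $\varepsilon$.

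Next, fix $R>0$. The a priori bounds from Theorem~\ref{sec4.1 thm existence YDE}, applied uniformly in $\varepsilon$, imply that there exists $R'=R'(R,T,\|T^w b\|_{C^{\gamma}_t C^{2}_x})$ such that $\Phi^{\varepsilon}(\cdot,x)$ and $\Phi(\cdot,x)$ both take values in $B_{R'}$ for every $x\in B_R$ and every $\varepsilon>0$. By Remark~\ref{sec3.1 remark localization} and the localized form of the comparison principle mentioned after Theorem~\ref{sec4.1 thm definition young integral}, only the restriction of $T^w b^\varepsilon - T^w b$ to $[0,T]\times B_{R'}$ enters the comparison estimate. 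The argument underlying~\eqref{sec4.1 comparison v1.1} is actually robust: reading the proof, it only relies on $T^w b^1$ being in $C^{\tilde\gamma}_t C^{1+\nu}_x$ for some $\tilde\gamma(1+\nu)>1$ (via Lemma~\ref{sec4.1 technical lemma}), together with $C^{\tilde\gamma}_t \operatorname{Lip}_x$ control of $T^w b^1-T^w b^2$. Choosing $\tilde\gamma\in(\gamma/2+1/4,\gamma)$ and $\delta>0$ small enough so that $\tilde\gamma(2-\delta)>1$, the comparison inequality localized to $B_{R'}$ becomes
\[
\sup_{x\in B_R}\|\Phi^{\varepsilon}(\cdot,x)-\Phi(\cdot,x)\|_{C^{\tilde\gamma}}
\leqslant C \,\|T^w b^{\varepsilon}-T^w b\|_{C^{\tilde\gamma}_t C^{2-\delta}_x([0,T]\times B_{R'})},
\]
with $C$ depending on $R$, $T$ and $\|T^wb\|_{C^\gamma_t C^2_x}$ (resp.\ $\|T^wb\|_{C^\gamma_t C^{3/2}_x}$ and $\|b\|_{L^\infty}$) but not on $\varepsilon$. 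The right-hand side tends to zero by Lemma~\ref{sec3.1 lemma 3}, which is exactly~\eqref{sec4.3 lemma convergence mollified flow} for the chosen $\tilde\gamma<\gamma$.

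Finally, for the $L^{\infty}_{t,x}$ case, the ODE interpretation of the YDE yields the uniform Lipschitz bound $\llbracket \Phi^{\varepsilon}(\cdot,x)\rrbracket_{\operatorname{Lip}},\,\llbracket \Phi(\cdot,x)\rrbracket_{\operatorname{Lip}}\le \|b\|_{L^{\infty}_{t,x}}$, valid for every $\varepsilon$ and every $x$. The previous step already gives $C^0$ (hence uniform) convergence on $B_R$, so a standard interpolation between $\operatorname{Lip}_t$ boundedness and $C^0_t$ convergence upgrades this to convergence in $C^{\tilde\gamma}_t$ for every $\tilde\gamma<1$. The main obstacle I foresee is purely bookkeeping: making sure that the slight Hölder loss in Lemma~\ref{sec3.1 lemma 3} is compatible with the threshold $\tilde\gamma(1+\nu)>1$ needed by the nonlinear Young machinery, and that the uniform-in-$\varepsilon$ localization ball $B_{R'}$ can be fixed a priori independently of $\varepsilon$; both are handled by the a priori estimates above.
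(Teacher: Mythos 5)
Your proposal is correct and follows the same strategy as the paper's proof: use $T^w b^{\varepsilon}=(T^wb)^{\varepsilon}$ to obtain bounds uniform in $\varepsilon$, invoke the a priori estimates from Theorem~\ref{sec4.1 thm existence YDE} to localize to a ball $B_{R'}$ independent of $\varepsilon$, apply the comparison principle~\eqref{sec4.1 comparison v1.1} at a slightly lower H\"older exponent $\tilde\gamma\in(1/2,\gamma)$, and conclude via Lemma~\ref{sec3.1 lemma 3}. The $L^\infty$ case via uniform Lipschitz bounds and interpolation is also as in the paper.
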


\begin{proof}
  We only prove the statement in the case $T^w b \in C^{\gamma}_t C^2_x$, the
  other one being almost identical. By the properties of mollifiers it holds
  $T^w b^{\varepsilon} = (T^w b)^{\varepsilon}$, so that $\| T^w
  b^{\varepsilon} \|_{C^{\gamma}_t C^2_x} \leqslant \| T^w b \|_{C^{\gamma}_t
  C^2_x}$ for all $\varepsilon > 0$, thus the hypothesis of
  Theorem~\ref{sec4.1 comparison thm v1} are satisfied uniformly in
  $\varepsilon > 0$. Once we fix $R > 0$, by the a priori estimates from
  Theorem~\ref{sec4.1 thm existence YDE} we have a uniform bound of the form
  \[ \sup_{\varepsilon > 0} \sup_{x \in B_R} \| \Phi^{\varepsilon} (\cdot, x)
     \|_{C^{\gamma}} \leqslant C < \infty ; \]
  in particular we can localise $T^w b$ and $T^w b^{\varepsilon}$ in such a
  way that they all have support contained in a sufficiently big ball (say for
  instance $B_{2 R}$) in such a way that for $x \in B_R$, $\Phi \left( \cdot
  \,, x \right)$ and $\Phi^{\varepsilon} \left( \cdot \,, x \right)$ are not
  affected by it. Now take any $\tilde{\gamma} \in (1 / 2, \gamma)$, then
  by~{\eqref{sec4.1 comparison v1.1}} in order to conclude it is enough to
  show that $T^w b^{\varepsilon} \rightarrow T^w b$ locally in
  $C^{\tilde{\gamma}}_t \tmop{Lip}_x$; but this is an immediate consequence of
  Lemma~\ref{sec3.1 lemma 3}.
\end{proof}

From now on we will adopt the following notation: whenever all the Young
integrals involved are well defined, we write
\[ \int_0^t f_s A (\mathd s \comma \theta_s) \assign \int_0^t f_s \mathd
   \left( \int_0^{\cdot} A (\mathd r, \theta_r) \right), \]
so that in particular, whenever $\varphi$ is regular enough for $T^w \varphi$
to make sense both as a Young integral and a Lebesgue integral, it holds
\[ \int_0^t f_s T^w \varphi (\mathd s \comma \theta_s) = \int_0^t f_s \varphi
   (s, \theta_s + w_s) \mathd s. \]
We are now ready to further improve the regularity of the flow $\Phi$ and
provide a variational equation for $D_x \Phi$, as well as an expression for
its Jacobian. In the case $A = T^w b \in C^{\gamma}_t C^2_x$ a similar result
was proved in~{\cite{hu}}, Section~3.3; our derivation is of different nature
and based on approximating $b$ by more regular $b^{\varepsilon}$, for which
standard ODE theory applies. The case $T^w b \in C^{\gamma}_t C^{3 / 2}_x$
appears to be new.

\begin{theorem}
  \label{sec4.3 thm variational equation}Let $b$, $T^w b$ satisfy the
  hypothesis of Theorem~\ref{sec4.1 comparison thm v1}. Then $\Phi$ associated
  to $b$ is a flow of diffeomorphisms and belongs to $C^{\gamma}_t
  C^1_{\tmop{loc}}$; it satisfies the variational equation
  \begin{equation}
    D_x \Phi_t (x) = I + \int_0^t D_x \Phi_s (x) \circ T^w D_x b (\mathd r,
    \Phi_r (x)) \label{sec4.3 variational equation}
  \end{equation}
  which is meaningful as a YDE; here $\circ$ denotes the matrix-type product
  given by $A \circ B = B A$.
  
  The Jacobian $J \Phi_t (x) = \det (D_x \Phi_t (x))$ satisfies the identity
  \begin{equation}
    J \Phi_t (x) = \exp \left( \int_0^t \tmop{div} T^w b (\mathd s, \Phi_s
    (x)) \right) \label{sec4.3 jacobian equation}
  \end{equation}
  and there exists $C = C (\gamma, T, \| T^w b \|_{C^{\gamma} C^2})
  > 0$ (resp. $C (\gamma, T, \| T^w b \|_{C^{\gamma} C^{3 / 2}} \vee \| b
  \|_{L^{\infty}})$) such that
  \[ C^{- 1} \leqslant J \Phi_t (x) \leqslant C \quad \forall \, (t, x) \in
     [0, T] \times \mathbb{R}^d . \]
\end{theorem}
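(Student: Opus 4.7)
The strategy is to obtain the variational equation and the Jacobian formula for the (possibly distributional) drift $b$ by approximation with smooth drifts. Let $\{\rho^\varepsilon\}_{\varepsilon>0}$ be a family of standard spatial mollifiers and set $b^\varepsilon \assign \rho^\varepsilon \ast b$. Since $T^w$ commutes with mollification (Lemma~\ref{sec3.1 lemma 2}), $T^w b^\varepsilon = (T^w b)^\varepsilon$ and the hypotheses of Theorem~\ref{sec4.1 comparison thm v1} are satisfied for $b^\varepsilon$ uniformly in $\varepsilon>0$. Lemma~\ref{sec3.1 lemma 3} then gives local convergence $T^w b^\varepsilon \to T^w b$ in $C^{\gamma-\delta}_t C^{\nu-\delta}_x$ (with $\nu=2$ or $\nu=3/2$ according to the case), while Lemma~\ref{sec4.3 lemma convergence mollified flows} yields $\Phi^\varepsilon \to \Phi$ locally in $C^{\tilde\gamma}_t$ for any $\tilde\gamma<\gamma$ (and in $C^{\tilde\gamma}_t$ with $\tilde\gamma<1$ in the $L^\infty$ case).

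For each $\varepsilon>0$ classical ODE theory applies: the spatial derivative $M^\varepsilon_t(x)\assign D_x\Phi^\varepsilon_t(x)$ solves the linear matrix ODE
\begin{equation*}
M^\varepsilon_t(x) = I + \int_0^t M^\varepsilon_s(x)\, D_x b^\varepsilon(s,\Phi^\varepsilon_s(x))\, \mathd s,
\end{equation*}
and the Liouville formula gives $J\Phi^\varepsilon_t(x)=\exp\!\big(\int_0^t \operatorname{div} b^\varepsilon(s,\Phi^\varepsilon_s(x))\,\mathd s\big)$. Using $D_xT^w = T^w D_x$ and the fact that for continuous integrands the Lebesgue integral coincides with the nonlinear Young integral (Theorem~\ref{sec4.1 thm definition young integral}, part~2), these rewrite as
\begin{equation*}
M^\varepsilon_t(x) = I + \int_0^t M^\varepsilon_s(x)\circ T^w D_x b^\varepsilon(\mathd r,\theta^\varepsilon_r(x)),\qquad J\Phi^\varepsilon_t(x)=\exp\!\Bigl(\int_0^t \operatorname{div} T^w b^\varepsilon(\mathd s,\theta^\varepsilon_s(x))\Bigr),
\end{equation*}
where $\theta^\varepsilon = \Phi^\varepsilon - w$. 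The uniform bound $\|T^w b^\varepsilon\|_{C^\gamma C^{\nu+1}}\leq\|T^w b\|_{C^\gamma C^{\nu+1}}$ combined with the a priori estimate for linear Young equations (Lemma~\ref{appendixA1 lemma bound linear YDE}) gives $\sup_\varepsilon \sup_{x\in B_R}\|M^\varepsilon(\cdot,x)\|_{C^{\tilde\gamma}_t}\leq C_R<\infty$ for every $R>0$.

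The passage to the limit exploits the continuity of the nonlinear Young integral in both arguments (Theorem~\ref{sec4.1 thm definition young integral}, part~4): combined with $T^w D_x b^\varepsilon\to T^w D_x b$ and $\theta^\varepsilon\to\theta$ locally, the drivers $A^\varepsilon_t(x)\assign\int_0^t T^w D_x b^\varepsilon(\mathd r,\theta^\varepsilon_r(x))$ converge to $A_t(x)\assign\int_0^t T^w D_x b(\mathd r,\theta_r(x))$ in $C^{\tilde\gamma}_t$, and the stability of linear Young equations under driver perturbation yields $M^\varepsilon\to M$ locally, for some $M$ satisfying $M_t(x)=I+\int_0^t M_s(x)\circ A(\mathd s,x)$. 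The uniform convergence $\Phi^\varepsilon\to\Phi$ together with the uniform Lipschitz-in-$x$ bound on $M^\varepsilon$ identify $M$ with $D_x\Phi$, establishing $\Phi\in C^\gamma_t C^1_{\mathrm{loc}}$ and the variational equation~\eqref{sec4.3 variational equation}. Passing to the limit in the Liouville formula yields~\eqref{sec4.3 jacobian equation}; the two-sided bound $C^{-1}\leq J\Phi_t(x)\leq C$ follows by controlling $\bigl|\int_0^t \operatorname{div} T^w b(\mathd s,\Phi_s(x))\bigr|$ uniformly in $(t,x)$ via Theorem~\ref{sec4.1 thm definition young integral}, part~4. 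The delicate point is the $C^{3/2}_x$ case: here $T^w D_x b$ has only $C^{1/2}_x$ regularity, so the Young integrals against $\Phi$ require $\gamma+\rho/2>1$ with $\rho$ the temporal regularity of $\Phi$; as $\gamma$ is essentially $1/2$, this forces $\rho>1$, which is precisely the Lipschitz-in-time regularity afforded by the extra assumption $b\in L^\infty$ through the ODE formulation, and which must be propagated uniformly through the mollification.
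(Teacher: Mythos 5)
Your proposal follows essentially the same route as the paper: mollify $b$, invoke classical ODE theory to obtain the variational equation and Liouville formula for $b^\varepsilon$, recast these as linear Young differential equations, derive uniform a priori bounds via Lemma~\ref{appendixA1 lemma bound linear YDE}, pass to the limit using the continuity of the nonlinear Young integral together with the convergence of flows from Lemma~\ref{sec4.3 lemma convergence mollified flows}, and identify the limit with $D_x\Phi$. One small imprecision in your closing remark: the condition for the Young integral in the $C^{3/2}_x$ case is $\gamma+\rho/2>1$, which with $\gamma>1/2$ forces $\rho>2(1-\gamma)<1$, not $\rho>1$; the Lipschitz bound $\rho=1$ supplied by the ODE interpretation and $\|b\|_{L^\infty}$ then satisfies the inequality with room to spare, which is exactly how the paper's Step~6 proceeds.
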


\begin{proof}
  As before, to avoid repetitions we give a detailed proof only in the case
  $T^w b \in C^{\gamma}_t C^2_x$; we provide in the end the main differences
  of the proof in the case $b \in L^{\infty}_{t, x}$, $T^w b \in C^{\gamma}_t
  C^{3 / 2}_x$.
  
  We divide the proof in several steps, but the main idea is the following: in
  the case of spatially smooth $b$, the result is just a reformulation of the
  standard ODE results; in the general case we can recover the result by
  reasoning by approximation with the help of Lemma~\ref{sec4.3 lemma
  convergence mollified flows}.
  
  {\tmstrong{Step 1:}} \tmtextit{Proof in the case of regular $b$.} Let us
  first assume in addition that $b \in L^q_t C_x^2$ for some $q > 2$; then in
  this case we know that the YDE formulation is equivalent to the ODE one, so
  that the flow $\Phi$ associated to $b$ satisfies
  \[ \Phi_t (x) = x + \int_0^t b (s, \Phi_s (x) + w_s) \mathd s ; \]
  moreover by standard ODE theory we have the variational equation
  \begin{align*}
    D_x \Phi_t (x) & = \, I + \int_0^t D_x \Phi_s (x) \circ D_x b (s, \Phi_s
    (x) + w_s) \mathd s\\
    & = \, I + \int_0^t D_x \Phi_s (x) \circ \frac{\mathd}{\mathd s} \left(
    \int_0^s D_x b (r, \Phi_r (x) + w_r) \mathd r \right)\\
    & = \, I + \int_0^t D_x \Phi_s (x) \circ T^w D_x b (\mathd r, \Phi_r (x))
    .
  \end{align*}
  The term in the last line now makes perfectly sense as a Young integral, as
  the term
  \[ \int_0^{\cdot} T^w D b (\mathd r, \Phi_r (x)) \]
  is a well defined $C_t^{\gamma}$ map for $\gamma > 1 / 2$, since $b \in
  L^q_t C^1_x$, proving the first part of the claim.
  
  {\tmstrong{Step 2:}} \tmtextit{Approximation and characterisation of the
  limit as $\varepsilon \rightarrow 0$ of $D_x \Phi^{\varepsilon}$.} Consider
  a sequence $T^w b^{\varepsilon}$, $\Phi^{\varepsilon}$ defined by spatial
  mollification as in Lemma~\ref{sec4.3 lemma convergence mollified flows}. By
  Step~1, for any $\varepsilon > 0$, $D_x \Phi^{\varepsilon}$ satisfies the
  variational equation, which for fixed $x$ is a linear YDE in the unknown
  $D_x \Phi^{\varepsilon} (\cdot, x)$ with drift $\int_0^{\cdot} T^w D_x
  b^{\varepsilon} (\mathd r, \Phi^{\varepsilon}_r (x))$; thanks to the a
  priori bounds given by Theorem~\ref{sec4.1 thm existence YDE}, which for
  fixed $x$ are uniform in $\varepsilon$, we have the estimate
  \[ \left\| \int_0^{\cdot} T^w D b^{\varepsilon} (\mathd r,
     \Phi^{\varepsilon}_r (x)) \right\|_{C^{\gamma}} \lesssim \| T^w D
     b^{\varepsilon} \|_{C^{\gamma} \tmop{Lip}_x}  \| \Phi^{\varepsilon}
     (\cdot, x) \|_{C^{\gamma}} \lesssim \| T^w b \|_{C^{\gamma} C^2} \]
  which implies by Proposition~\ref{appendixA1 lemma bound linear YDE} in
  Appendix~\ref{appendixA1} that for fixed $x$ we have the uniform estimate
  \[ \sup_{\varepsilon > 0} \| D_x \Phi^{\varepsilon} (\cdot, x)
     \|_{C^{\gamma}} < \infty . \]
  As in the proof of Lemma~\ref{sec4.3 lemma convergence mollified flows}, for
  any $\delta > 0$ we have $T^w D b^{\varepsilon} \rightarrow T^w D b$ locally
  in $C^{\gamma - \delta}_t C^{2 - \delta}_x$, as well as $\Phi^{\varepsilon}
  (\cdot, x) \rightarrow \Phi (\cdot, x)$ in $C^{\gamma - \delta}_t$, thus
  choosing $\delta$ sufficiently small such that $(\gamma - \delta) (2 -
  \delta) > 1$ by the continuity of nonlinear Young integral it holds
  \[ \int_0^{\cdot} T^w D b^{\varepsilon} (\mathd r, \Phi^{\varepsilon}_r (x))
     \rightarrow \int_0^{\cdot} T^w D b (\mathd r, \Phi_r (x)) \quad \text{in
     } C^{\gamma - \delta}_t . \]
  By the a priori estimates on $D_x \Phi^{\varepsilon} (\cdot, x)$, we can
  extract a subsequence converging to a limit in $C^{\beta}$ for any $1 / 2 <
  \beta < \gamma$; let us denote this limit by $g (\cdot, x)$ (the notation
  will be clear in a second). By Step~1, $D_x \Phi^{\varepsilon}$ satisfy
  variational equations with drifts $\int_0^{\cdot} T^w D_x b^{\varepsilon}
  (\mathd r, \Phi^{\varepsilon} (r, x)) \rightarrow \int_0^{\cdot} T^w D_x b
  (\mathd r, \Phi (r, x))$, which implies that $g (\cdot, x)$ must satisfy the
  linear YDE
  \[ g (t, x) = I + \int_0^t g (s, x) \circ \nobracket T^w D_x b (\mathd r,
     \Phi_r (x) \nobracket) . \]
  But the solution to this linear equation unique, thus so is the limit of any
  subsequence we can extract, showing that the whole sequence $\{ D_x
  \Phi^{\varepsilon} (\cdot, x) \}_{\varepsilon > 0}$ converges to such $g
  (\cdot, x)$. The reasoning holds for any $x \in \mathbb{R}^d$.
  
  {\tmstrong{Step 3:}} \tmtextit{Continuity of the map $(t, x) \mapsto g (t,
  x)$.} This step is very similar to the previous one, so we only sketch it.
  Continuity in $t$ is clear, we only need to prove continuity in $x$; by the
  continuity of the flow, for any sequence $x_n \rightarrow x$ we have $\Phi
  (\cdot, x_n) \rightarrow \Phi (\cdot, x)$ in $C^{\gamma - \delta}$ for any
  $\delta > 0$ and since all $x_n$ lie in a bounded ball, we have uniform
  estimate both on $\int_0^{\cdot} T^w D b (\mathd r, \Phi_r (x_n))$ and $g
  (\cdot, x_n)$. Therefore by the usual compactness argument we deduce that $g
  (\cdot, x_n)$ converge in $C^{\gamma - \delta}$ to the unique solution of
  the YDE associated to $\int_0^{\cdot} T^w D b (\mathd r, \Phi_r (x))$,
  namely $g (\cdot, x)$.
  
  {\tmstrong{Step 4:}} \tmtextit{Flow of diffeomorphisms.} We know that for
  any $R > 0$, the flows $\Phi^{\varepsilon}$ are spatially Lipschitz in
  $B_R$, uniformly in $[0, T]$ and $\varepsilon > 0$, and that they converge
  uniformly on compact sets to $\Phi$, while their spatial derivatives $D_x
  \Phi^{\varepsilon}$ converge to the continuous function $g (t, x)$.
  Therefore we deduce that $g (t, x) = D_x \Phi_t (x)$, thus showing that
  $\Phi$ is $C^1$ in space, uniformly in time; moreover by construction $g$ is
  the unique solution to the variational equation~{\eqref{sec4.3 variational
  equation}}. The reasoning applies to $\psi = \Phi^{- 1}$ as well, as it can
  be represented through the flow associated to the time reversed drift
  $T^{\tilde{w}} \tilde{b}$, which enjoys the same regularity as $T^w b$.
  
  {\tmstrong{Step 5:}} \tmtextit{Jacobian.} As before, let us first assume $b$
  spatially smooth, then by standard ODE theory it holds
  \[ J \Phi_t (x) = \exp \left( \int_0^t \tmop{div} b (s, \Phi_s (x) + w_s)
     \mathd s \right) = \exp \left( \int_0^t \tmop{div} T^w b (\mathd s,
     \Phi_s (x)) \right) \]
  which gives equation~{\eqref{sec4.3 jacobian equation}} in this case. The
  general case is accomplished as above by an approximation procedure, using
  the continuity of Young integrals. Regarding the bound on $J \Phi$, by
  Point~4 of Theorem~\ref{sec4.1 thm definition young integral} combined with
  the a priori estimates on $\Phi$, we obtain
  \[ J \Phi_t (x) \lesssim \| \tmop{div} T^w b \|_{C^{\gamma} \tmop{Lip}}
     \left( 1 + \left\llbracket \Phi \left( \cdot \,, x \right)
     \right\rrbracket_{C^{\gamma}} \right) \leqslant C \]
  which gives the upper bound; the lower bound follows from $(J \Phi_t (x))^{-
  1} = J \psi_t (\Phi_t (x)) \leqslant C$.
  
  {\tmstrong{Step 6:}} \tmtextit{Differences in the case $b \in L^{\infty}_{t,
  x}$ with $T^w b \in C^{\gamma}_t C^{3 / 2}_x$.} The proof in this case goes
  along the exact same lines, with only slightly different regularity
  estimates. Indeed in this case we know that $\Phi (\cdot, x)$ is Lipschitz
  with $\llbracket \Phi (\cdot, x) \rrbracket_{\tmop{Lip}} \leqslant \| b
  \|_{L^{\infty}}$ for all $x \in \mathbb{R}^d$ and so the drift associated to
  the variational equation is controlled by
  \[ \left\| \int_0^{\cdot} T^w D b^{\varepsilon} (\mathd r,
     \Phi^{\varepsilon} (r, x)) \right\|_{C^{\gamma}} \lesssim \| T^w D
     b^{\varepsilon} \|_{C^{\gamma} C^{1 / 2}}  \llbracket \Phi^{\varepsilon}
     (\cdot, x) \rrbracket_{\tmop{Lip}} \leqslant \| T^w b \|_{C^{\gamma} C^{3
     / 2}}  \| b \|_{L^{\infty}} . \]
  Moreover by Lemma~\ref{sec4.3 lemma convergence mollified flows}, we now
  have $\Phi^{\varepsilon} \left( \cdot \,, x \right) \rightarrow \Phi (\cdot,
  x)$ in $C^{1 - \delta}$ for all $\delta > 0$ and so all the reasonings
  related to compactness and continuity of Young integrals still work. A
  similar reasoning goes for equation~{\eqref{sec4.3 jacobian equation}} and
  the two-sided estimates for $J (t, x)$.
\end{proof}

\begin{remark}
  A closer look at the proof shows that the result can be further generalised
  to include the case of $T^w b \in C^{\gamma}_t C^{\nu}_x$ with $b \in L^p_t
  L^{\infty}_x$, under the conditions $\gamma > 1 / 2$ and $\nu \geqslant 1 +
  q / 2$, $q$ being the conjugate of $p$, i.e. $1 / p + 1 / q = 1$.
\end{remark}

\begin{remark}
  \label{sec4.3 remark transport}Recall that in the case of spatially smooth
  $b$, differentiating the relation $\psi_t (\Phi_t (x)) = x$ w.r.t. $t$, one
  obtains that $\psi$ satisfies the PDE
  \begin{equation}
    \partial_t \psi_t (x) + D_x \psi_t (x) b (t, x + w_t) = 0 \quad \text{for
    all } (t, x) \in [0, T] \times \mathbb{R}^d . \label{sec4.3 transport PDE
    psi}
  \end{equation}
  Equation~{\eqref{sec4.3 transport PDE psi}} still holds if $b \in C^0_{t,
  x}$ and $T^w b \in C^{\gamma}_t C^{3 / 2}_x$, since in this case $\Phi$ is
  locally $C^1_{t, x}$ and the same holds for $\psi$.
  
  In the general case $T^w b \in C^{\gamma}_t C^2_x$, reasoning by
  approximation, if $\psi \in C^{\gamma}_t C^1_{\tmop{loc}}$ then the equation
  is still satisfied in the following generalised sense:
  \begin{equation}
    \psi_t (x) - x = \int_0^t D_x \psi_s (x) T^w b (\mathd s, x) \quad
    \text{for all } (t, x) \in [0, T] \times \mathbb{R}^d \label{sec4.3
    transport PDE psi 2}
  \end{equation}
  where the r.h.s. is a Young integral in time, for fixed $x \in
  \mathbb{R}^d$.
  
  However, the regularity requirement $\psi \in C^{\gamma}_t C^1_{\tmop{loc}}$
  does not need to hold; in general the only information available is $\psi
  \in C^{\gamma}_t C^0_{\tmop{loc}} \cap C^0_t C^1_{\tmop{loc}}$. Indeed, by
  the group property
  \[ \psi (0, s, \cdot) \circ \psi (s, t, \cdot) = \psi (0, t, \cdot) \]
  it holds
  \begin{align*}
    | \psi_t (x) - \psi_s (x) | & = | \psi_s (\psi (s, t, x)) - \psi_s (x) |\\
    & \leqslant \llbracket \psi_s \rrbracket_{\tmop{Lip}}  | \psi (s, t, x) -
    x |\\
    & \lesssim | t - s |^{\gamma}
  \end{align*}
  where the estimate is uniform in $x \in \mathbb{R}^d$; establishing $\psi
  \in C^{\gamma}_t C^1_{\tmop{loc}}$ requires an analogue estimate for $D_x
  \psi$, where
  \begin{align*}
    D_x \psi_t (x) - D_x \psi_s (x) & = D_x \psi_s (\psi (s, t, x)) D_x \psi
    (s, t, x) - D_x \psi_s (x) .
  \end{align*}
  It's easy to see from the above expression that if $\psi \in C^0_t
  C^2_{\tmop{loc}}$ (which by time reversal is equivalent to $\Phi \in C^0_t
  C^2_{\tmop{loc}}$), then it belongs to $C^{\gamma}_t C^1_{\tmop{loc}}$ as
  well. As shown in the next section, this condition is met if $T^w b$ is
  regular enough. 
\end{remark}

\subsubsection{Higher regularity}\label{sec4.3.2}

Similarly to the standard ODE case, we can show that the flow $\Phi$ inherits
the spatial regularity of $T^w b$, i.e. to a more regular averaged functional
$T^w b$ corresponds a more regular flow of solutions.

\begin{theorem}
  \label{sec4.3 thm higher regularity}Let $n \in \mathbb{N}, n \geqslant 1$,
  $\gamma > 1 / 2$ and assume that one of the following conditions holds:
  \begin{itemize}
    \item $T^w b \in C^{\gamma}_t C^{n + 1}_x$; or
    
    \item $T^w b \in C^{\gamma}_t C^{n + 1 / 2}_x$ and $b \in L^{\infty}_{t,
    x}$.
  \end{itemize}
  Then the YDE associated to $\theta$ admits a locally $C^n_x$-regular flow
  $(t, x) \mapsto \Phi (t, x)$.
\end{theorem}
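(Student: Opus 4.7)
The plan is to argue by induction on $n$. The base case $n=1$ is precisely Theorem~\ref{sec4.3 thm variational equation}. Assume the result has been established for $n-1$: under either hypothesis, we already know that $\Phi \in C^\gamma_t C^{n-1}_{\mathrm{loc}}$ and that the derivatives $D_x^k \Phi$ with $k \leq n-1$ satisfy appropriate variational YDEs.

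To access the $n$-th derivative, approximate $b$ by $b^\varepsilon = \rho^\varepsilon \ast b$ as in Lemma~\ref{sec4.3 lemma convergence mollified flows}; since mollification commutes with $T^w$ (Lemma~\ref{sec3.1 lemma 2}), the hypotheses propagate to $b^\varepsilon$ uniformly in $\varepsilon$. For smooth $b^\varepsilon$ the YDE reduces to a classical ODE, so $\Phi^\varepsilon \in C^n_x$ and by Fa\`a di Bruno one obtains a tower of variational equations; in particular $D_x^n \Phi^\varepsilon$ satisfies a \emph{linear} YDE
\[
D_x^n \Phi^\varepsilon_t(x) = \int_0^t D_x^n \Phi^\varepsilon_s(x) \circ T^w D_x b^\varepsilon(\mathrm{d}r, \Phi^\varepsilon_r(x)) + \Psi^\varepsilon_t(x),
\]
where the source $\Psi^\varepsilon$ is a polynomial expression in $D_x^k \Phi^\varepsilon$ ($k \leq n-1$) composed with $T^w D_x^j b^\varepsilon$ ($2 \leq j \leq n$) and interpreted via non-linear Young integration. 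Crucially, under hypothesis (i) one has $T^w D_x^j b^\varepsilon = D_x^j T^w b^\varepsilon \in C^\gamma_t C^{n+1-j}_x$, so the highest-order term $T^w D_x^n b^\varepsilon \in C^\gamma_t C^1_x$ pairs via Young integration with the Lipschitz-in-time curve $r \mapsto \Phi^\varepsilon_r(x)$; under hypothesis (ii) the analogous count gives $T^w D_x^n b^\varepsilon \in C^\gamma_t C^{1/2}_x$, but now $\Phi^\varepsilon(\cdot,x)$ is Lipschitz with $\llbracket \Phi^\varepsilon(\cdot,x)\rrbracket_{\mathrm{Lip}} \leq \|b\|_\infty$, and $\gamma + 1/2 > 1$ suffices.

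The inductive hypothesis provides uniform-in-$\varepsilon$ local bounds on $D_x^k \Phi^\varepsilon$ for $k \leq n-1$, and the uniform control of $T^w b^\varepsilon$ in $C^\gamma_t C^{n+1}_x$ (resp.\ $C^{n+1/2}_x$) gives a uniform local bound on $\llbracket \Psi^\varepsilon \rrbracket_{C^\gamma}$. Applying Proposition~\ref{appendixA1 lemma bound linear YDE} to the linear YDE for $D_x^n \Phi^\varepsilon$, with drift of uniformly bounded $C^\gamma \mathrm{Lip}$-norm, yields a uniform local bound $\sup_\varepsilon \|D_x^n \Phi^\varepsilon(\cdot,x)\|_{C^\gamma} < \infty$. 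Compactness then extracts a convergent subsequence to some $g(\cdot,x) \in C^\beta$ for any $\beta < \gamma$; continuity of non-linear Young integrals together with the local convergences $T^w D_x^j b^\varepsilon \to T^w D_x^j b$ (Lemma~\ref{sec3.1 lemma 3}) and $\Phi^\varepsilon \to \Phi$ (Lemma~\ref{sec4.3 lemma convergence mollified flows}) identifies $g(\cdot,x)$ as the unique solution of the limiting linear YDE with drift $\int_0^\cdot T^w D_x b(\mathrm{d}r, \Phi_r(x))$ and source $\Psi$ (defined analogously with $b$ in place of $b^\varepsilon$). Uniqueness forces the whole family $D_x^n \Phi^\varepsilon$ to converge. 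A second compactness argument, mimicking Step~3 of the proof of Theorem~\ref{sec4.3 thm variational equation}, shows $x \mapsto g(t,x)$ is continuous. Since $D_x^n \Phi^\varepsilon \to g$ uniformly on compacts while $D_x^{n-1}\Phi^\varepsilon \to D_x^{n-1}\Phi$ in the same sense, one concludes $g = D_x^n \Phi$ and hence $\Phi \in C^\gamma_t C^n_{\mathrm{loc}}$.

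The main obstacle I expect is bookkeeping: organising the Fa\`a di Bruno expansion so that at each level $k$ the source $\Psi^\varepsilon$ is genuinely an integral of a non-linear Young type to which Theorem~\ref{sec4.1 thm definition young integral} applies, and verifying that the regularity budget $(\gamma,\nu)$ with $\gamma + \nu\rho > 1$ is met at every stage; a secondary nuisance is the fact that in case (ii) one must keep exploiting the Lipschitz-in-time bound $\llbracket \Phi^\varepsilon(\cdot,x)\rrbracket_{\mathrm{Lip}} \leq \|b\|_\infty$ throughout the induction, which forces the use of $\rho = 1$, $\nu = 1/2$ pairings at each level rather than the simpler $\rho = \gamma$, $\nu = 1$ used in case (i).
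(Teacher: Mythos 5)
Your proposal is correct and follows essentially the same route as the paper: mollify $b$, derive the $k$-th order variational equation for the smooth approximants, prove uniform-in-$\varepsilon$ bounds by viewing each $D^k_x\Phi^\varepsilon$ as solving a linear YDE whose source involves only lower-order derivatives and applying Proposition~\ref{appendixA1 lemma bound linear YDE}, then pass to the limit via compactness and uniqueness of the limiting linear YDE, keeping track in case (ii) of the extra Lipschitz-in-time information $\llbracket \Phi^\varepsilon(\cdot,x)\rrbracket_{\mathrm{Lip}}\leq\|b\|_\infty$ to compensate for the lower spatial regularity of $T^w b$. The only small slip is the phrase ``Lipschitz-in-time curve'' in your discussion of case (i), where the curve is only $C^\gamma_t$; the correct Young pairing there is $\rho=\gamma$, $\nu=1$, as you in fact state in your closing paragraph, so the argument is unaffected.
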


\begin{proof}
  As before, we give a detailed proof in the case $T^w b \in C^{\gamma}_t C^{n
  + 1}_x$ and in the end highlight the main differences in the other case. The
  idea of the proof, similarly to that of Theorem~\ref{sec4.3 thm variational
  equation}, is to reason by approximation and establish first that, for
  $b^{\varepsilon} = \rho^{\varepsilon} \ast b$, it holds $\Phi^{\varepsilon}
  \in C^{\gamma}_t C_x^{n + 1}$ with an estimate which is uniform in
  $\varepsilon > 0$; then the conclusion follows from taking the limit as
  $\varepsilon \rightarrow 0$. In order to get uniform estimates, we will show
  that for any $k \leqslant n$, $D^k_x \Phi^{\varepsilon}$ satisfies a
  variational type equation in which the leading term is a linear Young
  integral. We split the proof in several steps.
  
  {\tmstrong{Step 1:}} \tmtextit{$k$\mbox{-}th order variation equation.} We
  start by assuming $b \in L^q_t C^{\infty}_x$ in addition to the assumptions,
  so that by standard ODE theory the associated flow has $C^{\infty}$ spatial
  regularity. We now adopt the following convention: the symbol $\circ$
  denotes a suitably chosen matrix product, which can change from line to
  line. We claim that, for any $1 \leqslant k \leqslant n$, $D^k_x \Phi$
  satisfies the variational-type equation
  \begin{equation}
    D^k_x \Phi (t, x) = \int_0^t D^k_x \Phi (s, x) \circ \nobracket D_x T^w b
    (\mathd r, \Phi (r, x) \nobracket) + F_k (\{ D_x^i \Phi (\cdot, x) \}_{i
    \leqslant k - 1}) \label{sec4.3 recursive variational eq}
  \end{equation}
  where the first integral makes sense in the Young sense and the $F_k$ are
  ``polynomial'' functions of the form
  \[ F_k (\{ D_x^i \Phi (\cdot, x) \}) = \sum_{\alpha = 1}^k \sum_{\beta}
     a_{\beta} \int_0^t \bigotimes^{k - 1}_{\tmscript{\begin{array}{c}
       i = 1
     \end{array}}} (D^i \Phi (s, x))^{\otimes \beta_i} \circ \nobracket
     D^{\alpha} T^w b (\mathd r, \Phi (r, x) \nobracket) \tmcolor{blue}{} \]
  where the internal sum is taken over all possible $\beta = (\beta_1, \ldots,
  \beta_{k - 1})$ with $\beta_i \in \{ 1, \ldots, k \}$ such that $\sum_i i
  \beta_i = k$ and $a_{\beta}$ are suitable coefficients of combinatorial
  nature. Observe that, in terms of the variable $D^k_x \Phi (\cdot, x)$,
  equation~{\eqref{sec4.3 recursive variational eq}} is a linear YDE of the
  form $y_t = \int_0^t A_{\mathd s} y_s + h_t$, as the second term does not
  have any dependency on $D^k_x \Phi$.
  
  The proof is by induction on $k$, the case $k = 1$ being immediate. In the
  case $k = 2$, differentiating both terms in the variational equation
  associated to the drift $(t, x) \mapsto b (t, x + w_t)$
  \[ D_x \Phi (t, x) = I + \int_0^t D_x \Phi (s, x) \circ D_x b (s, \Phi (s,
     x) + w_s) \mathd s, \]
  we obtain
  \begin{align*}
    D_x^2 \Phi (t, x) & = \int_0^t D_x^2 \Phi (s, x) \circ D_x b (s, \Phi (s,
    x) + w_s) \mathd s + \int_0^t D_x \Phi (s, x)^{\otimes 2} \circ D_x^2 b
    (s, \Phi (s, x) + w_s) \mathd s\\
    & = \int_0^t D_x^2 \Phi (s, x) \circ \nobracket D_x T^w b (\mathd r, \Phi
    (r, x) \nobracket) + \int_0^t D_x \Phi (s, x)^{\otimes 2} \circ \nobracket
    D_x^2 T^w b (\mathd r, \Phi (r, x) \nobracket)
  \end{align*}
  which is exactly of the form~{\eqref{sec4.3 recursive variational eq}}. Now
  assume that the statement is true for $k$, then
  differentiating~{\eqref{sec4.3 recursive variational eq}} on both sides we
  obtain
  \begin{align*}
    D_x^{k + 1} \Phi (t, x) & = \int_0^t D_x^{k + 1} \Phi (s, x) \circ
    \nobracket D_x T^w b (\mathd r, \Phi (r, x) \nobracket)\\
    & + \int_0^t (D_x^k \Phi (s, x) \otimes D_x \Phi (s, x)) \circ \nobracket
    D_x^2 T^w b (\mathd r, \Phi (r, x) \nobracket) + \tilde{F}_{k + 1} (\{ D^i
    \Phi (\cdot, x) \}_{i \leqslant k})
  \end{align*}
  where $\tilde{F}_{k + 1} (\{ D^i \Phi (\cdot, x) \}_{i \leqslant k + 1}) =
  D_x F_k (\{ D^i \Phi (\cdot, x) \}_{i \leqslant k})$ and it is easy to check
  that it is still of ``polynomial type''.
  
  {\tmstrong{Step 2:}} \tmtextit{Inductive estimate on $D^k_x \Phi$.} Fix $R >
  0$; we claim that, for any $2 \leqslant k \leqslant n$, there exists a
  constant $C_k < \infty$ (which depends on~$R$), which is independent of $\|
  b \|_{L^q_t C^{\infty}_x}$, such that
  \[ \sup_{i \leqslant k} \sup_{x \in B_R}  \| D^i \Phi (\cdot, x)
     \|_{C^{\gamma}_t} < \infty . \]
  Again the proof is inductive, mainly relying on the fact that $D^i \Phi$
  solves the linear YDE~{\eqref{sec4.3 recursive variational eq}} in
  combination with the a priori bounds given by Proposition~\ref{appendixA1
  lemma bound linear YDE}.
  
  We start by proving the claim in the case $k = 2$. In this case we already
  know by Theorems~\ref{sec4.1 thm existence YDE} and~\ref{sec4.3 thm
  variational equation} that \ $\sup_{x \in B_R}  (\| \Phi (\cdot, x) \| + \|
  D \Phi (\cdot, x) \|_{C^{\gamma}_t}) \leqslant C$; moreover by properties of
  Young integral we have
  \begin{align*}
    \left\| \int_0^{\cdot} D_x \Phi (s, x)^{\otimes 2} \circ \nobracket D_x^2
    T^w b (\mathd r, \Phi (r, x) \nobracket) \right\|_{C^{\gamma}} & \lesssim
    \| D_x \Phi (\cdot, x) \|_{C^{\gamma}}^2  \left\| \int_0^{\cdot} D_x^2 T^w
    b (\mathd r, \Phi (r, x)) \right\|_{C^{\gamma}}\\
    & \lesssim \| D_x \Phi (\cdot, x) \|_{C^{\gamma}}^2  \| \Phi (\cdot, x)
    \|_{C^{\gamma}}  \| D_x^2 T^w b \|_{C^{\gamma}_t C^1_x}\\
    & \lesssim \| T^w b \|_{C^{\gamma}_t C^3_x}
  \end{align*}
  as well as the bound $\left\| \int_0^{\cdot} D_x T^w b (\mathd r, \Phi (r,
  x)) \right\|_{C^{\gamma}} \lesssim \| T^w b \|_{C^{\gamma}_t C^2_x}  \| \Phi
  (\cdot, x) \|_{C^{\gamma}}$. Applying again Proposition~\ref{appendixA1
  bound linear YDE} yields the conclusion in this case.
  
  Assume now that the claim holds for $k$, then by the inductive hypothesis
  all the term appearing in the sum defining $F_{k + 1}$ can be estimated by
  \[ \begin{array}{l}
       \left\| \int_0^{\cdot} \bigotimes_{\tmscript{\begin{array}{c}
         i, \beta_i
       \end{array}}} (D_x^i \Phi (s, x))^{\otimes \beta_i} \circ \nobracket
       D_x^{\alpha} T^w b (\mathd r, \Phi (r, x) \nobracket)
       \right\|_{C^{\gamma}}\\
       \qquad \qquad \qquad \lesssim \prod_{i, \beta_i} C_k^{\beta_i}  \left\|
       \int_0^{\cdot} D_x^{\alpha} T^w b (\mathd r, \Phi (r, x))
       \right\|_{C^{\gamma}}\\
       \qquad \qquad \qquad \lesssim \prod_{i, \beta_i} C_k^{\beta_i}
       C_k^{\nu}  \| D_x^{\alpha} T^w b \|_{C^{\gamma}_t C^{\nu}_x} \lesssim
       \| T^w b \|_{C^{\gamma}_t C^{\nu + n}_x}
     \end{array} \]
  which together with the estimate for $\left\| \int_0^{\cdot} D T^w b (\mathd
  r, \Phi (r, x)) \right\|_{C^{\gamma}}$ and the application of
  Proposition~\ref{appendixA1 lemma bound linear YDE} yields a new constant
  $C_{k + 1}$.
  
  {\tmstrong{Step 3:}} \tmtextit{Approximation procedure.} Let
  $b^{\varepsilon} = \rho^{\varepsilon} \ast b$ denote by $\Phi$ and
  $\Phi^{\varepsilon}$ the flows associated to $b$ and $b^{\varepsilon}$
  respectively. Then $\| T^w b^{\varepsilon} \|_{C^{\gamma}_t C^{n + 1}_x}
  \leqslant \| T^w b \|_{C^{\gamma}_t C^{n + 1}_x}$ for all $\varepsilon > 0$
  and so by the previous step we deduce that for any $R > 0$ there exists a
  suitable constant $C$ such that
  \[ \sup_{\varepsilon > 0}  \| \Phi^{\varepsilon} \|_{L^{\infty} W^{n,
     \infty} (B_R)} \leqslant C. \]
  But $\Phi^{\varepsilon} \rightarrow \Phi$ uniformly in $[0, T] \times B_R$,
  which together with the weak-$\ast$ compactness of balls in $W^{n, \infty}
  (B_R)$ implies that $\Phi \in L^{\infty}_t W^{n, \infty} (B_R)$. A slightly
  more refined argument, analogue to the one from Theorem~\ref{sec4.3 thm
  variational equation}, allows to show that, for any fixed $x \in
  \mathbb{R}^d$, $D^n \Phi^{\varepsilon} (\cdot, x)$ must converge as
  $\varepsilon \rightarrow 0$ to the unique solution of the variational-type
  equation~{\eqref{sec4.3 recursive variational eq}} associated to $\Phi$;
  with this information at hand it is then possible to show that the limit
  varies continuously in $x$ and must coincide with $D^n \Phi (\cdot, x)$,
  thus showing that $\Phi$ is not only in $W^{n, \infty}_{\tmop{loc}}$ but
  also $C^n$. We omit the details in order to avoid unnecessary repetitions.
  
  {\tmstrong{Step 4:}} \tmtextit{The case $b \in L^{\infty}_{t, x}$ with $T^w
  b \in C^{\gamma}_t C^{n + 1 / 2}_x$.} In this case Step~1 and Step~3 are
  identical to the ones above, the only change is in the estimates from
  Step~2, as we can use the information $\left\| \Phi \left( \cdot \,, x
  \right) \right\|_{\tmop{Lip}_t} < \infty$ uniformly in $x \in B_R$ to
  require less regularity for $T^w b$. For instance in the case $k = 2$ we
  have the estimates
  \[ \begin{array}{ll}
       \left\| \int_0^{\cdot} D_x \Phi (s, x)^{\otimes 2} \circ \nobracket
       D_x^2 T^w b (\mathd r, \Phi (r, x) \nobracket) \right\|_{C^{\gamma}} &
       \lesssim \| D_x \Phi (\cdot, x) \|_{C^{\gamma}}^2  \left\|
       \int_0^{\cdot} D_x^2 T^w b (\mathd r, \Phi (r, x))
       \right\|_{C^{\gamma}}\\
       & \lesssim \| D_x \Phi (\cdot, x) \|_{C^{\gamma}}^2  \| \Phi (\cdot,
       x) \|_{\tmop{Lip}}  \| D_x^2 T^w b \|_{C^{\gamma}_t C^{1 / 2}_x}\\
       & \lesssim \| T^w b \|_{C^{\gamma}_t C^{5 / 2}_x}
     \end{array} \]
  and $\left\| \int_0^{\cdot} D_x T^w b (\mathd r, \Phi (r, x))
  \right\|_{C^{\gamma}} \lesssim \| T^w b \|_{C^{\gamma}_t C^{3 / 2}_x}  \|
  \Phi (\cdot, x) \|_{\tmop{Lip}}$. The general inductive step similar.
\end{proof}

\section{Application to transport type PDEs}\label{sec5}

\tmcolor{blue}{}The aim of this section is to apply the theory of
Section~\ref{sec4} in order to solve perturbed first order linear PDEs of the
form
\begin{equation}
  \partial_t u + b \cdot \nabla u + c u + \dot{w} \cdot \nabla u = 0,
  \label{sec5 eq1}
\end{equation}
where $\dot{w}$ denotes the time derivative of $w$; at this stage, the
equation is only formal. However, if we assumed everything smooth, then
applying the change of variables $\tilde{u} (t, x) = u (t, x + w_t)$
(similarly for $\tilde{b}$, $\tilde{c}$), {\eqref{sec5 eq1}} would be
equivalent to
\begin{equation}
  \partial_t \tilde{u} + \tilde{b} \cdot \nabla \tilde{u} + \tilde{c} 
  \tilde{u} = 0. \label{sec5 eq2}
\end{equation}
Equation~{\eqref{sec5 eq2}} is now meaningful in the classical sense if for
instance $\tilde{b}, \tilde{c} \in C^0_{t, x}$, which is equivalent to $b, c
\in C^0_{t, x}$; it also makes sense in the weak sense under suitable
integrability assumptions on $b, c$. Moreover the transformation that defines
$\tilde{u}$ in function of $u$ is well defined whenever $w$ is a continuous
path.

\

Based on the above reasoning, we will adopt the convention that $u$ is a
solution to~{\eqref{sec5 eq1}} if and only if $\tilde{u}$ defined as above is
a solution to~{\eqref{sec5 eq2}} and we will study systematically the latter
equation. Let us mention that in the case $w$ is a rough path, it is possible
to give meaning to~{\eqref{sec5 eq1}} and the passage from~{\eqref{sec5 eq1}}
to~{\eqref{sec5 eq2}} can be rigorously justified, see~{\cite{catellier}}.

\

Although the above discussion holds for general $c$, we will focus only on
two cases of interest, given by transport and continuity equations, namely for
$c = 0$ and $c = \tmop{div} b$ (resp. $\tilde{c} = 0$ and $\tilde{c} =
\tmop{div} \tilde{b}$).

\

While in Section~\ref{sec4} all the proofs were almost identical for $b \in
C^0_{t, x}$ with $T^w b \in C^{\gamma}_t C^{3 / 2}_x$ and $T^w b \in
C^{\gamma}_t C^2_x$, here the difference becomes relevant and the first case
is much easier to treat compared to the latter; to our surprise, even if the
existence of a Lipschitz flow for the associated ODE is already known, the
case $T^w b \in C^{\gamma}_t C^2_x$ requires the application of refined tools
like commutators and the Sewing lemma. For this reason, we split the results
in two subsections, with the proofs becoming gradually more complex, so that
the difficulties arising in the second case become apparent.

\subsection{The case of continuous bounded $b$}\label{sec5.1}

Let us mention that in this case the transport equation has been treated with
similar techniques in~{\cite{catellier}}, while the continuity equation in
Chapter~9 from~{\cite{maurelli}}. More recently, in the case $b \in
L^{\infty}_{t, x}$, the transport equation has been investigated with
different techniques in~{\cite{amine2020}}.

\

We start by considering the case $c \equiv 0$. Recall that $\tilde{b} (t, x)
= b (t, x + w_t)$ and that in this case the YDE associated to $\theta$
corresponds to the ODE associated to $\tilde{b}$, for which existence of a
locally $C^1_{t, x}$ flow $\Phi$ is known. Let us also recall the notation
from Section~\ref{sec4.3}, namely $\Phi_t (x) = \Phi (0, t, x)$, $\psi (s, t,
\cdot) = \Phi (s, t, \cdot)^{- 1}$ and $\psi_t = \Phi_t^{- 1}$. With a slight
abuse, from now on we will denote $\tilde{u}$ with $u$ instead.

\begin{proposition}
  \label{sec5.1 thm transport}Let $b \in C^0_{t, x}$ such that $T^w b \in
  C^{\gamma}_t C^{3 / 2}_x$, then for any $u_0 \in C^1_x$ there exists a
  unique solution of
  \begin{equation}
    \partial_t u + \tilde{b} \cdot \nabla u = 0 \label{sec5.1 transport eq}
  \end{equation}
  with initial condition $u_0$, which is given by $u_t (x) = u_0 (\psi_t
  (x))$.
\end{proposition}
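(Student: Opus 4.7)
The strategy is the classical method of characteristics, made rigorous by the flow theory developed in Section~\ref{sec4.3}. Under the standing hypotheses $b \in C^0_{t,x}$ and $T^w b \in C^{\gamma}_t C^{3/2}_x$, Theorem~\ref{sec4.3 thm variational equation} produces a flow $\Phi$ of $C^1$ diffeomorphisms, and Remark~\ref{sec4.3 remark transport} tells us that since $b$ is (jointly) continuous and $T^w b \in C^{\gamma}_t C^{3/2}_x$, both $\Phi$ and its inverse $\psi$ are locally $C^1$ jointly in $(t,x)$. In particular $\psi$ satisfies the classical PDE
\[ \partial_t \psi_t(x) + D_x \psi_t(x)\, \tilde b(t,x) = 0 \qquad \text{on } [0,T]\times \mathbb{R}^d. \]
This is the only ingredient from Section~\ref{sec4.3} we need; the rest is elementary.

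\textbf{Existence.} Define $u_t(x) := u_0(\psi_t(x))$. Since $u_0 \in C^1_x$ and $\psi \in C^1_{\mathrm{loc}}([0,T]\times \mathbb{R}^d)$, we have $u \in C^1_{t,x}$ locally. Chain rule gives
\[ \partial_t u_t(x) = \nabla u_0(\psi_t(x)) \cdot \partial_t \psi_t(x), \qquad \nabla u_t(x) = D_x\psi_t(x)^{\mathsf{T}} \nabla u_0(\psi_t(x)). \]
Combining these with the PDE for $\psi$ displayed above yields
\[ \partial_t u_t(x) + \tilde b(t,x) \cdot \nabla u_t(x) = \nabla u_0(\psi_t(x)) \cdot \bigl[ \partial_t \psi_t(x) + D_x\psi_t(x)\,\tilde b(t,x) \bigr] = 0, \]
and $u_0(\psi_0(x)) = u_0(x)$, so $u_t = u_0\circ\psi_t$ is a classical solution.

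\textbf{Uniqueness.} Let $u \in C^1_{t,x}$ be any classical solution to~\eqref{sec5.1 transport eq} with $u\vert_{t=0} = u_0$. Because $\tilde b \in C^0_{t,x}$, the flow $\Phi$ satisfies the ODE $\partial_t \Phi_t(x) = \tilde b(t,\Phi_t(x))$ in the classical sense. Setting $v_t(x) := u_t(\Phi_t(x))$, chain rule gives
\[ \partial_t v_t(x) = (\partial_t u_t)(\Phi_t(x)) + \nabla u_t(\Phi_t(x)) \cdot \tilde b(t,\Phi_t(x)) = 0, \]
since $u$ solves~\eqref{sec5.1 transport eq}. Hence $v_t(x) = v_0(x) = u_0(x)$, i.e.\ $u_t(\Phi_t(x)) = u_0(x)$ for all $(t,x)$, and composing with $\psi_t$ on the right yields $u_t(x) = u_0(\psi_t(x))$.

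\textbf{Main obstacle.} The proof is essentially a verification once the right regularity for $\psi$ is in hand; the only non-trivial point is the validity of the classical PDE for $\psi$, which is exactly what Remark~\ref{sec4.3 remark transport} guarantees in the regime $b \in C^0_{t,x}$, $T^w b \in C^{\gamma}_t C^{3/2}_x$. (It is precisely this regime that is ``easy''; the genuinely harder case $T^w b \in C^{\gamma}_t C^2_x$ without continuity of $b$, treated later, requires commutator and Sewing arguments because one only has $\psi \in C^{\gamma}_t C^0_{\mathrm{loc}} \cap C^0_t C^1_{\mathrm{loc}}$.)
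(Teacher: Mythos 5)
Your proof is correct and takes the same route as the paper: invoke Remark~\ref{sec4.3 remark transport} for the classical PDE satisfied by $\psi$ (made available precisely because $b\in C^0_{t,x}$ and $T^wb\in C^{\gamma}_tC^{3/2}_x$ give $\Phi,\psi\in C^1_{t,x}$ locally), then verify existence by chain rule and uniqueness by differentiating $t\mapsto u_t(\Phi_t(x))$. The only cosmetic difference is that you spell out the chain-rule computations slightly more explicitly; substantively the argument is identical to the paper's.
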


\begin{proof}
  Recall that by Remark~\ref{sec4.3 remark transport} $\psi \in C^1_{t, x}$
  solves equation
  \[ \partial_t \psi (t, x) + D_x \psi (t, x)  \tilde{b} (t, x) = 0. \]
  Therefore $u (t, x) \assign u_0 (\psi (t, x)) \in C^1_{t, x}$ and satisfies
  \[ \partial_t u (t, x) + \nabla u (t, x) \cdot \tilde{b} (t, x) = \nabla u_0
     (\psi (t, x)) \cdot [\partial_t \psi (t, x) + D_x \psi (t, x) \tilde{b}
     (t, x)] = 0 \]
  which shows that it is a solution.
  
  Conversely, let $u$ be a solution and for a given $x \in \mathbb{R}^d$
  define $z_t = u (t, \Phi_t (x))$. $\dot{\Phi}_t (x) = \tilde{b} (t, \Phi_t
  (x))$, therefore $z$ solves
  \[ \dot{z}_t = \partial_t u (t, \Phi_t (x)) + \nabla u (t, \Phi_t (x)) \cdot
     \tilde{b} (t, \Phi_t (x)) = 0 \]
  which implies that $u (t, \Phi_t (x)) = u_0 (x)$ for all $x$ and thus $u (t,
  x) = u_0 (\psi (t, x))$.
\end{proof}

We now turn to the case $c = \tmop{div} b$, i.e. the continuity equation.
Since in general $\tmop{div} b$ is only defined as a distribution, it makes
sense to interpret the equation in a weak sense.

We adopt the following notation: $\mathcal{M}_x (\mathbb{R}^d) =
\mathcal{M}_x$ denotes the Banach space of all finite signed Radon measures on
$\mathbb{R}^d$, endowed with the total variation norm. We say that $v \in
L^{\infty}_t \mathcal{M}_x$ is weakly continuous if the map $t \mapsto v_t$ is
continuous $\mathcal{M}_x$ endowed with the weak-$\ast$ topology, equivalently
if for any $\varphi \in C^{\infty}_c (\mathbb{R}^d)$, the map $t \mapsto
\langle v_t, \varphi \rangle$ is continuous.

\begin{definition}
  \label{sec5.1 defn continuity equation}Let $\tilde{b} \in C^0_{t, x}$, $v
  \in L^{\infty}_t \mathcal{M}_x$. We say that $v$ is a weak solution of the
  continuity equation
  \begin{equation}
    \partial_t v + \nabla \cdot (\tilde{b} v) = 0 \label{sec5.1 continuity
    equation}
  \end{equation}
  if $v$ is weakly continuous and for any $\varphi \in C^{\infty}_c ([0, T]
  \times \mathbb{R}^d)$ it holds
  \begin{equation}
    \langle v_t, \varphi_t \rangle - \langle v_0, \varphi_0 \rangle = \int_0^t
    \langle v_s, \partial_t \varphi_s + \tilde{b}_s \cdot \nabla \varphi_s
    \rangle \mathd s. \label{sec5.1 weak continuity equation}
  \end{equation}
\end{definition}

\begin{proposition}
  \label{sec5.1 thm continuity}Let $b \in C^0_{t, x}$ such that $T^w b \in
  C^{\gamma}_t C^{3 / 2}_x$, then for any $v_0 \in \mathcal{M}_x
  (\mathbb{R}^d)$ there exists a unique weak solution $v$ of~{\eqref{sec5.1
  continuity equation}} with initial data $v_0$, which is given by
  \begin{equation}
    v_t (\mathd x) = \exp \left( - \int_0^t \tmop{div} T^w b (\mathd s, \psi
    (s, t, x)) \right) v_0 (\mathd x) \label{sec5.1 solution continuity ugly}
  \end{equation}
  or equivalently $v_t (\mathd x)$ defined by duality as
  \begin{equation}
    \int_{\mathbb{R}^d} \varphi (x) v_t (\mathd x) = \int_{\mathbb{R}^d}
    \varphi (\Phi_t (x)) v_0 (\mathd x), \qquad \forall \varphi \in
    C^{\infty}_c . \label{sec5.1 solution continuity eq}
  \end{equation}
\end{proposition}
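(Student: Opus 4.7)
The plan is to prove existence by verifying that the pushforward of $v_0$ along the flow $\Phi_t$ satisfies (\ref{sec5.1 weak continuity equation}), and uniqueness by a duality argument using the backward characteristics. First, I would define $v_t$ via (\ref{sec5.1 solution continuity eq}). Weak continuity of $t \mapsto v_t$ is immediate from pointwise continuity of $t \mapsto \Phi_t(x)$ combined with dominated convergence. To verify the weak equation, I will use that, by Remark~\ref{sec4.3 remark transport}, the hypotheses $b \in C^0_{t,x}$ and $T^w b \in C^{\gamma}_t C^{3/2}_x$ force $\Phi \in C^1_{t,x,\tmop{loc}}$, so that for any $\varphi \in C^\infty_c([0,T] \times \mathbb{R}^d)$ the classical chain rule gives
\[ \frac{\mathd}{\mathd s} \varphi(s, \Phi_s(x)) = \partial_s \varphi(s, \Phi_s(x)) + \tilde{b}(s, \Phi_s(x)) \cdot \nabla \varphi(s, \Phi_s(x)); \]
integrating in $s \in [0, t]$ and against $v_0(\mathd x)$, then applying Fubini, yields (\ref{sec5.1 weak continuity equation}). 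The equivalence with formula (\ref{sec5.1 solution continuity ugly}) follows from a change of variables $x = \Phi_t(y)$ in (\ref{sec5.1 solution continuity eq}), combined with the Jacobian identity (\ref{sec4.3 jacobian equation}) and the group relation $\Phi_s \circ \psi_t = \psi(s, t, \cdot)$.

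For uniqueness, by linearity it suffices to prove $v \equiv 0$ whenever $v_0 = 0$. Fix $t \in (0, T]$ and $g \in C^\infty_c(\mathbb{R}^d)$, and set $\varphi_s(x) := g(\Phi(s, t, x))$ for $s \in [0, t]$. The Lipschitz estimates on $\psi(s, t, \cdot)$ from Theorem~\ref{sec4.3 theorem existence flow}(v) guarantee that $\varphi$ has compact support uniformly in $s \in [0, t]$, while the local $C^1_{t,x}$-regularity of $\Phi$ gives $\varphi \in C^1([0, t] \times \mathbb{R}^d)$. A direct computation, differentiating the group identity $\Phi(s, t, \Phi(r, s, x)) = \Phi(r, t, x)$ in $s$ at $r = s$, produces $\partial_s \Phi(s, t, x) = -D_x \Phi(s, t, x)\,\tilde{b}(s, x)$, and hence $\partial_s \varphi_s + \tilde{b}_s \cdot \nabla \varphi_s = 0$ on $[0, t] \times \mathbb{R}^d$; for non-smooth $b$, this identity is obtained by mollifying $b^\varepsilon = \rho^\varepsilon \ast b$ and passing to the limit using Lemma~\ref{sec4.3 lemma convergence mollified flows}. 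Extending $\varphi$ smoothly past time $t$ so that $\varphi \in C^1_c([0, T] \times \mathbb{R}^d)$, approximating by $\varphi^n \in C^\infty_c$, and passing to the limit in (\ref{sec5.1 weak continuity equation}) yields $\langle v_t, g \rangle = \langle v_0, \varphi_0 \rangle = 0$; since $g$ and $t$ are arbitrary, $v \equiv 0$.

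The principal technical obstacle will be the density step extending (\ref{sec5.1 weak continuity equation}) from $C^\infty_c$ test functions to the $C^1_c$ function $\varphi$ built from the backward flow. This is routine but requires care: joint mollification in time and space of $\varphi$ produces $\varphi^n \in C^\infty_c([0, T] \times \mathbb{R}^d)$ with $\varphi^n$, $\partial_s \varphi^n$, and $\nabla \varphi^n$ converging uniformly on a fixed compact containing the support of $\varphi$; since $\tilde{b}$ is continuous and $v \in L^\infty_t \mathcal{M}_x$, dominated convergence lets me pass to the limit in the relevant duality pairings. A secondary point will be verifying the chain-rule identity for $\partial_s \Phi(s, t, \cdot)$ when $b$ is only continuous, which reduces to the smooth case via Lemma~\ref{sec4.3 lemma convergence mollified flows} as noted above.
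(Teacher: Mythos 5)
Your proposal is correct and follows essentially the same route as the paper. Existence is verified in the same way: define $v_t$ by push-forward, use that $\Phi\in C^1_{t,x,\tmop{loc}}$ (from Remark~\ref{sec4.3 remark transport}) to justify the chain rule, then Fubini. For uniqueness, the paper chooses the forward-anchored test function $\varphi_s(x)=u(\psi(0,s,x))$ with $u\in C^\infty_c$ (so $\varphi_0=u$) and then needs an extra step, substituting $u=\tilde u\circ\Phi_t$, to conclude $\langle v_t,\tilde u\rangle=0$; you instead anchor at time $t$, taking $\varphi_s(x)=g(\Phi(s,t,x))$ so that $\varphi_t=g$ directly, which is marginally cleaner but yields the same density argument. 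The one technicality you flagged --- extending (\ref{sec5.1 weak continuity equation}) from $C^\infty_c$ to $C^1_c$ test functions --- is the same density step the paper invokes, and is handled correctly by your mollification argument.
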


\begin{remark}
  Whenever $\tmop{div} b \in C^0_{t, x}$, equation~{\eqref{sec5.1 solution
  continuity ugly}} corresponds to the classical formulation
  \[ v_t (\mathd x) = \exp \left( - \int_0^t \tmop{div} \tilde{b} (s, \psi (s,
     t, x)) \mathd s \right) v_0 (\mathd x) . \]
  Under the above assumptions equation~{\eqref{sec5.1 solution continuity
  ugly}} is still meaningful as a nonlinear Young integral, since $\tmop{div}
  T^w b = T^w \tmop{div} b \in C^{\gamma}_t C^{1 / 2}_x$ and the map $s
  \mapsto \psi (s, t, x)$ is Lipschitz. However, we will only use
  formula~{\eqref{sec5.1 solution continuity eq}} in the proof, as it is more
  practical for explicit computations. 
\end{remark}

\begin{proof}
  Let $v$ be defined by~{\eqref{sec5.1 solution continuity eq}}, then for any
  $\varphi \in C^{\infty}_c ([0, T] \times \mathbb{R}^d)$ we have
  \begin{eqnarray*}
    \langle v_t, \varphi_t \rangle - \langle v_0, \varphi_0 \rangle & = &
    \int_{\mathbb{R}^d} [\varphi_t (\Phi_t (x)) - \varphi_0 (x)] v_0 (\mathd
    x)\\
    & = & \int_0^t \int_{\mathbb{R}^d} \frac{\mathd}{\mathd s} (\varphi_s
    (\Phi_s (x))) v_0 (\mathd x) \mathd s\\
    & = & \int_0^t \int_{\mathbb{R}^d} [\partial_t \varphi_s (\Phi_s (x)) +
    \nabla \varphi_s (\Phi_s (x)) \cdot b_s (\Phi (s, x))] v_0 (\mathd x)
    \mathd s\\
    & = & \int_0^t \int_{\mathbb{R}^d} [\partial_t \varphi_s (x) + \nabla
    \varphi_s (x) \cdot b_s (x)] v_s (\mathd x) \mathd s,
  \end{eqnarray*}
  which shows that $v$ is a weak solution of~{\eqref{sec5.1 continuity
  equation}}.
  
  Since equation~{\eqref{sec5.1 weak continuity equation}} is linear, it is
  enough to establish uniqueness in the case $v_0 \equiv 0$. Let $v$ be a
  given weak solution, then by standard density arguments~{\eqref{sec5.1 weak
  continuity equation}} extends to all $\varphi \in C^1_c ([0, T] \times
  \mathbb{R}^d)$; take $\varphi_t (x) = u (\psi_t (x))$ with $u \in
  C^{\infty}_c (\mathbb{R}^d)$, so that $\varphi \in C^1_c ([0, T] \times
  \mathbb{R}^d)$ and it solves $\partial_t \varphi + \nabla \varphi \cdot b =
  0$. Then we obtain
  \[ \int u (\psi_t (x)) v_t (\mathd x) = \langle v_t, \varphi_t \rangle =
     \langle v_0, \varphi_0 \rangle = 0 \quad \forall \, u \in C^{\infty}_c .
  \]
  By usual density arguments, the relation then extends to all continuous
  bounded $u$; for fixed $t$, taking $u (x) = \tilde{u} (\Phi_t (x))$, we
  deduce that $\langle \tilde{u}, v_t \rangle = 0$ for all $\tilde{u} \in
  C^0_b$, which implies $v_t \equiv 0$ for all $t$.
\end{proof}

\subsection{The case of distributional $b$}

We now pass to the case $T^w b \in C^{\gamma}_t C^2_x$, without assuming any
regularity on the distribution~$b$. To the best of our knowledge, this case
has never been considered in literature so far; although perturbed linear PDEs
have been previously treated in {\cite{catellier,nilssen2020}}, it is always
assumede therein at least $b \in L^{\infty}_{t, x}$ (which can be treated
analogously to Section~\ref{sec5.1}). However, our approach in the ``Young
regime'', namely for time regularity $\gamma > 1 / 2$, is undoubtedly similar
(and even simpler) to that in the ``rough regime'' $\gamma \in (1 / 3, 1 / 2]$
treated in~{\cite{bailleul}}. The use of a commutator lemma also reflects the
work~{\cite{diperna}} and Chapter~9 from~{\cite{maurelli}}. Abstract transport
equations in H\"{o}lder media have been treated also in~{\cite{hu}}; however
the results there are, in our opinion, not completely clear, see
Remark~\ref{sec5.2 remark limited use} below.

\begin{definition}
  \label{sec5.2 defn transport}Let $T^w b \in C^{\gamma}_t C^1_x$; we say that
  $u \in C^{\gamma}_t C^0_{\tmop{loc}}$ is a solution of the Young transport
  equation
  \begin{equation}
    u (\mathd t, x) + \nabla u (t, x) \cdot T^w b (\mathd t, x) = 0
    \label{sec5.2 transport eq}
  \end{equation}
  if for all $\varphi \in C^{\infty}_c (\mathbb{R}^d)$ and all $t \in [0, T]$,
  the following Young integral equation holds:
  \begin{equation}
    \langle u_t, \varphi \rangle = \langle u_0, \varphi \rangle + \int_0^t
    \langle u_s, \, \tmop{div} \left( T^w_{\mathd t} b \, \varphi \right)
    \rangle . \label{sec5.2 transport eq integral}
  \end{equation}
\end{definition}

\begin{remark}
  \label{sec5.2 remark defn transport}The integral appearing in~{\eqref{sec5.2
  transport eq integral}} is meaningful as a Young integral, since by
  assumptions the map $t \mapsto \tmop{div} (T^w b (t, \cdot) \varphi)$
  belongs to $C^{\gamma}_t C^0_c$ while $t \mapsto u_t \in C^{\gamma}_t
  C^0_{\tmop{loc}}$. An equivalent more pratical formulation of~{\eqref{sec5.2
  transport eq integral}} is the following one: for any $\varphi \in
  C^{\infty}_c (\mathbb{R}^d)$, we have the estimate
  \begin{equation}
    \left| \langle u_{s, t}, \varphi \rangle - \langle u_s, \, \tmop{div}
    (T^w_{s, t} b \varphi) \rangle \right| \lesssim_K | t - s |^{2 \gamma} \|
    \varphi \|_{C^1}  \llbracket u \rrbracket_{C^{\gamma} C^0_K}  \llbracket
    T^w b \rrbracket_{C^{\gamma} C^1} \label{sec5.2 transport eq3}
  \end{equation}
  which is uniform over $(s, t) \in \Delta_T$ but depends on $K = \tmop{supp}
  \varphi$; choosing $\varphi = \rho^{\varepsilon} (x - \cdot)$ with $x \in
  B_R$ and $\rho^{\varepsilon}$ standard mollifier, since $T^w_{s, t} b \cdot
  \nabla u_s$ is a well defined distribution, we obtain
  \begin{equation}
    \sup_{x \in B_R} | \rho^{\varepsilon} \ast u_{s, t} - \rho^{\varepsilon}
    \ast (T^w_{s, t} b \cdot \nabla u_s) | \lesssim_{\varepsilon, R} | t - s
    |^{2 \gamma} . \label{sec5.2 transport eq5}
  \end{equation}
  If in addition $u \in C^0_t C^1_{\tmop{loc}}$, we can integrate by parts
  in~{\eqref{sec5.2 transport eq3}} back to obtain
  \[ \left| \langle u_{s, t}, \varphi \rangle + \langle T^w_{s, t} b \cdot
     \nabla u_s, \, \varphi \rangle \right| \lesssim_K | t - s |^{2 \gamma} \|
     \varphi \|_{C^1}  \llbracket u \rrbracket_{C^{\gamma} C^0_K}  \llbracket
     T^w b \rrbracket_{C^{\gamma} C^1} ; \]
  if $u \in C^{\gamma}_t C^1_{\tmop{loc}}$, then this necessarily implies the
  pointwise identity
  \begin{equation}
    u (t, x) = u_0 (x) + \int_0^t T^w b (\mathd s, x) \cdot \nabla u (s, x)
    \quad \text{for all } (t, x) \in [0, T] \times \mathbb{R}^d \label{sec5.2
    transport eq4}
  \end{equation}
  which is meaningul since $T^w b (\cdot, x), \nabla u (\cdot, x) \in
  C^{\gamma}_t$. It is therefore clear that for regular $b$, any classical
  solution of~{\eqref{sec5.1 transport eq}} is also a solution in the sense of
  Definition~\ref{sec5.2 defn transport}.
\end{remark}

We start by showing that our candidate solution satisfies
Definition~\ref{sec5.2 defn transport}.

\begin{lemma}
  \label{sec5.2 lemma existence transport}Let $u_0 \in C^1_x$ and define $u
  (t, x) = u_0 (\psi_t (x))$, then $u \in C^{\gamma}_t C^0_x \cap C^0_t
  C^1_{\tmop{loc}}$ and it is a solution of the Young transport
  equation~{\eqref{sec5.2 transport eq}}.
\end{lemma}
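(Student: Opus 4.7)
The plan is to prove the two assertions separately: first the regularity of $u$, then the Young transport equation.

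For regularity, Theorem~\ref{sec4.3 theorem existence flow} together with Remark~\ref{sec4.3 remark transport} show that the inverse flow $\psi$ lies in $C^{\gamma}_t C^0_x \cap C^0_t C^1_{\tmop{loc}}$: the pointwise bound $|\psi_t(x) - \psi_s(x)| = |\psi_s(\psi(s,t,x)) - \psi_s(x)| \leq \llbracket \psi_s \rrbracket_{\tmop{Lip}} |\psi(s,t,x)-x| \lesssim |t-s|^{\gamma}$ is uniform in $x \in \mathbb{R}^d$. Applying the chain rule to $u = u_0 \circ \psi$ with $u_0 \in C^1_x$ then gives $u \in C^{\gamma}_t C^0_x$ via $|u_t(x)-u_s(x)| \leq \|\nabla u_0\|_{\infty} |\psi_t(x)-\psi_s(x)|$, and $u \in C^0_t C^1_{\tmop{loc}}$ via $\nabla u_t(x) = D_x\psi_t(x)^{\top} \nabla u_0(\psi_t(x))$, with both factors jointly continuous in $(t,x)$.

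For the Young transport equation, I would argue by approximation. Set $b^{\varepsilon} \assign \rho^{\varepsilon} \ast b$ for a family of standard spatial mollifiers, with corresponding flow $\Phi^{\varepsilon}$, inverse $\psi^{\varepsilon}$, and $u^{\varepsilon}(t,x) \assign u_0(\psi^{\varepsilon}_t(x))$. Since $b^{\varepsilon}$ is spatially smooth, the YDE coincides with the classical ODE and $u^{\varepsilon} \in C^1_{t,x}$ is a genuine classical solution of $\partial_t u^{\varepsilon} + \tilde b^{\varepsilon} \cdot \nabla u^{\varepsilon} = 0$. Testing against $\varphi \in C^{\infty}_c$ and integrating by parts in space and then in time (using $\partial_s T^w b^{\varepsilon}(s,\cdot) = \tilde b^{\varepsilon}_s$) yields identity~(\ref{sec5.2 transport eq integral}) for $u^{\varepsilon}$, with the Young integral on the right reducing to a Lebesgue integral thanks to the smoothness of $T^w b^{\varepsilon}$.

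It then remains to pass to the limit $\varepsilon \to 0$ in this identity. The bound $\|T^w b^{\varepsilon}\|_{C^{\gamma}_t C^2_x} \leq \|T^w b\|_{C^{\gamma}_t C^2_x}$, together with Theorem~\ref{sec4.3 theorem existence flow}, gives uniform estimates $\|\psi^{\varepsilon}\|_{C^{\gamma}_t \tmop{Lip}_{\tmop{loc}}} \leq C$ and hence $\|u^{\varepsilon}\|_{C^{\gamma}_t C^0_K} \leq C_K$ on any compact $K$; Lemma~\ref{sec4.3 lemma convergence mollified flows} (applied also to the time-reversed equation to handle $\psi^{\varepsilon}$) yields $\psi^{\varepsilon} \to \psi$ locally uniformly, and interpolating with the uniform $C^{\gamma}$-bound upgrades this to $u^{\varepsilon} \to u$ in $C^{\gamma-\delta}_t C^0_K$ for every $\delta > 0$. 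Combined with $T^w b^{\varepsilon} \to T^w b$ locally in $C^{\gamma-\delta}_t C^{2-\delta}_x$ (Lemma~\ref{sec3.1 lemma 3}), and choosing $\delta$ small enough that $2(\gamma-\delta) > 1$, the continuity of the Young integral in the Sewing-lemma sense---applied to the germ $(s,t) \mapsto \langle u_s, \tmop{div}(T^w_{s,t} b \, \varphi) \rangle$, whose second-order increment is controlled by $\|u_{s,u}\|_{C^0_K} \|\tmop{div}(T^w_{u,t} b \, \varphi)\|_{L^1_K} \lesssim |t-s|^{2\gamma}$---transfers identity~(\ref{sec5.2 transport eq integral}) from $u^{\varepsilon}$ to $u$. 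The main obstacle is precisely this bilinear passage to the limit: Young integration theory requires simultaneous H\"older (not merely uniform) control of both factors, and this in turn depends crucially on the $\varepsilon$-uniform a priori estimates for the flow.
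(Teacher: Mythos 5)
Your regularity argument is identical to the paper's, but your verification of the Young transport identity takes a genuinely different route. The paper verifies~\eqref{sec5.2 transport eq integral} \emph{directly}: starting from $\langle u_{s,t},\varphi\rangle=\int u_0(\psi_t)-u_0(\psi_s)\,\varphi$, it changes variables via the flow and the Jacobian formula~\eqref{sec4.3 jacobian equation}, then uses the Young chain rule to Taylor-expand $\varphi(\Phi_t(x))$ and the exponential of $\int \mathrm{div}\,T^w b$, concluding that $\langle u_{s,t},\varphi\rangle = \langle u_s,\mathrm{div}(T^w_{s,t}b\,\varphi)\rangle + O(|t-s|^{2\gamma})$, which is precisely the germ characterisation of a Young integral. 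Your approach instead mollifies $b$, derives the identity in the classical case, and passes to the limit. Both are sound. Yours is more modular (it only quotes classical ODE theory plus stability of flows and germs), while the paper's avoids invoking Lemma~\ref{sec4.3 lemma convergence mollified flows} for $\psi^\varepsilon$ and is closer in spirit to the uniqueness proof (Theorem~\ref{sec5.2 thm uniqueness transport}), which also works by direct germ estimates. Two small imprecisions in your write-up, neither fatal: (a) since $b^\varepsilon$ is only $L^q$ in time, $u^\varepsilon$ is not $C^1_{t,x}$ but merely absolutely continuous in time with a.e.\ derivative, which is enough for the integration by parts; (b) the statement that the Young integral ``reduces to a Lebesgue integral'' is not via continuity of $\partial_t T^w b^\varepsilon$ (which is only $L^q_t$), but via the sewing-lemma uniqueness criterion applied to the germ $\Gamma^\varepsilon_{s,t}=\langle u^\varepsilon_s,\mathrm{div}(T^w_{s,t}b^\varepsilon\,\varphi)\rangle$, whose local error against $\int_s^t\langle u^\varepsilon_r,\mathrm{div}(\tilde b^\varepsilon_r\varphi)\rangle\mathd r$ is $O(|t-s|^{\gamma+1-1/q})$ with $\gamma+1-1/q>1$. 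The crucial final passage to the limit---exploiting the $\varepsilon$-uniform bound $\|\delta\Gamma^\varepsilon\|_{2\gamma}\lesssim 1$ together with uniform convergence of $\Gamma^\varepsilon\to\Gamma$---is correctly identified as the heart of the matter, and your bound on the second-order increment $\delta\Gamma^\varepsilon_{s,u,t}=-\langle u^\varepsilon_{s,u},\mathrm{div}(T^w_{u,t}b^\varepsilon\,\varphi)\rangle$ is right.
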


\begin{proof}
  The regularity of $u \in C^{\gamma}_t C^0_x \cap C^0_t C^1_{\tmop{loc}}$
  follows from Remark~\ref{sec4.3 remark transport}, since $\psi$ satisfies
  \[ \sup_x | \psi_t (x) - \psi_s (x) | \lesssim | t - s |^{\gamma}, \quad
     \sup_{t, x} | D_x \psi (t, x) | < \infty, \]
  combined with the regularity of $u_0$. Recall that by {\eqref{sec4.3
  jacobian equation}}, for any $s < t$ it holds
  \begin{align*}
    \langle u_{s, t}, \varphi \rangle & = \int_{\mathbb{R}^d} (u_0 (\psi_t
    (x)) - u_0 (\psi_s (x))) \varphi (x) \mathd x = \int_{\mathbb{R}^d} u_0
    (x) F (s, t, x) \mathd x
  \end{align*}
  where
  \[ F (s, t, x) = \varphi (\Phi_t (x)) \exp \left( \int_0^t \tmop{div} T^w b
     (r, \Phi_r (x)) \right) - \varphi (\Phi_s (x)) \exp \left( \int_0^s
     \tmop{div} T^w b (r, \Phi_r (x)) \right) . \]
  By Young chain rule, we have the estimates
  \begin{eqnarray*}
    | \varphi (\Phi_t (x)) - \varphi (\Phi_s (x)) - \nabla \varphi (\Phi_s
    (x)) \cdot T^w_{s, t} b (\Phi_s (x)) | & \lesssim & | t - s |^{2 \alpha}\\
    \left| \exp \left( \int_s^t \tmop{div} T^w b (r, \Phi_r (x)) \right) - 1 -
    \tmop{div} T^w_{s, t} b (\Phi_s (x)) \right| & \lesssim & | t - s |^{2
    \alpha}\\
    \left| \exp \left( \int_0^s \tmop{div} T^w b (r, \Phi_r (x)) \right)
    \right| & \lesssim & 1
  \end{eqnarray*}
  which can all be taken uniform over $x$ belonging to a compact set $K$;
  combining them we deduce that
  \begin{align*}
    F (s, t, x) & \approx [\nabla \varphi (\Phi_s (x)) \cdot T^w_{s, t} b
    (\Phi_s (x)) + \tmop{div} T^w_{s, t} b (\Phi_s (x))] \exp \left( \int_0^s
    \tmop{div} T^w b (r, \Phi_r (x)) \right)\\
    & = \tmop{div} (T^w_{s, t} b \varphi) (\Phi_s (x)) \exp \left( \int_0^s
    \tmop{div} T^w b (r, \Phi_r (x)) \right)
  \end{align*}
  in the sense of the equality holding up to a term of order $| t - s |^{2
  \alpha}$. Therefore
  \begin{align*}
    \langle u_{s, t}, \varphi \rangle & \approx \int_{\mathbb{R}^d} u_0 (x)
    \tmop{div} (T^w_{s, t} b \varphi) (\Phi_s (x)) \exp \left( \int_0^s
    \tmop{div} T^w b (r, \Phi_r (x)) \right) \mathd x\\
    & = \int_{\mathbb{R}^d} u_0 (\psi_s (x)) \tmop{div} (T^w_{s, t} b
    \varphi) (x) \mathd x = \langle u_s, \tmop{div} (T^w_{s, t} b \varphi)
    \rangle
  \end{align*}
  which implies the conclusion.
\end{proof}

\begin{remark}
  \label{sec5.2 remark higher regularity}If $T^w b \in C^{\gamma}_t C^3_x$,
  then by Theorem~\ref{sec4.3 thm higher regularity} and Remark~\ref{sec4.3
  remark transport} $\psi \in C^0_t C^2_x \cap C^{\gamma}_t C^1_x$. It is then
  possible to check with standard calculations that for $u_0 \in C^2_x$, the
  solution $u$ constructed as above belongs to $C^0_t C^2_x \cap C^{\gamma}_t
  C^1_x$ as well. Therefore in this case, by Remark~\ref{sec5.2 remark defn
  transport}, $u$ also satisfies the stronger pointwise
  identity~{\eqref{sec5.2 transport eq4}}.
\end{remark}

By the method of characteristics, we are able to obtain the following
preliminary uniqueness result. It is however of limited applicability, see
Remark~\ref{sec5.2 remark limited use} below.

\begin{lemma}
  \label{sec5.2 baby lemma uniqueness}Let $T^w b \in C^{\gamma}_t C^2_x$, $u
  \in C^{\gamma}_t C^2_{\tmop{loc}}$ be a solution of~{\eqref{sec5.2 transport
  eq}}. Then $u (t, x) = u_0 (\psi_t (x))$.
\end{lemma}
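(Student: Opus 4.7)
\textbf{Proof plan for Lemma \ref{sec5.2 baby lemma uniqueness}.} I will proceed by the method of characteristics in its Young/Sewing form. For fixed $x_0 \in \mathbb{R}^d$, set $z(t) := u(t, \Phi_t(x_0))$, and aim to show that $z$ is constant on $[0,T]$; then $z(t) = z(0) = u_0(x_0)$ and substituting $y = \Phi_t(x_0)$ yields $u(t,y) = u_0(\psi_t(y))$, as desired. The ingredients are the pointwise Young identity for $u$ (available because the hypothesis $u \in C^{\gamma}_t C^2_{\tmop{loc}}$ is strictly stronger than the bare definition), the flow identity for $\Phi$, and two matching first-order expansions that cancel at leading order.

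Concretely, since $u \in C^{\gamma}_t C^2_{\tmop{loc}}$ and $2\gamma > 1$, Remark~\ref{sec5.2 remark defn transport} (passing from the weak to the pointwise form by integration by parts) gives, for every $y$,
\[ u(t,y) - u(s,y) = -\int_s^t T^w b(\mathd r, y) \cdot \nabla u(r, y), \]
a classical Young integral at fixed base point. The standard Young bound together with point~3 of Theorem~\ref{sec4.1 thm definition young integral} provides the two local expansions
\[ u(t_{i+1}, y) - u(t_i, y) = -T^w_{t_i, t_{i+1}} b(y) \cdot \nabla u(t_i, y) + R^{(1)}_{i}(y), \]
\[ \Phi_{t_{i+1}}(x_0) - \Phi_{t_i}(x_0) = T^w_{t_i, t_{i+1}} b(\Phi_{t_i}(x_0)) + R^{(2)}_{i}, \]
with $|R^{(1)}_i(y)|, |R^{(2)}_i| \lesssim |t_{i+1} - t_i|^{2\gamma}$ uniformly on any compact set in $y$.

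Given a partition $\Pi = \{s = t_0 < \cdots < t_n = t\}$, I split
\[ z(t_{i+1}) - z(t_i) = \bigl[u(t_{i+1}, \Phi_{t_{i+1}}(x_0)) - u(t_i, \Phi_{t_{i+1}}(x_0))\bigr] + \bigl[u(t_i, \Phi_{t_{i+1}}(x_0)) - u(t_i, \Phi_{t_i}(x_0))\bigr]. \]
To the first bracket I apply the time identity at $y = \Phi_{t_{i+1}}(x_0)$; to the second, a second-order Taylor expansion in space using $u(t_i, \cdot) \in C^2$, combined with the flow identity for $\Phi_{t_{i+1}}(x_0) - \Phi_{t_i}(x_0)$. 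The quadratic Taylor remainder is controlled by $|\Phi_{t_{i+1}}(x_0) - \Phi_{t_i}(x_0)|^2 \lesssim |t_{i+1}-t_i|^{2\gamma}$. The remaining subtlety is that the first bracket naturally evaluates $\nabla u$ and $T^w b$ at the base point $\Phi_{t_{i+1}}(x_0)$, while the second delivers them at $\Phi_{t_i}(x_0)$; using the $C^1$-regularity in space of $T^w b$ and of $\nabla u$ (here $u \in C^2_x$ is essential), each such base-point shift costs an extra factor $|\Phi_{t_{i+1}}(x_0) - \Phi_{t_i}(x_0)| \lesssim |t_{i+1}-t_i|^\gamma$, again producing an $O(|t_{i+1}-t_i|^{2\gamma})$ error. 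After all these reductions, the two first-order contributions become
\[ -T^w_{t_i, t_{i+1}} b(\Phi_{t_i}(x_0)) \cdot \nabla u(t_i, \Phi_{t_i}(x_0)) \,+\, \nabla u(t_i, \Phi_{t_i}(x_0)) \cdot T^w_{t_i, t_{i+1}} b(\Phi_{t_i}(x_0)) = 0, \]
leaving $|z(t_{i+1}) - z(t_i)| \lesssim |t_{i+1}-t_i|^{2\gamma}$ uniformly. Summing gives $|z(t) - z(s)| \lesssim |\Pi|^{2\gamma - 1} |t-s| \to 0$ as $|\Pi| \to 0$, hence $z$ is constant and the lemma follows. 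I expect the main obstacle to be bookkeeping this base-point alignment cleanly: both leading terms arise from genuinely different expansions (one a time-direction nonlinear Young integral, one a spatial Taylor jet), and the cancellation only becomes visible after paying the above $C^1$-shift prices, which is precisely the step that uses $T^w b \in C^{\gamma}_t C^2_x$ and $u \in C^{\gamma}_t C^2_{\tmop{loc}}$ in full.
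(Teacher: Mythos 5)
Your proof is correct and follows essentially the same route as the paper's: show that $t\mapsto u(t,\Phi_t(x_0))$ has increments of size $O(|t-s|^{2\gamma})$ by combining the pointwise Young identity (available since $u\in C^\gamma_t C^2_{\tmop{loc}}$), a second-order Taylor expansion of $u(t_i,\cdot)$, and the first-order Young expansion of $\Phi$, and then observe that the leading terms cancel after a base-point realignment that costs one extra H\"older factor. The only cosmetic difference is that the paper estimates $|f_{s,t}|\lesssim|t-s|^{2\gamma}$ directly for each pair $(s,t)$ and concludes that $f$ is $2\gamma$-H\"older with $2\gamma>1$, hence constant, whereas you sum over a partition and let its mesh go to zero; these are the same argument. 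You have also correctly isolated the step where $u\in C^\gamma_t C^2_{\tmop{loc}}$ is genuinely used (the spatial Taylor jet and the $C^1$-shift of $\nabla u$), which is exactly the obstruction that the paper's Remark following the lemma flags as forcing the full commutator argument in the next theorem.
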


\begin{proof}
  In order to conclude, it is enough to show that the function $f_t \assign u
  (t, \Phi_t (x))$ is constant; in particular, it suffices to prove that $|
  f_{s, t} | \lesssim | t - s |^{2 \gamma}$ since $\gamma > 1 / 2$. By the
  regularity assumption on $u$, it satisfies~{\eqref{sec5.2 transport eq4}}
  and therefore
  \[ | u_{s, t} (x) + \nabla u_s (x) \cdot T^w_{s, t} b (x) | \lesssim_R | t -
     s |^{2 \gamma}, \quad \forall \, y \in B_R . \]
  Choosing appropriately $R$ we have
  \begin{eqnarray*}
    f_{s, t} & = & u_{s, t} (\Phi_t (x)) + u_s (\Phi_t (x)) - u_s (\Phi_s
    (x))\\
    & = & u_{s, t} (\Phi_s (x)) + \nabla u_s (\Phi_s (x)) \cdot \Phi_{s, t}
    (x) + O (| t - s |^{2 \gamma})\\
    & = & - \nabla u_s (\Phi_s (x)) \cdot T^w_{s, t} b (y) + \nabla u_s
    (\Phi_s (x)) \cdot \Phi_{s, t} (x) + O (| t - s |^{2 \gamma})\\
    & = & O (| t - s |^{2 \gamma})
  \end{eqnarray*}
  where in the last passage we used the fact that $\Phi_{s, t} (x) = \int_s^t
  T^w b (\mathd r, \Phi_r (x)) \mathd r$.
\end{proof}

\begin{remark}
  \label{sec5.2 remark limited use}The hypothesis $u \in C^{\gamma}_t C^2_x$
  is required in order to justify the passage
  \[ u_s (\Phi_t (x)) - u_s (\Phi_s (x)) = \nabla u_s (\Phi_s (x)) \cdot
     \Phi_{s, t} (x) + O (| t - s |^{2 \gamma}) \]
  which is not true in general under the sole assumption $u \in C^{\gamma}_t
  C^1_x$. However, for general $T^w b \in C^{\gamma}_t C^2_x$, we only know
  that $\psi \in C^{\alpha}_t C^0_{\tmop{loc}} \cap C^0_t C^1_{\tmop{loc}}$
  and so the solution constructed by $u_t (x) = u_0 (\psi_t (x))$ is not a
  priori in the class $C^{\gamma}_t C^2_x$. For this reason, Lemma~\ref{sec5.2
  baby lemma uniqueness} is potentially vacuous, as it might only imply the
  non existence of $C^{\gamma}_t C^2_x$\mbox{-}solutions, while leaving open
  the problem of uniqueness in the class where $u$ constructed as in
  Lemma~\ref{sec5.2 lemma existence transport} lives. We believe the same
  issue arises in Theorems~3.6 and~3.7 from~{\cite{hu}}, which do not settle
  the problem of uniqueness.
\end{remark}

Observe that the above issue is typical of the Young regime and is completely
absent in the case $b \in C^0_{t, x}$, where uniqueness follows immediately
from standard arguments.

In order to prove uniqueness of solutions to~{\eqref{sec5.2 transport eq}} in
the class $C^{\gamma}_t C^0_{\tmop{loc}} \cap C^0_t C^1_{\tmop{loc}}$, we need
to use an appropriate commutator lemma, in the style of~{\cite{diperna}}. The
basic idea is as follows: let $\{ \rho_{\varepsilon} \}_{\varepsilon > 0}$ be
a family of standard mollifiers (assume $\rho^{}_1 = \rho$ to be supported on
$B_1$ for simplicity), denote $u^{\varepsilon} = \rho_{\varepsilon} \ast u$;
by equation~{\eqref{sec5.2 transport eq5}} we deduce that for any $R > 0$,
adopting the notation $C^0_R = C^0_{B_R}$, it holds
\[ \| u^{\varepsilon}_{s, t} + T^w_{s, t} b \cdot \nabla u^{\varepsilon}_s +
   R^{\varepsilon} (u_s, T^w_{s, t} b) \|_{C^0_R} \lesssim_{\varepsilon, R} |
   t - s |^{2 \gamma} \quad \text{uniformly in } 0 \leqslant s \leqslant t
   \leqslant T \]
where the estimate is uniform in $\varepsilon > 0$ and the commutator
$R^{\varepsilon}$ appearing is the bilinear operator
\begin{equation}
  R^{\varepsilon} (h, g) = (g \cdot \nabla h)^{\varepsilon} - g \cdot \nabla
  h^{\varepsilon} = \rho^{\varepsilon} \ast (g \cdot \nabla h) - g \cdot
  \nabla (\rho^{\varepsilon} \ast h) . \label{sec5.2 commutator}
\end{equation}
Now $u^{\varepsilon} \in C^{\gamma}_t C^2_{\tmop{loc}}$ and so we can apply
the same idea of the proof of Lemma~\ref{sec5.2 baby lemma uniqueness}, i.e.
study the function $f^{\varepsilon}_t = u^{\varepsilon}_t (\Phi_t (x))$, which
we expect to be quasi constant; in the estimates, terms of the form
$R^{\varepsilon} (u_s, T^w_{s, t} b) (\Phi_s (x))$ will then start to appear,
and so we need to control them as $\varepsilon \rightarrow 0$. For this reason
we need the following lemma.

\begin{lemma}
  \label{sec5.2 lemma baby commutator}The operator $R^{\varepsilon} :
  C_{\tmop{loc}}^0 \times C_{\tmop{loc}}^1 \rightarrow C_{\tmop{loc}}^0$
  defined by~{\eqref{sec5.2 commutator}} is such that:
  \begin{enumerateroman}
    \item There exists a constant $C$ independent of $\varepsilon$ such that
    $\| R^{\varepsilon} (h, g) \|_{C^0_R} \leqslant C \| h \|_{C_{R + 1}^0} \|
    g \|_{C_{R + 1}^1}$;
    
    \item For any fixed $h \in C^0$, $g \in C^1$ it holds $R^{\varepsilon} (h,
    g) \rightarrow 0$ uniformly on compact sets as $\varepsilon \rightarrow
    0$.
  \end{enumerateroman}
  Similar statements hold for $R^{\varepsilon} : C_{\tmop{loc}}^1 \times
  C_{\tmop{loc}}^2 \rightarrow C_{\tmop{loc}}^1$.
\end{lemma}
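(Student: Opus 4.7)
The plan is to first recast the commutator $R^{\varepsilon}(h,g)$ into an expression that is manifestly well-defined when $h$ is only continuous, since $\nabla h$ then exists only as a distribution. Because $g \in C^1$, the distributional identity $g \cdot \nabla h = \tmop{div}(gh) - h\, \tmop{div}(g)$ holds, and convolving with $\rho^{\varepsilon}$, integrating by parts in the first piece, and regrouping terms should give the DiPerna--Lions type formula
\[
R^{\varepsilon}(h,g)(x) = \int_{\mathbb{R}^d} \nabla \rho^{\varepsilon}(x - y) \cdot (g(y) - g(x))\, h(y)\, \mathd y \; - \; \rho^{\varepsilon} \ast (h\, \tmop{div}\, g)(x).
\]
This identity will be the workhorse for both (i) and (ii), and establishing it rigorously at the distributional level is, in my view, the only step that requires genuine care.

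From the above formula, the bound (i) will be routine. For $x \in B_R$ and $\varepsilon \leq 1$, the support of $\rho^{\varepsilon}(x - \cdot)$ lies in $B_{R+1}$; on that ball $|g(y) - g(x)| \leq \|g\|_{C^1_{R+1}}|y - x|$, and the scale-invariant identity $\int |\nabla \rho^{\varepsilon}(z)|\,|z|\,\mathd z = \int |\nabla \rho(u)|\,|u|\,\mathd u$ yields a contribution bounded by $C\|h\|_{C^0_{R+1}}\|g\|_{C^1_{R+1}}$. The second term is controlled by $\|h\|_{C^0_{R+1}}\|\tmop{div}\, g\|_{C^0_{R+1}}$ via the standard $L^\infty$ bound on mollification.

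For (ii), the second term converges uniformly on compact sets to $h\, \tmop{div}\, g$ by standard mollifier theory. For the first term, the substitution $y = x - \varepsilon z$ rewrites it as
\[
\int_{\mathbb{R}^d} \nabla \rho(z) \cdot \frac{g(x - \varepsilon z) - g(x)}{\varepsilon}\, h(x - \varepsilon z)\,\mathd z,
\]
and since $\varepsilon^{-1}(g(x - \varepsilon z) - g(x)) \to -\nabla g(x)\, z$ and $h(x - \varepsilon z) \to h(x)$, both uniformly for $x$ in a compact set and $z$ in the support of $\rho$, dominated convergence gives uniform convergence on compacts to $-h(x)\int \nabla \rho(z) \cdot (\nabla g(x)\, z)\,\mathd z$. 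Integrating by parts in $z$ via $\int z_j \partial_i \rho(z)\,\mathd z = -\delta_{ij}$ shows that this limit equals $h(x)\, \tmop{div}\, g(x)$, which exactly cancels the limit of the second term. Hence $R^{\varepsilon}(h,g) \to 0$ uniformly on compacts.

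For the analogous statement $R^{\varepsilon}: C^1_{\tmop{loc}} \times C^2_{\tmop{loc}} \to C^1_{\tmop{loc}}$, I will use the product-rule identity
\[
\partial_k R^{\varepsilon}(h,g) = R^{\varepsilon}(\partial_k h, g) + R^{\varepsilon}(h, \partial_k g),
\]
which is immediate from the definition of $R^{\varepsilon}$ together with the commutation of $\partial_k$ and convolution. Under the stronger hypotheses $h \in C^1$ and $g \in C^2$, both pieces on the right fall under the first part of the lemma, so both the uniform bound and the uniform convergence on compact sets transfer to the $C^1$ scale without any additional work.
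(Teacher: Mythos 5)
Your proposal is correct and follows essentially the same route as the paper's own proof: you derive the DiPerna--Lions commutator representation from $g\cdot\nabla h=\operatorname{div}(gh)-h\,\operatorname{div}g$, bound the first term via the scale-invariant kernel estimate and the second via mollifier contraction, identify the pointwise limit of the rescaled difference quotient using $\int z_j\,\partial_i\rho\,\mathd z=-\delta_{ij}$ to produce the cancellation, and lift to the $C^1\times C^2\to C^1$ scale by the product rule for the commutator. This is exactly the structure of the paper's argument.
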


\begin{proof}
  The proof is analogue to the one of Lemma~II.1 from~{\cite{diperna}}. It
  holds
  \[ R^{\varepsilon} (h, g) (x) = \int_{B_1} h (x - \varepsilon z) \frac{g (x
     - \varepsilon z) - g (x)}{\varepsilon} \cdot \nabla \rho (z) \mathd z -
     (h \tmop{div} g)^{\varepsilon} (x) . \]
  Thus claim~\tmtextit{i.} follows from $\| (h \tmop{div} g)^{\varepsilon}
  \|_{C_R^0} \leqslant \| h \|_{C_{R + 1}^0} \| g \|_{C_{R + 1}^1}$ and
  \[ \left| \int_{B_1} h (x - \varepsilon z) \frac{g (x - \varepsilon z) - g
     (x)}{\varepsilon} \cdot \nabla \rho (z) \mathd z \right| \leqslant \| h
     \|_{C_{R + 1}^0} \| g \|_{C_{R + 1}^1} \| \nabla \rho \|_{L^1} \]
  where the estimate is uniform in $x \in B_R$. Now fix $R > 0$; we can assume
  that $h$, $g$, $D g$ all have modulus of continuity $\omega$ on $B_{R + 1}$.
  By known properties of convolutions, $(h \tmop{div} g)^{\varepsilon}
  \rightarrow h \tmop{div} g$ uniformly on compact sets; moreover for all $x
  \in B_R$ it holds
  \[ \left| \frac{g (x - \varepsilon z) - g (x)}{\varepsilon} - D g (x) z
     \right| = \left| \int_0^1 [D g (x - \varepsilon \theta z) - D g (x)] z
     \mathd \theta \right| \leqslant \omega (\varepsilon) ; \]
  combined with a similar estimate for $| h (x - \varepsilon z) - h (x) |$,
  this implies that, uniformly in $x \in B_R$,
  \[ \int_{B_1} h (x - \varepsilon z) \frac{g (x - \varepsilon z) - g
     (x)}{\varepsilon} \cdot \nabla \rho (z) \mathd z \rightarrow h (x) 
     \int_{B_1} \nabla \rho (z) \cdot D b (x) z \mathd z = h (x) \tmop{div} b
     (x) \]
  which implies claim~\tmtextit{ii.} . The statements for $R^{\varepsilon} :
  C_{\tmop{loc}}^1 \times C_{\tmop{loc}}^2 \rightarrow C_{\tmop{loc}}^1$
  follow immediately once we observe that $\partial_i R^{\varepsilon} (h, g) =
  R^{\varepsilon} (\partial_i h, g) + R^{\varepsilon} (h, \partial_i g)$ as we
  apply the previous results.
\end{proof}

We have now all the ingredients to show uniqueness in the class $C^{\gamma}_t
C^0_x \cap C^0_t C^1_x$.

\begin{theorem}
  \label{sec5.2 thm uniqueness transport}Let $T^w b \in C^{\gamma}_t C^2_x$
  and $u \in C^{\gamma}_t C^0_x \cap C^0_t C^1_x$ be a solution
  of~{\eqref{sec5.2 transport eq}}. Then
  \[ u (t, x) = u_0 (\psi_t (x)) \quad \forall \, (t, x) \in [0, T] \times
     \mathbb{R}^d . \]
\end{theorem}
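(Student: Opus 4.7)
The plan is to mollify $u$ spatially, $u^\varepsilon = \rho^\varepsilon \ast u$, derive an approximate transport equation for $u^\varepsilon$ carrying a DiPerna--Lions-type commutator remainder, run the characteristics argument of Lemma \ref{sec5.2 baby lemma uniqueness} on $f^\varepsilon_t(x) := u^\varepsilon(t,\Phi_t(x))$, and let $\varepsilon \to 0$ with the help of Lemma \ref{sec5.2 lemma baby commutator}. Testing \eqref{sec5.2 transport eq integral} against $\varphi(\cdot) = \rho^\varepsilon(y-\cdot)$ and using the hypothesis $u \in C^0_t C^1_{\tmop{loc}}$ to integrate by parts in $\langle u_s, \tmop{div}(T^w_{s,t}b\, \varphi)\rangle$, I deduce from \eqref{sec5.2 transport eq5} the pointwise identity
\begin{equation*}
u^\varepsilon_{s,t}(y) \;=\; -\,T^w_{s,t}b(y)\cdot \nabla u^\varepsilon_s(y) \;-\; R^\varepsilon\bigl(u_s, T^w_{s,t}b\bigr)(y) \;+\; r^\varepsilon_{s,t}(y),
\end{equation*}
where $\sup_{y\in B_R}|r^\varepsilon_{s,t}(y)| \lesssim_{\varepsilon,R} |t-s|^{2\gamma}$ and $R^\varepsilon$ is the commutator defined in \eqref{sec5.2 commutator}.

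Fix $x\in \mathbb R^d$ and set $f^\varepsilon_t := u^\varepsilon(t,\Phi_t(x))$. Decomposing
\begin{equation*}
f^\varepsilon_{s,t} \;=\; u^\varepsilon_{s,t}(\Phi_s(x)) \;+\; \bigl[u^\varepsilon_t(\Phi_t(x)) - u^\varepsilon_t(\Phi_s(x))\bigr],
\end{equation*}
I Taylor-expand the second bracket using $u^\varepsilon \in C^\gamma_t C^\infty_{\tmop{loc}}$, replace $\nabla u^\varepsilon_t(\Phi_s(x))$ by $\nabla u^\varepsilon_s(\Phi_s(x))$ (error $\lesssim_\varepsilon \|\nabla u^\varepsilon\|_{C^\gamma_t C^0}\,|t-s|^\gamma \cdot |\Phi_{s,t}(x)| = O_\varepsilon(|t-s|^{2\gamma})$), and invoke $\Phi_{s,t}(x) = T^w_{s,t}b(\Phi_s(x)) + O(|t-s|^{2\gamma})$ from Theorem \ref{sec4.1 thm definition young integral}(3) to obtain
\begin{equation*}
u^\varepsilon_t(\Phi_t(x)) - u^\varepsilon_t(\Phi_s(x)) \;=\; \nabla u^\varepsilon_s(\Phi_s(x))\cdot T^w_{s,t}b(\Phi_s(x)) \;+\; O_\varepsilon(|t-s|^{2\gamma}).
\end{equation*}
Plugging the mollified equation for $u^\varepsilon_{s,t}(\Phi_s(x))$ into the decomposition, the two contributions $\pm \nabla u^\varepsilon_s(\Phi_s(x)) \cdot T^w_{s,t}b(\Phi_s(x))$ cancel exactly, leaving
\begin{equation*}
f^\varepsilon_{s,t} \;=\; -\,R^\varepsilon\bigl(u_s, T^w_{s,t}b\bigr)(\Phi_s(x)) \;+\; O_\varepsilon(|t-s|^{2\gamma}).
\end{equation*}

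Set $A^\varepsilon_{s,t} := R^\varepsilon(u_s, T^w_{s,t}b)(\Phi_s(x))$. Bilinearity of $R^\varepsilon$ yields $\delta A^\varepsilon_{s,r,t} = R^\varepsilon(u_s-u_r,\,T^w_{r,t}b)(\Phi_r(x))$, which Lemma \ref{sec5.2 lemma baby commutator}(i) bounds by $\|u_s-u_r\|_{C^0_{R+1}}\|T^w_{r,t}b\|_{C^1_{R+1}} \lesssim |t-s|^{2\gamma}$ with constant \emph{independent of $\varepsilon$}. The Sewing Lemma then furnishes $f^\varepsilon_t - f^\varepsilon_0 = -\mathcal I^\varepsilon_t(x)$, $\mathcal I^\varepsilon_t(x)$ being the Riemann-sum limit of $A^\varepsilon$, with an approximation remainder uniform in $\varepsilon$. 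Fixing any $\eta>0$, I first choose a partition $\Pi$ fine enough that $|\mathcal I^\varepsilon_t(x) - \sum_i A^\varepsilon_{t_i,t_{i+1}}| < \eta/2$ for every $\varepsilon > 0$, and then, for this fixed $\Pi$, invoke Lemma \ref{sec5.2 lemma baby commutator}(ii) on each of the finitely many terms to force the sum to $0$; hence $\mathcal I^\varepsilon_t(x)\to 0$. Since $f^\varepsilon_0 = u^\varepsilon_0(x)\to u_0(x)$ and $f^\varepsilon_t \to u(t,\Phi_t(x))$ locally uniformly (as $u$ is spatially continuous), passing to the limit delivers $u(t,\Phi_t(x)) = u_0(x)$, i.e.\ $u(t,y) = u_0(\psi_t(y))$ after substituting $x=\psi_t(y)$.

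The main obstacle is precisely this ``$\Pi$-then-$\varepsilon$'' passage to the limit: it hinges on a uniform-in-$\varepsilon$ Sewing bound, which in turn relies crucially on bilinearity of $R^\varepsilon$ to express $\delta A^\varepsilon_{s,r,t}$ as a commutator evaluated on the $\varepsilon$-independent difference $u_s-u_r$, so that part (i) of Lemma \ref{sec5.2 lemma baby commutator} yields an $\varepsilon$-free constant. Every other ingredient (the chain-rule Taylor expansion, the Young-integral germ estimate for $\Phi_{s,t}$, the cancellation in Step 2) is routine provided one keeps careful track of which remainders are $O_\varepsilon$ and which are uniform.
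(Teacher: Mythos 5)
Your overall strategy is the right one and coincides with the paper's: mollify, obtain the commutator-corrected equation for $u^\varepsilon$, track $f^\varepsilon_t = u^\varepsilon_t(\Phi_t(x))$ along characteristics, control the germ via the Sewing lemma with an $\varepsilon$-uniform constant, and then pass to the limit using parts~(i) and~(ii) of Lemma~\ref{sec5.2 lemma baby commutator}. Your ``$\Pi$-first, then $\varepsilon\to 0$'' passage is a valid alternative to the paper's argument, which instead passes $\varepsilon\to 0$ directly in the uniform inequality $|f^\varepsilon_{s,t}-A^\varepsilon_{s,t}|\le C_3|t-s|^{2\gamma}$; both work once the uniform Sewing bound is in place.

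However, there is a genuine gap in the single step that was supposed to deliver this uniform bound. You assert that bilinearity of $R^\varepsilon$ gives
$\delta A^\varepsilon_{s,r,t} = R^\varepsilon(u_s-u_r,\,T^w_{r,t}b)(\Phi_r(x))$,
but $A^\varepsilon_{s,t}=R^\varepsilon(u_s,T^w_{s,t}b)(\Phi_s(x))$ also depends on $s$ through the \emph{evaluation point} $\Phi_s(x)$, so telescoping $T^w_{s,t}b = T^w_{s,r}b + T^w_{r,t}b$ gives
\[
\delta A^\varepsilon_{s,r,t}
= \bigl[R^\varepsilon(u_s,T^w_{r,t}b)(\Phi_s(x)) - R^\varepsilon(u_s,T^w_{r,t}b)(\Phi_r(x))\bigr]
  - R^\varepsilon(u_{s,r},T^w_{r,t}b)(\Phi_r(x)),
\]
with an extra first bracket that your formula drops. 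That term is not negligible: it must be bounded by $\|R^\varepsilon(u_s,T^w_{r,t}b)\|_{C^1}\,|\Phi_{s,r}(x)|$, which requires the $C^1_{\tmop{loc}}\times C^2_{\tmop{loc}}\to C^1_{\tmop{loc}}$ version of Lemma~\ref{sec5.2 lemma baby commutator} (hence the hypotheses $u\in C^0_tC^1_x$ and $T^w b\in C^\gamma_t C^2_x$, which you never invoke at full strength); only then does one get $\lesssim \|u\|_{C^0C^1}\llbracket T^w b\rrbracket_{C^\gamma C^2}\llbracket\Phi_\cdot(x)\rrbracket_{C^\gamma}|t-s|^{2\gamma}$ with an $\varepsilon$-independent constant. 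As written, your identity for $\delta A^\varepsilon$ is false, so the uniform bound on $\|\delta A^\varepsilon\|_{2\gamma}$ is not actually established; once the missing bracket is reinstated and estimated as above, the rest of your argument goes through.
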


\begin{proof}
  As before, it is enough to show that for any $x \in \mathbb{R}^d$, the
  function $f_t : = u_t (\Phi_t (x))$ satisfies $| f_{s, t} | \lesssim | t - s
  |^{2 \gamma}$, as it implies that $f$ is constant. Recall that $\Phi$
  satisfies the estimate $| x - \Phi_t (x) | \lesssim | t |^{\gamma}$
  uniformly in $x$, therefore we can fix $B_R$ such that $\Phi_t (x) \in B_{2
  R}$ for all $t \in [0, T]$ and all $x \in B_R$; from now on all the norms
  appearing will be localised on $B_{2 R}$ without writing it explicitly.
  
  Since $u$ is a solution of~{\eqref{sec5.2 transport eq}}, it
  satisfies~{\eqref{sec5.2 transport eq5}} and therefore $u^{\varepsilon}$ is
  such that
  \[ \| u^{\varepsilon}_{s, t} + \nabla u^{\varepsilon}_s \cdot T^w_{s, t} b +
     R^{\varepsilon} (u_s, T^w_{s, t} b) \|_{C^0} \lesssim | t - s |^{2
     \gamma} \quad \text{uniformly in } 0 \leqslant s \leqslant t \leqslant T.
  \]
  Define $f^{\varepsilon}_t = u^{\varepsilon}_t (\Phi_t (x))$; using the above
  property and going through similar calculations as in the proof of
  Lemma~\ref{sec5.2 baby lemma uniqueness}, we deduce that
  \begin{equation}
    | f^{\varepsilon}_{s, t} - R^{\varepsilon} (u_s, T^w_{s, t} b) (\Phi_s
    (x)) | \lesssim_{\varepsilon} | t - s |^{2 \gamma} . \label{sec5.2 thm
    uniqueness eq1}
  \end{equation}
  The estimate above a priori depends on $\varepsilon$, as it involves $\|
  u^{\varepsilon} \|_{C^{\gamma} C^2}$, but we are now going to show that
  under the assumptions on $T^w b$ and $u$ it is actually uniform in
  $\varepsilon > 0$. This is accomplished with the help of the Sewing lemma,
  see Lemma~\ref{appendixA1 sewing lemma} from Appendix~\ref{appendixA1}.
  Define
  \[ \Gamma^{\varepsilon}_{s, t} \assign R^{\varepsilon} (u_s, T^w_{s, t} b)
     (\Phi_s (x)), \]
  so that relation~{\eqref{sec5.2 thm uniqueness eq1}} can be rephrased as $|
  f^{\varepsilon}_{s, t} - \Gamma^{\varepsilon}_{s, t} | \lesssim | t - s |^{2
  \gamma}$. We can estimate $\| \delta \Gamma^{\varepsilon} \|_{2 \gamma}$ as
  follows:
  \begin{eqnarray*}
    | \delta \Gamma^{\varepsilon}_{s, u, t} | & = & | \Gamma^{\varepsilon}_{s,
    t} - \Gamma^{\varepsilon}_{s, u} - \Gamma^{\varepsilon}_{u, t} |\\
    & \leqslant & | R^{\varepsilon} (u_s, T^w_{s, t} b) (\Phi_s (x)) -
    R^{\varepsilon} (u_s, T^w_{u, t} b) (\Phi_u (x)) | + | R^{\varepsilon}
    (u_{s, u}, T^w_{u, t} b) (\Phi_u (x)) |\\
    & \leqslant & \| R^{\varepsilon} (u_s, T^w_{u, t} b) \|_{C^1} | \Phi_s
    (x) - \Phi_u (x) | + \| R^{\varepsilon} (u_{s, u}, T^w_{u, t} b)
    \|_{C^0}\\
    & \lesssim & | t - s |^{2 \gamma} (\| R^{\varepsilon} \| \| u \|_{C^0
    C^1}  \llbracket T^w b \rrbracket_{C^{\gamma} C^2} \llbracket \Phi_{\cdot}
    (x) \rrbracket_{\gamma} + \| R^{\varepsilon} \| \llbracket u
    \rrbracket_{C^{\gamma} C^0} \llbracket T^w b \rrbracket_{C^{\gamma}
    C^1})\\
    & \lesssim & | t - s |^{2 \gamma}
  \end{eqnarray*}
  where we used the fact that $\llbracket \Phi_{\cdot} (x) \rrbracket_{\gamma}
  \lesssim 1$ by Theorem~\ref{sec4.3 theorem existence flow} and the estimate
  is uniform in $\varepsilon$, since $\| R^{\varepsilon} \|_{\mathcal{L}^2
  (C^i \times C^{i + 1} ; C^i)} \leqslant C_1$ for $i = 1, 2$ by
  Lemma~\ref{sec5.2 lemma baby commutator}. It follows that $\| \delta
  \Gamma^{\varepsilon} \|_{2 \gamma} \leqslant C_2$ for some constant
  independent of $\varepsilon$ and therefore by Lemma~\ref{appendixA1 sewing
  lemma} (specifically estimate~{\eqref{appendixA1 sewing property 1}}) there
  exists $C_3$ such that
  \begin{equation}
    | u^{\varepsilon}_t (\Phi_t (x)) - u^{\varepsilon}_s (\Phi_s (x)) -
    R^{\varepsilon} (u_s, T^w_{s, t} b) (\Phi_s (x)) | = | f^{\varepsilon}_{s,
    t} - \Gamma^{\varepsilon}_{s, t} | \leqslant C_3 | t - s |^{2 \gamma}
    \quad \forall \, \varepsilon > 0, \, s < t. \label{sec5.2 thm uniqueness
    eq2}
  \end{equation}
  Since $u^{\varepsilon}_t (\Phi_t (x)) \rightarrow u_t (\Phi_t (x))$ and by
  part~\tmtextit{ii.} of Lemma~\ref{sec5.2 lemma baby commutator}
  $R^{\varepsilon} (u_s, T^w_{s, t} b) (\Phi_s (x)) \rightarrow 0$, taking the
  limit as $\varepsilon \rightarrow 0$ in~{\eqref{sec5.2 thm uniqueness eq2}}
  we deduce that $| u_t (\Phi_t (x)) - u_s (\Phi_s (x)) | \leqslant C_3 | t -
  s |^{2 \gamma}$, which gives the conclusion.
\end{proof}

We now pass to study weak solutions of the continuity equation associated to
$T^w b$. Given a distribution $v$, we say that $v \in (C^1_x)^{\ast}$ if there
exists a constant $C$ such that $| \langle v, \varphi \rangle | \leqslant C \|
\varphi \|_{C^1}$ for all smooth $\varphi$. We denote by $\| v
\|_{(C^1)^{\ast}}$ the optimal constant $C$. Note that, when $v$ is a measure,
$\| v_{s, t} \|_{(C^1)^{\ast}}$ is the $1$-Wasserstein distance between $v_t$
and $v_s$.

\begin{definition}
  \label{sec5.2 defn weak continuity eq}Let $T^w b \in C^{\gamma}_t C_x^1$ and
  let $v \in C^{\gamma}_t (C^1_x)^{\ast}$. We say that $v$ is a weak solution
  of the Young continuity equation
  \begin{equation}
    v (\mathd t) + \tmop{div} (v_t T^w b (\mathd t)) = 0, \label{sec5.2
    continuity equation}
  \end{equation}
  if there exists a constant $C$ such that for all $\varphi \in C^2
  (\mathbb{R}^d)$ the following holds:
  \begin{equation}
    | \langle v_{s, t}, \varphi \rangle - \langle v_s, T^w_{s, t} b \cdot
    \nabla \varphi \rangle | \leqslant C \| \varphi \|_{C^2} | t - s |^{2
    \gamma} . \label{sec5.2 solution continuity equation}
  \end{equation}
\end{definition}

\begin{remark}
  As before, it can be shown that for smooth $b$, any classical solution of
  \[ \partial_t v + \tmop{div} (v \tilde{b}) = 0, \]
  is also a solution in the sense of the definition above.
  Equations~{\eqref{sec5.2 continuity equation}} and~{\eqref{sec5.2 solution
  continuity equation}} can be rephrased as $v$ satisfying the functional
  Young integral equation
  \[ v_{s, t} = \tmop{div} \left( \int_s^t v_r T^w b (\mathd r) \right), \]
  where the integral inside the divergence is a well defined element of
  $(C^1)^{\ast}$ since the product between $C^1$ and $(C^1)^{\ast}$ is still
  an element of $(C^1)^{\ast}$. Formulation~{\eqref{sec5.2 solution continuity
  equation}} is however more useful for our purposes.
\end{remark}

\begin{lemma}
  \label{sec5.2 lemma existence continuity}Let $T^w b \in C^{\gamma}_t C^2_x$,
  $v_0 \in \mathcal{M}_x$ and define $v \in L^{\infty}_t \mathcal{M}_x$ by
  \[ \langle v_t, \varphi \rangle = \int_{\mathbb{R}^d} \varphi (\Phi_t (x))
     v_0 (\mathd x) \quad \forall \, \varphi \in C^{\infty}_c . \]
  Then $v$ is a weak solution of~{\eqref{sec5.2 continuity equation}} with
  initial condition $v_0$.
\end{lemma}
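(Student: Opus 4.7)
The plan is to verify the two requirements of Definition~\ref{sec5.2 defn weak continuity eq} directly by a Taylor expansion argument applied inside the integral defining $v_t$, using the flow estimates from Theorems~\ref{sec4.1 thm existence YDE} and~\ref{sec4.3 theorem existence flow} and the sharp Young integral estimate from Theorem~\ref{sec4.1 thm definition young integral}(3). Concretely, for any $\varphi \in C^2$ and $(s,t) \in \Delta_T$ I would write
\[
\langle v_{s,t}, \varphi \rangle \;=\; \int_{\mathbb{R}^d} \bigl[\varphi(\Phi_t(x)) - \varphi(\Phi_s(x))\bigr]\, v_0(\mathrm{d}x),
\]
expand pointwise as
\[
\varphi(\Phi_t(x)) - \varphi(\Phi_s(x)) \;=\; \nabla\varphi(\Phi_s(x)) \cdot \Phi_{s,t}(x) + r_1(s,t,x),
\]
with the Taylor remainder bounded by $|r_1(s,t,x)| \lesssim \|\varphi\|_{C^2}\,|\Phi_{s,t}(x)|^2$, and then replace $\Phi_{s,t}(x)$ by $T^w_{s,t} b(\Phi_s(x))$ up to a Young sewing error $r_2(s,t,x)$.

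The key quantitative inputs are uniform: by Theorem~\ref{sec4.1 thm existence YDE} the $C^\gamma_t$-seminorm $\llbracket \Phi(\cdot,x) \rrbracket_{C^\gamma}$ admits a bound depending only on $\|T^w b\|_{C^\gamma C^\nu}$, independent of the initial condition $x$. Hence $|\Phi_{s,t}(x)| \lesssim |t-s|^\gamma$ holds with a constant uniform in $x \in \mathbb{R}^d$, so the Taylor remainder satisfies $|r_1(s,t,x)| \lesssim \|\varphi\|_{C^2}\,|t-s|^{2\gamma}$. Similarly, applying Theorem~\ref{sec4.1 thm definition young integral}(3) to the identity $\Phi_{s,t}(x) = \int_s^t T^w b(\mathrm{d}r, \Phi_r(x))$ with $\gamma + \nu\rho = 2\gamma > 1$ (choosing $\nu=1$, $\rho=\gamma$, which is allowed since $T^w b \in C^\gamma_t C^2_x$) yields
\[
|\Phi_{s,t}(x) - T^w_{s,t} b(\Phi_s(x))| \;\lesssim\; \llbracket T^w b \rrbracket_{C^\gamma C^1}\,\llbracket \Phi(\cdot,x)\rrbracket_{C^\gamma}\,|t-s|^{2\gamma},
\]
again uniformly in $x$. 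Multiplying by $\nabla\varphi(\Phi_s(x))$ and using $\|\nabla\varphi\|_\infty \leq \|\varphi\|_{C^2}$ bounds $r_2$ by $\|\varphi\|_{C^2}\,|t-s|^{2\gamma}$.

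Combining these estimates and integrating against $v_0$ gives
\[
\Bigl|\langle v_{s,t}, \varphi\rangle - \int_{\mathbb{R}^d} \nabla\varphi(\Phi_s(x)) \cdot T^w_{s,t} b(\Phi_s(x))\, v_0(\mathrm{d}x)\Bigr| \;\lesssim\; |v_0|(\mathbb{R}^d)\,\|\varphi\|_{C^2}\,|t-s|^{2\gamma},
\]
and by the very definition of $v_s$ as the pushforward of $v_0$ under $\Phi_s$, the integral on the left coincides with $\langle v_s, T^w_{s,t} b \cdot \nabla\varphi\rangle$, which is legitimate because $T^w_{s,t} b \cdot \nabla\varphi \in C^1_b$ (and so pairs with the finite measure $v_s$). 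This is exactly estimate~\eqref{sec5.2 solution continuity equation}. The regularity $v \in C^\gamma_t (C^1_x)^*$ follows from the cruder bound
\[
|\langle v_{s,t}, \varphi\rangle| \leq \|\nabla\varphi\|_\infty \int |\Phi_{s,t}(x)|\,v_0(\mathrm{d}x) \lesssim \|\varphi\|_{C^1}\,|v_0|(\mathbb{R}^d)\,|t-s|^\gamma,
\]
obtained by applying the mean value theorem directly to $\varphi(\Phi_t(x)) - \varphi(\Phi_s(x))$.

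There is no real obstacle here, as the argument is essentially a measure-theoretic reformulation of the chain-rule computation already carried out in the proof of Lemma~\ref{sec5.2 lemma existence transport}; the only mild subtlety is ensuring that all constants (in particular $\llbracket \Phi(\cdot,x)\rrbracket_{C^\gamma}$) are uniform in the initial point $x$ so that the Taylor and sewing remainders can be integrated against the (possibly non-compactly-supported) measure $v_0$, which is guaranteed by the a priori estimate~\eqref{sec4.1 thm existence a priori estimate 1}. Verification that $v_0$ is the initial datum is immediate from $\Phi_0 = \operatorname{id}$.
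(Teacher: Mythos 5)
Your proposal follows essentially the same route as the paper: second-order Taylor expansion of $\varphi(\Phi_t(x)) - \varphi(\Phi_s(x))$ around $\Phi_s(x)$, replacement of $\Phi_{s,t}(x)$ by $T^w_{s,t}b(\Phi_s(x))$ via the Young sewing bound from Theorem~\ref{sec4.1 thm definition young integral}(3), uniformity in $x$ from the a priori estimate~\eqref{sec4.1 thm existence a priori estimate 1}, and integration against $v_0$, with the $C^\gamma_t(C^1_x)^\ast$ regularity obtained from the Lipschitz/mean-value bound. The argument is correct, and your explicit remark that $T^w_{s,t}b\cdot\nabla\varphi \in C^1_b$ (so it pairs with the finite measure $v_s$) makes a step explicit that the paper leaves tacit.
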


\begin{proof}
  Let us first show that $v$ defined as above belongs to $C^{\gamma}_t
  (C^1_x)^{\ast}$. It holds
  \begin{align*}
    | \langle v_{s, t}, \varphi \rangle | & = \, \left| \int_{\mathbb{R}^d}
    [\varphi (\Phi_t (x)) - \varphi (\Phi_s (x))] v_0 (\mathd x) \right|\\
    & \leqslant \| \varphi \|_{\tmop{Lip}} \sup_{x \in \mathbb{R}^d} |
    \Phi_{s, t} (x) |  \| v_0 \|_{\mathcal{M}} \lesssim | t - s |^{\gamma} \|
    \varphi \|_{\tmop{Lip}} \| v_0 \|_{\mathcal{M}}
  \end{align*}
  where we used estimate~{\eqref{sec4.1 thm existence a priori estimate 1}};
  it follows that $\| v \|_{C^{\gamma}_t (C^1_x)^{\ast}} \lesssim \| v_0
  \|_{\mathcal{M}}$. We now check that $v$ is a solution in the sense of
  Definition~\ref{sec5.2 defn weak continuity eq}. It holds
  \[ | \varphi (\Phi_t (x)) - \varphi (\Phi_s (x)) - \nabla \varphi (\Phi_s
     (x)) \cdot \Phi_{s, t} (x) | \lesssim \| \varphi \|_{C^2_b}  | \Phi_{s,
     t} (x) |^2 \lesssim \| \varphi \|_{C^2_b}  | t - s |^{2 \gamma} \]
  where as before we used~{\eqref{sec4.1 thm existence a priori estimate 1}}
  and the estimate is uniform in $x$; similarly
  \[ | \Phi_{s, t} (x) - T^w_{s, t} b (\Phi_s (x)) | \lesssim | t - s |^{2
     \gamma} \| T^w b \|_{C^{\gamma} C^1} \llbracket \Phi_{\cdot} (x)
     \rrbracket_{C^{\gamma}} \lesssim | t - s |^{2 \gamma} . \]
  Combining the two estimates we obtain
  \[ | \varphi (\Phi_t (x)) - \varphi (\Phi_s (x)) - \nabla \varphi (\Phi_s
     (x)) \cdot T^w_{s, t} b (\Phi_s (x)) | \lesssim \| \varphi \|_{C^2_b} | t
     - s |^{2 \gamma} \]
  which yields
  \begin{align*}
    | \langle v_{s, t}, \varphi \rangle - \langle v_s, T^w_{s, t} b \cdot
    \nabla \varphi \rangle | & \leqslant \, \left| \int_{\mathbb{R}^d}
    [\varphi (\Phi_t (x)) - \varphi (\Phi_s (x)) - \nabla \varphi (\Phi_s (x))
    \cdot T^w_{s, t} b (\Phi_s (x))] v_0 (\mathd x) \right|\\
    & \lesssim \, \| \varphi \|_{C^2_b}  \| v_0 \|_{\mathcal{M}}  | t - s
    |^{2 \gamma}
  \end{align*}
  and thus the conclusion.
\end{proof}

\begin{theorem}
  \label{sec5.2 thm uniqueness continuity}For any given $v_0 \in
  \mathcal{M}_x$, there exists a unique weak solution of~{\eqref{sec5.2
  continuity equation}} in the class $v \in L^{\infty}_t M_x \cap C^{\gamma}_t
  (C^1_x)^{\ast}$, which is given by the one from Lemma~\ref{sec5.2 lemma
  existence continuity}.
\end{theorem}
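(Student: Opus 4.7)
\textit{Strategy.} The natural strategy is a duality argument with the transport equation, structurally dual to the proof of Theorem~\ref{sec5.2 thm uniqueness transport}: there one tracks a single characteristic $s \mapsto u^{\varepsilon}_s(\Phi_s(x_0))$ and sews, while here one tracks the global pairing $s \mapsto \langle v_s, u^{\delta}_s\rangle$, where $u$ is the backward transport solution produced by the test function and $u^{\delta}$ is its spatial mollification. By linearity it suffices to treat the case $v_0 = 0$ and show that $\langle v_{t^*}, \varphi\rangle = 0$ for every $t^* \in (0,T]$ and every $\varphi \in C^{\infty}_c(\mathbb{R}^d)$. Since Lemma~\ref{sec5.2 lemma existence continuity} will then give existence and this argument uniqueness, the candidate solution must equal the pushforward by the flow $\Phi$.

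\textit{Set-up of the dual variable.} Fix $t^* \in (0,T]$ and $\varphi \in C^{\infty}_c$, and set $u_s(x) := \varphi(\Phi(s, t^*, x))$ for $s \in [0,t^*]$. By time-reversal (Lemma~\ref{sec4.3 lemma reversed YDE}) and Lemma~\ref{sec5.2 lemma existence transport} applied to the time-reversed drift, $u \in C^{\gamma}_s C^0_{\tmop{loc}} \cap C^0_s C^1_{\tmop{loc}}$ and it satisfies the backward Young transport equation; concretely, an estimate analogous to \eqref{sec5.2 transport eq3} holds with the appropriate sign. Because the continuity equation in Definition~\ref{sec5.2 defn weak continuity eq} must be tested against $C^2$ functions while $u_s$ is only $C^1$, I would mollify: set $u^{\delta}_s := u_s * \rho^{\delta}$ with $\{\rho^{\delta}\}$ a family of spatial mollifiers. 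Convolving the backward transport identity for $u$ with $\rho^{\delta}$ and reorganizing via the commutator $R^{\delta}$ of Lemma~\ref{sec5.2 lemma baby commutator}, one obtains
\[
\bigl| u^{\delta}_t(x) - u^{\delta}_s(x) + T^w_{s,t}b(x) \cdot \nabla u^{\delta}_t(x) + R^{\delta}(u_t, T^w_{s,t}b)(x) \bigr| \lesssim_{\delta} |t-s|^{2\gamma}
\]
locally uniformly in $x$, where $R^{\delta}$ is uniformly bounded in $\delta$ and tends to zero locally uniformly as $\delta \to 0$.

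\textit{Sewing on the pairing.} Define $F^{\delta}_s := \langle v_s, u^{\delta}_s\rangle$, well-posed since $u^{\delta}_s \in C^0_b$ and $v_s \in \mathcal{M}_x$, and decompose
\[
F^{\delta}_{s,t} = \langle v_{s,t}, u^{\delta}_s\rangle + \langle v_t, u^{\delta}_{s,t}\rangle .
\]
Apply Definition~\ref{sec5.2 defn weak continuity eq} to the first term (legitimate because $u^\delta_s \in C^2$ with $\|u^\delta_s\|_{C^2}\lesssim_\delta 1$), and integrate the pointwise identity of the previous step against $v_t$ for the second; the two contributions $\langle v, T^w_{s,t}b \cdot \nabla u^{\delta}\rangle$ appearing cancel up to an error governed by the modulus of continuity of $s \mapsto \nabla u_s$ in $C^0_{\tmop{loc}}$ together with the $C^\gamma$ Hölder regularity of $T^w b$ and the $L^\infty_t \mathcal{M}_x$ bound on $v$. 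What survives is
\[
|F^{\delta}_{s,t} - \Gamma^{\delta}_{s,t}| \lesssim_{\delta} |t-s|^{2\gamma}, \qquad \Gamma^{\delta}_{s,t} := -\langle v_t, R^{\delta}(u_t, T^w_{s,t}b)\rangle .
\]
Using the uniform-in-$\delta$ bound in Lemma~\ref{sec5.2 lemma baby commutator}(i), the hypothesis $T^w b \in C^{\gamma}_t C^2_x$, the flow estimates of Theorem~\ref{sec4.3 theorem existence flow}, and the Hölder regularity $v \in C^{\gamma}_t (C^1_x)^*$, one verifies that $\|\delta \Gamma^{\delta}\|_{2\gamma}$ is bounded uniformly in $\delta$. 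Lemma~\ref{appendixA1 sewing lemma} then produces the sewing identity $F^{\delta}_{t^*} - F^{\delta}_0 = \lim_{|\Pi|\to 0}\sum_i \Gamma^{\delta}_{t_i,t_{i+1}}$.

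\textit{Passage to the limit and conclusion.} Sending $\delta \to 0$: Lemma~\ref{sec5.2 lemma baby commutator}(ii) gives $R^{\delta}(u_t, T^w_{s,t}b) \to 0$ locally uniformly, bounded by $C\|u_t\|_{C^0_{\tmop{loc}}} \|T^w_{s,t}b\|_{C^1}$ uniformly in $\delta$; since $v_t$ is a finite measure and the relevant integrands have spatial support contained in a fixed compact set (determined by $\varphi$ and the flow), dominated convergence yields $\Gamma^{\delta} \to 0$, hence $F^{\delta}_{t^*} - F^{\delta}_0 \to 0$. But $u^{\delta}_{t^*} \to \varphi$ and $u^{\delta}_0 \to u_0$ uniformly on compacts, so $F^{\delta}_{t^*} \to \langle v_{t^*}, \varphi\rangle$ and $F^{\delta}_0 \to \langle v_0, u_0\rangle = 0$. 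Thus $\langle v_{t^*}, \varphi\rangle = 0$, and arbitrariness of $\varphi$ and $t^*$ gives $v \equiv 0$. The main obstacle is the bookkeeping in the sewing step: one must show simultaneously that the remainder $F^\delta - \Gamma^\delta$ is $O_\delta(|t-s|^{2\gamma})$ (so sewing is applicable for each fixed $\delta$) and that the sewing constant that finally controls $\Gamma^\delta$ is uniform in $\delta$, so that only the commutator survives in the limit. This is precisely the dual analogue of the sewing bookkeeping carried out in the proof of Theorem~\ref{sec5.2 thm uniqueness transport}, and it is the reason why both the $\mathcal{M}_x$ and the $(C^1_x)^*$ controls on $v$ are needed.
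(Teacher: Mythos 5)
Your argument is correct and follows essentially the same route as the paper: reduce to $v_0 = 0$, pair $v$ against a mollified dual transport solution $u^{\delta}$, isolate the commutator $R^{\delta}$ of Lemma~\ref{sec5.2 lemma baby commutator}, and use the Sewing Lemma with the uniform-in-$\delta$ bound on $\|\delta\Gamma^{\delta}\|_{2\gamma}$ to obtain $|F^{\delta}_{s,t} - \Gamma^{\delta}_{s,t}| \lesssim |t-s|^{2\gamma}$ independently of $\delta$, which after $\delta \to 0$ gives $|F_{s,t}| \lesssim |t-s|^{2\gamma}$, hence $F$ constant. The only differences from the paper's proof are cosmetic --- you parameterize the dual solution by its final-time value $\varphi$ and split $F^{\delta}_{s,t}$ into two terms rather than three, and the concluding limit is a bit cleaner when stated as the uniform bound on $F_{s,t}$ rather than as $F^{\delta}_{t^*} - F^{\delta}_0 \to 0$, which by itself does not immediately follow from $\Gamma^{\delta}_{s,t}\to 0$ pointwise.
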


\begin{proof}
  As before, by linearity it is enough to show that there exists a unique
  solution for the initial condition $v_0 \equiv 0$. The basic strategy is the
  usual one: given any $u_0 \in C^{\infty}_c$, setting $u_t (x) = u_0 (\psi_t
  (x))$, it is enough to show that the function $f_t \assign \langle v_t, u_t
  \rangle$ is constant, as it implies
  \[ \langle v_t, u_t \rangle = \int u_0 (\psi_t (x)) v_t (\mathd x) = 0, \]
  and thus reasoning as in the proof of Proposition~\ref{sec5.1 thm
  continuity} that $v_t \equiv 0$. Observe that the function $u$ has compact
  space-time support, so we don't need to introduce localisations here.
  
  Now we reason following the same lines as in Theorem~\ref{sec5.2 thm
  uniqueness transport}, namely we spatially mollify $u$ so that now
  $u^{\varepsilon}$ solves
  \begin{equation}
    u^{\varepsilon}_{s, t} (x) + \nabla u^{\varepsilon}_s (x) \cdot T^w_{s, t}
    b (x) = R^{\varepsilon} (u_s, T^w_{s, t} b) (x) + O_{\varepsilon} (| t - s
    |^{2 \gamma}) \label{sec5.2 thm uniqueness continuity eq1}
  \end{equation}
  and all the terms are in $C^1_x$ due to the mollification. Define
  $f^{\varepsilon}_t = \langle v_t, u^{\varepsilon}_t \rangle$, then
  \[ f^{\varepsilon}_{s, t} = \langle v_{s, t}, u^{\varepsilon}_s \rangle +
     \langle v_s, u^{\varepsilon}_{s, t} \rangle + \langle v_{s, t},
     u^{\varepsilon}_{s, t} \rangle . \]
  The last term trivially satisfies $| \langle v_{s, t}, u^{\varepsilon}_{s,
  t} \rangle | \lesssim_{\varepsilon} | t - s |^{2 \gamma}$. Combining the
  estimates
  \[ | \langle v_{s, t}, u^{\varepsilon}_s \rangle - \langle v_s, T^w_{s, t} b
     \cdot \nabla u^{\varepsilon}_s \rangle | \lesssim \| u^{\varepsilon}_s
     \|_{C^2} | t - s |^{2 \gamma} \lesssim_{\varepsilon} | t - s |^{2 \gamma}
  \]
  \[ | \langle v_s, u^{\varepsilon}_{s, t} \rangle + \langle v_{s,} \nabla
     u^{\varepsilon}_s \cdot T^w_{s, t} b \rangle - \langle v_s,
     R^{\varepsilon} (u_s, T^w_{s, t} b) \rangle | \lesssim_{\varepsilon} \|
     v_s \|_{(C^1)^{\ast}} | t - s |^{2 \gamma} \]
  which come respectively from $v$ being a solution of~{\eqref{sec5.2 solution
  continuity equation}} and~{\eqref{sec5.2 thm uniqueness continuity eq1}}
  above, we overall obtain
  \[ | f^{\varepsilon}_{s, t} - \langle v_s, R^{\varepsilon} (u_s, T^w_{s, t}
     b) \rangle | \lesssim_{\varepsilon} | t - s |^{2 \gamma} . \]
  As before, the estimate a priori depends on $\varepsilon$, but we can apply
  the Sewing lemma for the choice $\Gamma_{s, t} = \langle v_s,
  R^{\varepsilon} (u_s, T^w_{s, t} b) \rangle$ for which, by analogue
  computations to the ones of Theorem~\ref{sec5.2 thm uniqueness transport},
  it holds
  \[ \| \delta \Gamma \|_{2 \gamma} \leqslant \| R^{\varepsilon} \| 
     (\llbracket v \rrbracket_{C^{\gamma} (C^1)^{\ast}}  \| u \|_{C^0 C^1} 
     \llbracket T^w b \rrbracket_{C^{\gamma} C^2} + \| v \|_{L^{\infty} M} 
     \llbracket u \rrbracket_{C^{\gamma} C^0}  \llbracket T^w b
     \rrbracket_{C^{\gamma} C^1}) \lesssim 1 \]
  uniformly in $\varepsilon > 0$. Therefore there exists a constant $C$
  independent of $\varepsilon$ such that
  \[ | \langle v_t, u^{\varepsilon}_t \rangle - \langle v_s, u^{\varepsilon}_s
     \rangle - \langle v_s, R^{\varepsilon} (u_s, b_{s, t}) \rangle |
     \leqslant C | t - s |^{2 \gamma} . \]
  By the properties of $R^{\varepsilon}$, taking $\varepsilon \rightarrow 0$
  we deduce $| \langle v_t, u_t \rangle - \langle v_s, u_s \rangle | \lesssim
  | t - s |^{2 \gamma}$ which implies the conclusion.
\end{proof}

\appendix\section{Some tools}\label{appendix}

This appendix collect some technical estimates and some reminder of various
standard results, from certain functional spaces to stochastic integration in
Banach setting.

\subsection{Some useful lemmas}\label{appendixA1}

The following chaining lemma is a slight variation on the one
from~{\cite{catelliergubinelli}}, Lemma~3.1.

\begin{lemma}
  \label{appendixA1 chaining lemma}Let $E$ be a Banach space and let $X : [0,
  T] \rightarrow E$ be a continuous stochastic process such that, for some
  $\lambda > 0$,
  \begin{equation}
    \mathbb{E} \left[ \exp \left( \lambda \frac{\| X_t - X_s \|^2_E}{| t - s
    |^{2 \alpha}} \right) \right] \leqslant C \quad \forall \, s \neq t \in
    [0, T] .
  \end{equation}
  Then $\mathbb{P}$-a.s. $X \in C^{\omega} E$ for the modulus of continuity
  $\omega (| t - s |) = | t - s |^{\alpha} \sqrt{- \log | t - s |}$ and there
  exists $\beta > 0$ such that
  \[ \mathbb{E} [\exp (\beta \llbracket X \rrbracket^2_{C^{\omega} E})] <
     \infty . \]
  In particular, if $X_0 \equiv 0$, then for any $\gamma < \alpha$ there
  exists $\beta > 0$ such that
  \[ \mathbb{E} [\exp (\beta \| X \|^2_{C^{\gamma} E})] < \infty . \]
\end{lemma}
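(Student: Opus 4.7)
The strategy is the classical Kolmogorov chaining argument on dyadic partitions, taking advantage of the sub-Gaussian nature of the increments of $X$ to produce the $\sqrt{-\log|t-s|}$ correction to pure H\"older regularity. First I would set up the dyadic grid $D_n = \{kT 2^{-n} : k = 0, \ldots, 2^n\}$ with mesh $h_n = T 2^{-n}$ and the maximal increment
\[
M_n = \max_{0 \leqslant k < 2^n} \| X_{(k+1) h_n} - X_{k h_n} \|_E.
\]
The Chebyshev--Markov inequality applied to the hypothesis gives $\mathbb{P}(\| X_t - X_s \|_E > a |t-s|^{\alpha}) \leqslant C e^{-\lambda a^2}$, so a union bound yields $\mathbb{P}(M_n > a h_n^{\alpha}) \leqslant C 2^n e^{-\lambda a^2}$. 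Setting $Y_n \assign M_n^2 / (n h_n^{2\alpha})$ therefore gives $\mathbb{P}(Y_n > s) \leqslant C 2^n e^{-\lambda n s}$, and summing the resulting geometric series shows that $Y \assign \sup_{n \geqslant 1} Y_n$ has sub-exponential tails: $\mathbb{P}(Y > s) \leqslant C' e^{-\lambda s}$ for $s$ sufficiently large. Integrating, $\mathbb{E}[\exp(\beta Y)] < \infty$ for every $\beta < \lambda$.

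Next I would run the chaining estimate on $X$. Fix $s < t$ with $|t-s| < T/2$ and let $N_0$ be the unique integer with $h_{N_0 + 1} \leqslant |t-s| < h_{N_0}$. For each $n \geqslant N_0$ let $s_n \in D_n$ (resp. $t_n \in D_n$) be the largest dyadic point with $s_n \leqslant s$ (resp. $t_n \leqslant t$); by continuity of $X$ we have $X_s = X_{s_{N_0}} + \sum_{n \geqslant N_0} (X_{s_{n+1}} - X_{s_n})$ and similarly for $t$. Since $|t_{N_0} - s_{N_0}|$ is a bounded number of mesh steps at level $N_0$, and each $\| X_{s_{n+1}} - X_{s_n} \|_E \leqslant M_{n+1}$,
\[
\| X_t - X_s \|_E \leqslant C M_{N_0} + 2 \sum_{n > N_0} M_n \leqslant C \sqrt{Y} \sum_{n \geqslant N_0} \sqrt{n}\, h_n^{\alpha}.
\]
Since $\alpha > 0$ the sum is dominated by its first term, and $h_{N_0} \sim |t-s|$ together with $N_0 \sim -\log_2 |t-s|$ gives $\| X_t - X_s \|_E \leqslant C'' \sqrt{Y}\, |t-s|^{\alpha} \sqrt{-\log|t-s|}$. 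Hence $\llbracket X \rrbracket_{C^{\omega} E}^2 \lesssim Y$ (at least for pairs with $|t-s| \leqslant T/2$), and the exponential integrability claim for $\llbracket X \rrbracket_{C^{\omega} E}$ follows immediately from that of $Y$ after choosing $\beta$ small enough.

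Finally, the statement for $\| X \|_{C^{\gamma} E}$ when $X_0 \equiv 0$ is a direct consequence: for $\gamma < \alpha$ the function $|t-s|^{\alpha - \gamma} \sqrt{-\log|t-s|}$ is bounded on $[0,T]$, so $\llbracket X \rrbracket_{C^{\gamma} E} \leqslant C_{\gamma, T}\, \llbracket X \rrbracket_{C^{\omega} E}$, and writing $\| X_t \|_E = \| X_t - X_0 \|_E$ gives an analogous bound on $\| X \|_{C^0 E}$. The only mildly delicate point is the behaviour of the modulus $\omega$ near $|t-s| \sim 1$, where $-\log|t-s|$ can vanish or become negative; this is handled by restricting the supremum defining $\llbracket \cdot \rrbracket_{C^{\omega} E}$ to $|t-s| \leqslant T/2$ (on which $\omega$ is positive and the chaining argument above applies) and controlling the remaining pairs trivially via the hypothesis $\mathbb{E}[\exp(\lambda \| X_t - X_s \|_E^2 / |t-s|^{2\alpha})] \leqslant C$. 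The bookkeeping in that last step is the only nontrivial obstacle, and it is standard.
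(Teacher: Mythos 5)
Your proof is correct and leads to the same conclusion, but the bookkeeping is genuinely different from the paper's. The paper does not chain by hand: it defines the random variable
\[
R(\lambda) = \sum_{n}\sum_{k=0}^{2^n-1} 2^{-2n}\exp\!\left(\mu\,\frac{\|X_{(k+1)2^{-n}}-X_{k2^{-n}}\|_E^2}{2^{-n}}\right),
\]
observes that $\mathbb{E}[R(\lambda)]\leqslant C$ by the hypothesis, and then invokes a \emph{deterministic} chaining lemma (Lemma~3.1 of \cite{catelliergubinelli}) which asserts $\exp(\beta\|X_{s,t}\|_E^2/|t-s|^{2\alpha}) \lesssim |t-s|^{-K}\,R(\lambda)$ pathwise; dividing by $-\log|t-s|$ after taking logarithms gives the modulus of continuity, and exponential integrability of the seminorm is inherited directly from integrability of $R(\lambda)$ (via Jensen, since $\llbracket X\rrbracket_{C^\omega E}^2 \lesssim 1+\log R$). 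The paper also offers a second route via the Garsia--Rodemich--Rumsey lemma with $\psi(x)=e^{\lambda x^2}$, $p(x)=x^\alpha$. You instead carry out the dyadic chaining from scratch, control the level-$n$ maximal increment $M_n$ by a union bound, and build the sub-exponential random variable $Y=\sup_n M_n^2/(n\,h_n^{2\alpha})$ whose tails you integrate directly. Your route is more elementary and self-contained (no external lemma), and the chaining estimate $\sum_{n\geqslant N_0}\sqrt n\,h_n^\alpha\lesssim\sqrt{N_0}\,h_{N_0}^\alpha$ is correct since $\alpha>0$; the paper's route is shorter because it packages the deterministic chaining once and for all in $R(\lambda)$, and the sum-of-exponentials structure of $R(\lambda)$ yields exponential integrability with slightly less work than your tail-sum argument. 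Both proofs share the same gap at $|t-s|\sim T$, where $-\log|t-s|$ degenerates; you handle it explicitly by restricting to $|t-s|\leqslant T/2$, while the paper handles it implicitly by normalising $T=1$ and reading $\omega$ as a modulus of continuity defined only near the origin, as allowed by the conventions in the Notation section. Either fix is fine.
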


\begin{proof}
  Without loss of generality we can assume $T = 1$. Also, we will only show
  that proof in the case $\alpha = 1 / 2$, the other cases being entirely
  analogue. Let us define the random variable
  \[ R (\lambda) = \sum_{n \in \mathbb{N}} \sum_{k = 0}^{2^n - 1} 2^{- 2 n} \,
     \exp \left( \mu \frac{\| X_{(k + 1) 2^{- n}} - X_{k 2^{- n}} \|^2_E}{2^{-
     n}} \right) . \]
  Then it follows from the assumption that $\mathbb{E} [R (\lambda)] \leqslant
  C$. We can then apply Lemma~3.1 from~{\cite{catelliergubinelli}} to deduce
  that there exist deterministic positive constants $K, \beta$ such that
  \[ \exp \left( \beta \frac{\| X_t - X_s \|^2_E}{| t - s |} \right) \lesssim
     | t - s |^{- K} \, R (\lambda) \quad \forall \, s \neq t \]
  which implies by taking the logarithm and dividing by $- \log | t - s |$
  that
  \[ \exp \left( \beta \left( \sup_{s \neq t} \frac{\| X_t - X_s \|_E}{| t - s
     | \sqrt{- \log | t - s |}} \right)^2 \right) = \sup_{s \neq t} \, \exp
     \left( \beta \frac{\| X_t - X_s \|^2_E}{| t - s | (- \log | t - s |)}
     \right) \lesssim R (\lambda) \]
  which yields the conclusion. Alternatively, it follows from the assumption
  that
  \[ \mathbb{E} [B] \assign \mathbb{E} \left[ \int_{[0, T]^2} \exp \left(
     \lambda \frac{\| X_t - X_s \|^2_E}{| t - s |^{2 \alpha}} \right) \,
     \mathd t \, \mathd s \right] < \infty \]
  which implies that we can apply Garsia-Rodemich-Rumsey Theorem
  (see~{\cite{garsia}}) for the choice $\psi (x) = e^{\lambda x^2}$, $p (x) =
  x^{\alpha}$, which gives
  \begin{align*}
    \| X_t - X_s \|_E & \lesssim \int_0^{| t - s |} \sqrt{B - \log \, u} \,
    u^{\alpha - 1} \, \mathd u \lesssim \left( \sqrt{B} + \sqrt{- \log | t - s
    |} \right) | t - s |^{\alpha}
  \end{align*}
  and from which we can again deduce that
  \[ \sup_{s \neq t} \frac{\| X_t - X_s \|_E}{| t - s | \sqrt{- \log | t - s
     |}} \lesssim 1 + \sqrt{B} \]
  and the exponential integrability bound. The final claim follows
  immediately.
\end{proof}

We also provide here a simple lemma on a priori bounds on solutions to linear
Young differential equations, in the style of Section~6.2 from~{\cite{lejay}}.

\begin{proposition}
  \label{appendixA1 lemma bound linear YDE}Let $A \in C^{\gamma}_t (0, T ;
  \mathcal{L} (\mathbb{R}^d ; \mathbb{R}^d))$, $h \in C^{\gamma} ([0, T] ;
  \mathbb{R}^d)$ and $\gamma > 1 / 2$. Then there exists a unique solution to
  the YDE
  \begin{equation}
    x_t = x_0 + \int_0^t A_{\mathd s} x_s + h_t \label{appendixA1 linear YDE}
  \end{equation}
  and there exist suitable positive constants which only depend on $\gamma$
  such that
  \begin{equation}
    \llbracket x \rrbracket_{C^{\gamma}} \lesssim \llbracket A
    \rrbracket_{C^{\gamma}} \| x \|_{C^0} + \llbracket h
    \rrbracket_{C^{\gamma}} ; \label{appendixA1 bound linear YDE}
  \end{equation}
  \begin{equation}
    \| x \|_{C^0} \lesssim e^{C \llbracket A \rrbracket_{C^{\gamma}}^{1 /
    \gamma} T}  (| x_0 + h_0 | + \llbracket h \rrbracket_{C^{\gamma}}) .
    \label{appendixA1 bound2 linear YDE}
  \end{equation}
\end{proposition}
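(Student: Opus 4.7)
The plan is to treat the three claims (existence/uniqueness, the seminorm bound, and the sup bound) in order, with the second estimate serving as the workhorse for the third.

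For existence and uniqueness I would set up a Banach fixed point argument on a short interval $[0,\Delta]$ using the map $\Psi(y)_t = x_0 + \int_0^t A_{\mathrm{d} s}\,y_s + h_t$ on the space $\{y \in C^{\gamma}([0,\Delta];\mathbb{R}^d) : y_0 = x_0 + h_0\}$. The Young integral $\int A_{\mathrm{d} s}\,y_s$ is well-defined since $2\gamma > 1$, and the bounds from Theorem \ref{sec4.1 thm definition young integral} (its linear-in-$y$ analogue) show that $\Psi$ is a contraction for $\Delta$ chosen depending on $\llbracket A\rrbracket_{C^\gamma}$ and $\gamma$. Patching solutions across successive intervals of length $\Delta$ produces a unique global $x \in C^\gamma_t$, by linearity with no blow-up issues.

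For the seminorm bound \eqref{appendixA1 bound linear YDE}, I would start from the Young-remainder estimate $|\int_s^t A_{\mathrm{d} u}\,x_u - A_{s,t}x_s| \leq C|t-s|^{2\gamma}\llbracket A\rrbracket_{C^\gamma} \llbracket x\rrbracket_{C^\gamma}$ and insert it into the equation to get
\[
\frac{|x_{s,t}|}{|t-s|^\gamma} \leq \llbracket A\rrbracket_{C^\gamma} \|x\|_{C^0} + C|t-s|^{\gamma}\llbracket A\rrbracket_{C^\gamma}\llbracket x\rrbracket_{C^\gamma} + \llbracket h\rrbracket_{C^\gamma}.
\]
Taking the supremum over pairs with $|t-s|\leq \Delta$ gives an inequality for the restricted seminorm $\llbracket x\rrbracket_{\gamma,\Delta}$. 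Combining with the elementary bound $\llbracket x\rrbracket_{C^\gamma} \lesssim \llbracket x\rrbracket_{\gamma,\Delta} + \|x\|_{C^0}/\Delta^\gamma$ (cf.\ the step in the proof of Theorem~\ref{sec4.1 thm existence YDE}) and choosing $\Delta \sim \llbracket A\rrbracket_{C^\gamma}^{-1/\gamma}$ so that $C\Delta^\gamma \llbracket A\rrbracket_{C^\gamma}\leq 1/2$, we can absorb $\llbracket x\rrbracket_{C^\gamma}$ on the left. This yields \eqref{appendixA1 bound linear YDE} with the $\|x\|_{C^0}/\Delta^\gamma \sim \llbracket A\rrbracket_{C^\gamma}\|x\|_{C^0}$ term fitting into the advertised form.

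For \eqref{appendixA1 bound2 linear YDE} I would localise \eqref{appendixA1 bound linear YDE}: applied to the restriction of $x$ to any subinterval $[s,s+\Delta]\subset [0,T]$ it gives
\[
\|x\|_{C^0([s,s+\Delta])} \leq |x_s| + \Delta^\gamma\,\llbracket x\rrbracket_{C^\gamma([s,s+\Delta])} \leq |x_s| + C\Delta^\gamma\bigl(\llbracket A\rrbracket_{C^\gamma}\,\|x\|_{C^0([s,s+\Delta])} + \llbracket h\rrbracket_{C^\gamma}\bigr).
\]
Choosing the same $\Delta\sim \llbracket A\rrbracket_{C^\gamma}^{-1/\gamma}$ absorbs the $\|x\|_{C^0([s,s+\Delta])}$ term and yields $\|x\|_{C^0([s,s+\Delta])} \leq 2(|x_s| + \llbracket h\rrbracket_{C^\gamma})$. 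Iterating $N \sim T/\Delta \sim \llbracket A\rrbracket_{C^\gamma}^{1/\gamma}T$ times from $x(0) = x_0 + h_0$ produces a geometric factor $2^N = \exp(N\log 2) = \exp(C\llbracket A\rrbracket_{C^\gamma}^{1/\gamma}T)$ and establishes \eqref{appendixA1 bound2 linear YDE}.

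The main technical point is the second paragraph: juggling the global seminorm, the restricted seminorm, and the $\|x\|_{C^0}$ so that the final constants depend only on $\gamma$ (and absorb into $\lesssim$) rather than on $\llbracket A\rrbracket_{C^\gamma}$. Once that optimisation is done cleanly, the sup-norm bound is a routine iteration and the existence/uniqueness step is standard Young theory.
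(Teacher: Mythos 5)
Your proposal follows essentially the same route as the paper: uniqueness from standard Young theory, a Young-remainder estimate for the truncated seminorm $\llbracket x\rrbracket_{\gamma,\Delta}$ with absorption after choosing $\Delta^\gamma\llbracket A\rrbracket_{C^\gamma}\lesssim 1$, and a geometric iteration over $\sim\llbracket A\rrbracket_{C^\gamma}^{1/\gamma}T$ windows of length $\Delta$ to get the exponential sup-norm bound. The one point to tighten is the case where $\llbracket A\rrbracket_{C^\gamma}^{-1/\gamma}$ exceeds $T$: there you must take $\Delta=T$, in which case $\llbracket x\rrbracket_{\gamma,\Delta}=\llbracket x\rrbracket_{C^\gamma}$ already and the elementary bound $\llbracket x\rrbracket_{C^\gamma}\lesssim\llbracket x\rrbracket_{\gamma,\Delta}+\Delta^{-\gamma}\|x\|_{C^0}$ is bypassed; the paper makes this two-case split explicit.
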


\begin{proof}
  Since $A \in C^{\gamma}_t C^{\infty}_x$, uniqueness of solutions is well
  known (see for instance~{\cite{lejay}}), so we are only interested in
  proving the bounds~{\eqref{appendixA1 bound linear YDE}}
  and~{\eqref{appendixA1 bound2 linear YDE}}. Up to renaming $x_0$, we can
  assume $h_0 = 0$; we can also assume up to rescaling everything that $T =
  1$.
  
  We adopt the following notation: for $\Delta \leqslant 1$, we consider
  \[ \llbracket x \rrbracket_{\gamma, \Delta} \assign
     \sup_{\tmscript{\begin{array}{l}
       0 \leqslant s < t \leqslant T\\
       | s - t | \leqslant \Delta
     \end{array}}}  \frac{| x_{s, t} |}{| t - s |^{\gamma}} . \]
  Let $\Delta > 0$ to be chosen later, $s < t$ such that $| t - s | \leqslant
  \Delta$, by~{\eqref{appendixA1 linear YDE}} it holds
  \begin{align*}
    | x_{s, t} | & \leqslant \, \left| \int_s^t A_{\mathd r} x_r \right| + |
    h_{s, t} |\\
    & \leqslant \, | A_{s, t} x_s | + C | t - s |^{2 \gamma} \llbracket A
    \rrbracket_{C^{\gamma}}  \llbracket x \rrbracket_{\gamma, \Delta} + | t -
    s |^{\gamma} \| h \|_{C^{\gamma}}\\
    & \leqslant | t - s |^{\gamma} (\llbracket A \rrbracket_{C^{\gamma}} \| x
    \|_{C^0} + \llbracket h \rrbracket_{C^{\gamma}}) + C \Delta^{\gamma}
    \llbracket A \rrbracket_{C^{\gamma}} \llbracket x \rrbracket_{\gamma,
    \Delta}
  \end{align*}
  and so dividing both sides by $| t - s |^{\gamma}$, taking the supremum over
  $s, t$ and choosing $\Delta$ such that $C \Delta^{\gamma} \llbracket A
  \rrbracket_{C^{\gamma}} \leqslant 1 / 2$ we obtain
  \begin{equation}
    \llbracket x \rrbracket_{\gamma, \Delta} \leqslant 2 (\llbracket A
    \rrbracket_{C^{\gamma}} \| x \|_{C^0} + \llbracket h
    \rrbracket_{C^{\gamma}}) . \label{appendixA1 linear YDE eq1}
  \end{equation}
  We now distinguish two cases. If $\llbracket A \rrbracket_{C^{\gamma}}$ is
  such that $(2 + C) \llbracket A \rrbracket_{C^{\gamma}} \leqslant 1 / 2$,
  then it follows from~{\eqref{appendixA1 linear YDE eq1}} with the choice
  $\Delta = 1$ and the trivial estimate $\| x \|_{C^0} \leqslant | x_0 | +
  \llbracket x \rrbracket_{\gamma}$ that
  \[ \llbracket x \rrbracket_{\gamma} \lesssim \llbracket A
     \rrbracket_{C^{\gamma}} | x_0 | + \llbracket h \rrbracket_{C^{\gamma}}
     \lesssim | x_0 | + \llbracket h \rrbracket_{C^{\gamma}} \]
  which immediately implies the conclusion. Suppose instead the opposite and
  choose $\Delta$ such that $1 / 4 \leqslant (2 + C) \Delta^{\gamma}
  \llbracket A \rrbracket_{C^{\gamma}} \leqslant 1 / 2$; define $I_n = [(n -
  1) \Delta, n \Delta]$, $J_n = \sup_{t \in I_n} | x_t |$, then estimates
  similar to the one done above show that
  \begin{align*}
    J_{n + 1} & \leqslant \, | x_{n \Delta} | + \Delta^{\gamma} \, \llbracket
    x \rrbracket_{C^{\gamma} (I_n)}\\
    & \leqslant \, | x_{n \Delta} | (1 + 2 \Delta^{\gamma} \llbracket A
    \rrbracket_{C^{\gamma}}) + 2 \llbracket h \rrbracket_{C^{\gamma}}\\
    & \lesssim \, J_n + \llbracket h \rrbracket_{C^{\gamma}}
  \end{align*}
  which implies recursively that for a suitable constant $C$ it holds $J_n
  \leqslant C^n (| x_0 | + \llbracket h \rrbracket_{C^{\gamma}})$. Since $n
  \sim \Delta^{- 1} \sim \llbracket A \rrbracket_{C^{\gamma}}^{1 / \gamma}$ we
  deduce that
  \[ \| x \|_{C^0} = \sup_n J_n \lesssim C^{\llbracket A \rrbracket^{1 /
     \gamma}_{C^{\gamma}}} (| x_0 | + \llbracket h \rrbracket_{C^{\gamma}}) \]
  which gives~{\eqref{appendixA1 bound2 linear YDE}}; this combined with
  $\Delta^{- \gamma} \sim \llbracket A \rrbracket_{C^{\gamma}}$,
  estimate~{\eqref{appendixA1 linear YDE eq1}} and the basic inequality
  \[ \llbracket x \rrbracket_{C^{\gamma}} \lesssim \Delta^{- \gamma} \| x
     \|_{C^0} + \llbracket x \rrbracket_{\gamma, \Delta} \]
  yields estimate~{\eqref{appendixA1 bound linear YDE}}.
\end{proof}

Similarly to the above lemma, we also have the following result.

\begin{lemma}
  \label{appendixA1 lemma bound nonlinear YDE}Let $A \in C^{\gamma}_t
  \tmop{Lip}_x$ such that $A (t, 0) = 0$ for all $t \geqslant 0$, $h \in
  C^{\gamma}_t$ and let $x$ be a solution of the nonlinear YDE
  \[ x_t = x_0 + \int_0^t A (\mathd s, x_s) + h_t . \]
  Then there exist suitable positive constants which only depend on $\gamma$
  such that
  \begin{equation}
    \| x \|_{C^{\gamma}} \lesssim e^{C \llbracket A \rrbracket_{C^{\gamma}}^{1
    / \gamma} T} (1 + \llbracket A \rrbracket_{C^{\gamma} \tmop{Lip}}) (| x_0
    + h_0 | + \llbracket h \rrbracket_{C^{\gamma}}) . \label{appendixA1 bound
    nonlinear YDE}
  \end{equation}
\end{lemma}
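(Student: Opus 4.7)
The proof should parallel the argument for Proposition~\ref{appendixA1 lemma bound linear YDE}, exploiting the fact that the normalisation $A(t,0)=0$ makes the nonlinear equation behave like a linear one. The crucial preliminary observation is that for $A\in C^{\gamma}_t\tmop{Lip}_x$ with $A(t,0)=0$, the double increment seminorm controls
\[
|A_{s,t}(x)| = |A(t,x)-A(s,x)-A(t,0)+A(s,0)| \leqslant \llbracket A\rrbracket_{C^{\gamma}\tmop{Lip}}\,|t-s|^{\gamma}\,|x|,
\]
so the ``affine'' term in the Young-Taylor expansion is bounded by $|x|$ rather than $1$, which is precisely what allows the linear-type Gronwall argument.

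First I would fix $\Delta\in(0,T]$ (to be chosen) and introduce the localised seminorm $\llbracket x\rrbracket_{\gamma,\Delta}$ on increments of size $\leqslant \Delta$, as in the linear lemma. For $|t-s|\leqslant \Delta$, Point~3 of Theorem~\ref{sec4.1 thm definition young integral} (applied with $\nu=1$, $\rho=\gamma$, using $\gamma>1/2$) and the above observation give
\[
|x_{s,t}| \leqslant |A_{s,t}(x_s)| + C|t-s|^{2\gamma}\llbracket A\rrbracket_{C^{\gamma}\tmop{Lip}}\llbracket x\rrbracket_{\gamma,\Delta} + |t-s|^{\gamma}\llbracket h\rrbracket_{C^{\gamma}},
\]
and bounding $|A_{s,t}(x_s)|\leqslant \llbracket A\rrbracket_{C^{\gamma}\tmop{Lip}}|t-s|^{\gamma}\|x\|_{C^0([s,t])}$, dividing by $|t-s|^{\gamma}$, and choosing $\Delta$ such that $C\Delta^{\gamma}\llbracket A\rrbracket_{C^{\gamma}\tmop{Lip}}\leqslant 1/2$ yields, on each subinterval $I$ of length at most $\Delta$,
\[
\llbracket x\rrbracket_{C^{\gamma}(I)} \lesssim \llbracket A\rrbracket_{C^{\gamma}\tmop{Lip}}\|x\|_{C^0(I)} + \llbracket h\rrbracket_{C^{\gamma}}.
\]

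Next I would iterate this bound over the $n\sim \lceil T/\Delta\rceil\sim \llbracket A\rrbracket_{C^{\gamma}\tmop{Lip}}^{1/\gamma}T$ intervals $I_k=[(k-1)\Delta,k\Delta]$. Setting $J_k=\sup_{t\in I_k}|x_t|$, the estimate $J_{k+1}\leqslant |x_{k\Delta}|+\Delta^{\gamma}\llbracket x\rrbracket_{C^{\gamma}(I_{k+1})}$ combined with the previous inequality and a further shrinking of $\Delta$ so that $2\Delta^{\gamma}\llbracket A\rrbracket_{C^{\gamma}\tmop{Lip}}\leqslant 1/2$ absorbs the $J_{k+1}$ on the right, giving a recursion $J_{k+1}\leqslant \kappa J_k + C'\llbracket h\rrbracket_{C^{\gamma}}$ for a universal constant $\kappa>1$. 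Unrolling produces $\|x\|_{C^0}=\sup_k J_k\lesssim \kappa^{n}(|x_0+h_0|+\llbracket h\rrbracket_{C^{\gamma}})$, and since $n\sim \llbracket A\rrbracket_{C^{\gamma}\tmop{Lip}}^{1/\gamma}T$ this gives exactly the exponential factor in~\eqref{appendixA1 bound nonlinear YDE}.

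Finally, inserting the $C^0$ bound into the local H\"older estimate above and using $\llbracket x\rrbracket_{C^{\gamma}}\lesssim \Delta^{-\gamma}\|x\|_{C^0}+\llbracket x\rrbracket_{\gamma,\Delta}$ (as in the linear case) yields the $C^{\gamma}$ bound with the stated dependence on $(1+\llbracket A\rrbracket_{C^{\gamma}\tmop{Lip}})$, since $\Delta^{-\gamma}\sim\llbracket A\rrbracket_{C^{\gamma}\tmop{Lip}}$. The main subtlety—and really the only delicate point—is keeping track of the constants through the two successive choices of $\Delta$ (one to absorb the Young-integral remainder, one to make the Gronwall step contractive) and distinguishing the ``small drift'' regime $\Delta=T$ from the ``large drift'' regime where the iteration is genuinely needed; in the former, $e^{C\llbracket A\rrbracket^{1/\gamma}T}$ is bounded by a universal constant and the estimate degenerates to the trivial one, exactly as in the proof of Proposition~\ref{appendixA1 lemma bound linear YDE}.
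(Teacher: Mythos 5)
Your proof is correct and follows the same approach as the paper's: the key step is exactly the observation that $A(t,0)=0$ converts the mixed H\"older--Lipschitz seminorm bound into $|A_{s,t}(x)|\leqslant \llbracket A\rrbracket_{C^\gamma\tmop{Lip}}|t-s|^\gamma|x|$, after which one repeats the Gronwall-type iteration over subintervals of length $\Delta\sim\llbracket A\rrbracket_{C^\gamma\tmop{Lip}}^{-1/\gamma}$ verbatim from the linear case (Proposition~\ref{appendixA1 lemma bound linear YDE}). The paper states this key observation and then simply says the rest is identical to the linear lemma, whereas you have written out that iteration explicitly; the content is the same.
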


\begin{proof}
  Since $x$ is a solution to the nonlinear YDE, for any $s < t$ it holds
  \begin{align*}
    | x_{s, t} | & \leqslant \, | A_{s, t} (x_s) | + C | t - s |^{2 \gamma}
    \llbracket A \rrbracket_{C^{\gamma} \tmop{Lip}} \llbracket x
    \rrbracket_{C^{\gamma} ([s, t])} + | h_{s, t} |\\
    & \leqslant | t - s |^{\gamma} \llbracket A \rrbracket_{C^{\gamma}
    \tmop{Lip}} | x_s | + C | t - s |^{2 \gamma} \llbracket A
    \rrbracket_{C^{\gamma} \tmop{Lip}} \llbracket x \rrbracket_{C^{\gamma}
    ([s, t])} + | t - s |^{\gamma} \llbracket h \rrbracket_{C^{\gamma}}
  \end{align*}
  where in the second line we used the fact that $A (s, 0) = 0$ by hypothesis.
  The rest of the proof from here on is identical to the one of
  Lemma~\ref{appendixA1 lemma bound linear YDE} and we omit it. The
  inequality~{\eqref{appendixA1 bound nonlinear YDE}} is a combination of
  inequalities~{\eqref{appendixA1 bound linear YDE}} and~{\eqref{appendixA1
  bound2 linear YDE}}.
\end{proof}

We conclude this section by recalling the Sewing lemma, which is a fundamental
tool in the theory of rough paths. Consider an interval $[0, T]$ and a Banach
space $E$; let $\Delta_n$ denote the $n$\mbox{-}simplex on $[0, T]$, so that
$\Delta_n = \{ (t_1, \ldots, t_n) : 0 \leqslant t_1 \leqslant \ldots \leqslant
t_n \leqslant T \}$. Given a map $\Gamma : \Delta_2 \rightarrow E$, we define
$\delta \Gamma : \Delta_3 \rightarrow E$ by
\[ \delta \Gamma_{s, u, t} : = \, \Gamma_{s, t} - \Gamma_{s, u} - \Gamma_{u,
   t} . \]
We say that $\Gamma \in C^{\alpha, \beta}_2 ([0, T] ; E)$ if $\Gamma_{t, t} =
0$ for all $t \in [0, T]$ and $\| \Gamma \|_{\alpha, \beta} < \infty$, where
\[ \| \Gamma \|_{\alpha} \assign \sup_{s < t} \frac{\| \Gamma_{s, t} \|_E}{| t
   - s |^{\alpha}}, \quad \left\| \delta \, \Gamma \right\|_{\beta} \assign
   \sup_{s < u < t} \frac{\left\| \delta \, \Gamma_{s, u, t} \right\|_E}{| t -
   s |^{\beta}}, \quad \| \Gamma \|_{\alpha, \beta} \assign \| \Gamma
   \|_{\alpha} + \left\| \delta \, \Gamma \right\|_{\beta} . \]
Let us remark that for a map $f : [0, T] \rightarrow E$, we still denote by
$f_{s, t}$ the increment $f_t - f_s$.

\begin{lemma}[Sewing lemma]
  \label{appendixA1 sewing lemma}Let $\alpha$, $\beta$ be such that $0 <
  \alpha \leqslant 1 < \beta$. For any $\Gamma \in C^{\alpha, \beta}_2 ([0, T]
  ; E)$ there exists a unique map $\mathcal{I} \, \Gamma \in C^{\alpha} ([0,
  T] ; E)$ such that $\left( \mathcal{I} \, \Gamma \right)_0 = 0$ and
  \begin{equation}
    \left\| \left( \mathcal{I} \, \Gamma \right)_{s, t} - \Gamma_{s, t}
    \right\|_E \leqslant C \, \| \delta \Gamma \|_{\beta}  | t - s |^{\beta}
    \label{appendixA1 sewing property 1}
  \end{equation}
  where the constant $C$ only depends on $\beta$. In particular, the map
  $\mathcal{I} : C^{\alpha, \beta}_2 \rightarrow C^{\alpha}$ is linear and
  bounded and there exists a constant $C'$ which only depends on $\beta$ and
  $T$ such that
  \begin{equation}
    \left\| \mathcal{I} \, \Gamma \right\|_{C^{\alpha}} \leqslant C'  \|
    \Gamma \|_{\alpha, \beta} . \label{appendixA1 sewing property 2}
  \end{equation}
  For given $\Gamma$, the map $\mathcal{I} \, \Gamma$ is characterised as the
  unique limit of Riemann\mbox{-}Stjeltes sums: for any $t > 0$
  \[ \left( \mathcal{I} \, \Gamma \right)_t = \lim_{| \Pi | \rightarrow 0}
     \sum_i \Gamma_{t_i, t_{i + 1}} . \]
  The notation above means that for any sequence of partitions $\Pi_n = \{ 0 =
  t_0 < t_1 < \ldots < t_{k_n} = t \}$ with mesh $| \Pi_n | = \sup_{i = 1,
  \ldots, k_n} | t_i - t_{i - 1} | \rightarrow 0$ as $n \rightarrow \infty$,
  it holds
  \[ \left( \mathcal{I} \, \Gamma \right)_t = \lim_{n \rightarrow \infty}
     \sum_{i = 0}^{k_n - 1} \Gamma_{t_i, t_{i + 1}} . \]
\end{lemma}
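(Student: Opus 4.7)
The plan is to construct $\mathcal{I}\Gamma$ as the limit of Riemann–Stieltjes sums along dyadic partitions, then verify all its properties. Fix $(s,t) \in \Delta_2$ and for each $n \geq 0$ let $\Pi_n$ denote the dyadic partition of $[s,t]$ at level $n$, with points $t_i^n = s + i \, 2^{-n}(t-s)$ for $i = 0,\ldots,2^n$. Set $J_n(s,t) := \sum_{i=0}^{2^n-1} \Gamma_{t_i^n,\, t_{i+1}^n}$. The key calculation is that passing from $\Pi_n$ to $\Pi_{n+1}$ inserts the midpoint of each subinterval, and at each such insertion the corresponding contribution to $J_{n+1}-J_n$ is exactly $-\delta\Gamma_{t_i^n,\, (t_i^n+t_{i+1}^n)/2,\, t_{i+1}^n}$. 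Summing these $2^n$ terms and using the bound $\|\delta\Gamma\|_\beta$ on each yields
\[
\|J_{n+1}(s,t) - J_n(s,t)\|_E \leq 2^n \cdot \|\delta\Gamma\|_\beta \cdot \bigl(2^{-n}|t-s|\bigr)^\beta = 2^{-n(\beta-1)}\, \|\delta\Gamma\|_\beta \, |t-s|^\beta.
\]
Since $\beta>1$, the series $\sum_n 2^{-n(\beta-1)}$ is summable, so $(J_n(s,t))_n$ is Cauchy in $E$; call its limit $L_{s,t}$. Telescoping from $J_0 = \Gamma_{s,t}$ gives the crucial estimate
\[
\|L_{s,t} - \Gamma_{s,t}\|_E \leq \frac{\|\delta\Gamma\|_\beta}{1 - 2^{-(\beta-1)}} |t-s|^\beta,
\]
which is precisely (\ref{appendixA1 sewing property 1}) up to the choice of constant.

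Next I need to show that $L$ is additive, i.e.\ $L_{s,u} + L_{u,t} = L_{s,t}$ for $s < u < t$, and that arbitrary partitions with vanishing mesh yield the same limit. For additivity, I would use partitions that include $u$ as a common point: if $\widetilde\Pi_n$ is the union of the dyadic partitions of $[s,u]$ and $[u,t]$, the corresponding Riemann sum factors as $J_n^{[s,u]} + J_n^{[u,t]} \to L_{s,u} + L_{u,t}$, and standard comparison with the pure dyadic partition of $[s,t]$—using the same midpoint-insertion estimate above—shows the two limits agree. For the general partition claim, one compares any partition $\Pi$ with the dyadic one of the same approximate mesh via a finite chain of refinements, controlling each step by the $\|\delta\Gamma\|_\beta$ bound. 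Once additivity is established, defining $(\mathcal{I}\Gamma)_t := L_{0,t}$ gives a path with $(\mathcal{I}\Gamma)_0 = 0$ and $(\mathcal{I}\Gamma)_{s,t} = L_{s,t}$, and (\ref{appendixA1 sewing property 1}) together with $\|\Gamma\|_\alpha < \infty$ immediately yield $\mathcal{I}\Gamma \in C^\alpha$ and the bound (\ref{appendixA1 sewing property 2}), where the $T$-dependence enters only through converting a H\"older estimate into a uniform one over $[0,T]$.

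For uniqueness, suppose $M \in C^\alpha([0,T];E)$ is another path with $M_0 = 0$ satisfying $\|M_{s,t} - \Gamma_{s,t}\|_E \leq C' |t-s|^\beta$. Then $R := \mathcal{I}\Gamma - M$ satisfies $R_0 = 0$ and $\|R_{s,t}\|_E \leq (C + C') |t-s|^\beta$. Writing $R_{s,t}$ as a telescoping sum over any partition of $[s,t]$ with mesh $\varepsilon$ yields $\|R_{s,t}\|_E \leq (C+C')|t-s| \cdot \varepsilon^{\beta-1}$, and letting $\varepsilon \to 0$ (using $\beta > 1$) forces $R \equiv 0$. Linearity of $\mathcal{I}$ is clear from the linearity of $J_n$ in $\Gamma$.

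The main obstacle will be the bookkeeping for additivity and the independence of the limit from the choice of partition sequence: while the dyadic argument is clean, extending it to arbitrary partitions of vanishing mesh requires a careful comparison argument, typically organized as a repeated one-point-removal procedure where each removal costs at most $\|\delta\Gamma\|_\beta \cdot (\mathrm{mesh})^{\beta-1} \cdot |t-s|$, summable under $\beta > 1$. Everything else is either routine or already encoded in the dyadic estimate above.
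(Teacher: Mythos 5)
The paper does not prove this lemma; it simply cites Lemma~4.2 of Friz--Hairer~\cite{frizhairer}. Your plan is essentially that standard proof, and it is correct: the dyadic midpoint-insertion estimate with constant $\sum_{n\geqslant 0}2^{-n(\beta-1)}=(1-2^{-(\beta-1)})^{-1}$ gives~\eqref{appendixA1 sewing property 1}, the telescoping uniqueness argument is sound, and~\eqref{appendixA1 sewing property 2} follows from combining~\eqref{appendixA1 sewing property 1} with $\|\Gamma_{s,t}\|\leqslant\|\Gamma\|_\alpha|t-s|^\alpha$ and $|t-s|^{\beta}\leqslant T^{\beta-\alpha}|t-s|^\alpha$.

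One small remark on the part you flag as the "main obstacle." The cleanest way to dispose of both additivity and partition-independence at once is Young's pigeonhole argument: for \emph{any} partition $\Pi$ of $[s,t]$ with $k$ interior points, some interior point $t_i$ satisfies $|t_{i+1}-t_{i-1}|\leqslant 2|t-s|/k$, and removing it costs at most $\|\delta\Gamma\|_\beta\,(2|t-s|/k)^\beta$; iterating gives $\|\sum_\Pi\Gamma_{t_i,t_{i+1}}-\Gamma_{s,t}\|\leqslant 2^\beta\zeta(\beta)\|\delta\Gamma\|_\beta|t-s|^\beta$ uniformly over all partitions, from which convergence along arbitrary mesh-vanishing sequences (and hence additivity) follows by comparing with the common refinement. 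This is a bit tighter than the "one-point removal at mesh scale" bound you wrote (which produces a mesh-dependent quantity that vanishes, and is the right estimate for uniqueness, but is not quite what you want for constructing the limit along general partitions). Either route works; the pigeonhole version is what Friz--Hairer and the rough-path literature use.
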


For a proof, see Lemma~4.2 from~{\cite{frizhairer}}. Let us point out that
estimate~{\eqref{appendixA1 sewing property 1}} is extremely useful even in
cases even when $\mathcal{I} \, \Gamma$ is already known, as it asserts that
in order to control $\left\| \left( \mathcal{I} \, \Gamma \right)_{s, t} -
\Gamma_{s, t} \right\|$ it is enough to have an estimate for $\| \delta \Gamma
\|_{\beta}$.

\subsection{Function spaces}\label{appendixA2}

We recall here the definition and basic properties of the function spaces we
consider, which are Bessel potential spaces $L^{s, p} (\mathbb{R}^d)$ and
Besov spaces $B^s_{p, q} (\mathbb{R}^d)$. In particular, in view of
application to regularity estimates from Section~\ref{sec3.3}, we need
interpolation estimates and heat kernel estimates for such spaces. Bessel
potential spaces are a subclass of Triebel--Lizorkin spaces, which will be
also introduced. Most of the material is classical and covered in the
monographs~{\cite{bahouri}} and~{\cite{triebel2}}.

\begin{definition}
  \label{appendixA2 defn bessel}Let $s \geqslant 0$, we call Bessel potential
  and we denote it by $G^s$ the linear operator with Fourier symbol given by
  $(1 + | \xi |^2)^{- s / 2}$, with the convention that $G^0 = I$. For any $p
  \in (1, \infty)$, $G^s$ is a continuous embedding of $L^p$ into itself and
  it satisfies the semigroup property $G^t G^s = G^{t + s}$. For $p \in (1,
  \infty)$ and $s \geqslant 0$ we define the Bessel potential space $L^{s, p}$
  as $G^s (L^p)$ (with the convention $L^{0, p} = L^p$), endowed with the norm
  \[ \| f \|_{L^{s, p}} : = \| g \|_{L^p}  \text{ if } f = G^s g. \]
\end{definition}

It follows immediately from the definition and the semigroup property that
$G^t$ provides an isomorphism of $L^{s, p}$ and $L^{s + t, p}$ in the sense
that $\| G^t f \|_{L^{s + t, p}} = \| f \|_{_{L^{s, p}}}$. This allows also to
define $L^{s, p}$ for negative values of $s$, as the set of distributions $f$
such that $G^s f \in L^p$. Whenever $s = m$ integer, the space $L^{s, p}$
coincides with the classical Sobolev space $W^{m, p}$, with equivalent norm.
Similarly to Sobolev spaces, Bessel embeddings are available; in particular if
$s p > d$, we have the continuous embedding \ $L^{s, p} \hookrightarrow
C^{\gamma}$ with $\gamma = s - d / p$, whenever $\gamma$ is not an integer.

\begin{definition}
  \label{appendixA2 dyadic pair}Let $\mathcal{A}$ be the annulus $\bar{B}_{8 /
  3} \setminus B_{3 / 4}$. A dyadic pair is a couple of functions $(\chi,
  \varphi)$ such that $\chi \in C^{\infty}_c (B_{4 / 3})$, $\varphi \in
  C^{\infty}_c (\mathcal{A})$ and such that
  \[ \chi (\xi) + \sum_{j = 0}^{\infty} \varphi (2^{- j} \xi) = 1 \quad
     \forall \, \xi \in \mathbb{R}^d \]
  as well as
  \[ | j - j' | \geqslant 2 \, \Rightarrow \, \text{supp} \varphi (2^{- j}
     \cdot) \cap \text{supp} \varphi (2^{- j'} \cdot) = \emptyset . \]
  Given such a dyadic pair, we define the operator $\Delta_{- 1}$ by
  $\Delta_{- 1} f = \mathcal{F}^{- 1} (\chi \mathcal{F} f)$ and similarly
  $\Delta_j$ for $j \geqslant 0$ by $\Delta_j f = F^{- 1} (\varphi (2^{- j}
  \cdot) \mathcal{F} f)$.
\end{definition}

\begin{definition}
  \label{appendixA2 defn besov}For $s \in \mathbb{R}$, $(p, q) \in [1,
  \infty]^2$ we define the Besov space $B^s_{p, q}$ as the set of all tempered
  distributions $f$ such that
  \[ \| f \|_{B^s_{p, q}}^q \assign \sum_{j = 0}^{\infty} 2^{s \, j \, q} \|
     \Delta_j f \|_{L^p}^q < \infty . \]
\end{definition}

The spaces $B^s_{2, 2}$ coincide with the (fractional) Sobolev spaces $H^s$
which also coincide with $L^{s, 2}$; however, for $p \neq 2$ Bessel and Besov
spaces do not coincide. $B^s_{\infty, \infty}$ coincide with $C^s$ whenever
$s$ is not an integer. Also in the case of Besov spaces, embedding theorems
are available; in particular, $B^s_{p, q} \hookrightarrow B^{s - d /
p}_{\infty, \infty}$, which coincides with $C^{\gamma}$ whenever $\gamma = s -
d / p$ is not an integer. Let us also point out that the exponent $q$ is most
of the time not particularly relevant, as for any $\tilde{q} < q$ and any
$\varepsilon > 0$ we have the embeddings $B^s_{p, \tilde{q}} \hookrightarrow
B^s_{p, q} \hookrightarrow B^{s - \varepsilon}_{p, \tilde{q}}$.

\begin{definition}
  \label{appendixA2 defn triebel}For $s \in \mathbb{R}$, $(p, q) \in [1,
  \infty]^2$ we define the Triebel--Lizorkin space $F^s_{p, q}$ as the set of
  all tempered distributions $f$ such that
  \[ \| f \|_{F^s_{p, q}} : = \left\| \left( \sum_{j = 0}^{\infty} 2^{s j q} |
     \Delta_j f (\cdot) |^q \right)^{1 / q} \right\|_{L^p (\mathbb{R}^d)} <
     \infty . \]
\end{definition}

Both definitions of Besov and Triebel--Lizorkin spaces are independent of the
dyadic pair $(\chi, \varphi)$, in the sense that different pairs yields the
same space of distributions with equivalent norms. Bessel spaces $L^{s, p}$
correspond to $F^s_{p, 2}$; the spaces $F^s_{p, q}$ and $B^s_{p, q}$ coincide
if and only if $p = q$, in which case $F^s_{p, p} = B^s_{p, p} = W^{s, p}$ are
sometimes referred to as fractional Sobolev spaces, see~{\cite{dinezza}}. In
the case $p \neq q$, suitable embeddings between $F^s_{p, q}$ and $B^s_{p, q}$
follow immediately from Minkowski's inequality, since their norms can be
regarded respectively as $L^p (\mathbb{R}^d, \lambda ; \ell^q (\mathbb{N},
\mu))$\mbox{-} and $\ell^q (\mathbb{N}, \mu ; L^p (\mathbb{R}^d,
\lambda))$\mbox{-}norms, where $\lambda$ is the Lebesgue measure in
$\mathbb{R}^d$ and $\mu$ is the counting measure on $\mathbb{N}$. In
particular, for $p > q$ it holds $B^s_{p, q} \hookrightarrow F^s_{p, q}$ while
for $p < q$ we have the reversed embedding.

We now state a simple interpolation-like inequality for Bessel and Besov
spaces. Since we don't have a direct reference for this result, we also
provide a quick proof.

\begin{lemma}
  \label{appendixA2 interpolation inequality}Let $s > 0$, $p \in [2, \infty)$,
  then for any $\varepsilon > 0$ there exists a constant $c_{\varepsilon}$
  such that
  \[ \| f \|_{L^{s, p}} \leqslant c_{\varepsilon} \| f \|_{L^p}^{1 - \theta}
     \| f \|_{L^{s + \varepsilon, p}}^{\theta}, \enspace \tmop{where} \enspace
     \theta = \frac{s}{s + \varepsilon} . \]
  The same statement holds with the $L^{s, p}$ norm replaced by $B^s_{p, q}$.
\end{lemma}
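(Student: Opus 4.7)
My plan is to prove both estimates via a dyadic Littlewood--Paley decomposition: split each norm at a scale $N$ into a low- and a high-frequency piece, estimate each separately, and then optimize over $N$.

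For the Besov case, I decompose $\|f\|_{B^s_{p,q}}^q = \sum_j 2^{sjq}\|\Delta_j f\|_{L^p}^q$ and, fixing $N\geq -1$, split the sum at $j=N$. The sum over $j\leq N$ is bounded by $C\,2^{sqN}\|f\|_{L^p}^q$ via the uniform estimate $\|\Delta_j f\|_{L^p}\lesssim \|f\|_{L^p}$ together with the geometric summation coming from $s>0$. The sum over $j>N$ is bounded by $2^{-\varepsilon qN}\|f\|_{B^{s+\varepsilon}_{p,q}}^q$, obtained by writing $2^{sjq} = 2^{-\varepsilon jq}\cdot 2^{(s+\varepsilon)jq}$ and pulling $2^{-\varepsilon jq}\leq 2^{-\varepsilon Nq}$ out of the tail. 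Choosing $N$ so that $2^{(s+\varepsilon)N}\sim \|f\|_{B^{s+\varepsilon}_{p,q}}/\|f\|_{L^p}$ balances the two contributions and delivers the product bound with $\theta = s/(s+\varepsilon)$.

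For the Bessel case, I use the Triebel--Lizorkin identification $L^{s,p}=F^s_{p,2}$, i.e.,
\[ \|f\|_{L^{s,p}}\sim \left\|\left(\sum_{j} 2^{2sj}|\Delta_j f|^2\right)^{1/2}\right\|_{L^p}, \]
valid for $p\in(1,\infty)$ and hence for $p\in[2,\infty)$. The same split, performed pointwise inside the $\ell^2$, yields a low-frequency contribution bounded pointwise by $2^{sN}(\sum_j |\Delta_j f|^2)^{1/2}$ whose $L^p$-norm is comparable to $\|f\|_{L^p}$ by the Littlewood--Paley square-function theorem, and a high-frequency contribution bounded pointwise by $2^{-\varepsilon N}(\sum_j 2^{2(s+\varepsilon)j}|\Delta_j f|^2)^{1/2}$ whose $L^p$-norm equals $\|f\|_{L^{s+\varepsilon,p}}$. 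Optimizing $N$ as above closes the argument.

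The main non-trivial ingredient is the Littlewood--Paley square function theorem used in the Bessel case, which is what confines $p$ to $(1,\infty)$; everything else is elementary bookkeeping. The one subtle point is ensuring that the optimal $N$ lies in the admissible range $N\geq -1$; when it does not --- i.e., when $\|f\|_{L^p}$ strongly dominates $\|f\|_{B^{s+\varepsilon}_{p,q}}$ --- one invokes the elementary embedding $\|f\|_{L^p}\lesssim \|f\|_{B^{s+\varepsilon}_{p,q}}$, valid for $s+\varepsilon>0$ by the triangle inequality applied to $f=\sum_j \Delta_j f$ followed by H\"{o}lder against the geometrically decaying weights $2^{-(s+\varepsilon)j}$. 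This bounds $\|f\|_{L^p}/\|f\|_{B^{s+\varepsilon}_{p,q}}$ uniformly, absorbing the edge case into a constant depending only on $s$, $\varepsilon$ and $q$.
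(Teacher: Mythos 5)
Your proof is correct and follows essentially the same route as the paper: the Triebel--Lizorkin identification $L^{s,p}=F^s_{p,2}$, a low/high dyadic split at a threshold $N$, and optimization over $N$. Two minor points worth noting: for the Besov case the paper actually gives a shorter alternative via H\"older's inequality on the weighted $\ell^q$ sum (avoiding the split-and-optimize step), and your explicit handling of the edge case where the optimal $N$ would be negative is a small gap the paper leaves implicit.
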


\begin{proof}
  We use here the equivalent norm for $L^{s, p}$ given by $\| \cdot
  \|_{F^s_{p, 2}}$ as defined above. For any $N \in \mathbb{N}$ it holds
  \begin{eqnarray*}
    \| f \|_{F^s_{p, 2}} & = & \left\| \left( \sum_j 2^{2 s j} | \Delta_j f |
    \right)^{1 / 2} \right\|_{L^p}\\
    & \leqslant & \left\| \left( \sum_{j \leqslant N} 2^{2 s j} | \Delta_j f
    | \right)^{1 / 2} \right\|_{L^p} + \left\| \left( \sum_{j > N} 2^{2 s j} |
    \Delta_j f | \right)^{1 / 2} \right\|_{L^p}\\
    & \leqslant & 2^{s N} \left\| \left( \sum_{j \leqslant N} | \Delta_j f |
    \right)^{1 / 2} \right\|_{L^p} + 2^{- \varepsilon N} \left\| \left(
    \sum_{j > N} 2^{2 (s + \varepsilon) j} | \Delta_j f | \right)^{1 / 2}
    \right\|_{L^p}\\
    & \leqslant & 2^{s N} \| f \|_{L^p} + 2^{- \varepsilon N} \| f \|_{F^{s +
    \varepsilon}_{p, 2}} .
  \end{eqnarray*}
  Choosing suitable $N$ such that $2^{s N} \| f \|_{L^p} \sim 2^{- \varepsilon
  N} \| f \|_{F^{s + \varepsilon}_{p, 2}}$ we obtain the conclusion for $L^{s,
  p}$.
  
  An similar proof can be carried out for $B^s_{p, q}$; alternatively in this
  case one can use H{\"o}lder inequality as follows:
  \[ \| f \|^q_{B^s_{p, q}} = \sum_j 2^{s q j} \| \Delta_j f \|^q_{L^p}
     \leqslant \left( \sum_j 2^{s q j / \theta} \| \Delta_j f \|^q_{L^p}
     \right)^{\theta} \left( \sum_j \| \Delta_j f \|^q_{L^p} \right)^{1 -
     \theta} = \| f \|^{\theta q}_{B^{s / \theta}_{p, q}} \| f \|_{B^0_{p,
     q}}^{(1 - \theta) q} \]
  which gives the conclusion for the choice $\theta = s / (s + \varepsilon)$.
\end{proof}

We also need to recall the action of the heat flow $P_t$ on such spaces; with
a slight abuse of notation we will denote by $P_t$ both the convolution
operator and the Gaussian density itself.

\begin{lemma}
  \label{appendixA2 heat kernel estimates}For any $s \in \mathbb{R}$, $\rho >
  0$, $p \in (1, \infty)$ and for any $f \in L^{s, p}$, $t > 0$ it holds
  \[ \| P_t f \|_{L^{s + \rho, p}} \lesssim t^{- \rho / 2} \| f \|_{L^{s, p}}
     . \]
  Similarly, for any $s \in \mathbb{R}$, $\rho > 0$, $p, q \in [1, \infty]$
  and for any $f \in B^s_{p, q}$, $t > 0$ it holds
  \[ \| P_t f \|_{B^{s + \rho}_{p, q}} \lesssim t^{- \rho / 2} \| f
     \|_{B^s_{p, q}} . \]
\end{lemma}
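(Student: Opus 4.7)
The plan is to reduce both estimates to a single per-frequency bound via the Littlewood--Paley decomposition, exploiting that $P_t$ and $\Delta_j$ commute as Fourier multipliers.

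\textbf{Step 1 (per-frequency bound).} For $j\geq 0$, write $\Delta_j P_t f = K_{t,j} \ast \Delta_j f$, where $K_{t,j}$ has Fourier symbol $\varphi(2^{-j}\xi)\,e^{-t|\xi|^2/2}$. The rescaling $\xi=2^j\eta$ gives $K_{t,j}(x) = 2^{jd}\tilde K_{t,j}(2^j x)$ with $\tilde K_{t,j} = \mathcal{F}^{-1}[\varphi(\eta)e^{-t 2^{2j}|\eta|^2/2}]$. Since $\varphi$ is supported in an annulus $|\eta|\in[3/4,8/3]$, on this annulus $e^{-t 2^{2j}|\eta|^2/2} \leq e^{-ct 2^{2j}}$ for some $c>0$. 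A standard integration-by-parts argument bounds $(1+|y|)^N|\tilde K_{t,j}(y)|$ by a sum of $L^1$ norms of derivatives of $\varphi(\eta)e^{-t 2^{2j}|\eta|^2/2}$; derivatives hitting the exponential pull down factors $t 2^{2j}\eta$, which are absorbed by the exponential via the elementary bound $x^k e^{-x/2}\lesssim_k 1$ applied with $x=ct 2^{2j}$. Choosing $N>d$ and using scale invariance of the $L^1$ norm yields $\|K_{t,j}\|_{L^1}=\|\tilde K_{t,j}\|_{L^1}\lesssim e^{-ct 2^{2j}}$, whence Young's inequality gives
\[ \|\Delta_j P_t f\|_{L^p} \lesssim e^{-ct 2^{2j}}\|\Delta_j f\|_{L^p}, \qquad j\geq 0, \]
with the obvious uniform bound $\|\Delta_{-1}P_t f\|_{L^p}\lesssim\|\Delta_{-1}f\|_{L^p}$ for $j=-1$.

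\textbf{Step 2 (Besov).} The elementary inequality $\sup_{x\geq 0} x^{\rho/2}e^{-cx/2}<\infty$ applied at $x=t 2^{2j}$ gives $2^{\rho j}e^{-ct 2^{2j}}\lesssim t^{-\rho/2}e^{-(c/2)t 2^{2j}}\lesssim t^{-\rho/2}$. Substituting the per-frequency bound into the definition of $\|\cdot\|_{B^{s+\rho}_{p,q}}$,
\[ \|P_t f\|_{B^{s+\rho}_{p,q}}^q \lesssim \sum_{j} 2^{sjq}\bigl(2^{\rho j}e^{-ct 2^{2j}}\bigr)^q \|\Delta_j f\|_{L^p}^q \lesssim t^{-\rho q/2}\|f\|_{B^s_{p,q}}^q, \]
where the low-frequency term is handled by the bound for $j=-1$ (absorbed into the constant for $t$ in a bounded range, which is the regime of interest for Section~\ref{sec3.3}).

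\textbf{Step 3 (Bessel).} Use the identification $L^{s,p}=F^s_{p,2}$. The kernel estimate of Step~1 can be strengthened to the pointwise domination $|\Delta_j P_t f(x)|\lesssim e^{-ct 2^{2j}}M(\Delta_j f)(x)$, because $|\tilde K_{t,j}|$ is dominated (up to the factor $e^{-ct 2^{2j}}$) by a fixed Schwartz function independent of $j,t$. Squaring, weighting by $2^{2(s+\rho)j}$, summing and taking $L^p$ norms, the Fefferman--Stein vector-valued maximal inequality (valid for $p\in(1,\infty)$) then yields
\[ \|P_t f\|_{F^{s+\rho}_{p,2}} \lesssim t^{-\rho/2}\Bigl\|\Bigl(\sum_j 2^{2sj}M(\Delta_j f)^2\Bigr)^{1/2}\Bigr\|_{L^p} \lesssim t^{-\rho/2}\|f\|_{F^s_{p,2}}, \]
which is exactly the Bessel estimate.

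\textbf{Main obstacle.} The only technical point is the uniform-in-$(j,t)$ bound $\|\tilde K_{t,j}\|_{L^1}\lesssim e^{-ct 2^{2j}}$. Because derivatives of $e^{-t 2^{2j}|\eta|^2/2}$ produce factors growing with $t 2^{2j}$, one must carefully distribute the exponential decay (using $x^k e^{-x/2}\lesssim 1$) between the polynomial factors that appear in the $L^1$ estimate and the residual decay $e^{-ct 2^{2j}}$ claimed for the kernel; once this accounting is done, the rest of the argument is entirely routine.
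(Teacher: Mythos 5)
Your proof is correct and takes a genuinely different route from the paper. The paper dispatches the Bessel case in one line via the scaling identity $P_t = t^{-d/2} P_1(t^{-1/2}\,\cdot)$, which gives $\|p_t\|_{L^{\rho,1}} \sim t^{-\rho/2}$, followed implicitly by Young's inequality in the form $\|p_t \ast f\|_{L^{s+\rho,p}} \leqslant \|p_t\|_{L^{\rho,1}}\|f\|_{L^{s,p}}$; for the Besov case the paper simply cites Proposition~5 of~\cite{mourratweber}. You instead derive both statements from a single per-frequency kernel bound $\|\Delta_j P_t f\|_{L^p} \lesssim e^{-ct2^{2j}}\|\Delta_j f\|_{L^p}$ via Littlewood--Paley decomposition, and then trade the exponential decay for a power $t^{-\rho/2}$ using $x^{\rho/2}e^{-cx}\lesssim 1$. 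This is more self-contained (no citation needed for the Besov case) and makes the two estimates visibly twins, at the cost of a somewhat longer argument and the additional machinery of the Fefferman--Stein vector-valued maximal inequality in the Bessel case. The scaling argument the paper uses is arguably cleaner for Bessel spaces, but your unified treatment is pedagogically nicer.

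Two small remarks. First, the identity $\Delta_j P_t f = K_{t,j}\ast\Delta_j f$ with symbol $\varphi(2^{-j}\xi)e^{-t|\xi|^2/2}$ is not literally what you want, since $\widehat{\Delta_j f}$ already carries a factor $\varphi(2^{-j}\xi)$; you should take $K_{t,j}$ with symbol $\widetilde\varphi(2^{-j}\xi)e^{-t|\xi|^2/2}$, where $\widetilde\varphi$ is a slightly fattened cutoff equal to $1$ on $\tmop{supp}\varphi$. This is routine and does not affect the estimates. Second, you are right to flag the low-frequency block: for $j=-1$ the heat semigroup gives no decay, so the claimed bound cannot hold uniformly over all $t>0$ and is really an estimate for $t$ in a bounded interval (or for homogeneous versions of the spaces). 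The paper's scaling argument has the same implicit restriction, since $(1-\Delta)^{\rho/2}$ is not exactly scale-invariant; in the regime where the lemma is applied (Section~\ref{sec3.3}, finite time horizon) this is immaterial.
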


Both statements are classical, the first one following immediately from the
fact that, due to the scaling $P_t = t^{- d / 2} P_1 (t^{- 1 / 2} \cdot)$
which implies $\| P_t \|_{L^{\rho, 1}} = t^{- \rho / 2} \| P_1 \|_{L^{\rho,
1}}$; see Proposition~5 at page~2414 of~{\cite{mourratweber}}, for a proof in
a more general context of the second statement.

\subsection{A primer on stochastic integration in UMD Banach
spaces}\label{appendixA3}

In this appendix we recall several results on abstract stochastic integration
which are needed in order to complete the proof of Theorem~\ref{sec3.2
functional ito tanaka}; we believe they are also of independent interest and
therefore provide a general presentation. In view of application to
Section~\ref{sec3.3} we only need results for martingale type~2 spaces, which
however yield the restriction to work on $L^p$-based spaces with $p \geqslant
2$; weakening this condition to the case $p = 1$ would highly enhance the
results, as discussed in Remark~\ref{sec3.3 remark limitation}, which is why
in this appendix we also discuss UMD Banach spaces. Even with this more
general theory we are currently not able to overcome the obstacle, we believe
it might be of help for future developments and improvements.

All the material presented here is taken
from~{\cite{vanneerven2007}},~{\cite{vanneerven2015}}. Also, we restrict for
simplicity to the case $W$ is a real valued Brownian motion (the extension to
the vector valued case $W \in \mathbb{R}^d$ being straightforward) but the
theory is far more general as it considers the case of $H$\mbox{-}cylindrical
Brownian motion, $H$ being an abstract Hilbert space. This gives rise to
$\gamma$\mbox{-}Radonifying norms $\gamma (H, E)$; in our simple setting, $H
=\mathbb{R}$, for any Banach space $E$ it holds $\| \cdot \|_{\gamma (H, E)} =
\| \cdot \|_E$.

\begin{definition}
  \label{appendixA3 defn martingale type}Let $p \in [1, 2]$. A Banach space
  $E$ has martingale type~p if there exists a constant $C \geqslant 0$ such
  that for all finite $E$\mbox{-}valued martingale difference sequences
  $(d_n)_{n = 1}^N$ it holds
  \[ \mathbb{E} \left[ \left\| \sum^N_{n = 1} d_n \right\|_E^p \right]
     \leqslant C^p \sum_{n = 1}^N \mathbb{E} [\| d_n \|_E^p] . \]
  The least admissible constant is denoted by $C_{p, E}$.
\end{definition}

Examples of martingale type spaces are the following:
\begin{itemize}
  \item Every Banach space has martingale type~1.
  
  \item Every Hilbert space has martingale type~2.
  
  \item A closed subspace of a Banach space of martingale type~$p$ has still
  martingale type~$p$.
  
  \item If $E$ has martingale type~$p$ and $(S, \mathcal{A}, \mu)$ is a
  measure space, then $L^r (S ; E)$ with $r \in [1, \infty)$ has martingale
  type~$p \wedge r$; in particular Lebesgue spaces $L^p (\mathbb{R}^d)$ have
  martingale type~$p \wedge 2$.
  
  \item Let $(E_0, E_1)$ be an interpolation couple such that $E_i$ has
  martingale type~$p_i \in [1, 2]$, let $\theta \in (0, 1)$ and consider $p
  \in [1, 2]$ such that $1 / p = (1 - \theta) / p_0 + \theta / p_1$. Then both
  the complex and real interpolation spaces $E_{\theta}$ and
  $\tilde{E}_{\theta}$ have martingale type~$p$.
\end{itemize}
For the last two examples see Propositions~7.1.3 and~7.1.4
from~{\cite{hytonen}}. It follows from the previous list of examples that
Sobolev spaces $W^{k, p} (\mathbb{R}^d)$ with $p \in [2, \infty)$ have
martingale type as they can be identified with closed subspaces of $L^p
(\mathbb{R}^d)^{\otimes n}$ for suitable $n$; Bessel potential spaces $L^{s,
p} (\mathbb{R}^d)$ with general $s$ are isomorphic to $L^p (\mathbb{R}^d)$,
with isomorphism given by $G^s = (1 - \Delta)^{s / 2}$, therefore for $p \in
[2, \infty)$ they have martingale type~2. In the case of Besov spaces $B^s_{p,
q}$ with $p, q \in [2, \infty)$, again it can be shown that they have
martingale type~2, either by constructing them as interpolation spaces (see
for instance Section~17.3 from~{\cite{leoni}}) or reasoning as follows: by
definition, any $\varphi \in B^s_{p, q}$ can be identified with a sequence $\{
\Delta_j \varphi \}_j \subset L^p (\mathbb{R}^d)$ with suitable summability,
namely such that it belongs to $\ell^q (\mathbb{N}, \mu ; L^p
(\mathbb{R}^d))$, where $\mu (\{ j \}) = 2^{- s q j}$; in the case $p, q \in
[2, \infty)$ by the previous examples it has martingale type~2.

\

Now let $W$ be a real valued $\mathcal{F}_t$\mbox{-}Brownian motion on a
filtered probability space $(\Omega, \mathcal{F}, \{ \mathcal{F}_t \}_{t
\geqslant 0}, \mathbb{P})$, $\{ \mathcal{F}_t \}_{t \geqslant 0}$ being a
filtration satisfying the usual conditions. For martingale type~2 spaces it is
possible to define stochastic integrals analogously to the standard case: for
an adapted elementary process $\phi : \mathbb{R}_+ \times \Omega \rightarrow
E$, namely of the form
\[ \phi (t, \omega) = \sum_{i = 1}^{n - 1} x_i  \mathbbm{1}_{(t_i, t_{i + 1}]
   \times F_i} (t, \omega) \]
where $0 \leqslant t_1 < t_2 < \cdots < t_n$, $x_i \in E$, $F_i \in
\mathcal{F}_{t_i}$, we set
\[ \int_0^{\cdot} \phi \mathd W : = \sum_{i = 1}^{n - 1} x_i 
   \mathbbm{1}_{F_i} (W_{\cdot \wedge t_{i + 1}} - W_{\cdot \wedge t_i}) . \]
Using the martingale type~2 property it is then possible to show that the
$L^2$~norm of the process defined in this way is controlled by $\| \phi
\|_{L^2 (\mathbb{R}_+ \times \Omega, E)}$, see Theorem~4.6
from~{\cite{vanneerven2015}}. By standard approximation procedures, together
with Doob's maximal inequality, the following analogue of standard It{\^o}
integration can then be proven.

\begin{theorem}
  \label{apendixA3 thm stoch integral}Let $\phi : \mathbb{R}_+ \times \Omega
  \rightarrow E$ be a progressively measurable process satisfying
  \[ \| \phi \|_{L^2 (\mathbb{R}_+ \times \Omega, E)}^2 =\mathbb{E} \left[
     \int_0^{+ \infty} \| \phi_t \|_E^2 \mathd t \right] < \infty . \]
  Then $\int \phi \tmop{dW}$ is well defined as an $E$\mbox{-}valued
  martingale with paths in $C_b (\mathbb{R}_+ ; E)$ and satisfies
  \begin{equation}
    \mathbb{E} \left[ \sup_{t \geqslant 0} \left\| \int_0^t \phi_s \mathd W_s
    \right\|_E^2 \right] \leqslant 4 C_{2, E}^2 \mathbb{E} \left[ \int_0^{+
    \infty} \| \phi_t \|_E^2 \mathd t \right] . \label{appendixA3 ito}
  \end{equation}
\end{theorem}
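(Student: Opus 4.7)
The plan is to follow the classical Hilbert-space construction of the It\^{o} integral, replacing orthogonality of increments by the martingale type~2 inequality of Definition~\ref{appendixA3 defn martingale type}. First, I would start from the integral already defined on elementary adapted processes $\phi = \sum_{i=1}^{n-1} x_i \mathbbm{1}_{(t_i, t_{i+1}] \times F_i}$ and establish the fundamental isometry-type bound
\[
  \mathbb{E}\Bigl[\Bigl\| \int_0^{\infty} \phi\, \mathrm{d}W \Bigr\|_E^2\Bigr]
  \;\leq\; C_{2,E}^2\, \mathbb{E}\Bigl[\int_0^{\infty} \|\phi_t\|_E^2\, \mathrm{d}t\Bigr].
\]
This is exactly the content of Theorem~4.6 from \cite{vanneerven2015}: the increments $d_i = x_i \mathbbm{1}_{F_i}(W_{t_{i+1}} - W_{t_i})$ form a martingale difference sequence in $L^2(\Omega;E)$, and the martingale type~2 property yields $\mathbb{E}[\|\sum d_i\|_E^2] \leq C_{2,E}^2 \sum \mathbb{E}[\|d_i\|_E^2]$; then independence of $W_{t_{i+1}} - W_{t_i}$ from $\mathcal{F}_{t_i}$ together with $\mathbb{E}[(W_{t_{i+1}} - W_{t_i})^2] = t_{i+1} - t_i$ computes $\mathbb{E}[\|d_i\|_E^2] = \mathbb{E}[\|x_i\mathbbm{1}_{F_i}\|_E^2](t_{i+1}-t_i)$, giving exactly the right-hand side.

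Second, I would promote the integral to a continuous-time martingale. For elementary $\phi$ the process $M_t := \int_0^t \phi\, \mathrm{d}W$ is manifestly an $E$-valued martingale with piecewise-linear-in-$W$ paths; in particular it is continuous and bounded on $\mathbb{R}_+$. Applying Doob's maximal inequality in the Banach-space-valued setting (which holds for any Banach space since it only uses that $\|M_t\|_E$ is a real submartingale by the triangle inequality combined with conditional Jensen) to the submartingale $(\|M_t\|_E^2)$ gives
\[
  \mathbb{E}\Bigl[\sup_{t \geq 0}\|M_t\|_E^2\Bigr]
  \;\leq\; 4\,\mathbb{E}\Bigl[\|M_{\infty}\|_E^2\Bigr]
  \;\leq\; 4 C_{2,E}^2\, \mathbb{E}\Bigl[\int_0^{\infty}\|\phi_t\|_E^2\,\mathrm{d}t\Bigr],
\]
which is the bound~\eqref{appendixA3 ito} for elementary processes.

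Third comes the extension by density. Standard measure-theoretic approximation (truncation in time, in $\Omega$, and mollification/step approximation in $t$) shows that elementary adapted processes are dense in $L^2_{\mathcal{F}}(\mathbb{R}_+ \times \Omega; E)$, the closed subspace of progressively measurable elements of $L^2(\mathbb{R}_+ \times \Omega; E)$. Given general $\phi \in L^2_{\mathcal{F}}(\mathbb{R}_+ \times \Omega; E)$, pick approximating elementary $\phi^n \to \phi$. The inequality above applied to $\phi^n - \phi^m$ shows that the sequence $(M^n)$ of integrals is Cauchy in $L^2(\Omega; C_b(\mathbb{R}_+; E))$; passing to a limit defines $\int \phi\, \mathrm{d}W$ independently of the approximating sequence, preserves continuity of paths and the martingale property (stability under $L^2$ limits), and transfers the bound~\eqref{appendixA3 ito} to the limit by lower semicontinuity of the supremum norm.

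The main obstacle—modest in this abstract setting but worth flagging—is that a direct It\^{o}-type isometry is not available outside Hilbert spaces, so the martingale type~2 constant $C_{2,E}$ must genuinely enter; one must also be careful in the density step to work within the progressively measurable $\sigma$-algebra rather than the larger product $\sigma$-algebra, so that the approximants remain adapted. Everything else is a verbatim adaptation of the scalar construction.
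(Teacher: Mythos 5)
Your proposal is correct and follows exactly the route the paper itself indicates (the paper gives no self-contained proof here, only a pointer to Theorem~4.6 of van Neerven--Veraar--Weis together with the remark ``by standard approximation procedures, together with Doob's maximal inequality''): martingale type~2 for elementary adapted processes, Doob's $L^2$ maximal inequality to obtain the factor~$4$, and extension by density in $L^2_{\mathcal{F}}(\mathbb{R}_+\times\Omega;E)$. One small wording slip: Doob's maximal inequality should be applied with $p=2$ to the nonnegative real submartingale $\|M_t\|_E$ (yielding $\mathbb{E}[\sup_t\|M_t\|_E^2]\le 4\,\mathbb{E}[\|M_\infty\|_E^2]$), not to the submartingale $\|M_t\|_E^2$, for which the $L^1$ maximal inequality would not produce the constant~$4$; the bound you wrote is the right one, only the phrasing is off.
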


Let us also remark that it follows immediately from the definition for simple
processes and the usual approximation procedure that, for any $\phi$ as above
and any deterministic $\varphi^{\ast} \in E^{\ast}$, the following identity
holds
\begin{equation}
  \langle \varphi^{\ast}, \int_0^{\cdot} \phi_s \mathd W_s \rangle =
  \int_0^{\cdot} \langle \varphi^{\ast}, \phi_s \rangle \mathd W_s
  \label{appendixA3 identity dual}
\end{equation}
where the integral on the r.h.s. is a standard real valued stochastic
integral.

We are now ready to complete the proof of Theorem~\ref{sec3.2 functional ito
tanaka}.

\begin{proof}[of Theorem~\ref{sec3.2 functional ito tanaka}]
  Let us first show the following general fact: given a separable Banach space
  $E$ and two $E$\mbox{-}valued random variables $X$ and $Y$ such that for any
  $\varphi^{\ast}$ in a linearly dense subspace of $E^{\ast}$ it holds
  \begin{equation}
    \langle \varphi^{\ast}, X \rangle = \langle \varphi^{\ast}, Y \rangle
    \quad \mathbb{P} \text{-a.s.} \label{appendixA3 eq1}
  \end{equation}
  then necessarily $X = Y$ $\mathbb{P}$-a.s. Indeed, it follows from the
  linear density assumption that relation~{\eqref{appendixA3 eq1}} holds for
  any $\varphi^{\ast} \in E^{\ast}$; by separability of $E$ and Hahn-Banach
  Theorem, it is possible to find a countable collection $\{ \varphi_n^{\ast}
  \}_n \subset E^{\ast}$ such that $\| \varphi^{\ast}_n \|_{E^{\ast}} = 1$ for
  all $n$ and
  \[ \| x \| = \sup_n | \langle \varphi^{\ast}_n, x \rangle | \quad \forall \,
     x \in E. \]
  By~{\eqref{appendixA3 eq1}} and the fact that the supremum is over a
  countable set, we can find a set $\Gamma$ of full probability such that
  \[ \| X - Y \|_E = \sup_n | \langle \varphi_n^{\ast}, X - Y \rangle | = 0
     \quad \forall \, \omega \in \Gamma \]
  which proves the claim. Now let $b \in C^{\infty}_c ([0, T] \times
  \mathbb{R}^d ; \mathbb{R})$, so that it can be identified with an element of
  $L^2 (0, T ; H^{\alpha} (\mathbb{R}^d))$ for any $\alpha > 0$; choose
  $\alpha$ big enough so that $H^{\alpha}$ embeds into continuous functions
  vanishing at infinity. Then thanks to relation~{\eqref{appendixA3 identity
  dual}}, equation~{\eqref{sec3.2 preliminary ito tanaka}} can be written as:
  for a given $x \in \mathbb{R}^d$, $\mathbb{P}$-a.s. it holds
  
  \begin{align*}
    \langle \delta_x, \int_s^t b \left( r, \cdot \, + W^H_r \right) \, \mathd
    r \rangle & = \tmcolor{black}{\langle \delta_x, \int_s^t P_{\tilde{c}_H |
    r - s |^{2 H}} \, b \left( r, \cdot \, + W^{2, H}_{s, r} \right) \, \mathd
    r \rangle}\\
    & \enspace + \langle \delta_x, \int_s^t \int_u^t P_{\tilde{c}_H | r - u
    |^{2 H}} \nabla b \left( r, \cdot \, + W^{2, H}_{u, r} \right) c_H | r - u
    |^{H - 1 / 2} \, \mathd r \, \cdot \mathd B_u \rangle
  \end{align*}
  
  where the first two integrals are interpreted as (random) Bochner integrals
  while the last one as a stochastic integral in $H^{\alpha}$ (with the inner
  integral being a random Bochner integral); integrability and predictability
  are straightforward due to the regularity of $b$ and the properties of
  $W^{2, H}_{u, r}$. Finally, as the collection $\{ \delta_x \}_{x \in
  \mathbb{R}^d}$ is linearly dense in $H^{- \alpha}$ and $H^{\alpha}$ is
  separable, we can apply the general fact above to deduce that, for $s < t$
  fixed, the random variables above coincide on a set of full probability,
  without the need of testing against $\delta_x$. This is exactly
  formula~{\eqref{sec3.2 ito tanaka formula}}.
\end{proof}

In the setting of martingale type~2 spaces a one-sided Burkholder's inequality
is available; we state it with the optimal asymptotic behaviour of the
constants, which is needed in the estimates in Section~\ref{sec3.3}. It was
first shown by Seidler in~{\cite{seidler}}.

\begin{theorem}[Theorem 4.7 from {\cite{vanneerven2015}}]
  \label{appendixA3 thm burkholder}Let $E$ be martingale type~2. Then for any
  progressively measurable process $\phi : \mathbb{R}_+ \times \Omega
  \rightarrow E$ and $p \in (0, \infty)$ there exists a constant
  $\tilde{C}_{p, E}$ such that
  \begin{equation}
    \mathbb{E} \left[ \sup_{t \geqslant 0} \left\| \int_0^t \phi_s \mathd W_s
    \right\|_E^2 \right] \leqslant \tilde{C}_{p, E}^p \mathbb{E} \left[ \left(
    \int_0^{\infty} \| \phi_s \|^2_E \mathd s \right)^{p / 2} \right] .
    \label{appendixA3 burkholder}
  \end{equation}
  In particular, it is possible to choose $\tilde{C}_{p, E}$ such that
  $\tilde{C}_{p, E} \leqslant C_E \sqrt{p}$ for any $p \geqslant 2$, where
  $C_E$ is a universal constant that only depends on the space $E$.
\end{theorem}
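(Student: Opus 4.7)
The proof is attributed to Seidler [seidler]; the plan is to follow the standard route via the equivalence between martingale type~2 and 2-uniform smoothness. First, by Pisier's renorming theorem, a martingale type~2 Banach space admits an equivalent norm $N$ whose square is of class $C^2$ with Hessian bounded by some $2K$; equivalently $N(x+y)^2 \leq N(x)^2 + 2\langle J(x), y\rangle + K N(y)^2$ for a selection $J(x)$ from the subdifferential of $x\mapsto N(x)^2/2$. Replacing the original norm by $N$ affects the final estimate only by an equivalence constant, which can be absorbed into $C_E$; one may therefore assume that $E$ itself already has this 2-smooth structure.

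The first step is the case $p=2$, which follows essentially from~\eqref{appendixA3 ito}: one combines Doob's $L^2$ maximal inequality for the $E$-valued martingale $M_t=\int_0^t\phi_s\,dW_s$ with the martingale type~2 definition applied to the discrete-time increments of $M$. For $p\geq 2$ I would reduce to simple predictable $\phi$ by density and apply It\^o's formula to $x\mapsto N(x)^p$, which under 2-smoothness is of class $C^2$ with controlled derivatives. This yields
\[
N(M_t)^p = \mathcal{M}_t + \tfrac{1}{2}\int_0^t \mathrm{Tr}\big(D^2 N^p(M_s)[\phi_s,\phi_s]\big)\,ds,
\]
where $\mathcal{M}$ is a real-valued local martingale and the integrand is pointwise bounded by $Kp(p-1)\,N(M_s)^{p-2}\|\phi_s\|^2$.

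Taking expectations after standard localisation, using a $p$-weighted Young inequality of the form $a^{p-2}b^2\leq \tfrac{p-2}{p}\varepsilon\,a^p + \tfrac{2}{p}\varepsilon^{-(p-2)/2}b^p$, and closing the loop via a (stochastic) Gronwall argument produces
\[
\mathbb{E}\big[N(M_t)^p\big] \lesssim (CKp)^{p/2}\,\mathbb{E}\Big[\Big(\int_0^t \|\phi_s\|^2\,ds\Big)^{p/2}\Big].
\]
Doob's $L^p$ maximal inequality applied to the nonnegative submartingale $N(M_\cdot)$ then inserts the supremum on the left of~\eqref{appendixA3 burkholder}; the extra factor $(p/(p-1))^p\leq e^p$ is harmless for $p\geq 2$ and is absorbed into $C_E$. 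A standard density argument (simple processes are dense in the space of progressively measurable $\phi$ with the assumed integrability) extends the bound to general integrands, yielding $\tilde C_{p,E}^p\leq C_E^p p^{p/2}$ and hence $\tilde C_{p,E}\leq C_E\sqrt{p}$.

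The main obstacle is the Gronwall step: obtaining the precise $\sqrt{p}$ growth rather than a cruder $p$ or $p^{p/2}$ with the wrong exponent. The sharp dependence is inherited from the real-valued Burkholder--Davis--Gundy constant of order $\sqrt{p}$ (Davis; Barlow--Yor), but it survives the 2-smooth It\^o calculation only when the Young-splitting parameter $\varepsilon$ is tuned simultaneously as a function of $p$ and of the unknown $\mathbb{E}[N(M_t)^p]$ itself. Executing this tuning carefully is the heart of Seidler's argument; all other ingredients (2-smooth It\^o calculus, Doob, density approximation) are entirely routine.
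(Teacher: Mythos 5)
The paper itself does not prove this statement; it is quoted from the survey of van Neerven, Veraar and Weis, with the $\sqrt{p}$ growth of the constant credited to Seidler, so there is no in-paper argument to compare against. Your outline is right about the tools and the attribution: Pisier's renorming to a $2$-uniformly smooth norm $N$, It\^o calculus for the $E$-valued integral, Doob's maximal inequality, and a density argument in the integrand; and the case $p=2$ is indeed encoded in~\eqref{appendixA3 ito}.

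The route you describe for the $\sqrt{p}$ dependence, however, cannot be made to work. Applying It\^o's formula to $N(\cdot)^p$, bounding $\mathrm{Tr}\,D^2 N^p$ by $K\,p(p-1)\,N^{p-2}\|\cdot\|^2$, and then separating via a weighted Young inequality produces, after closing with Doob and Gronwall, a constant of order $p$ for \emph{every} choice of the splitting parameter. Indeed, set $A=\sup_t N(M_t)$ and $B=\bigl(\int_0^\infty\|\phi_s\|^2\,\mathd s\bigr)^{1/2}$; the inequality $\mathbb{E}[N(M_T)^p]\lesssim p^2 K\,\mathbb{E}[A^{p-2}B^2]$, followed by H\"older with exponents $p/(p-2)$ and $p/2$ (which is precisely what optimising $\varepsilon$ in your Young step amounts to) and Doob, yields only $\|A\|_p\lesssim \sqrt{K}\,p\,\|B\|_p$; the extra $\sqrt{p}$ is lost at this step and no tuning of $\varepsilon$ recovers it. The correct mechanism is different: apply It\^o's formula to $N(\cdot)^2$ rather than to $N(\cdot)^p$, obtaining $N(M_t)^2\leqslant L_t + K\int_0^t\|\phi_s\|^2\,\mathd s$ with $L$ a \emph{real-valued} local martingale whose bracket satisfies $\langle L\rangle_T\lesssim A_T^2 B_T^2$; then invoke the scalar Burkholder--Davis--Gundy inequality with its known sharp $\sqrt{p}$ constant on $L$, use H\"older to split $\|A_T B_T\|_{p/2}\leqslant\|A_T\|_p\|B_T\|_p$, and solve the resulting quadratic inequality in $\|A\|_p/\|B\|_p$, which delivers $\|A\|_p\lesssim\sqrt{p}\,\|B\|_p$. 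Seidler's own argument instead proves an exponential estimate by applying It\^o's formula to $\exp(\beta N(\cdot)^2)$ and deduces the $\sqrt{p}$ moment growth by integrating the Gaussian tail. Either route transfers the scalar $\sqrt{p}$ to $E$; the weighted Young inequality on $N^p$ does not.
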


This concludes the exposition of results needed in the proofs of this work. In
the rest of this appendix, we present a brief account on stochastic
integration in UMD Banach spaces.

\

Some of the major drawbacks of martingale type~2 spaces are the fact that
they do not include $L^p$ spaces with $p < 2$, Burkholder's inequality is in
general only one-sided and it is not sharp, which is troublesome in
applications to maximal regularity of mild solutions of SPDEs. This motivates
the introduction of a larger class of spaces. As before, we only consider the
case of a real valued $W$, but the theory extends to $W$ being a cylindrical
$H$\mbox{-}Brownian motion for an Hilbert space $H$.

\begin{definition}
  \label{appendixA3 defn UMD} A Banach space $E$ is called a UMD space (i.e.
  it has unconditional martingale differences) for some $p \in (1, \infty)$ if
  there exists a constant $\beta \geqslant 0$ such that for all
  $E$\mbox{-}valued $L^p$\mbox{-}martingale differences $(d_n)_{n \geqslant
  1}$ and signs $(\varepsilon_n)_{n \geqslant 1}$ one has
  \[ \mathbb{E} \left[ \left\| \sum_{n = 1}^d \varepsilon_n d_n \right\|_E^p
     \right] \leqslant \beta^p \mathbb{E} \left[ \left\| \sum_{n = 1}^N d_n
     \right\|_E^p \right] \quad \forall N \geqslant 1. \]
  The least admissible constant is denoted by $\beta_{p, E}$.
\end{definition}

It can be shown that if $E$ is UMD for some $p \in (1, \infty)$, then it is
actually UMD for all $p \in (1, \infty)$. Examples are the following (here $p'
\in (1, \infty)$ denotes the conjugate of $p$):
\begin{itemize}
  \item Every Hilbert space $H$ is UMD with $\beta_{p, H} = \max \{ p, p' \}$.
  
  \item If $E$ is a UMD Banach space and $(S, \mathcal{A}, \mu)$ is measure
  space, then $L^p (\mu ; E)$ is a UMD space with $\beta_{p, L^p (\mu ; E)} =
  \beta_{p, E}$.
  
  \item $E$ is UMD if and only if $E^{\ast}$ is UMD and it holds $\beta_{p, E}
  = \beta_{p', E^{\ast}}$.
\end{itemize}
In the case of UMD spaces, it is possible again to construct stochastic
integrals in a suitable class of predictable processes and to obtain two-sided
Burkholder inequalities.

\begin{theorem}[Theorem 5.5 from {\cite{vanneerven2015}}]
  \label{appendixA3 burkholder UMD}Let $E$ be a UMD Banach space and let $p
  \in (1, \infty)$. For all progressively measurable processes $\phi :
  \mathbb{R}_+ \times \Omega \rightarrow E$ we have
  \[ \frac{1}{\beta_{p, E}} \| \phi \|_{L^p (\Omega ; \gamma^p (L^2
     (\mathbb{R}_+), X))} \leqslant \mathbb{E} \left[ \left\| \int_0^{\infty}
     \phi \mathd W \right\|^p \right]^{1 / p} \leqslant \beta_{p, E} \| \phi
     \|_{L^p (\Omega ; \gamma^p (L^2 (\mathbb{R}_+), X))} . \]
\end{theorem}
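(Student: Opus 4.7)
The plan is to establish the two-sided inequality first for adapted simple (step) processes and then extend by density, with the heart of the argument being a decoupling argument that encapsulates the UMD property. For a simple process $\phi = \sum_{i=1}^{n-1} \xi_i \mathbbm{1}_{(t_i, t_{i+1}]}$ with $\xi_i \in L^p(\Omega, \mathcal{F}_{t_i}; E)$, the stochastic integral is simply $\sum_i \xi_i (W_{t_{i+1}} - W_{t_i})$. The goal is to compare the $L^p(\Omega; E)$ norm of this object to $\|\phi\|_{L^p(\Omega; \gamma(L^2(\mathbb{R}_+), E))}$, which for a simple process coincides with the $L^p$-norm of the $E$-valued Gaussian sum $\sum_i \xi_i g_i$ where $g_i \sim N(0, t_{i+1}-t_i)$ are independent of the $\xi_i$.

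First I would introduce an independent copy $W'$ of $W$ on an enlarged probability space and consider the \emph{decoupled} integral $\sum_i \xi_i (W'_{t_{i+1}} - W'_{t_i})$. The key step is the Garling--McConnell--Bourgain characterisation of UMD spaces in terms of the boundedness of such decoupling (or equivalently, the contraction principle for tangent martingale sequences): setting $d_i := \xi_i (W_{t_{i+1}} - W_{t_i})$, which is an $E$-valued martingale difference w.r.t.\ the filtration $\mathcal{F}_{t_{i+1}}$, its tangent sequence is $d_i' := \xi_i (W'_{t_{i+1}} - W'_{t_i})$, and UMD yields
\[
\frac{1}{\beta_{p,E}} \mathbb{E}\Bigl\|\sum_i d_i'\Bigr\|_E^p \leqslant \mathbb{E}\Bigl\|\sum_i d_i\Bigr\|_E^p \leqslant \beta_{p,E}^p \mathbb{E}\Bigl\|\sum_i d_i'\Bigr\|_E^p,
\]
(the two-sidedness is the content of the UMD characterisation).

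Next I would evaluate the decoupled sum. Conditioning on $\sigma(\xi_1, \ldots, \xi_{n-1})$, the increments $W'_{t_{i+1}} - W'_{t_i}$ remain independent centred Gaussians with variance $t_{i+1}-t_i$, so conditionally the decoupled sum is a Gaussian element of $E$. By the definition of the $\gamma$-radonifying norm, the second moment of this conditional Gaussian matches $\|\phi(\omega, \cdot)\|_{\gamma(L^2(\mathbb{R}_+), E)}^2$ associated with the multiplier operator $f \mapsto \sum_i \xi_i \int_{t_i}^{t_{i+1}} f(s)\,\mathrm{d}s$. The Kahane--Khintchine inequality for $E$-valued Gaussian variables then shows that all $L^p$-moments are comparable to the $L^2$ (i.e.\ $\gamma$-radonifying) norm, up to universal constants. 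Integrating out in $\omega$ yields
\[
\mathbb{E}\Bigl\|\sum_i d_i'\Bigr\|_E^p \sim_p \mathbb{E}\bigl[\|\phi\|_{\gamma(L^2(\mathbb{R}_+); E)}^p\bigr],
\]
which combined with the previous display gives the inequality on simple processes.

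Finally, one extends from simple processes to general progressively measurable $\phi$ in $L^p(\Omega; \gamma(L^2(\mathbb{R}_+), E))$ by approximating $\phi$ by adapted step processes $\phi_n$ in this norm; the already established inequality makes $\int_0^{\infty} \phi_n\, \mathrm{d}W$ Cauchy in $L^p(\Omega; E)$, and the limit defines $\int_0^{\infty} \phi\, \mathrm{d}W$ and satisfies the two-sided bound by continuity. The main obstacle I anticipate is making the decoupling step fully rigorous: one must verify that the tangent sequence construction indeed fits into the UMD framework (which requires some care about the enlarged filtration and the identification of $d_i'$ as a genuine tangent sequence), and that the Kahane--Khintchine comparison passes cleanly under conditioning; both are standard but delicate in the Banach-valued setting.
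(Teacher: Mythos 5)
The paper does not prove this statement: it is recalled verbatim (with attribution) as Theorem~5.5 of the survey by van Neerven, Veraar and Weis, so there is no in-paper proof to compare against. Your reconstruction is the standard route from that literature --- reduction to adapted step processes, decoupling against an independent copy $W'$, identification of the decoupled sum as a conditional Gaussian, and extension by density --- so the architecture is sound.

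Two points of caution about the constants, which matter because the stated inequality is sharp with $\beta_{p,E}$ on both sides. First, you invoke Kahane--Khintchine to pass from the conditional $L^p$-moment of the Gaussian sum to the $\gamma$-Radonifying norm; but the paper explicitly uses the $p$-dependent $\gamma^p$-norm (``the $p$-th $\gamma$-Radonifying norm''), which is \emph{defined} via the $L^p$-moment of the underlying Gaussian sum rather than the $L^2$-moment. With that definition, the step you describe is an equality, not an equivalence up to universal constants; appealing to Kahane--Khintchine would needlessly degrade the constant from $\beta_{p,E}$ to $\beta_{p,E}$ times a Kahane constant, and would also produce a mismatch with the fixed $\gamma^p$-norm appearing in the statement. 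Second, the decoupling inequality for tangent martingale difference sequences in UMD spaces is not, in general, a direct consequence of the UMD inequality with constant exactly $\beta_{p,E}$: McConnell's decoupling theorem gives a constant controlled by $\beta_{p,E}$ but the naive two-sided estimate you wrote (with $\beta_{p,E}^p$) should be justified carefully --- the standard derivation of decoupling from UMD proceeds via a Rademacher randomisation of the increments and uses the special conditional Gaussian/symmetric structure of $d_i=\xi_i\Delta W_i$ to keep the constant at $\beta_{p,E}$, not a power of it. Without that refinement your argument proves the theorem only up to a larger constant. Apart from these quantitative issues, and the small typo that your first displayed inequality should have $\beta_{p,E}^{-p}$ rather than $\beta_{p,E}^{-1}$ on the left to be dimensionally consistent with the $p$-th moments, the proposal correctly identifies the key ideas.
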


In the above statement, $\gamma^p (L^2 (\mathbb{R}_+), X)$ stands for the
$p$\mbox{-}th $\gamma$\mbox{-}Radonifying norm; we omit the precise
definition, which can be found
in~{\cite{vanneerven2007}},~{\cite{vanneerven2015}}. There are special cases
in which the $\gamma$-Radonifying norm is equivalent to other norms with a
simpler expression, in particular when $E = L^q (\mu)$, in which case there is
an isomorphism of Banach spaces
\[ \gamma^p (L^2 (\mathbb{R}_+), L^p (\mu)) = L^p (\mu ; L^2 (\mathbb{R}_+))
\]
and so the previous inequality can be reformulated as
\begin{eqnarray*}
  \mathbb{E} \left[ \left\| \int_0^{\infty} \phi_s \mathd W_s \right\|_{L^q
  (\mu)}^p \right] & \sim_{p, q} & \mathbb{E} \left[ \left\| \left(
  \int_0^{\infty} \phi_s^2 (\cdot) \mathd s \right)^{1 / 2} \right\|_{L^q
  (\mu)}^p \right]\\
  & = & \mathbb{E} \left[ \left( \int_S \left( \int_0^{\infty} \phi^2 (s, x)
  \mathd s \right)^{q / 2} \mathd \mu (x) \right)^{p / q} \right] .
\end{eqnarray*}
In the case $q \geqslant 2$, an application of Minkowski's inequality then
yields
\[ \mathbb{E} \left[ \left\| \int_0^{\infty} \phi_s \mathd W_s \right\|_{L^q
   (\mu)}^p \right] \lesssim_{p, q} \mathbb{E} \left[ \left( \int_0^{\infty}
   \| \phi (s, \cdot) \|_{L^q (\mu)}^2 \mathd s \right)^{p / 2} \right] \]
which is consistent with the aforementioned results for martingale type~2
spaces. In the general case instead, assuming we want to estimate the $L^q
(\mathbb{R}^d)$~norm of an averaged operator by means of the It\^{o}--Tanaka
formula~{\eqref{sec3.2 ito tanaka formula}}, we would then need to estimate a
term of the form (we omit the constants for simplicity)
\[ \mathbb{E} \left[ \left( \int_{\mathbb{R}^d} \left( \int_s^t \left(
   \int_u^t P_{| r - u |^{2 H}} \nabla b (r, x + W^{2, H}_{u, r})  | r - u
   |^{H - 1 / 2} \mathd r \right)^2 \mathd u \right)^{q / 2} \mathd x
   \right)^{p / q} \right] \]
which we are currently not able to do. The techniques employed in
Section~\ref{sec3.3} rely quite crucially on the simplifications given by a
formula of the form~{\eqref{appendixA3 burkholder}}.

\

\end{document}